\numberwithin{equation}{section}
\newtheorem{thm}{Theorem}[section]
\newtheorem{Lemma}[thm]{Lemma}
\newtheorem{prop}[thm]{Proposition}
 { \theoremstyle{definition}

\newtheorem{Remark}[thm]{Remark} }
\renewcommand{\le}{\leqslant}
\renewcommand{\ge}{\geqslant}
\newcommand{\dbar}{{\rm d}\hspace*{-0.1em}\bar{}\hspace*{0.1em}}
\tikzstyle arrowstyle=[scale=1]
\tikzstyle directed=[postaction={decorate,decoration={markings,
 mark=at position .65 with {\arrow[arrowstyle]{stealth}}}}]
\tikzstyle reverse directed=[postaction={decorate,decoration={markings,
 mark=at position .65 with {\arrowreversed[arrowstyle]{stealth};}}}]
\begin{document}
\allowdisplaybreaks

\newcommand{\arXivNumber}{1711.01590}

\renewcommand{\thefootnote}{}

\renewcommand{\PaperNumber}{056}

\FirstPageHeading

\ShortArticleName{Asymptotics of Polynomials Orthogonal with respect to a Logarithmic Weight}

\ArticleName{Asymptotics of Polynomials Orthogonal\\ with respect to a Logarithmic Weight\footnote{This paper is a~contribution to the Special Issue on Orthogonal Polynomials, Special Functions and Applications (OPSFA14). The full collection is available at \href{https://www.emis.de/journals/SIGMA/OPSFA2017.html}{https://www.emis.de/journals/SIGMA/OPSFA2017.html}}}

\Author{Thomas Oliver CONWAY and Percy DEIFT}

\AuthorNameForHeading{T.O.~Conway and P.~Deift}

\Address{Department of Mathematics, Courant Institute of Mathematical Sciences, New York University,\\ 251 Mercer Str., New York, NY 10012, USA}
\Email{\href{mailto:oliver.conway641@gmail.com}{oliver.conway641@gmail.com}, \href{mailto:deift@cims.nyu.edu}{deift@cims.nyu.edu}}

\ArticleDates{Received November 29, 2017, in final form May 30, 2018; Published online June 12, 2018}

\Abstract{In this paper we compute the asymptotic behavior of the recurrence coefficients for polynomials orthogonal with respect to a logarithmic weight $w(x){\rm d}x = \log \frac{2k}{1-x}{\rm d}x$ on $(-1,1)$, $k > 1$, and verify a conjecture of A.~Magnus for these coefficients. We use Riemann--Hilbert/steepest-descent methods, but not in the standard way as there is no known parametrix for the Riemann--Hilbert problem in a neighborhood of the logarithmic singularity at $x=1$.}

\Keywords{orthogonal polynomials; Riemann--Hilbert problems; recurrence coefficients; steepest descent method}

\Classification{33C47; 34E05; 34M50}

\renewcommand{\thefootnote}{\arabic{footnote}}
\setcounter{footnote}{0}

\section{Introduction}
In this paper we consider the sequence of polynomials $\{p_n(x)\}$ that are orthonormal with respect to a logarithmic measure $w(x){\rm d}x$ on $[-1,1)$ and 0 elsewhere
\begin{gather*}
\int_{-1}^1 p_i(x)p_j(x)w(x){\rm d}x = \delta_{ij},
\end{gather*}
where
\begin{gather*}
w(x) =\log\frac{2k}{1-x}.
\end{gather*}
We will only consider the case $k > 1$, for which the measure is strictly positive on its support, $w(x) \ge c >0$ for $x \in [-1,1)$.

The polynomials $\{p_n(x)\}$ obey the well-known three term recurrence relation
\begin{gather*}
xp_n(x) = b_np_{n+1}(x) + a_np_n(x) + b_{n-1}p_{n-1}(x),
\end{gather*}
where the recurrence coefficients $a_n \in \mathbb{R}, b_n > 0$ are of particular interest.
\begin{Remark}
Note that the notation for $a_n$, $b_n$ in this paper is the same as the notation in \cite{DeiftOPs,DeiftSD}, but the reverse of the notation in \cite{Kuijlaars,Magnus}.
\end{Remark}

 Logarithmic singularities like the ones exhibited in the above weight present various obstacles to the asymptotic analysis of the associated polynomials $\{p_n(x)\}$ and their recurrence coefficients. The standard Riemann--Hilbert approach requires that explicit local solutions can be found in the neighborhood of singularities of the underlying measure for the associated Riemann--Hilbert problem. Because no such local solution is known near $x = 1$ for the logarithmic case, the standard approach cannot be applied directly.

The approach in this paper hinges instead on a comparison with the Legendre polyno\-mials~$\{\tilde{p}_n\}$ which are orthogonal with respect to the weight
 \begin{gather*}
 \tilde{w}(x) = \begin{cases}
 1, & \text{ for $x \in [-1,1]$}, \\
 0, & \text{ otherwise},
 \end{cases}
 \end{gather*}
 and which have recurrence coefficients $\tilde{a}_n$, $\tilde{b}_n$. The asymptotics of $\tilde{a}_n$ and $\tilde{b}_n$ are well known. We have, from \cite{Kuijlaars},
 \begin{gather*}
 \tilde{a}_n = 0,\qquad \tilde{b}_n = \frac{1}{2} + \frac{1}{16n^2}+ O\left(\frac{1}{n^3}\right).
 \end{gather*}
 The result of this comparison, and the main result of this paper is that
 \begin{thm} \label{MainResult}
 As $n\to \infty$, the differences between the log-orthogonal coefficients and the Le\-gendre coefficients have the following asymptotic behavior
 \begin{gather}
 a_n - \tilde{a}_n = \frac{2C}{(n\log n)^2} + O\left(\frac{1}{n^2(\log n)^3}\right), \label{MainTheoremDiff1}\\
 b_n - \tilde{b}_n = \frac{C}{(n \log n)^2} + O\left(\frac{1}{n^2(\log n)^3}\right),\label{MainTheoremDiff2}
 \end{gather}
 where $C = -\frac{3}{32} \approx -0.094$. In particular this implies that, as $n \to \infty$, the recurrence coeffi\-cients~$a_n$,~$b_n$ have the following asymptotic behavior
 \begin{gather*}
 a_n = \frac{2C}{(n\log n)^2} + O\left(\frac{1}{n^2(\log n)^3}\right),\\
 b_n = \frac{1}{2} + \frac{1}{16n^2} + \frac{C}{(n \log n)^2} + O\left(\frac{1}{n^2(\log n)^3}\right).
 \end{gather*}
 \end{thm}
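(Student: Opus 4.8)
The plan is to run a Riemann--Hilbert/steepest-descent analysis \emph{in parallel} for the log-orthogonal polynomials $\{p_n\}$ and the Legendre polynomials $\{\tilde p_n\}$: the two analyses are identical everywhere except in a shrinking neighbourhood of $x=1$, and the differences $a_n-\tilde a_n$, $b_n-\tilde b_n$ are then read off from the discrepancy of the two solutions at $z=\infty$.

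First I would set up, for each weight, the Fokas--Its--Kitaev matrix $Y$: the $2\times2$ matrix analytic in $\mathbb C\setminus[-1,1]$ with jump $Y_+=Y_-\begin{pmatrix}1&w\\0&1\end{pmatrix}$ on $(-1,1)$, the admissible growth at $z=1$ (here of order $(\log(1-z))^2$, which one reads off directly from the Cauchy transform $\int_{-1}^1\frac{\pi_n(s)\log\frac{2k}{1-s}}{s-z}\,ds$) and boundedness at $z=-1$, and $Y(z)=(I+Y_1/z+Y_2/z^2+\cdots)z^{n\sigma_3}$ at infinity; the recurrence coefficients are recovered from $Y_1,Y_2$ by the classical formulas ($b_n^2=(Y_1)_{12}(Y_1)_{21}$, and $a_n$ a fixed rational function of the entries of $Y_1$, $Y_2$), and likewise for the Legendre matrix $\tilde Y$. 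Next, to both $Y$ and $\tilde Y$ I would apply the \emph{same} chain of transformations dictated by the arcsine equilibrium measure of $[-1,1]$ (identical for the two weights, the external field being zero): the $g$-function conjugation normalising at infinity, the opening of lenses about $[-1,1]$, and a Szeg\H{o}-type conjugation on $(-1,1)$ using the Szeg\H{o} function $D_w$ of $w$ ($D_{\tilde w}\equiv1$ for Legendre). After this, the jump of $S$ (resp.\ $\tilde S$) on the lens lips is $I+O(e^{-cn})$ away from $z=\pm1$, the jump on $(-1,1)$ is the same constant matrix for both, and $S$ agrees with the global parametrix $N$ built from the equilibrium measure and $D_w$ (resp.\ $\tilde N$) to all orders except near $z=\pm1$. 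At $z=-1$ the weight is analytic and positive ($w(-1)=\log k>0$), so a Bessel parametrix $P^{(-1)}$ identical in structure to the Legendre one closes that endpoint, and the error it contributes matches the Legendre one and cancels under subtraction.

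The heart of the proof is the neighbourhood of $z=1$: there the $(\log)^2$-growth of $S$, the unbounded factor $w=\log\frac{2k}{1-z}$ in the jump, and the mild blow-up $D_w(z)\sim\big(\log\tfrac1{1-z}\big)^{1/2}$ of the Szeg\H{o} function combine into a local model problem with no known closed-form solution, so no local parametrix is available. Instead I would form the error matrix $R:=S\,(\text{parametrix})^{-1}$ using $N$ away from $z=-1$ and $P^{(-1)}$ near $z=-1$, \emph{without} excising a disc about $z=1$, and observe that the only non-exponentially-small parts of the jump of $R$ are the $O(1/n)$ jump on $\partial U_{-1}$ and a contribution near $z=1$ which — because the factor $w^{-1}$ there is partly cancelled by the growth of the $D_w^{\pm\sigma_3}$-conjugation in $N$ — is bounded but not small, and is concentrated in a sub-region of size $\sim n^{-2}$ about $z=1$; hence $\|J_R-I\|_{L^1}=O(n^{-2})$ and $\|J_R-I\|_{L^2}=O(n^{-1})$ there. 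The small-norm/resolvent argument then closes in the $L^2$ (not the uniform) sense, giving $R=I+O(1/n)$; the $z=1$ contribution to $R$ is the Cauchy integral of that bounded jump over the $n^{-2}$-scale sub-region, and expanding it — using $w^{-1}=\big(\log\tfrac{2kn^2}{s}\big)^{-1}=\tfrac1{2\log n}\big(1+O(\tfrac1{\log n})\big)$ at scale $1-x=s/n^2$, together with the matching expansion of $D_w$ — produces, to second order in $1/\log n$, an explicit correction. Subtracting the entirely analogous Legendre expansion (for which $z=1$ is not special) cancels the global and $z=-1$ pieces to the required precision and leaves exactly this $z=1$ residue, which upon insertion into the formulas for $Y_1,Y_2$ gives $a_n-\tilde a_n=\frac{2C}{(n\log n)^2}+O\!\big(\frac1{n^2(\log n)^3}\big)$ and $b_n-\tilde b_n=\frac{C}{(n\log n)^2}+O\!\big(\frac1{n^2(\log n)^3}\big)$ with $C=-\tfrac3{32}$; adding the known Legendre values $\tilde a_n=0$, $\tilde b_n=\tfrac12+\tfrac1{16n^2}+O(n^{-3})$ then yields the stated asymptotics of $a_n$ and $b_n$.

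I expect the main obstacle to be precisely this $z=1$ step. One must (i) justify that $R$ is controllable there despite the $(\log)^2$-growth of $S$ and the singular Szeg\H{o} factor, so that the $L^2$ small-norm machinery closes at all; and (ii) carry the local expansion in the effective small parameter $1/\log n$ to \emph{second} order, with enough precision to pin down $C=-3/32$ and to bound the next term — which is what forces the error $O\!\big(n^{-2}(\log n)^{-3}\big)$ and no sharper. A secondary, essentially routine task is to carry the Legendre analysis ($\tilde S$, $\tilde R$) to the same order, uniformly enough that the subtraction is clean.
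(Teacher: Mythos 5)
Your broad strategy --- parallel RH analyses, no local parametrix at $z=1$, a comparison that cancels everything except a residue from the log singularity --- matches the paper's plan in outline, and you have correctly spotted the crucial cancellation $\frac{F^2}{w_+}+\frac{F^2}{w_-}-2=O\big(\log^{-2}|z-1|\big)$. But the concrete mechanism you propose for closing the gap at $z=1$ does not work. You form $R=S\,(\text{parametrix})^{-1}$ with the \emph{global} parametrix $N$ left in place near $z=1$ and assert $\|J_R-I\|_{L^2}=O(n^{-1})$, $\|J_R-I\|_{L^1}=O(n^{-2})$. Neither bound holds: $N$ blows up like $(z-1)^{-1/4}$ at the right endpoint, so conjugating the lower-triangular lens jump, whose nonzero entry $\frac{F^2}{w}\phi^{-2n}$ is genuinely $O(1)$ at scale $|z-1|\sim n^{-2}$, produces
\[
J_R-I \;=\; N_-\begin{pmatrix}0&0\\ \tfrac{F^2}{w}\phi^{-2n}&0\end{pmatrix}N_-^{-1},
\]
with entries of size $(z-1)^{-1/2}\phi^{-2n}$. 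At $|z-1|\sim n^{-2}$ this is $O(n)$, and it diverges as $z\to1$; hence $\|J_R-I\|_{L^\infty}\not\to0$, $\|J_R-I\|_{L^1}\gtrsim n\cdot n^{-2}=n^{-1}$, and $\|J_R-I\|_{L^2}$ is $O(1)$ at best (logarithmically divergent if the lip touches $z=1$). So the small-norm machinery does not close in $L^2$ either. This is exactly the obstruction the paper records in the Remark following Proposition~\ref{QQProp}: the quotient $Q\tilde Q^{-1}$ has jump
\[
J \;=\; I+\left(\frac{F^2}{w_+}+\frac{F^2}{w_-}-2\right)\phi^{-2n}\begin{pmatrix}\tilde Q_{12}\tilde Q_{22}&-\tilde Q_{12}^2\\ \tilde Q_{22}^2&-\tilde Q_{12}\tilde Q_{22}\end{pmatrix}_-,
\]
and because $\tilde Q_{12},\tilde Q_{22}\sim n^{1/2}\log|z-1|$ near $z=1$, the $\log^{-2}$ gain from the Szeg\H{o} cancellation is exactly swallowed while the $n^{1/2}\cdot n^{1/2}$ factor survives, so $\|J-I\|_{L^\infty([1,1+\delta])}\not\to0$ and no parametrix-free quotient/small-norm argument is available.

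What the paper uses instead is genuinely different and is the step your proposal is missing. Theorem~\ref{ABInverse} gives the exact identity $AB^{-1}=I+C_\Sigma A_-\big(v_Av_B^{-1}-I\big)B_-^{-1}$, valid for any two RHPs on the same contour, not just ones that are uniformly close; combined with uniform-in-$n$ resolvent bounds $(1-C_{\tilde v_\Sigma})^{-1}$, $(1-C_{v_\Sigma})^{-1}$ (Theorems~\ref{LegOpBound}, \ref{LogOpBound}), this gives the exact formula $Q_1-\tilde Q_1=-\int_\Sigma \mu\,(v_\Sigma-\tilde v_\Sigma)\,\tilde\mu^{-1}\,\dbar s$. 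The quantity that is small is not $J_R-I$ but $\big\|\tilde\mu\,(v_\Sigma-\tilde v_\Sigma)\big\|_{L^2(\Sigma)}=O\big(n^{-1/2}\log^{-2}n\big)$: the strictly lower-triangular structure of $v_\Sigma-\tilde v_\Sigma$ pairs the $O(n)$ growth of the second column of $\tilde\mu$ against the $O(\log^{-2}n)$ cancellation and the $O(n^{-1})$ measure of the effective support. That estimate is what permits $\mu$ to be replaced by $\tilde\mu$ (Proposition~\ref{Replacement}) and the resulting explicit Legendre integral to be evaluated, producing $C=-3/32$. Your proposal needs this bridge and cannot supply it through a small-norm estimate on $R$.
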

 This result, and more, was conjectured by Alphonse Magnus in \cite{Magnus}, up to the precise value for the constant~$C$. We are indebted to his work, which provided the inspiration for the above result. For the weight $w(x) = -\log x$ on $[0,1]$, in particular, Magnus conjectured that as $n \to \infty$
 \begin{gather}
 a_n = \frac{1}{2} - \frac{1}{8n^2} - \frac{2C}{(n\log n)^2} + o\left(\frac{1}{n^2(\log n)^2}\right),\label{aMag}\\
 b_n = \frac{1}{4} - \frac{1}{32n^2} + \frac{C}{(n \log n)^2} + o\left(\frac{1}{n^2(\log n)^2}\right),\label{bMag}
 \end{gather}
and on numerical grounds he conjectured further that $C \sim -0.044$. The weight $-\log x$ on $[0,1]$ corresponds to the case $k=1$ for which our analysis is not yet complete. Nevertheless, our prelimary calculations confirm (\ref{aMag}) and (\ref{bMag}) with stronger estimates $O\big(\frac{1}{n^2(\log n)^3}\big)$ on the error terms, and with the explicit value $-3/64$ for $C$. As $-3/64 \sim -0.047$ this is close to Magnus's conjectured value.

The proof proceeds as follows: The polynomials $\{p_n\}$ have a well-known expression in terms of the solution $Y$ of a Riemann--Hilbert problem due to Fokas--Its--Kitaev, see Section~\ref{section3.1}. We follow a now standard series of transformations as in~\cite{DeiftOPs,Kuijlaars}, that transform the original Riemann--Hilbert problem for $Y$ into a modified problem that is close to a Riemann--Hilbert problem whose asymptotics can be inferred directly.

Here is where our analysis diverges from the standard analysis. The standard analysis compares the modified Riemann--Hilbert problem to a~model Riemann--Hilbert problem (parametrix) that explicitly solves the Riemann--Hilbert problem locally around the critical points at the edge of the measure. The new difficulty of our problem is that no explicit local solution is known near the logarithmic singularity. The main new feature of our analysis is that, for the problem at hand, no local solution is needed.

The strategy here is to compare the modified Riemann--Hilbert problem above to the corresponding modified Riemann--Hilbert problem for the Legendre case. In the standard approach, a~comparison is made by examining the quotient of solutions to two Riemann--Hilbert problems as the solution to a Riemann--Hilbert problem in its own right. However, for the case at hand, the log singularity renders this approach ineffective. Instead we compare the problems through the operator theory that underlies Riemann--Hilbert problems, see Section~\ref{MachinerySection}, or see \cite{DeiftNLS} for more details.

In Theorem \ref{ABInverse} we prove a general and very useful formula that allows for an effective comparison of two Riemann--Hilbert problems on the same contour. In our case, where we compare the log case with the Legendre case, this formula can be approximated by quantities solely related to the Legendre case, for which the asymptotics are well known, and this leads, eventually, to the formulas~(\ref{MainTheoremDiff1}) and~(\ref{MainTheoremDiff2}). It is a remarkable, and unexpected, development that the log problem can be approximated adequately by the Legendre problem.

The proof is laid out within this paper as follows:
\begin{itemize}\itemsep=0pt
\item In Section \ref{PreliminariesSection}, we prove several preliminary properties and estimates on the auxilliary functions that will arise in our analysis.
\item In Section \ref{RHPsSection}, we introduce the Riemann--Hilbert problem for the orthogonal polynomials and transform this problem into one which can be controlled as $n\to \infty$.
\item In Section \ref{MachinerySection}, we prove bounds, uniform in $n$, on the resolvent operators that are associated with the Riemann--Hilbert problem in the standard way and prove Theorem~\ref{ABInverse}.
\item In Section \ref{ProofSection}, we calculate the asymptotics of the recurrence coefficients by combining the results of the previous sections with calculations derived from the known asymptotics of Legendre quanties.
\item The above calculations are given in Appendices~\ref{FProofsSection},~\ref{NormsSection}, and~\ref{IntegralSection}.
\end{itemize}

Note that the case $k=1$ corresponding to the weight $\log\frac{2}{1-x}$ on the interval $[-1,1)$, for which $w(x) > 0$ on its support, but for which $w(x) \ge c > 0$ fails, is not considered in this paper. Recall that questions concerning orthogonal polynomials on the line with respect to a measure of compact support correspond to questions involving associated orthogonal polynomials on the unit circle, where the vanishing of the weight $\log\frac{2}{1-x}$ at the point $-1$ corresponds to a Fisher--Hartwig singularity for the related problem on the circle, see~\cite{DeiftIsing}. The methods in this paper do not extend immediately to cover this case, but will be addressed in a future paper.

This paper is in the line of questioning concerning the effect that singularities and zeroes in the measure have on the asymptotic behavior of orthogonal polynomials. Here we have in mind the extensive history of the problem dating originally back to the work of Szeg{\H{o}}, proving the strong limit theorem for Toeplitz determinants for non-vanishing smooth weights on the circle without singularities \cite{Szego}, and whose result has been extended to include a wide class of algebraic singularities, in particular, by the work of M.E.~Fisher and R.E.~Hartwig, see \cite{DeiftIsing} for a history. Of additional interest is the recent work of Barry Simon et al.\ concerning sum laws of Szeg\H{o} type where zeros of the form $e^{-1/x^2}$, for example, in the weight are investigated (see~\cite{Simon} for an up to date discussion). The logarithimic singularities explored in this paper are of practical interest in both physics and mathematics as described in \cite{Magnus,VanAssche}.

\section[The $\phi$ function and the Szeg\H{o} function]{The $\boldsymbol{\phi}$ function and the Szeg\H{o} function}\label{PreliminariesSection}
In this section, we introduce several functions central to our analysis and detail their properties.
\subsection[The map $\phi$ and the Szeg\H{o} function $F$]{The map $\boldsymbol{\phi}$ and the Szeg\H{o} function $\boldsymbol{F}$}\label{Prel1}

The first part of our work closely follows the work from the papers \cite{DeiftSD,Kuijlaars,Venakides}, and uses the Riemann--Hilbert approach to orthogonal polynomials, first put forward by Fokas, Its, and Kitaev \cite{FIK}. We apply similar sets of transformations to the Riemann--Hilbert problems associated with the weights $w$ and $\tilde{w}$,
\begin{gather*}
Y \to T \to Q,\qquad \tilde{Y} \to \tilde{T} \to \tilde{Q}.
\end{gather*}
The problems $Q$ and $\tilde{Q}$ are then close in a ``certain'' sense, allowing the comparison. The prob\-lem~$\tilde{Q}$ can be asymptotically solved as in~\cite{Kuijlaars}, as repeated in the appendix, also see Section~\ref{LegendreSummary}.

In the calculations that follow $\big(z^2-1\big)^{1/2}$ refers to the branch of the square root of $z^2 -1$ that is analytic in $\mathbb{C}\backslash [-1,1]$ and is positive for $z > 1$. For $z \in (-1,1)$, $\big(z^2 - 1\big)^{1/2}_\pm$ refers to the boundary value of $\big(z^2-1\big)$ as $z' \to z$ with $z' \in \mathbb{C}_\pm$ respectively, $\mathbb{C}_\pm = \{z = x \pm iy\,|\, x\in \mathbb{R},\, y >0\}$. Additionally, when $x > 0$, $\sqrt{x}$ will refer to the positive square root of~$x$.

The map
\begin{gather*}
\phi(z) = z + \big(z^2-1\big)^{1/2}, \qquad z \in \mathbb{C}\backslash [-1,1]
\end{gather*}
plays a key role in the transformations $Y \to T$ and $\tilde{Y} \to \tilde{T}$. In particular $\phi$ plays the role of normalizing these problems at infinity. The function $\phi$ has the following properties,
\begin{prop}\label{PhiProps}
The function $\phi$, and its boundary values, $\phi_\pm$, on the interval $[-1,1]$ have the following properties:
\begin{gather}
0)\quad \phi(z)\text{ is analytic for $z \in \mathbb{C} \backslash [-1,1]$}, \label{prop0}\\
1)\quad \phi(z) = 2z + O\left(\frac{1}{z}\right) \text{ as $z \to \infty$},\label{prop1}\\
2)\quad \phi(z) = 1 + \sqrt{2}(z-1)^{1/2} + O(z-1) \text{ as $z \to 1$},\label{1branch}\\
3)\quad \phi(z) = -1 + \sqrt{2}i(z+1)^{1/2} + O(z+1) \text{ as $z \to -1$},\label{2branch}\\
4)\quad \phi(\overline{z}) = \overline{\phi(z)}, \label{prop4}\\
5)\quad \vert \phi(z)\vert > 1,\label{prop5}\\
6)\quad \phi_\pm(x) = x \pm i \sqrt{1-x^2},\label{prop6}\\
7) \quad \phi_+(x)\phi_-(x) = 1,\label{prop7}\\
8) \quad \vert \phi_\pm(x)\vert = 1.\label{prop8}
\end{gather}
The function $(z-1)^{1/2}$ in~\eqref{1branch} is analytic in $\mathbb{C}\backslash (-\infty, 1]$ and positive for $z > 1$ and the function $(z+1)^{1/2}$ in~\eqref{2branch} is analytic in $\mathbb{C}\backslash (-1, \infty]$ and is positive imaginary from $z < -1$.
\end{prop}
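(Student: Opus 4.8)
The plan is to derive everything from the explicit formula $\phi(z) = z + (z^2-1)^{1/2}$ together with the chosen branch convention for $(z^2-1)^{1/2}$, namely that it is analytic on $\mathbb{C}\setminus[-1,1]$ and positive for $z>1$. Property (0) is then immediate, since $\phi$ is the sum of the identity and a function analytic off $[-1,1]$. For (1), I would factor $(z^2-1)^{1/2} = z(1-z^{-2})^{1/2}$ for large $z$; the branch condition forces $(1-z^{-2})^{1/2} = 1 - \tfrac{1}{2}z^{-2} + O(z^{-4})$ (positive real on the positive real axis), so $\phi(z) = z + z(1 + O(z^{-2})) = 2z + O(1/z)$. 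For (4), note that $\overline{z^2-1} = \bar z^2 - 1$, and both $z\mapsto (z^2-1)^{1/2}$ and $z\mapsto \overline{(\bar z^2-1)^{1/2}}$ are analytic on $\mathbb{C}\setminus[-1,1]$ and agree (and are positive) for $z>1$; by uniqueness of analytic continuation they coincide, giving $\phi(\bar z) = \overline{\phi(z)}$.

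Next I would handle the boundary values and the local expansions. For (6): for $x\in(-1,1)$, as $z'\to x$ from $\mathbb{C}_+$ we have $z'^2-1 \to x^2-1 < 0$, and one checks from the branch convention (tracking the argument as $z$ moves from the region $z>1$ into the upper half-plane and down to $(-1,1)$) that $(z^2-1)^{1/2}_+ = i\sqrt{1-x^2}$; hence $\phi_+(x) = x + i\sqrt{1-x^2}$, and $\phi_-(x) = \overline{\phi_+(x)} = x - i\sqrt{1-x^2}$ by (4). Then (8) is the computation $|\phi_\pm(x)|^2 = x^2 + (1-x^2) = 1$, and (7) is $\phi_+(x)\phi_-(x) = (x+i\sqrt{1-x^2})(x-i\sqrt{1-x^2}) = x^2 + (1-x^2) = 1$. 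For the local behavior at the endpoints, write $z^2-1 = (z-1)(z+1)$. Near $z=1$, $(z+1)^{1/2} \to \sqrt 2$ along the appropriate branch, so $(z^2-1)^{1/2} = \sqrt{2}\,(z-1)^{1/2}(1 + O(z-1))$ with $(z-1)^{1/2}$ the branch analytic on $\mathbb{C}\setminus(-\infty,1]$ and positive for $z>1$; thus $\phi(z) = z + \sqrt 2 (z-1)^{1/2} + O((z-1)) = 1 + \sqrt 2(z-1)^{1/2} + O(z-1)$, which is (2). Property (3) near $z=-1$ is analogous: $z^2-1 = (z+1)(z-1)$, $(z-1)^{1/2}\to (-2)^{1/2}$; one verifies the branch makes this $\sqrt 2\, i$ (consistent with ``$(z+1)^{1/2}$ positive imaginary from $z<-1$''), giving $\phi(z) = -1 + \sqrt 2 i (z+1)^{1/2} + O(z+1)$.

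Finally, property (5), $|\phi(z)| > 1$ for $z\in\mathbb{C}\setminus[-1,1]$, is the one step requiring a genuine argument rather than a formula manipulation. I would argue as follows. Since $\phi_+(x)\phi_-(x) = 1$ and the two boundary values are complex conjugates of modulus $1$, the function $\psi(z) := 1/\phi(z)$ extends analytically across $(-1,1)$ (its boundary values from above and below agree there, being $\phi_-$ and $\phi_+$ respectively — i.e.\ $\psi$ is the ``second sheet''), and in fact $z\mapsto z - (z^2-1)^{1/2}$ is exactly $1/\phi(z)$, so $\phi(z) \ne 0$ and $1/\phi$ is entire-like away from the branch cut but continuous across it. The cleanest route: observe $\phi(z) - 1/\phi(z) = 2(z^2-1)^{1/2}$ and $\phi(z) + 1/\phi(z) = 2z$, so $\phi$ maps $\mathbb{C}\setminus[-1,1]$ to the region $\{w : |w| \ne 1\}$ image; since $\phi$ is continuous, $\mathbb{C}\setminus[-1,1]$ is connected, and $\phi(z)\to\infty$ as $z\to\infty$ (by (1)), the image lies in the single component $\{|w|>1\}$ — it cannot meet $|w|=1$ because $|\phi(z)|=1$ would force, via $w + 1/w = 2z$ with $|w|=1$, that $z = \operatorname{Re}(w) \in [-1,1]$, a contradiction. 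Hence $|\phi(z)|>1$ off the cut. The main obstacle is thus just this connectedness/boundary argument for (5); everything else reduces to careful but routine branch-tracking in the explicit formula.
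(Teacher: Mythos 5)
Your proof is correct and complete. You carefully work out all nine items, whereas the paper only writes out a proof of item (5) and remarks that the others follow directly from the definition. Where the two approaches genuinely diverge is in the proof of (5). The paper's argument is: $\phi(z)\big(z-(z^2-1)^{1/2}\big)=1$ shows $\phi\neq 0$, so $1/\phi$ is analytic on $\mathbb{C}\setminus[-1,1]$; since $1/\phi\to 0$ at infinity and $|1/\phi|\to 1$ as $z$ approaches $[-1,1]$ (using (8)), the maximum modulus principle forces $|1/\phi|<1$, i.e.\ $|\phi|>1$. Your argument instead exploits the Joukowski identity $\phi(z)+1/\phi(z)=2z$: if $|\phi(z)|=1$ then $1/\phi(z)=\overline{\phi(z)}$ and hence $z=\operatorname{Re}\phi(z)\in[-1,1]$, which is excluded, so the image of the connected set $\mathbb{C}\setminus[-1,1]$ under the continuous map $\phi$ avoids the circle $|w|=1$; since $\phi\to\infty$ at infinity, the image lies entirely in $\{|w|>1\}$. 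Your route is slightly more elementary in that it replaces the maximum modulus principle with a connectedness argument and, notably, does not need (8) as an input, whereas the paper's boundary control for the maximum principle relies on (8). Both proofs are valid and of comparable length.
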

\begin{proof}
We will prove statement~\eqref{prop5}; the remaining statements follow directly from the definition of $\phi$.

First note that for all $z \in \mathbb{C} \backslash [-1,1]$,
\begin{gather*}
\phi(z) \times \big(z - \big(z^2 - 1\big)^{1/2}\big) = z^2 - \big(z^2 - 1\big) = 1.
\end{gather*}
Therefore $\phi(z) \neq 0$. The function $\frac{1}{\phi}$ is therefore analytic in the region $\mathbb{C}\backslash [-1.1]$. By pro\-per\-ty~\eqref{prop1}, we see that $\frac{1}{\phi}(z) \to 0$ as $z \to \infty$, and by property~\eqref{prop8}, we see that $\big\vert \frac{1}{\phi}(z) \big\vert \to 1$ as $z \to [-1,1]$. Therefore, the maximum modulus principle guarantees that $\big\vert \frac{1}{\phi}(z) \big\vert < 1$ for $z \in\mathbb{C} \backslash [-1,1]$ and so $\vert \phi (z) \vert > 1$ in the same region.
\end{proof}

For the log-orthogonal polynomial case, we make use of the associated Szeg\H{o} function $F$ similar to the non-Legendre cases analyzed in \cite{Kuijlaars},
\begin{gather*}
F(z) = \exp \left(\big(z^2-1\big)^{1/2} \int_{-1}^1 \frac{\log w(s)}{\big(s^2-1\big)^{1/2}_+}\frac{\dbar s}{s-z}\right),
\end{gather*}
where
\begin{gather*}
\dbar s := \frac{{\rm d}s}{2\pi i}
\end{gather*}
and similarly $\dbar t, \dbar z, \dots$ mean $\frac{{\rm d}t}{2\pi i}, \frac{{\rm d}z}{2\pi i}, \dots$. In the calculations that follow we shall use the following lemma repeatedly without further comment:
\begin{Lemma}\label{NoFurtherComment}
The function $w(s) = \log \frac{2k}{1-s}$ has an analytic extension from $[-1,1)$ to $\mathbb{C} \backslash [1, \infty)$ where $\log$ takes on its principle value.

The function $L(s) := \log w(s)$ has an analytic extension from $(-1, 1)$ to $\mathbb{C} \backslash ( (-\infty, 1-2k] \cup [1, \infty))$, where again $\log$ takes its principle value.
\end{Lemma}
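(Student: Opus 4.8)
The plan is to argue in two steps, first handling the analytic extension of $w$ and then that of $L=\log w$, in each case by identifying exactly where the argument of the outermost logarithm leaves the slit plane on which the principal branch is holomorphic.

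For the first claim, recall that the principal branch of $\log$ is analytic on $\mathbb{C}\backslash(-\infty,0]$. The map $s\mapsto \frac{2k}{1-s}$ is a M\"obius transformation, hence analytic on $\mathbb{C}\backslash\{1\}$, and it sends the point $s=1$ to $\infty$. So I would first check that $\frac{2k}{1-s}\notin(-\infty,0]$ precisely when $s\notin[1,\infty)$: writing $s=x+iy$, one has $\frac{2k}{1-s}=\frac{2k(1-x)+2kiy}{(1-x)^2+y^2}$, whose real part is negative exactly when $x>1$ and (when $y=0$) which is zero never for finite $s$; thus the preimage of the cut $(-\infty,0]$ under $s\mapsto\frac{2k}{1-s}$ is exactly the ray $[1,\infty)$ (with $s=1$ itself corresponding to the point at infinity, also excluded). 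Since $k>1>0$, for $s\in[-1,1)$ the quantity $\frac{2k}{1-s}$ is real and at least $k>1$, so the principal-branch value agrees with the real logarithm there; hence the stated formula really is an analytic continuation of $w$ from $[-1,1)$, and it is single-valued on $\mathbb{C}\backslash[1,\infty)$ because that domain is simply connected.

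For the second claim, I would compose once more: $L(s)=\log\!\big(w(s)\big)$ is analytic wherever $w$ is analytic and $w(s)\notin(-\infty,0]$. We already know $w$ is analytic on $\mathbb{C}\backslash[1,\infty)$, so it remains to locate the set where $w(s)\in(-\infty,0]$, i.e. where $\frac{2k}{1-s}\in(0,1]$ (since $\log u\le 0 \iff 0<u\le 1$ for the principal branch restricted to positive reals, together with the fact that $\log u$ is non-real but can only be real-negative when $u\in(0,1)$). Solving $\frac{2k}{1-s}\in(0,1]$ for real $s<1$ gives $1-s\ge 2k$, i.e. $s\le 1-2k$; and one must also re-exclude the point where $w(s)=0$ exactly, which is $s=1-2k$. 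Combining, $L$ extends analytically to $\mathbb{C}\backslash\big((-\infty,1-2k]\cup[1,\infty)\big)$, and again single-valuedness follows since this domain is simply connected; on $(-1,1)$ the value coincides with $\log w(s)$ with the real logarithm because there $w(s)>1>0$.

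The only mildly delicate point — the ``main obstacle,'' such as it is — is the bookkeeping of \emph{which} complex values of $s$ send $w(s)$ onto the negative real axis, i.e. verifying that the branch cut of the \emph{outer} $\log$ pulls back to exactly the stated rays and nothing more; this is a short computation with the image of a half-line under a M\"obius map, but it is where a sign error would do real damage. Everything else is composition of analytic functions on simply connected domains together with the observation that on the real interval of interest all quantities are positive reals, so the principal branches reduce to the familiar real logarithm.
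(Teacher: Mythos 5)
Your proof is correct, and since the paper simply declares the lemma's proof ``left to the reader,'' your branch-cut bookkeeping --- computing the preimage of the principal branch cut $(-\infty,0]$ under $s\mapsto\frac{2k}{1-s}$ to get $[1,\infty)$, then the preimage of $(0,1]$ to get $(-\infty,1-2k]$, and invoking simple connectivity for single-valuedness --- is presumably exactly the argument the authors had in mind.

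One small slip: you justify that $L$ agrees with the real logarithm on $(-1,1)$ by asserting $w(s)>1$ there, but this can fail; for $1<k<e$ and $s$ near $-1$ one has $w(s)\to\log k<1$. What the argument actually needs, and what does hold since $\frac{2k}{1-s}>k>1$ on $(-1,1)$, is only that $w(s)>0$. Also, your ``re-exclusion'' of $s=1-2k$ is harmless but redundant: the cut of the principal logarithm is $(-\infty,0]$ including $0$, so the preimage you computed, $(-\infty,1-2k]$, already contains that point.
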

\begin{proof}
The proof is left to the reader.
\end{proof}
\begin{prop}\label{FProps}
The function $F$, and its boundaray values, $F_\pm$, on the interval $[-1,1]$ have the following properties:
\begin{gather}
0) \quad F(z) \text{ is analytic for $z \in \mathbb{C} \backslash [-1,1]$}, \label{prop0-2.3}\\
1) \quad F(z) = F_\infty + \frac{F_1}{z} + O\left(\frac{1}{z^2}\right) \text{ as $z \to \infty$}, \text{ for suitable constants $F_\infty, F_1 \in \mathbb{R}$},\label{prop1-2.3}\\
2) \quad \frac{F^2}{w}(z) = 1 \mp \frac{i\pi}{w(z)} - \frac{\pi^2}{2 w^2(z)} + O\left(\frac{1}{w^3(z)}\right)\text{ uniformly as $z \to 1$},\label{prop2-2.3}\\
3) \quad \frac{F^2}{w}(z) = 1 + O(\vert z+1\vert^{1/2}) \text{ uniformly as $z \to -1$},\label{prop3-2.3}\\
4) \quad F(\overline{z}) = \overline{F(z)},\label{prop4-2.3}\\
5) \quad F_+(x)F_-(x) = w(x).\label{prop5-2.3}
\end{gather}
\end{prop}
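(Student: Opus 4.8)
The plan is to read off each property directly from the integral representation
\begin{gather*}
F(z) = \exp \left(\big(z^2-1\big)^{1/2} \int_{-1}^1 \frac{L(s)}{\big(s^2-1\big)^{1/2}_+}\frac{\dbar s}{s-z}\right),
\end{gather*}
where $L(s) = \log w(s)$, using the analyticity of $L$ from Lemma~\ref{NoFurtherComment} and the known behavior of $\big(z^2-1\big)^{1/2}$ and $\phi$ from Proposition~\ref{PhiProps}. Property~\eqref{prop0-2.3} is immediate: the integrand is singular only for $s \in [-1,1]$, so the Cauchy-type integral is analytic off $[-1,1]$, and $\big(z^2-1\big)^{1/2}$ is analytic there as well. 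For~\eqref{prop4-2.3}, I would use that $\big(z^2-1\big)^{1/2}$ and the integral both respect the Schwarz reflection (since $L$ is real on $(-1,1)$ and $\big(s^2-1\big)^{1/2}_+$ is purely imaginary there), so the exponent maps $\bar z$ to the conjugate of its value at $z$. For~\eqref{prop5-2.3}, I would compute the jump: on $[-1,1]$ the factor $\big(z^2-1\big)^{1/2}$ changes sign across the cut while the Cauchy integral has a jump given by Plemelj, and the product $F_+F_-$ telescopes to $\exp$ of the Plemelj residue term, which is $L(x)$ times the appropriate normalization; this recovers $F_+(x)F_-(x) = \exp(2\cdot\frac12\log w(x))\cdot(\text{correction}) = w(x)$ — the standard Szeg\H{o}-function identity, to be verified carefully with the $\big(s^2-1\big)^{1/2}_+$ and $\dbar s$ conventions.

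The substantive parts are~\eqref{prop1-2.3},~\eqref{prop2-2.3}, and~\eqref{prop3-2.3}. For~\eqref{prop1-2.3}, expanding $\frac{1}{s-z} = -\frac1z - \frac{s}{z^2} - \cdots$ and $\big(z^2-1\big)^{1/2} = z - \frac1{2z} + \cdots$ gives the exponent as a convergent expansion in $1/z$ with real coefficients (the moments of $L$ against the Chebyshev weight being real), so $F(z) = F_\infty + F_1/z + O(1/z^2)$ with $F_\infty = \exp\big(-\int_{-1}^1 \frac{L(s)}{\big(s^2-1\big)^{1/2}_+}\dbar s\big)$ and $F_1$ the next coefficient, both real. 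For~\eqref{prop3-2.3}, near $z = -1$ the weight $w$ and hence $L$ extend analytically (Lemma~\ref{NoFurtherComment}), so the singular structure at $z=-1$ comes only from $\big(z^2-1\big)^{1/2} \sim \sqrt{2}i(z+1)^{1/2}$; since $F^2/w = F_+F_-\cdot(F^2/(F_+F_-))$ near the endpoint is controlled by the square root of $(z+1)$ in the exponent, and the $w$ in the denominator is bounded and bounded away from zero there, one gets $F^2/w = 1 + O(|z+1|^{1/2})$. The delicate estimate is~\eqref{prop2-2.3}: near $z=1$ the weight $w(z) = \log\frac{2k}{1-z}$ blows up, so $L(z) = \log w(z)$ is itself large, and I would split the integral into a piece near $s=1$ and a bounded remainder, track how $\big(z^2-1\big)^{1/2}$ combines with the $\big(s^2-1\big)^{1/2}_+$ and the $\log$ of a $\log$, and extract the expansion of $F^2/w$ in powers of $1/w(z)$. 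The $\mp i\pi/w(z)$ term reflects the jump of $\log(1-z)$ across $[1,\infty)$ as $z\to1$ from $\mathbb{C}_\pm$.

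I expect~\eqref{prop2-2.3} to be the main obstacle, since it requires a uniform asymptotic expansion near the logarithmic singularity — precisely the place where no parametrix exists — and demands the exact coefficients $\mp i\pi$ and $-\pi^2/2$ rather than mere orders. The strategy is to write $F^2(z) = w(z) \exp\big(2\big(z^2-1\big)^{1/2}\int_{-1}^1 \frac{L(s) - L(z)\chi(s)}{\big(s^2-1\big)^{1/2}_+}\frac{\dbar s}{s-z} + (\text{explicit term from }L(z))\big)$ for a suitable cutoff $\chi$, so that $F^2/w$ is $\exp$ of something small, then expand the exponential; the explicit term carries the $\log w(z) = L(z)$ contribution that must cancel the prefactor $w(z)$, and the residual Cauchy integral of $L(s) - L(z)\chi(s)$ is $O(1/w(z))$ with computable leading behavior. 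Carrying out this cancellation cleanly, keeping the $\big(z^2-1\big)^{1/2}$ factor and the branch of $\log(1-z)$ straight through two orders, is the crux; the remaining properties are routine bookkeeping with the integral representation. (The detailed computation is deferred to Appendix~\ref{FProofsSection}.)
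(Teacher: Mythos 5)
Your treatment of (0), (1), (4), (5) is correct and matches the paper's, which also declares those parts routine consequences of the integral representation; your value $F_\infty = \exp\bigl(-\int_{-1}^1 \frac{L(s)}{(s^2-1)^{1/2}_+}\dbar s\bigr)$ and the Schwarz-reflection argument for (4) are fine. The paper likewise defers~\eqref{prop2-2.3} and~\eqref{prop3-2.3} to the appendix (Propositions~\ref{FAsymptoticsZApp} and~\ref{-1FAsymptotics}), so the deferral itself is not an issue --- but the sketches you give of those appendix arguments both contain gaps.

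For~\eqref{prop3-2.3}, the step ``$F^2/w = F_+F_-\cdot(F^2/(F_+F_-))$ near the endpoint'' is circular: $F_+F_- = w$ holds only on $(-1,1)$, whereas you are estimating $F^2/w$ at points $z$ approaching $-1$ from \emph{outside} the cut. The clean move, which Proposition~\ref{-1FAsymptotics} uses, is to subtract the \emph{constant} $L(-1)$ under the Cauchy integral; Lemma~\ref{WAnalytic} then bounds the residual and a residue computation evaluates the constant piece, giving $\log F^2(z) = \log w(-1) + O(|z+1|^{1/2})$ on the nose. For~\eqref{prop2-2.3}, the genuine gap: subtracting $L(z)\chi(s)$ does not by itself make the residual Cauchy integral small with computable coefficients, because $L$ has a double-logarithmic singularity at $1$ and $L(s)-L(z)$ is small only when $\log(1-s)$ and $\log(1-z)$ are at the same scale, so a local smoothing cannot resolve it. To obtain $\mp i\pi$ and $-\pi^2/2$ the paper performs a Laplace-type analysis: set $z = 1 + re^{i\theta}$, change variables $t = 1-s$, deform onto the ray through $re^{i\theta}$, rescale $t = re^{i\theta}u$ then $\gamma = \sqrt u$, isolate the dominant annulus $(r/2k)^{1/4} \le \gamma \le (r/2k)^{-1/4}$, Taylor-expand $\log\bigl(\log\frac{2k}{re^{i\theta}} - 2\log\gamma\bigr)$ there, and evaluate the moments $\int_0^\infty \frac{d\gamma}{\gamma^2+1}$, $\int_0^\infty \frac{\log\gamma\, d\gamma}{\gamma^2+1} = 0$, $\int_0^\infty \frac{\log^2\gamma\, d\gamma}{\gamma^2+1}$. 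The $\mp i\pi$ enters only at the very last step, when $\log\frac{2k}{re^{i\theta}}$ is rewritten as $w(z)\mp i\pi$ for $z\in\mathbb{C}_\pm$. Your sketch contains none of this mechanism; the ``computable leading behavior'' you invoke is precisely the content that the proof must supply.
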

\begin{proof}
With the exception of statements~\eqref{1branch} and~\eqref{2branch} of Proposition~\ref{PhiProps}, the above statements are straightforward to prove. Statements~\eqref{prop2-2.3} and~\eqref{prop3-2.3} are proven in Appendix~\ref{AppendixA} as Proposition~\ref{FAsymptoticsZApp} and Proposition~\ref{-1FAsymptotics}.
\end{proof}

Additionally, as a consequence of Proposition~\ref{FProps}, the Szeg\H{o} function satisfies.
\begin{prop} \label{FCancellation}
\begin{gather*}
\frac{F^2}{w_+}(x) + \frac{F^2}{w_-}(x) - 2 = - \frac{3\pi^2}{ \log^2 \frac{2k}{\vert x - 1\vert}} + O\left(\frac{1}{\log^3 \vert 1-x \vert}\right) 
\end{gather*}
for $x > 1$ as $x \to 1$.
\end{prop}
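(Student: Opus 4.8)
The plan is to apply Proposition~\ref{FProps}, statement~\eqref{prop2-2.3}, to the two boundary values $\frac{F^2}{w_+}$ and $\frac{F^2}{w_-}$ and add them. The key point is that the $\mp \frac{i\pi}{w}$ terms cancel when the two boundary values are summed, while the $-\frac{\pi^2}{2w^2}$ terms add. First I would note that for $x > 1$ approaching $1$, the boundary values $w_\pm(x)$ of the analytic extension of $w(s) = \log\frac{2k}{1-s}$ across $[1,\infty)$ are $\log\frac{2k}{|1-x|} \mp i\pi$ (the sign depending on the orientation convention fixed in Lemma~\ref{NoFurtherComment}), since $\log(1-x) = \log|1-x| \pm i\pi$ on the two sides of the cut. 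In particular $w_\pm(x) = \log\frac{2k}{|x-1|} + O(1)$, so $w_\pm^{-1}(x), w_\pm^{-2}(x)$ are both $O\big(1/\log\frac{2k}{|x-1|}\big)$ and $O\big(1/\log^2\frac{2k}{|x-1|}\big)$ respectively, uniformly as $x \to 1^+$.

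Next I would write, using \eqref{prop2-2.3} for each sign,
\begin{gather*}
\frac{F^2}{w_+}(x) + \frac{F^2}{w_-}(x) - 2 = \left(-\frac{i\pi}{w_+(x)} + \frac{i\pi}{w_-(x)}\right) - \frac{\pi^2}{2}\left(\frac{1}{w_+^2(x)} + \frac{1}{w_-^2(x)}\right) + O\left(\frac{1}{\log^3|1-x|}\right).
\end{gather*}
For the first bracket, $-\frac{i\pi}{w_+} + \frac{i\pi}{w_-} = i\pi\,\frac{w_+ - w_-}{w_+ w_-}$; since $w_+ - w_- = \mp 2\pi i$ is bounded and $w_+ w_- = \log^2\frac{2k}{|x-1|} + O(1)$, this term is $O\big(1/\log^2|1-x|\big)$ — not yet small enough, so I must keep it. More precisely $i\pi \cdot (\mp 2\pi i)/w_+w_- = \pm \frac{2\pi^2}{\log^2\frac{2k}{|x-1|}} + O\big(1/\log^3|1-x|\big)$. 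For the second bracket, both $w_\pm^{-2}(x) = \log^{-2}\frac{2k}{|x-1|} + O\big(1/\log^3|1-x|\big)$, so $\frac{1}{w_+^2} + \frac{1}{w_-^2} = \frac{2}{\log^2\frac{2k}{|x-1|}} + O\big(1/\log^3|1-x|\big)$, contributing $-\frac{\pi^2}{\log^2\frac{2k}{|x-1|}} + O\big(1/\log^3|1-x|\big)$. Summing, the total leading coefficient is $\pm 2\pi^2 - \pi^2$, which is $+\pi^2$ or $-3\pi^2$ depending on the orientation; the claimed constant $-3\pi^2$ fixes the orientation convention, and I would check it is consistent with the one in Lemma~\ref{NoFurtherComment} and Proposition~\ref{FProps}.

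The main obstacle is precisely this sign/orientation bookkeeping: one must be careful that the $\mp$ in \eqref{prop2-2.3} refers to the \emph{same} labelling of the two sides of the cut $[1,\infty)$ as the boundary values $w_\pm$, and that the expansion $\log\frac{2k}{|x-1|}$ with $|x-1| = x-1$ for $x>1$ matches the real part of $\log\frac{2k}{1-z}$ as $z \to x$ from above and below. Once the conventions are aligned, the computation above shows the first-bracket term contributes $2\pi^2/\log^2\frac{2k}{|x-1|}$ and the second $-\pi^2/\log^2\frac{2k}{|x-1|}$, for a net $-(2\pi^2 - \pi^2)$ in the case where the signs work out as in the statement — so I would organize the argument to land on $-3\pi^2$ by verifying the first bracket in fact contributes $-2\pi^2/\log^2$ (i.e. the relevant orientation gives the minus sign), after which the $O\big(1/\log^3 |1-x|\big)$ error absorbs all lower-order terms and the proposition follows.
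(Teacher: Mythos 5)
Your proposal is correct and follows the same approach as the paper: apply statement~\eqref{prop2-2.3} to both boundary values, combine the $\mp i\pi/w$ terms into $i\pi(w_+-w_-)/(w_+w_-)$, and add this to the $-\pi^2/\log^2$ coming from the $-\pi^2/(2w^2)$ terms. To settle the sign you left open: with the paper's convention $\mathbb{C}_\pm = \{x\pm iy,\ y>0\}$ one finds $w_\pm(x) = \log\frac{2k}{|x-1|}\pm i\pi$, hence $w_+-w_- = 2\pi i$ and the first bracket gives $i\pi\cdot 2\pi i/(w_+w_-) = -2\pi^2/\log^2\frac{2k}{|x-1|} + O(1/\log^3)$, which combined with $-\pi^2/\log^2$ yields $-3\pi^2/\log^2$ as claimed.
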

\begin{proof}
As the estimate in statement~\eqref{prop2-2.3} of Proposition \ref{FProps} is uniform in $0 < \vert \theta \vert < \pi$, we can compute for $x > 1$, $x\to 1$,
\begin{gather*}
\frac{F^2}{w_+}(x) + \frac{F^2}{w_-}(x) - 2 = -\frac{i \pi}{w_+(x)} + \frac{i\pi}{w_-(x)} - \frac{\pi^2}{ \log^2 \frac{2k}{\vert x-1\vert}} + O\left(\frac{1}{\log^3 \vert 1-x \vert}\right) \nonumber\\
\hphantom{\frac{F^2}{w_+}(x) + \frac{F^2}{w_-}(x) - 2}{} = \frac{i\pi (-w_- + w_+)}{w_+(x)w_-(x)} - \frac{\pi^2}{ \log^2 \frac{2k}{\vert x-1\vert}} + O\left(\frac{1}{\log^3 \vert 1-x \vert}\right) \nonumber\\
\hphantom{\frac{F^2}{w_+}(x) + \frac{F^2}{w_-}(x) - 2}{}= -\frac{2\pi^2}{w_+(x)w_-(x)} - \frac{\pi^2}{ \log^2 \frac{2k}{\vert x-1\vert}} + O\left(\frac{1}{\log^3 \vert 1-x \vert}\right) \nonumber\\
\hphantom{\frac{F^2}{w_+}(x) + \frac{F^2}{w_-}(x) - 2}{} = - \frac{3\pi^2}{ \log^2 \frac{2k}{\vert x -1\vert}} + O\left(\frac{1}{\log^3 \vert 1-x \vert}\right)
\end{gather*}
as $x \to 1$.
\end{proof}

We will also need the following technical estimate on the difference $\frac{F^2}{w_\pm}(1 + r) - \frac{F^2}{w_\pm} (1 + \tilde{r})$ for $r, \tilde{r} > 0$ suitably small and close together. This estimate is needed to estimate the integral in Proposition~\ref{HIntegralProp}.
\begin{prop} \label{F-FAsymptotics} Fix $R > 0$ and suppose $r, \tilde{r} > 0$ obey
\begin{gather*}
r, \tilde{r} = O\left(\frac{1}{n^2}\right), \qquad n\left(\frac{r}{\tilde{r}} - 1\right) \in [-R, R].
\end{gather*}
Then
\begin{gather*}
\frac{F^2}{w_+}(1 + r) - \frac{F^2}{w_+} (1 + \tilde{r}) + \frac{F^2}{w_-}(1 + r) - \frac{F^2}{w_-} (1 + \tilde{r}) = O_R\left(\frac{1}{n\log^3n}\right).
\end{gather*}
\end{prop}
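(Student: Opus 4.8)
The plan is to reduce the claim to a single pointwise derivative bound. Set $G(x):=\frac{F^2}{w_+}(x)+\frac{F^2}{w_-}(x)$, so that the quantity in question is $G(1+r)-G(1+\tilde r)$. For $x>1$ the function $F^2$ has a single finite value there: by part~(0) of Proposition~\ref{FProps}, $F$ is analytic on $\mathbb{C}\setminus[-1,1]$, hence on the half-plane $\{z:\mathrm{Re}\,z>1\}$; and by Lemma~\ref{NoFurtherComment}, a short computation of boundary values at $x>1$ gives $w_\pm(x)=\ell(x)\pm i\pi$ with $\ell(x):=\log\frac{2k}{x-1}$. Hence $G(x)=F^2(x)\,m(\ell(x))$ with $m(\ell):=\frac{2\ell}{\ell^2+\pi^2}$, so $G\in C^1\big((1,\infty)\big)$ and
\[
G(1+r)-G(1+\tilde r)=\int_{\tilde r}^{r}G'(1+t)\,dt .
\]
The hypotheses give $r/\tilde r\in[1-R/n,\,1+R/n]$, so $r$ and $\tilde r$ are comparable for large $n$, $|r-\tilde r|=\tilde r\,|r/\tilde r-1|=O\big(R\,\min(r,\tilde r)/n\big)$, and $\min(r,\tilde r)=O(n^{-2})$ forces $\log\tfrac{1}{\min(r,\tilde r)}\ge 2\log n-O(1)$. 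Thus it is enough to prove the pointwise bound $G'(x)=O\big(\tfrac{1}{(x-1)\log^{3}\frac{1}{x-1}}\big)$ as $x\to 1^{+}$, since then $|G(1+r)-G(1+\tilde r)|\le|r-\tilde r|\,\sup|G'|=O_R\big(\tfrac{1}{n\log^{3}n}\big)$.

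To prove the pointwise bound I would differentiate, $G'(x)=(F^2)'(x)\,m(\ell(x))+F^2(x)\,m'(\ell(x))\,\ell'(x)$ with $\ell'(x)=-\tfrac{1}{x-1}$, and use the elementary expansions $m(\ell)=\tfrac{2}{\ell}+O(\ell^{-3})$, $m'(\ell)=-\tfrac{2}{\ell^{2}}+O(\ell^{-4})$. Part~(2) of Proposition~\ref{FProps}, evaluated on a boundary value at $x>1$ (cf.\ the proof of Proposition~\ref{FCancellation}), gives $F^2(x)=\ell(x)+O(\ell(x)^{-1})$. The one missing ingredient is the \emph{differentiated} endpoint estimate $(F^2)'(x)=-\tfrac{1}{x-1}+O\big(\tfrac{1}{(x-1)\log^{2}\frac{1}{x-1}}\big)$ as $x\to1^+$. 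Granting it, $(F^2)'(x)\,m(\ell(x))=-\tfrac{2}{(x-1)\ell(x)}+O\big(\tfrac{1}{(x-1)\ell(x)^{3}}\big)$ and $F^2(x)\,m'(\ell(x))\,\ell'(x)=+\tfrac{2}{(x-1)\ell(x)}+O\big(\tfrac{1}{(x-1)\ell(x)^{3}}\big)$, so the leading terms $\mp\tfrac{2}{(x-1)\ell(x)}$ cancel and the desired bound follows. (One expects, consistently with formally differentiating Proposition~\ref{FCancellation}, the sharper $G'(x)=-\tfrac{6\pi^{2}}{(x-1)\ell(x)^{3}}+O\big(\tfrac{1}{(x-1)\ell(x)^{4}}\big)$, but the weaker bound is all that is needed.)

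For the differentiated estimate I would pass into the complex plane. For small $x>1$ the disc $D_x:=\{|z-x|<(x-1)/2\}$ lies in $\{z:\mathrm{Re}\,z>1\}$, so $F^2$ is analytic on $D_x$, on which $|z-1|\asymp x-1$. Since $\log\frac{2k}{1-z}$ has no branch point in $D_x$, the restriction $w|_{D_x\cap\mathbb{C}_+}$ continues analytically to a function $\lambda$ on all of $D_x$; then $\lambda=w$ on $D_x\cap\mathbb{C}_+$ and $\lambda=w+2\pi i$ on $D_x\cap\mathbb{C}_-$, and $\lambda$ is analytic and nonvanishing on $D_x$ with $|\lambda(z)|\asymp\log\frac{1}{x-1}$ and $\lambda'(x)=-\tfrac{1}{x-1}$. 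Substituting these relations into part~(2) of Proposition~\ref{FProps} on each half of $D_x$, both cases collapse to the single statement, uniform on $D_x$,
\[
F^2(z)=\lambda(z)-i\pi-\frac{\pi^{2}}{2\lambda(z)}+O\!\left(\frac{1}{\log^{2}\frac{1}{x-1}}\right).
\]
Thus $g(z):=F^2(z)-\lambda(z)+i\pi+\tfrac{\pi^{2}}{2\lambda(z)}$ is analytic on $D_x$ with $\sup_{D_x}|g|=O\big(\log^{-2}\tfrac{1}{x-1}\big)$, and Cauchy's inequality on $|z-x|=(x-1)/2$ gives $|g'(x)|\le\tfrac{2}{x-1}\sup_{D_x}|g|=O\big(\tfrac{1}{(x-1)\log^{2}\frac{1}{x-1}}\big)$. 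Differentiating the displayed identity gives $(F^2)'(x)=\lambda'(x)\big(1+\tfrac{\pi^{2}}{2\lambda(x)^{2}}\big)+g'(x)$, and the estimate for $(F^2)'$ then follows from $\lambda'(x)=-\tfrac{1}{x-1}$, $|\lambda(x)|\asymp\log\tfrac{1}{x-1}$, and the bound on $g'$.

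The hard part is the interplay of the last two steps. The two pieces of $G'$ are individually of size $\tfrac{1}{(x-1)\log\frac{1}{x-1}}$ --- two logarithmic factors larger than the target --- so the estimate is invisible to any crude bound and genuinely requires the two-term endpoint asymptotics of both $F^2$ and $(F^2)'$; obtaining the latter is what forces the detour into $\{z:\mathrm{Re}\,z>1\}$, where $F^2$ is honestly analytic (no local parametrix is used, in keeping with the spirit of the paper) and a Cauchy estimate is available once the logarithmic main term $\lambda$ and its first correction $-\pi^{2}/(2\lambda)$ have been subtracted. The remaining details --- checking that the hypotheses really do give $r\asymp\tilde r$ and $\log\tfrac{1}{\min(r,\tilde r)}\ge c\log n$, and carrying out the elementary simplifications of $m$, $m'$, and of Proposition~\ref{FProps}(2) on each half of $D_x$ --- are routine.
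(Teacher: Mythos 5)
Your proof is correct, and it takes a genuinely different route from the paper's. The paper's Proposition~\ref{F-FAsymptoticsApp} works directly with the integral representation of $\log F$, recomputes the Cauchy integral at each of the two nearby points $1+r$, $1+\tilde r$, changes variables, splits into pieces $N_1,N_2,N_3$ just as in the proof of Proposition~\ref{FAsymptoticsZApp}, and exhibits the same algebraic cancellation of $\pm i\pi$ at the end (see equation~(\ref{3.28})). You instead factor the quantity as $G(x)=F^2(x)\,m(\ell(x))$ with $m(\ell)=\tfrac{2\ell}{\ell^2+\pi^2}$, reduce the difference to $\int_{\tilde r}^{r}G'(1+t)\,{\rm d}t$ and a single pointwise bound $G'(x)=O\big(\tfrac{1}{(x-1)\log^3\frac{1}{x-1}}\big)$, and then obtain the needed differentiated estimate on $(F^2)'$ by applying Cauchy's inequality to $g(z)=F^2(z)-\lambda(z)+i\pi+\tfrac{\pi^2}{2\lambda(z)}$ on the disc $D_x$, using nothing but the already-established uniform asymptotics of Proposition~\ref{FProps}(2). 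I checked the details: the two branches $w_\pm$ do collapse to a single identity in $\lambda$ on $D_x\cap\mathbb{C}_\pm$ (the offset $2\pi i$ disappears into the $O(\lambda^{-2})$ error); $|\lambda|\asymp\log\frac{1}{x-1}$ and $\lambda'(x)=-\frac{1}{x-1}$ on $D_x$; the expansions of $m,m'$ are correct; the leading $\mp\frac{2}{(x-1)\ell}$ terms cancel as stated; and the hypotheses $r,\tilde r=O(n^{-2})$, $n|r/\tilde r-1|\le R$ do give $|r-\tilde r|/\min(r,\tilde r)=O(R/n)$ and $\log\tfrac{1}{\min(r,\tilde r)}\ge 2\log n-O(1)$, so the final multiplication yields $O_R(1/(n\log^3 n))$. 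The only point worth flagging is that your Cauchy-estimate step requires the error in Proposition~\ref{FProps}(2) to have a constant uniform over the whole disc $D_x$; this is indeed what the paper proves (uniformity in $0<|\theta|<\pi$), so you are entitled to it. What your approach buys is a much shorter and more transparent argument --- the only real work is a one-line complex-analytic bootstrap from an asymptotic for $F^2$ to one for $(F^2)'$ --- and it makes the mechanism of the two-logarithm gain (cancellation of the leading $1/((x-1)\ell)$ terms) conceptually visible rather than buried in an integral computation. The paper's method has the minor advantage of never leaving the real-variable integral framework already set up in Appendix~\ref{FProofsSection}, at the cost of repeating essentially the whole of that appendix's machinery.
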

\begin{proof}
See Proposition \ref{F-FAsymptoticsApp}.
\end{proof}

\section{The Riemann--Hilbert approach} \label{RHPsSection}
In this section we define the main Riemann--Hilbert problem and transform the problem into a~form which is amenable to asymptotic analysis.
\begin{Remark}
The notation $O_n$ is used as follows. For matrices
\begin{gather*}
A(z) = A^{(n)}(z) = \left(\begin{matrix}
A_{11}^{(n)}(z) & A^{(n)}_{12}(z)\\
A_{21}^{(n)}(z) & A^{(n)}_{22}(z)
\end{matrix}\right), \qquad B(z)= \left(\begin{matrix}
B_{11}(z) & B_{12}(z)\\
B_{21}(z) & B_{22}(z)
\end{matrix}\right)
\end{gather*}
we say $A(z) = O_n(B(z))$ as $z \to z_0$ if, for each pair $i,j = 1, 2$, and for any $n\ge 0$, there exists $\varepsilon$ and $c_n$ such that
\begin{gather*}
\big\vert A^{(n)}_{ij}(z)\big\vert \le c_n \vert B_{ij}(z)\vert
\end{gather*}
for all $\vert z- z_0 \vert \le \varepsilon$. Similarly, for scalar functions $A_n(z)$ and $B(z)$, we say that $A_n(z) = O(B(z))$ if there exists $c_n$ such that
\begin{gather*}
\vert A_n(z) \vert \le c_n\vert B(z)\vert
\end{gather*}
\end{Remark}

\subsection[Riemann--Hilbert problem for $Y$]{Riemann--Hilbert problem for $\boldsymbol{Y}$}\label{section3.1}
We will denote by $\pi_n(x) = x^n + \cdots$, $\tilde{\pi}_n = x^n + \cdots$ the monic, orthogonal polynomials associated with the weights $w, \tilde{w}$ respectively. The polynomials $\{\pi_n\}$ are described by the following Riemann--Hilbert problem introduced by Fokas, Its, and Kitaev \cite{FIK} (see also \cite{DeiftOPs}) for a 2$\times$2 matrix valued function $Y(z) = Y^{(n)}(z)$:
\begin{enumerate}\itemsep=0pt
\item[(a)]
\begin{gather}
Y^{(n)}(z) \text{ is analytic for $z \in \mathbb{C} \backslash [-1,1]$}. \label{YRHP1}
\end{gather}
\item[(b)] $Y^{(n)}$ has boundary values, $Y_\pm := \lim\limits_{\substack{z \to x, z \in \mathbb{C}_\pm}} Y(z)$, for $x \in \mathbb{R}$ pointwise almost
 everywhere and in the following $L^2$ sense:
 \begin{gather*}
 \text{for any $R > 1$}, \qquad \lim_{\epsilon \downarrow 0} \int_{-R}^R \vert Y(x\pm i \epsilon) - Y_\pm(x)\vert^2{\rm d}x =0
 \end{gather*}
and
\begin{gather} Y^{(n)}_+(x) = Y^{(n)}_-(x) \left(\begin{matrix}
1 & w(x) \\
0 & 1
\end{matrix}\right) \qquad \text{for $x \in (-1,1)$}. \label{YRHP2}
\end{gather}
\item[(c)] $Y^{(n)}$ has the following asymptotics as $z \to \infty$
\begin{gather}
 Y^{(n)}(z) = \left(I + O_n\left(\frac{1}{z}\right)\right)\left(\begin{matrix}
z^n & 0\\
0 & z^{-n}
\end{matrix}\right). \label{YRHP3}
\end{gather}
Of course, for $\vert x \vert > 1$, $Y_+(x) = Y_-(x) = Y(x)$.
\end{enumerate}

Define the Cauchy operator, $C_{[-1,1]}$ by
\begin{gather*}
C_{[-1,1]}f = \int_{-1}^1 \frac{f(s)}{s-z} \dbar s
\end{gather*}
and let $C_{[-1,1]}^\pm$ denote its boundary values from $\mathbb{C}_\pm$ respectively. We give a brief summary of the properties of the Cauchy operator and its boundary values on general contours in Section~\ref{Machinery1}.
\begin{prop}[Fokas--Its--Kitaev]\label{Fokas}
Let $\gamma_n > 0$ be the leading coefficient of $p_n$, the orthonormal polynomials with respect to the weight $w$, $p_n = \gamma_n\pi_n$. The function
\begin{gather}
Y^{(n)}(z) = \left(\begin{matrix}
\pi_n(z) & (C_{[-1,1]}\pi_n w)(z) \\
 2\pi i \gamma_{n-1}^2 \pi_{n-1}(s) & 2\pi i\gamma_{n-1}^2 (C_{[-1,1]}\pi_{n-1} w)(z)
\end{matrix}\right)\label{YSol}
\end{gather}
uniquely solves the RHP for $Y^{(n)}$.
\end{prop}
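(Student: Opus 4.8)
The plan is to verify directly that the matrix in \eqref{YSol} satisfies the three conditions \eqref{YRHP1}--\eqref{YRHP3}, and then to deduce uniqueness from a Liouville argument applied to the quotient of two solutions.

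\emph{Existence.} Condition \eqref{YRHP1} is immediate: the $(1,1)$ and $(2,1)$ entries are polynomials, hence entire, while the $(1,2)$ and $(2,2)$ entries are the Cauchy transforms $C_{[-1,1]}(\pi_n w)$ and $C_{[-1,1]}(\pi_{n-1}w)$ of functions in $L^1([-1,1])$ --- here one uses that for $k>1$ the weight $w$ is bounded below by a positive constant and integrable up to $x=1$ --- and Cauchy transforms of $L^1([-1,1])$ densities are analytic on $\mathbb{C}\setminus[-1,1]$. For the jump \eqref{YRHP2}: the first column of \eqref{YSol} is polynomial and has no jump across $(-1,1)$, while the Plemelj--Sokhotski relation $C_{[-1,1]}^+ f - C_{[-1,1]}^- f = f$ on $(-1,1)$ (see Section~\ref{Machinery1}) gives $\bigl(C_{[-1,1]}\pi_n w\bigr)_+ - \bigl(C_{[-1,1]}\pi_n w\bigr)_- = \pi_n w$, and likewise for $\pi_{n-1}$; reading these identities together is exactly $Y^{(n)}_+ = Y^{(n)}_-\bigl(\begin{smallmatrix}1 & w\\ 0 & 1\end{smallmatrix}\bigr)$, and the a.e.\ and $L^2$ boundary-value assertions are the standard mapping properties of $C_{[-1,1]}$ collected in Section~\ref{Machinery1}. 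For the asymptotics \eqref{YRHP3}, which is where orthogonality is essential, expand $\tfrac{1}{s-z} = -\sum_{j\ge0}s^j z^{-j-1}$ uniformly for $s\in[-1,1]$ to get $C_{[-1,1]}(\pi_m w)(z) = -\sum_{j\ge0}z^{-j-1}\int_{-1}^1 s^j\pi_m(s)w(s)\,\dbar s$; orthogonality of $\pi_m$ to $1,s,\dots,s^{m-1}$ against $w$ kills the terms with $j\le m-1$, and the normalization $\int_{-1}^1\pi_m(s)^2 w(s)\,{\rm d}s = \gamma_m^{-2}$ evaluates the $j=m$ term. Hence $C_{[-1,1]}(\pi_n w)(z) = O(z^{-n-1})$, while $C_{[-1,1]}(\pi_{n-1}w)(z)$ equals a constant multiple of $\gamma_{n-1}^{-2}z^{-n}$ plus $O(z^{-n-1})$; combining with the monic normalizations $\pi_m(z) = z^m + O(z^{m-1})$, an entrywise check gives $Y^{(n)}(z)\bigl(\begin{smallmatrix}z^{-n} & 0\\ 0 & z^{n}\end{smallmatrix}\bigr) = I + O_n(1/z)$ as $z\to\infty$. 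The placement of the factor $2\pi i\gamma_{n-1}^2$ (and the sign) in the second row of \eqref{YSol} is precisely what normalizes the lower entries correctly.

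\emph{Uniqueness.} Let $Y$ be any solution. Since $\det\bigl(\begin{smallmatrix}1 & w\\ 0 & 1\end{smallmatrix}\bigr)=1$, the scalar $\det Y$ has no jump across $(-1,1)$; once one knows the endpoint growth of $Y$ at $\pm1$ is mild (see below), $\det Y$ extends to an entire function, and \eqref{YRHP3} forces $\det Y(z) = 1 + O(1/z)$ at infinity, so $\det Y\equiv1$ by Liouville and $Y$ is invertible off $[-1,1]$. Given two solutions $Y,\hat Y$, set $R := Y\hat Y^{-1}$; it is analytic off $[-1,1]$, and on $(-1,1)$ one has $R_+ = Y_-\bigl(\begin{smallmatrix}1 & w\\ 0 & 1\end{smallmatrix}\bigr)\bigl(\begin{smallmatrix}1 & w\\ 0 & 1\end{smallmatrix}\bigr)^{-1}\hat Y_-^{-1} = R_-$, so $R$ is entire; finally $R(z) = (I+O(1/z))\,\mathrm{diag}(z^{n},z^{-n})\,\mathrm{diag}(z^{-n},z^{n})\,(I+O(1/z))^{-1} = I+O(1/z)$ at infinity (the diagonal factors cancel), whence $R\equiv I$ by Liouville and $Y=\hat Y$. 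Applying this with $\hat Y$ the matrix in \eqref{YSol}, which was shown above to be a solution, proves it is the unique solution.

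\emph{Main obstacle.} The one delicate point is the endpoint analysis: to run the Liouville arguments one must rule out bad singularities of $\det Y$ and of $Y\hat Y^{-1}$ at $z=\pm1$. This follows from the fact that a function analytic on $\mathbb{C}\setminus[-1,1]$ with $L^2$ boundary values and a bounded jump matrix is, up to an entire additive term, a Cauchy integral of an $L^1$ density, and so grows at most sub-power at $\pm1$; I would invoke the Cauchy-operator estimates of Section~\ref{Machinery1} to make this precise. Everything else --- \eqref{YRHP1}, \eqref{YRHP2}, and the geometric-series-plus-orthogonality computation behind \eqref{YRHP3} --- is routine bookkeeping.
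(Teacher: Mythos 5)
Your overall structure (existence via Plemelj plus orthogonality; uniqueness via $\det Y\equiv 1$ and a Liouville argument on $Y\hat Y^{-1}$) parallels the paper's, but the ``main obstacle'' paragraph contains a genuine gap. You claim the needed sub-power endpoint growth ``follows from the fact that a function analytic on $\mathbb{C}\setminus[-1,1]$ with $L^2$ boundary values and a bounded jump matrix is, up to an entire additive term, a Cauchy integral of an $L^1$ density''. But the jump matrix here is $\left(\begin{smallmatrix}1 & w\\ 0 & 1\end{smallmatrix}\right)$ with $w(x)=\log\tfrac{2k}{1-x}$, which is \emph{unbounded} as $x\to 1$; and $\det Y$ and $Y\hat Y^{-1}$, being sums of products of $L^2$ entries, a priori have only $L^1$ boundary values. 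So the stated premise does not apply to any of the functions you want to bound. The conclusion is salvageable (use $w\in L^p([-1,1])$ for all $p<\infty$ and H\"older), but the unboundedness of $w$ is precisely the delicate point the proposition must address, and the argument as written assumes it away.

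It is also worth noting that the paper's uniqueness argument avoids endpoint growth estimates entirely: the $L^2$ boundary condition is formulated so that Cauchy--Schwarz yields $\int_a^b\bigl|\det Y(x\pm i\epsilon)-\det Y_\pm(x)\bigr|\,{\rm d}x\to 0$, and since $\det Y_+=\det Y_-$ a.e., a Morera-type argument gives $\oint_C\det Y\,{\rm d}z=0$ for all closed contours $C$ directly; the same device is applied to $\tilde Y Y^{-1}$. This is exactly why the $L^2$ condition is built into the RHP formulation, and your route through removable singularities at $\pm1$ is the longer and more error-prone road. On the existence side you assert the $L^2$ boundary-value property of \eqref{YSol} is ``standard mapping properties of $C_{[-1,1]}$''; the paper instead derives the explicit uniform estimates $Y=O_n(\log\lvert z+1\rvert)$ near $-1$ and the nontrivial $Y=O_n(\log^2\lvert z-1\rvert)$ near $+1$ via a contour deformation --- the extra logarithm coming from the log weight hitting the log singularity of the Cauchy kernel --- and then reads off the $L^2$ condition. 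Your route through $\pi_n w\in L^2([-1,1])$ plus Cauchy $L^2$ theory could probably be made to work, but the $\log^2$ estimate \eqref{YRHP5} is exactly where the logarithmic weight enters nontrivially and deserves to be made explicit rather than subsumed into ``routine bookkeeping''.
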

\begin{proof}
Aside from some special attention that must be paid to the $L^2$ sense in which the boundary values $Y_\pm$ are achieved, the proof proceeds as in \cite{DeiftOPs}. The $L^2$ condition in (\ref{YRHP2}) plays the following key role in proving uniqueness:

Note that, by the $L^2$ condition in (\ref{YRHP2}), for any $a, b \in \mathbb{R}$
\begin{gather*}
\lim_{\epsilon \downarrow 0} \int_a^b \left\vert \det Y(x \pm i\epsilon) - \det Y_\pm(x) \right\vert {\rm d}x \nonumber\\
\qquad{}=\lim_{\epsilon \downarrow 0} \int_a^b \left\vert Y_{11}Y_{22}(x \pm i\epsilon) - ( Y_{11}Y_{22})_\pm (x) - \left(Y_{12}Y_{21}(x \pm i\epsilon) - ( Y_{12}Y_{21})_\pm (x) \right)\right\vert {\rm d}x = 0.
\end{gather*}
However, the jump condition in (\ref{YRHP2}) implies $\det Y_+(x) = \det Y_-(x)$ for almost all $x \in \mathbb{R}$. Therefore,
\begin{gather*}
\lim_{\epsilon \downarrow0} \int_a^b \det Y(x + i \epsilon) = \lim_{\epsilon \downarrow 0}\int_a^b \det Y(x - i \epsilon)
\end{gather*}
and it follows, by standard calculations, that $\int_C \det Y(z) {\rm d}z = 0$ for all closed contours $C \subset \mathbb{C}$. Hence $\det Y(z)$ is entire by Morera's theorem. Condition~(\ref{YRHP3}) and Liouville's theorem then implies that $\det Y(z) = 1$ for all $z$. In particular, $Y(z)$ is invertible for all $z \in \mathbb{C}\backslash [-1,1]$.

Let $\tilde{Y}$ be a second solution to the RHP (\ref{YRHP1})--(\ref{YRHP3}) and define $H(z) = \tilde{Y}(z)Y^{-1}(z)$, and repeating the same argument, we find that $H(z)$ is entire, and $H(z) \to I$ as $z\to \infty$ and hence $H(z) = 1$ for all $z$ and therefore $Y = \tilde{Y}$.

We must also demonstrate that the function $Y^{(n)}(z)$ defined in (\ref{YSol}) achieves its boundary values in the aforementioned $L^2$ sense. A standard argument as in \cite{Kuijlaars} shows that
\begin{enumerate}\itemsep=0pt
\item[(d)] $Y^{(n)}$ has the following uniform asymptotics as $z \to -1$, $z \in \mathbb{C}\backslash [-1,1]$
\begin{gather} Y^{(n)}(z) = O_n\left(\begin{matrix}
1 & \log(\vert z+1\vert) \\
1 & \log(\vert z +1\vert)
\end{matrix}\right).\label{YRHP4}
\end{gather}
\end{enumerate}

 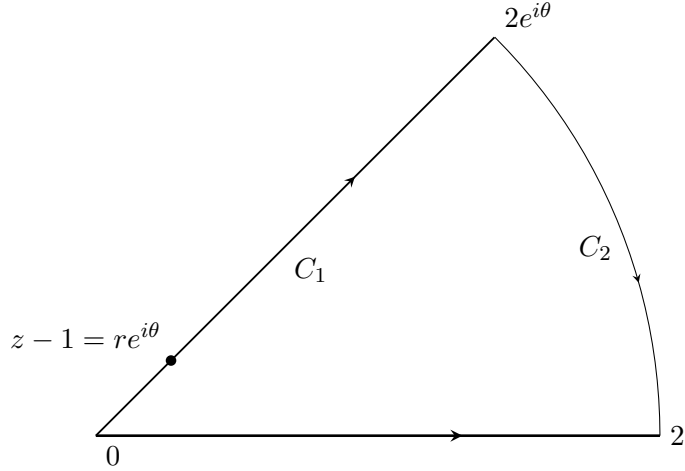
\begin{figure}[t]
\centering
\begin{tikzpicture}[scale=5.]
\coordinate (up) at (.5,.2);
\coordinate (dn) at (.5,-.2);
\draw[line width = 1,directed] (-1,0) to (.5,0) ;
\draw[line width = .7,directed] (-1,0) to (.06,1.06);
\draw [black, domain= 45:0, directed] plot ({1.5 * cos(\x) -1}, {1.5 * sin(\x)});
\fill (-.8, .2) circle (.4pt);
\node[below right] at (-1,0) {$0$};
\node[above left] at (-.8, .2) {$z-1= re^{i\theta}$};
\node[below right] at (-.5, .5){$C_1$};
\node[left] at (.4, .5){$C_2$};
\node[above right] at (.06, 1.06) {$2e^{i\theta}$};
\node[right] at (.5, 0){$2$};
\end{tikzpicture}
\caption{Definition of $C_1$, $C_2$.}\label{C1DefRHPS}
\end{figure}

 We must also analyze $Y(z)$ for $z$ near~1. Let $z = 1 + re^{i\theta}$ where $r > 0$ and $-\pi < \theta < \pi$, and recall from Lemma~\ref{NoFurtherComment} that $w(s)$ has to analytic extension in $\mathbb{C}\backslash [1, \infty)$. The following estimate holds uniformly for $-\pi < \theta < \pi$,
\begin{gather*}
\left\vert\int_{-1}^1 \frac{\pi_n(s) w(s)}{s-z} \dbar s\right\vert = \left\vert \int_{-1}^1 \frac{\pi_n(s)w(s)}{s-1 - re^{i\theta}} \dbar s \right\vert= \left\vert \int_{0}^2 \frac{\pi_n(1-s)w(1-s)}{s + re^{i\theta}} \dbar s \right\vert \nonumber\\
\hphantom{\left\vert\int_{-1}^1 \frac{\pi_n(s) w(s)}{s-z} \dbar s\right\vert}{} = \left\vert \int_{C_1}\frac{\pi_n(1-s)w(1-s)}{s + re^{i\theta}} \dbar s + \int_{C_2}\frac{\pi_n(1-s)w(1-s)}{s + re^{i\theta}} \dbar s \right\vert,
\end{gather*}
where $C_1$, $C_2$ are depicted in Fig.~\ref{C1DefRHPS}. The integrand is clearly bounded over $C_2$ is clearly analytic and hence the integral of $C_2$ is $O(1)$
\begin{gather*}
 \left\vert \int_{C_1}\frac{\pi_n(1-s)w(1-s)}{s + re^{i\theta}} \dbar s\right\vert \le c_n \int_{C_1}\left\vert \frac{w(1-s)}{s + re^{i\theta}}\right\vert \vert {\rm d}s\vert.
 \end{gather*}
Making the change of variables $s = re^{i\theta}t$,
 \begin{gather*}
c_n \int_{C_1}\left\vert \frac{w(1-s)}{s + re^{i\theta}}\right\vert \vert {\rm d}s\vert = c_n \int_0^{2/r}\left| \frac{w(1 - re^{i\theta}t)}{re^{i\theta}t + re^{i\theta}} \right| r{\rm d}t \nonumber\\
 \hphantom{c_n \int_{C_1}\left\vert \frac{w(1-s)}{s + re^{i\theta}}\right\vert \vert {\rm d}s\vert}{}
 = c_n \int_0^{2/r}\left| \frac{w(1 - re^{i\theta}t)}{t +1} \right| {\rm d}t = c_n \int_0^{2/r}\left| \frac{\log \frac{re^{i\theta}t}{2k}}{t +1} \right| {\rm d}t \nonumber\\
 \hphantom{c_n \int_{C_1}\left\vert \frac{w(1-s)}{s + re^{i\theta}}\right\vert \vert {\rm d}s\vert}{}\le c_n \int_0^{2/r}\left| \frac{\log t}{t +1} \right| {\rm d}t + c_n \log r\int_0^{2/r}\left| \frac{1}{t +1} \right| {\rm d}t \nonumber\\
 \hphantom{c_n \int_{C_1}\left\vert \frac{w(1-s)}{s + re^{i\theta}}\right\vert \vert {\rm d}s\vert}{}\le c_n \int_0^{1}\left| \frac{\log t}{t +1} \right| {\rm d}t + c_n \int_1^{2/r}\left| \frac{\log t}{t +1} \right| {\rm d}t + O_n\big(\log^2r\big) \nonumber\\
 \hphantom{c_n \int_{C_1}\left\vert \frac{w(1-s)}{s + re^{i\theta}}\right\vert \vert {\rm d}s\vert}{} \le c_n \log r \int_{1}^{2/r} \left\vert\frac{1}{t+1}\right\vert {\rm d}t + O_n\big(\log^2t\big)= O_n\big(\log^2 r\big)
\end{gather*}
proving
\begin{gather*}
\left\vert\int_{-1}^1 \frac{\pi_n(s) w(s)}{s-z} \dbar s\right\vert = O_n\big(\log^2 \vert z -1 \vert\big)
\end{gather*}
uniformly for $z \in \mathbb{C} \backslash [-1,1]$, as $z\to 1$. Therefore,
\begin{enumerate}\itemsep=0pt
\item[(e)] $Y^{(n)}$ has the following uniform asymptotics as $z \to 1$, $z \in \mathbb{C}\backslash [-1,1]$
\begin{gather}
Y^{(n)}(z) = O_n\left(\begin{matrix}
1 & \log^2(\vert z-1\vert) \\
1 & \log^2(\vert z-1\vert)
\end{matrix}\right).\label{YRHP5}
\end{gather}
\end{enumerate}
Lastly, note that the function $Y^{(n)}(z)$ clearly is continuous up to the boundary for all $x \in \mathbb{R}\backslash \{-1, 1\}$ from the properties of the Cauchy operator. Therefore, combining~(\ref{YRHP4}) and~(\ref{YRHP5}), it is clear that $Y$ achieves its boundary values $Y_\pm$ in the $L^2$ sense described in~(\ref{YRHP2}).
\end{proof}

Let $\sigma_3$ refer to the 3rd Pauli matrix
\begin{gather*}
\sigma_3 = \left(\begin{matrix}
1 & 0\\
0 & -1
\end{matrix}\right).
\end{gather*}
The recurrence coefficients $a_n$, $b_n$ associated with the polynomials $\{\pi_n\}$ have the following expressions in terms of the solution $Y^{(n)}(z)$:
\begin{prop}\label{Referee1}
Let $Y^{(n)}_1$ be the coefficient of $\frac{1}{z}$ in the Laurent expansion $Y^{(n)}(z) z^{-n\sigma_3} =I + \frac{Y^{(n)}_1}{z} + O\big(\frac{1}{z^2}\big)$ as $z \to \infty$. Then
\begin{gather*}
a_n = \big(Y_1^{(n)}\big)_{11} - \big(Y_1^{(n+1)}\big)_{11}, \qquad
b_{n-1}^2 = \big(Y_1^{(n)}\big)_{12} \big(Y_1^{(n)}\big)_{21},
\end{gather*}
and
\begin{gather*}
1 = \big(Y^{(n)}_1\big)_{12}\big(Y^{(n+1)}_1\big)_{21}.
\end{gather*}
\end{prop}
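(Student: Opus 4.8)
The plan is to substitute the explicit formula for $Y^{(n)}$ from Proposition~\ref{Fokas} into the definition of $Y_1^{(n)}$, read off the entries of the matrix $Y_1^{(n)}$ by Laurent-expanding $Y^{(n)}(z)z^{-n\sigma_3}$ at $z=\infty$, and then identify these entries with the recurrence coefficients using the three-term recurrence. Throughout I write $\pi_m(x) = x^m + c_m x^{m-1} + \cdots$ for the monic orthogonal polynomials, so that $c_m$ denotes the subleading coefficient of $\pi_m$.

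First I would expand the Cauchy transforms. For $|z| > 1$ the geometric series $\frac{1}{s-z} = -\sum_{k\ge 0} s^k z^{-k-1}$ converges uniformly for $s \in [-1,1]$, and since $\pi_m w \in L^1(-1,1)$ we may integrate term by term to obtain
\begin{gather*}
(C_{[-1,1]}\pi_m w)(z) = -\sum_{k\ge 0}\frac{\mu_k^{(m)}}{2\pi i}\, z^{-k-1}, \qquad \mu_k^{(m)} := \int_{-1}^1 s^k\pi_m(s)w(s)\, {\rm d}s.
\end{gather*}
Orthogonality of $\pi_m$ against $1, s, \dots, s^{m-1}$ gives $\mu_k^{(m)} = 0$ for $k<m$, and writing $s^m = \pi_m(s) - (\text{lower order})$ together with $p_m = \gamma_m\pi_m$, $\int_{-1}^1 p_m^2 w\, {\rm d}x = 1$ gives $\mu_m^{(m)} = \|\pi_m\|_w^2 = \gamma_m^{-2}$. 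Consequently the $(1,1)$ entry of $Y^{(n)}(z)z^{-n\sigma_3}$ is $\pi_n(z)z^{-n} = 1 + c_n z^{-1} + O(z^{-2})$, so $(Y_1^{(n)})_{11} = c_n$; the $(1,2)$ entry is $z^n(C_{[-1,1]}\pi_n w)(z) = -\frac{\mu_n^{(n)}}{2\pi i}z^{-1} + O(z^{-2})$, so $(Y_1^{(n)})_{12} = -\frac{1}{2\pi i\gamma_n^2}$; and the $(2,1)$ entry, being a $2\pi i\gamma_{n-1}^2$-multiple of $\pi_{n-1}(z)z^{-n} = z^{-1} + O(z^{-2})$, gives $(Y_1^{(n)})_{21} = -2\pi i\gamma_{n-1}^2$ once the sign conventions in Proposition~\ref{Fokas} and in the Cauchy operator are fixed so that $Y^{(n)}(z)z^{-n\sigma_3} \to I$. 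Hence $(Y_1^{(n)})_{12}(Y_1^{(n)})_{21} = \gamma_{n-1}^2/\gamma_n^2$, and replacing $n$ by $n+1$ in the last factor, $(Y_1^{(n)})_{12}(Y_1^{(n+1)})_{21} = \gamma_n^2/\gamma_n^2 = 1$, which is the third identity.

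Next I would convert the orthonormal recurrence into a monic one. Substituting $p_m = \gamma_m\pi_m$ into $xp_n = b_n p_{n+1} + a_n p_n + b_{n-1}p_{n-1}$ and matching leading coefficients of the monic polynomials forces $b_m = \gamma_m/\gamma_{m+1}$, hence $\pi_{n+1}(x) = (x - a_n)\pi_n(x) - b_{n-1}^2\pi_{n-1}(x)$ with $b_{n-1}^2 = \gamma_{n-1}^2/\gamma_n^2$. Comparing the coefficient of $x^n$ on the two sides yields $c_{n+1} = c_n - a_n$. Combining the above, $a_n = c_n - c_{n+1} = (Y_1^{(n)})_{11} - (Y_1^{(n+1)})_{11}$ and $b_{n-1}^2 = \gamma_{n-1}^2/\gamma_n^2 = (Y_1^{(n)})_{12}(Y_1^{(n)})_{21}$, completing the proof.

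This is a routine computation once Proposition~\ref{Fokas} is available; there is no genuine obstacle. The only steps needing care are justifying the term-by-term Laurent expansion of the Cauchy integral (immediate from compactness of $[-1,1]$ and $\pi_m w\in L^1$), and keeping precise track of the signs and the $2\pi i$ prefactors in the second row of $Y^{(n)}$, so that the products $(Y_1^{(n)})_{12}(Y_1^{(n)})_{21}$ and $(Y_1^{(n)})_{12}(Y_1^{(n+1)})_{21}$ emerge with the stated signs.
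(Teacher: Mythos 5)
Your proposal is correct and supplies exactly the explicit computation that the paper's one-line proof delegates to \cite{DeiftOPs}: expand $Y^{(n)}(z)z^{-n\sigma_3}$ at $z=\infty$ via the Fokas--Its--Kitaev formula, read off $(Y_1^{(n)})_{11}=c_n$, $(Y_1^{(n)})_{12}=-\frac{1}{2\pi i\gamma_n^2}$, $(Y_1^{(n)})_{21}=-2\pi i\gamma_{n-1}^2$, and match them against the monic three-term recurrence $\pi_{n+1}=(x-a_n)\pi_n-b_{n-1}^2\pi_{n-1}$ with $b_{n-1}=\gamma_{n-1}/\gamma_n$. You are also right to flag the sign issue: as printed, the second row of the formula in Proposition~\ref{Fokas} has $+2\pi i\gamma_{n-1}^2$, which would force $(Y^{(n)}z^{-n\sigma_3})_{22}\to -1$ and violate condition~(\ref{YRHP3}); with the standard sign $-2\pi i\gamma_{n-1}^2$ all three identities emerge as you state.
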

\begin{proof}See \cite{DeiftOPs}.
\end{proof}

References for the calculations that follow are \cite{DeiftOPs,DeiftSD,Kuijlaars,Venakides}.
\subsection[The transformation $Y \to T$]{The transformation $\boldsymbol{Y \to T}$}
The first step in the asymptotic analysis of $Y$, is to provide a reformulation of the RHP described by (\ref{YRHP1})--(\ref{YRHP3}) in terms of a RHP normalized at infinity, and now with a bounded, but highly oscillatory, jump function.
\begin{Remark}
In the standard Riemann--Hilbert theory, the RHP is converted to a problem involving a bounded singular operator on the contour. The replacement of the unbounded jump $\left(\begin{smallmatrix}
1 & w \\
0 & 1
\end{smallmatrix}\right)$ with a bounded jump makes it possible to apply Riemann--Hilbert theory directly.
\end{Remark}

Recall the definitions of $\phi$ and $F$ from Section \ref{PreliminariesSection} and define
\begin{gather}
T(z) = \left(\frac{2^n}{F_\infty}\right)^{\sigma_3} Y(z) \left(\frac{F(z)}{\phi^n(z)}\right)^{\sigma_3}, \qquad z \in \mathbb{C}\backslash[-1,1].\label{TDef}
\end{gather}
Since $Y$, $\phi$, and $F$ are all analytic in the region $\mathbb{C} \backslash [-1,1]$, $T$ clearly satisfies the same analyticity condition (\ref{YRHP1}) as~$Y$.

For $x \in (-1,1)$
\begin{gather*}
T_+(x) = \left(\frac{2^n}{F_\infty}\right)^{\sigma_3} Y_+(x) \left(\frac{F_+(x)}{\phi^n_+(x)}\right)^{\sigma_3} = \left(\frac{2^n}{F_\infty}\right)^{\sigma_3} Y_-(x) \left(\begin{matrix}
1 & w(x)\\
0 & 1
\end{matrix}\right) \left(\frac{F_+(x)}{\phi^n_+(x)}\right)^{\sigma_3} \nonumber\\
\hphantom{T_+(x)}{}
= T_-(x) \left(\frac{F_-(x)}{\phi^n_-(x)}\right)^{-\sigma_3} \left(\begin{matrix}
1 & w(x)\\
0 & 1
\end{matrix}\right) \left(\frac{F_+(x)}{\phi^n_+(x)}\right)^{\sigma_3} \\
\hphantom{T_+(x)}{}
= T_-(x)\left(\begin{matrix}
\dfrac{F_+}{F_-} \left(\dfrac{\phi_-}{\phi_+}\right)^n & \dfrac{ \phi_-^n\phi_+^n}{F_+F_-} w\vspace{1mm}\\
0 & \dfrac{F_-}{F_+} \left(\dfrac{\phi_+}{\phi_-}\right)^n
\end{matrix}\right)
= T_-(x)\left(\begin{matrix}
\dfrac{F_+^2}{w} \phi_+^{-2n} & 1\\
0 & \dfrac{F_-^2}{w} \phi_-^{-2n}
\end{matrix}\right),
\end{gather*}
where we have used property~\eqref{prop7} of Proposition \ref{PhiProps} and property~\eqref{prop5-2.3} of Proposition \ref{FProps}.
Combining property~\eqref{prop1} of 
Proposition \ref{PhiProps} and property~\eqref{prop1-2.3} of Proposition \ref{FProps} with (\ref{TDef}), we see that
\begin{gather*}
T(z) = I + O_n\left(\frac{1}{z}\right) \qquad \text{as $z\to \infty$}
\end{gather*}
Finally by properties~\eqref{1branch} and \eqref{2branch} of Proposition~\ref{PhiProps} and properties \eqref{prop2-2.3} and \eqref{prop3-2.3} of Proposition~\ref{FProps} and by conditions~(\ref{YRHP4}) and~(\ref{YRHP5}) of the RHP for $Y$, we see that
\begin{gather*}
T(z) = O_n\left(\begin{matrix}
1 & \log(\vert z + 1\vert) \\
1 & \log(\vert z + 1\vert)
\end{matrix}\right) \qquad \text{as $z \to -1$},\nonumber\\
T(z) = O_n\left(\begin{matrix}
\log^{1/2}(\vert z-1\vert) & \log^{3/2}(\vert z - 1\vert) \\
\log^{1/2}(\vert z-1\vert) & \log^{3/2}(\vert z - 1\vert)
\end{matrix}\right) \qquad \text{as $z \to 1$}.
\end{gather*}
To summarize, $T(z)$ solves the following standard RHP:
\begin{enumerate}\itemsep=0pt
\item[(a)] $T(z)$ is analytic for $z \in \mathbb{C} \backslash [-1,1]$, 
\item[(b)] $T^{(n)}$ has continuous boundary values for $x \in (-1,1)$, which we denote by~$T^{(n)}_+(x)$ and $T^{(n)}_-(x)$ and
\begin{gather} T_+(x) = T_-(x)\left(\begin{matrix}
\dfrac{F_+^2}{w} \phi_+^{-2n} & 1\\
0 & \dfrac{F_-^2}{w} \phi_-^{-2n}
\end{matrix}\right) \qquad \text{for $x \in (-1,1)$}, \label{TRHP2}
\end{gather}
\item[(c)] $T(z)$ has the following asymptotics as $z \to \infty$
\begin{gather*} T(z) =I + O_n\left(\frac{1}{z}\right) \qquad \text{as $z \to \infty$}, 
\end{gather*}
\item[(d)] $T^{(n)}$ has the following asymptotics as $z \to -1$
\begin{gather*}
 T(z) = O_n\left(\begin{matrix}
1 & \log(\vert z + 1\vert) \\
1 & \log(\vert z + 1\vert)
\end{matrix}\right), 
\end{gather*}
\item[(e)] $T^{(n)}$ has the following asymptotics as $z \to 1$
\begin{gather*}
 T(z) = O_n\left(\begin{matrix}
\log^{1/2}(\vert z-1\vert) & \log^{3/2}(\vert z - 1\vert) \\
\log^{1/2}(\vert z-1\vert) & \log^{3/2}(\vert z - 1\vert)
\end{matrix}\right). 
\end{gather*}
\end{enumerate}
It follows from the definitions of $T$ (\ref{TDef}) and the fact that $\det(Y) \equiv 1$ that $\det(T) \equiv 1$.
\begin{Remark}
Properties \eqref{prop2-2.3} and \eqref{prop3-2.3} of Proposition \ref{FProps} imply that $\frac{F^2}{w}$ is bounded uniformly as we approach the interval $[-1,1]$ and therefore the jump matrix for $T$ in (\ref{TRHP2}) is indeed bounded.\label{F2/wRemark}
\end{Remark}

\subsection[The transformation $T \to Q$]{The transformation $\boldsymbol{T \to Q}$}\label{TToQ}

The transformations in these sections are well known to those familiar with the Riemann--Hilbert approach to orthogonal polynomials. The next transformation is a slight modification of the typical ``lens'' transformation. The difference is that it collects factors in the upper and lower lips near the singularity at~1. For our particular weight, these factors cancel each other to leading order, due to Proposition~\ref{FCancellation}. We ``deform'' the RHP on $[-1,1]$ to a problem on the oriented contour $\Sigma$ as depicted in Fig.~\ref{SigmaDef}.

\begin{figure}[t]\centering
\begin{tikzpicture}[scale=10.]
\fill (1,0) circle (.006cm);
\draw[line width = 1,directed] (1.1,0) to (1, 0);
\draw[line width = 1, loosely dotted] (.9,0.2) to (.8, .16);
\draw[line width = 1, loosely dotted] (.94,0.2) to (1, .108);
\draw[line width = 1, loosely dotted] (.9,-0.2) to (.8, -.16);
\draw[line width = 1, loosely dotted] (.94,-0.2) to (1, -.108);
\draw[line width = 1, directed] (0,0) to (1, 0);
\coordinate (up) at (.5,.2);
\coordinate (dn) at (.5,-.2);
\draw[line width = 1, domain=135:0 , directed] plot ({1 + .1* cos(\x) }, {.1 * sin(\x)});
\draw[line width = 1, domain=225:360 , directed] plot ({1+.1* cos(\x) }, {.1 * sin(\x)});
\draw[line width = 1,directed] (0,0) to [out=45,in=180] (up) to [out=0, in=180-45] (.932, .068);
\draw[line width = 1,directed] (0,0) to [out=-45,in=180] (dn) to [out=0, in=-180+45] (.932,-.068);
\node[below left] at (0,0) {$-1$};
\node[below left] at (1,0) {$1$};
\node[right] at (1.1, 0){$1+\delta$};
\node[above] at (.92,.2) {$\Sigma_1$};
\node[below] at (.92,-.2) {$\Sigma_2$};
\node [above] at (.2, .2) {$\Omega_0$};
\node [below right] at (.2, .1){$\Omega_1$};
\node [above right] at (.2, -.1){$\Omega_2$};
\node [above] at (.5, .2) {$+$};
\node [below] at (.5, .2) {$-$};
\node [above] at (.5, -.2) {$+$};
\node [below] at (.5, -.2) {$-$};
\node [above] at (.5, 0) {$+$};
\node [below] at (.5, 0) {$-$};
\node [above] at (1.05, 0) {$-$};
\node [below] at (1.05, 0) {$+$};
\end{tikzpicture}
\caption{Definition of $\Sigma = \Sigma_1 \cup \Sigma_2 \cup [-1,1] \cup [1+\delta, 1]$.} \label{SigmaDef}
\end{figure}
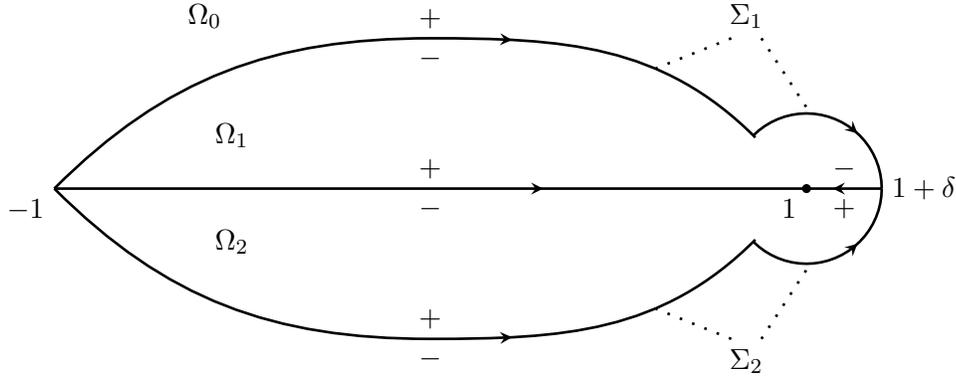

The transformation in this section is based on the factorization of the jump matrix of $T$ in~(\ref{TRHP2}) into a product of 3 matrices on $(-1,1)$:
\begin{gather*}
 \left(\begin{matrix}
\dfrac{F_+^2}{w} \phi_+^{-2n} & 1\\
0 & \dfrac{F_-^2}{w} \phi_-^{-2n}
\end{matrix}\right) = \left(\begin{matrix}
1& 0 \\
 \dfrac{F_-^2}{w} \phi_-^{-2n} & 1
\end{matrix}\right)\left(\begin{matrix}
0 & 1 \\
-1 & 0
\end{matrix}\right) \left(\begin{matrix}
1& 0 \\
 \dfrac{F_+^2}{w} \phi_+^{-2n} & 1
\end{matrix}\right).
\end{gather*}
Since $F$, $\phi$ are analytic in $\mathbb{C}\backslash [-1,1]$, and, by Lemma~\ref{NoFurtherComment}, $\log \frac{2k}{1-s}$ has a unique analytic continuation to $\mathbb{C} \backslash [1, \infty)$, the matrix valued function
\begin{gather*}
\left(\begin{matrix}
1 & 0\\
\dfrac{F^2}{w} \phi^{-2n}(z) & 1
\end{matrix}\right)
\end{gather*}
is an analytic function on $\mathbb{C} \backslash [-1, \infty)$ and, by Remark~\ref{F2/wRemark} and Proposition~\ref{PhiProps} is continuous up to the boundary. Now define
\begin{gather}
Q(z)=
\begin{cases}
T(z) & \text{for $z \in \Omega_0$},\\
T(z)\left(\begin{matrix}
1& 0 \\
- \dfrac{F^2}{w} \phi^{-2n} & 1
\end{matrix}\right) & \text{for $z \in \Omega_1$},\\
T(z)\left(\begin{matrix}
1& 0 \\
 \dfrac{F^2}{w} \phi^{-2n} & 1
\end{matrix}\right) & \text{for $z \in \Omega_2$}.
\end{cases}\label{QDef}
\end{gather}
A straightforward calculations shows $Q = Q^{(n)}$ solves the following RHP:
\begin{enumerate}\itemsep=0pt
\item[(a)] $Q(z)$ is analytic for $z \in \mathbb{C} \backslash \Sigma$, 
\item[(b)] $Q$ has continuous boundary values for $s \in \Sigma\backslash \{-1, 1\}$ and
$ Q_+(s) = Q_-(s) v_{\Sigma} (s)$, $s \in \Sigma$, 
 where
 \begin{gather*}
 v_{\Sigma}(s) = \begin{cases}
\left(\begin{matrix}
1& 0 \\
 \dfrac{F^2}{w} \phi^{-2n} & 1
\end{matrix}\right) &\text{for $s \in \Sigma_1 \cup \Sigma_2$}, \\
\left(\begin{matrix}
1& 0 \\
\left( \dfrac{F^2}{w_+} + \dfrac{F^2}{w_-}\right) \phi^{-2n} & 1
\end{matrix}\right) &\text{for $s \in [1, 1+\delta]$}, \\
\left(\begin{matrix}
0& 1 \\
-1 & 0
\end{matrix}\right) &\text{for $s \in [-1,1]$},
\end{cases}
\end{gather*}
\item[(c)] as $z \to \infty$
\begin{gather*} Q(z) =I + \frac{Q_1}{z} + O_n\left(\frac{1}{z^2}\right), 
\end{gather*}
\item[(d)] $Q$ has the following asymptotics as $z \to -1$
\begin{gather}Q(z) = O_n\left(\begin{matrix}
\log(\vert z + 1\vert) & \log(\vert z + 1\vert) \\
\log(\vert z + 1\vert) & \log(\vert z + 1\vert)
\end{matrix}\right), \label{QRHP4}
\end{gather}
\item[(e)] $Q$ has the following asymptotics as $z \to 1$
\begin{gather}
 Q(z) = O_n\left(\begin{matrix}
\log^{3/2}(\vert z-1\vert) & \log^{3/2}(\vert z - 1\vert) \\
\log^{3/2}(\vert z-1\vert) & \log^{3/2}(\vert z - 1\vert)
\end{matrix}\right).\label{QRHP5}
\end{gather}
\end{enumerate}

The endpoint conditions in (\ref{QRHP4}) and (\ref{QRHP5}) guarantee that the solution $Q$ to the above RHP achieves its boundary values in the $L^2$ sense. The same argument as in Proposition \ref{Fokas} proves that this solution is unique. Therefore, any solution to the above RHP must be the solution we obtain from the RHP for $Y$ through the transformations~(\ref{TDef}) and~(\ref{QDef}).

\subsection[The associated Legendre problem, $\tilde{Q}$]{The associated Legendre problem, $\boldsymbol{\tilde{Q}}$}
The same steps as in Section~\ref{TToQ} can be repeated for the problem with the Legendre weight $\tilde{w}(s) = 1$, $s \in [-1,1]$, in which case all of the above calculations hold true provided we take $F \equiv 1$.

This results in the following RHP on $\Sigma$ for $\tilde{Q} = \tilde{Q}^{(n)}$:
\begin{enumerate}\itemsep=0pt
\item[(a)]
 \begin{gather}\tilde{Q}(z) \text{ is analytic for $z \in \mathbb{C} \backslash \Sigma$.} \label{TildeQRHP1}
 \end{gather}
\item[(b)] $\tilde{Q}$ has continuous boundary values for $x \in \Sigma \backslash\{-1, 1\}$ and
$\tilde{Q}_+(s) = \tilde{Q}_-(s) v_{\Sigma} (s)$
\begin{gather} \tilde{v}_{\Sigma}(s) = \begin{cases}
\left(\begin{matrix}
1& 0 \\
 \phi^{-2n} & 1
\end{matrix}\right) &\text{for $s \in \Sigma_1 \cup \Sigma_2$}, \\
\left(\begin{matrix}
1& 0 \\
2 \phi^{-2n} & 1
\end{matrix}\right) &\text{for $s \in [1, 1+\delta]$}, \\
\left(\begin{matrix}
0& 1 \\
-1 & 0
\end{matrix}\right) &\text{for $s \in [-1,1]$},
\end{cases} \label{TildeQRHP2}
\end{gather}
\item[(c)] as $z \to \infty$
\begin{gather*}
\tilde{Q}(z) =I + \frac{\tilde{Q}_1}{z} + O_n\left(\frac{1}{z^2}\right), 
\end{gather*}
\item[(d)] $\tilde{Q}$ has the following asymptotics as $z \to -1$
\begin{gather*}
\tilde{Q}(z) = O_n\left(\begin{matrix}
\log(\vert z + 1\vert) & \log(\vert z + 1\vert) \\
\log(\vert z + 1\vert) & \log(\vert z + 1\vert)
\end{matrix}\right), 
\end{gather*}
\item[(e)] $\tilde{Q}$ has the following asymptotics as $z \to 1$
\begin{gather}
 \tilde{Q}(z) =O_n\left(\begin{matrix}
\log(\vert z - 1\vert) & \log(\vert z - 1\vert) \\
\log(\vert z - 1\vert) & \log(\vert z - 1\vert)
\end{matrix}\right).\label{TildeQRHP5}
\end{gather}
\end{enumerate}

\subsection{A formula for recurrence coefficients}
\begin{prop}\label{RecCoeffFormula} The following formulas hold for the recurrence coefficients in terms of the modified RHPs $Q$, $\tilde{Q}$,
\begin{gather*}
a_n - \tilde{a}_n = \big(Q_1^{(n)}\big)_{11} -\big(\tilde{Q}_1^{(n)}\big)_{11} - \big( \big(Q_1^{(n+1)}\big)_{11}- \big(\tilde{Q}_1^{(n+1)}\big)_{11}\big),\\
b_{n-1}^2 - \tilde{b}_{n-1}^2 = \big(\big(Q_1^{(n)}\big)_{12} - \big(\tilde{Q}_1^{(n)}\big)_{12}\big) \big(\big(Q_1^{(n)}\big)_{21} - \big(Q_1^{(n+1)}\big)_{21}\big)\nonumber \\
\hphantom{b_{n-1}^2 - \tilde{b}_{n-1}^2 =}{} + \big(\tilde{Q}_1^{(n)}\big)_{12} \big[ \big(Q_1^{(n)}\big)_{21} - \big(Q_1^{(n+1)}\big)_{21}- \big(\big(\tilde{Q}_1^{(n)}\big)_{21} - \big(\tilde{Q}_1^{(n+1)}\big)_{21}\big)\big].
\end{gather*}
\end{prop}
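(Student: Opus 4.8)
The plan is to express the matrix $Q_1^{(n)}$ in terms of $Y_1^{(n)}$, substitute into the three identities of Proposition~\ref{Referee1}, and rearrange. For $|z|$ large, $z$ lies in the unbounded region $\Omega_0$, where $Q = T$, so by~\eqref{TDef} and~\eqref{QDef},
\begin{gather*}
Q(z) = \left(\frac{2^n}{F_\infty}\right)^{\sigma_3} Y(z)\left(\frac{F(z)}{\phi^n(z)}\right)^{\sigma_3}.
\end{gather*}
From property~\eqref{prop1} of Proposition~\ref{PhiProps} a short computation gives $\phi^n(z) = 2^n z^n\big(1 + O_n(1/z^2)\big)$, where the key point is that the $1/z$ term is absent, and from property~\eqref{prop1-2.3} of Proposition~\ref{FProps}, $F(z) = F_\infty\big(1 + \beta/z + O(1/z^2)\big)$ with $\beta := F_1/F_\infty$. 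Inserting $Y(z) = \big(I + Y_1^{(n)}/z + O_n(1/z^2)\big)z^{n\sigma_3}$ and using that the diagonal factor $z^{n\sigma_3}$ commutes with $(F_\infty/2^n)^{\sigma_3}$, the powers of $z$ cancel while $Y_1^{(n)}$ gets conjugated by $(2^n/F_\infty)^{\sigma_3}$, leaving
\begin{gather*}
Q_1^{(n)} = \left(\frac{2^n}{F_\infty}\right)^{\sigma_3} Y_1^{(n)}\left(\frac{F_\infty}{2^n}\right)^{\sigma_3} + \beta\,\sigma_3 .
\end{gather*}

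I would record this entry by entry. Since the Szeg\H{o} function $F$ depends only on $w$, the constant $\beta$ is independent of $n$, and
\begin{gather*}
\big(Q_1^{(n)}\big)_{11} = \big(Y_1^{(n)}\big)_{11} + \beta, \\
\big(Q_1^{(n)}\big)_{12} = \frac{2^{2n}}{F_\infty^2}\big(Y_1^{(n)}\big)_{12}, \qquad \big(Q_1^{(n)}\big)_{21} = \frac{F_\infty^2}{2^{2n}}\big(Y_1^{(n)}\big)_{21}.
\end{gather*}
The same computation for the Legendre problem, which is the case $F\equiv 1$ (so $F_\infty = 1$, $F_1 = 0$), gives $\big(\tilde{Q}_1^{(n)}\big)_{11} = \big(\tilde{Y}_1^{(n)}\big)_{11}$, $\big(\tilde{Q}_1^{(n)}\big)_{12} = 2^{2n}\big(\tilde{Y}_1^{(n)}\big)_{12}$, and $\big(\tilde{Q}_1^{(n)}\big)_{21} = 2^{-2n}\big(\tilde{Y}_1^{(n)}\big)_{21}$.

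Now I would substitute into Proposition~\ref{Referee1}. In the formula $a_n = \big(Y_1^{(n)}\big)_{11} - \big(Y_1^{(n+1)}\big)_{11}$ the shift $\beta$ cancels between indices $n$ and $n+1$, so $\big(Q_1^{(n)}\big)_{11} - \big(Q_1^{(n+1)}\big)_{11} = a_n$ and likewise $\big(\tilde{Q}_1^{(n)}\big)_{11} - \big(\tilde{Q}_1^{(n+1)}\big)_{11} = \tilde{a}_n$; subtracting and regrouping yields the first formula. In $b_{n-1}^2 = \big(Y_1^{(n)}\big)_{12}\big(Y_1^{(n)}\big)_{21}$ the reciprocal factors $2^{2n}/F_\infty^2$ and $F_\infty^2/2^{2n}$ cancel in the product of a $(1,2)$- and a $(2,1)$-entry carrying the same index, so $b_{n-1}^2 = \big(Q_1^{(n)}\big)_{12}\big(Q_1^{(n)}\big)_{21}$ and $\tilde{b}_{n-1}^2 = \big(\tilde{Q}_1^{(n)}\big)_{12}\big(\tilde{Q}_1^{(n)}\big)_{21}$. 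The third identity of Proposition~\ref{Referee1}, $1 = \big(Y_1^{(n)}\big)_{12}\big(Y_1^{(n+1)}\big)_{21}$, now reads $\big(Q_1^{(n)}\big)_{12}\big(Q_1^{(n+1)}\big)_{21} = \tfrac14$, and the same value $\tfrac14$ holds for the Legendre quantities; hence
\begin{gather*}
\big(Q_1^{(n)}\big)_{12}\big(Q_1^{(n+1)}\big)_{21} - \big(\tilde{Q}_1^{(n)}\big)_{12}\big(\tilde{Q}_1^{(n+1)}\big)_{21} = 0 .
\end{gather*}

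Finally I would subtract this vanishing quantity from $b_{n-1}^2 - \tilde{b}_{n-1}^2 = \big(Q_1^{(n)}\big)_{12}\big(Q_1^{(n)}\big)_{21} - \big(\tilde{Q}_1^{(n)}\big)_{12}\big(\tilde{Q}_1^{(n)}\big)_{21}$, obtaining $b_{n-1}^2 - \tilde{b}_{n-1}^2 = \big(Q_1^{(n)}\big)_{12}\big[\big(Q_1^{(n)}\big)_{21} - \big(Q_1^{(n+1)}\big)_{21}\big] - \big(\tilde{Q}_1^{(n)}\big)_{12}\big[\big(\tilde{Q}_1^{(n)}\big)_{21} - \big(\tilde{Q}_1^{(n+1)}\big)_{21}\big]$, and then splitting $\big(Q_1^{(n)}\big)_{12} = \big(\tilde{Q}_1^{(n)}\big)_{12} + \big[\big(Q_1^{(n)}\big)_{12} - \big(\tilde{Q}_1^{(n)}\big)_{12}\big]$ and expanding produces exactly the stated expression. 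There is no deep obstacle here: the one point requiring care is the asymptotic bookkeeping at $\infty$---that $\phi^n$ contributes no $1/z$ term, that the diagonal shift $\beta$ is genuinely $n$-independent, and that the $n$-dependent prefactors enter as exact reciprocals---which is precisely what makes these two formulas hold identically rather than merely to leading order.
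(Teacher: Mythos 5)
Your proposal is correct and follows the same route as the paper: expand $T=Q$ at $z=\infty$ using the $1/z$-coefficients of $\phi^{\mp n}$ and $F^{\pm 1}$, obtain the affine relation $Q_1^{(n)} = (2^n/F_\infty)^{\sigma_3} Y_1^{(n)} (F_\infty/2^n)^{\sigma_3} + (F_1/F_\infty)\sigma_3$, observe that the conjugating prefactors and the $n$-independent diagonal shift cancel in the three identities of Proposition~\ref{Referee1}, and then regroup $b_{n-1}^2-\tilde b_{n-1}^2$ by splitting $(Q_1^{(n)})_{12}$ through $(\tilde Q_1^{(n)})_{12}$. The paper writes the affine relation as $Y_1$ in terms of $Q_1$ rather than the reverse, but the algebra is identical; your observation that $\phi^n(z)=2^nz^n\big(1+O(1/z^2)\big)$ with no $1/z$ term is exactly the bookkeeping point the paper relies on.
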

\begin{proof}Note, from Proposition \ref{Referee1}, that the recurrence coefficients $a_n$, $b_n$ are related to the RHP for $Y$ via the formulas
\begin{gather}
a_n = \big(Y^{(n)}_1\big)_{11} - \big(Y^{(n+1)}_1\big)_{11}, \label{anY}\\
b_{n-1}^2 = \big(Y^{(n)}_1\big)_{12}\big(Y^{(n)}_1\big)_{21},\label{bnY}
\end{gather}
and, again from Proposition \ref{Referee1}, $Y$ satisfies the following symmetry relation:
\begin{gather}
1 = \big(Y^{(n)}_1\big)_{12}\big(Y^{(n+1)}_1\big)_{21}.\label{symY}
\end{gather}
Now for $z$ sufficiently large (outside of the contour $\Sigma$)
\begin{gather*}
\left(\frac{2^n}{F_\infty}\right)^{\sigma_3}Y(z)\left(\frac{F(z)}{\phi^n(z)}\right)^{\sigma_3} = Q(z)
\end{gather*}
and
\begin{gather*}
\left(I + \frac{Y_1}{z} + O_n\big(z^{-2}\big)\right)z^{n\sigma_3} = \left(\frac{F_\infty}{2^n}\right)^{\sigma_3}\left(I + \frac{Q_1}{z} + O_n\big(z^{-2}\big)\right)\left(\frac{\phi^n(z)}{F(z)}\right)^{\sigma_3}.
\end{gather*}
By Propositions \ref{PhiProps} and \ref{FProps}
\begin{gather*}
\phi(z) = 2z + O_n\big(z^{-1}\big), \\
F(z) = F_\infty + \frac{F_1}{z} + O_n\big(z^{-2}\big).
\end{gather*}
Hence
\begin{gather*}
\frac{\phi^n(z)}{F(z)}= \frac{2^n}{F_\infty+ \frac{F_1}{z} +O_n\big(z^{-2}\big)} z^n + O_n\big(z^{n-2}\big)
 = \frac{2^n}{F_\infty} z^n -\frac{2^nF_1}{F_\infty^2}z^{n-1} + O_n\big(z^{n-2}\big),\\
\frac{F(z)}{\phi^n(z)}=\frac{F_\infty}{2^nz^n + O_n\big(z^{n-2}\big)} +\frac{F_1}{2^nz^{n+1} + O_n(z^{n-1})} + O_n\big(z^{n-2}\big)\nonumber\\
\hphantom{\frac{\phi^n(z)}{F(z)}}{} = \frac{F_\infty}{2^n} z^{-n}+ \frac{F_1}{2^n}z^{-n-1} + O_n\big(z^{-n-2}\big).
\end{gather*}
So
\begin{gather*}
\left(I + \frac{Y_1}{z} + O_n\big(z^{-2}\right)\big)z^{n\sigma_3} = \left(\frac{F_\infty}{2^n}\right)^{\sigma_3}\left( I + \frac{Q_1}{z} + O_n\big(z^{-2}\right)\big) \nonumber\\
\hphantom{\left(I + \frac{Y_1}{z} + O_n\big(z^{-2}\right)\big)z^{n\sigma_3} =}{} \times\left(\left(\frac{2^n}{F_\infty}\right)^{\sigma_3} +\frac{1}{z} \left(\begin{matrix}
 - \frac{2^nF_1}{F_\infty^2} & 0\\
0 & \frac{F_1}{2^n}
\end{matrix}\right) + O_n\big(z^{-2}\right)\big)z^{n\sigma_3},
\\
 \frac{Y_1}{z}=\frac{1}{z} \left(\frac{F_\infty}{2^n}\right)^{\sigma_3}Q_1\left(\frac{2^n}{F_\infty}\right)^{\sigma_3} + \frac{1}{z} \left(\begin{matrix}
 - \dfrac{F_1}{F_\infty} & 0\\
0 & \dfrac{F_1}{F_\infty}
\end{matrix}\right), \\
 Y_1=\left(\frac{F_\infty}{2^n}\right)^{\sigma_3}Q_1\left(\frac{2^n}{F_\infty}\right)^{\sigma_3} + \left(\begin{matrix}
 - \dfrac{F_1}{F_\infty} & 0\\
0 & \dfrac{F_1}{F_\infty}
\end{matrix}\right),
\end{gather*}
and so the formula for the recurrence coefficients in (\ref{anY})--(\ref{bnY}) can be expressed in terms of the RHP for $Q$ by
\begin{gather*}
a_n = \big(Q_1^{(n)}\big)_{11} - \big(Q_1^{(n+1)}\big)_{11}, \qquad
b_{n-1}^2 = \big(Q_1^{(n)}\big)_{12} \big(Q_1^{(n)}\big)_{21}.
\end{gather*}
The symmetry consideration (\ref{symY}) becomes
\begin{gather*}
1 = 4 \big(Q_1^{(n)}\big)_{12} \big(Q_1^{(n+1)}\big)_{21}
\end{gather*}
and so
\begin{gather*}
a_n = \big(Q_1^{(n)}\big)_{11} - \big(Q_1^{(n+1)}\big)_{11}, \\
b_{n-1}^2 - \frac{1}{4} = \big(Q_1^{(n)}\big)_{12} \big(\big(Q_1^{(n)}\big)_{21} - \big(Q_1^{(n+1)}\big)_{21}\big).
\end{gather*}
The same formulas hold for $\tilde{a}_{n}$ and $\tilde{b}_{n-1}^2$, replacing $Q_1$, $Y_1$ with $\tilde{Q}_1$, $\tilde{Y}_1$. Therefore,
\begin{gather*}
a_n - \tilde{a}_n = \big(Q_1^{(n)}\big)_{11} -\big(\tilde{Q}_1^{(n)}\big)_{11} - \big( \big(Q_1^{(n+1)}\big)_{11}- \big(\tilde{Q}_1^{(n+1)}\big)_{11}\big)\\
b_{n-1}^2 - \tilde{b}_{n-1}^2 = \big(Q_1^{(n)}\big)_{12} \big(\big(Q_1^{(n)}\big)_{21} - \big(Q_1^{(n+1)}\big)_{21}\big)- \big(\tilde{Q}_1^{(n)}\big)_{12} \big(\big(\tilde{Q}_1^{(n)}\big)_{21} - \big(\tilde{Q}_1^{(n+1)}\big)_{21}\big) \\
\hphantom{b_{n-1}^2 - \tilde{b}_{n-1}^2}{} = \big(\big(Q_1^{(n)}\big)_{12} - \big(\tilde{Q}_1^{(n)}\big)_{12}\big) \big(\big(Q_1^{(n)}\big)_{21} - \big(Q_1^{(n+1)}\big)_{21}\big) \\
\hphantom{b_{n-1}^2 - \tilde{b}_{n-1}^2=}{} + \big(\tilde{Q}_1^{(n)}\big)_{12} \big[ \big(Q_1^{(n)}\big)_{21} - \big(Q_1^{(n+1)}\big)_{21}- \big(\big(\tilde{Q}_1^{(n)}\big)_{21} - \big(\tilde{Q}_1^{(n+1)}\big)_{21}\big)\big]
\end{gather*}
as desired.
\end{proof}

\section{A comparison of resolvents}\label{MachinerySection}
In this section we develop the machinery that will allow us to compare two Riemann--Hilbert problems using non-local information.
\subsection{Comparing Riemann--Hilbert problem through the Cauchy transform}
For a bounded, oriented, rectifiable contour $\Sigma$, consider $C_\Sigma$ acting on $L^p(\Sigma)$, $1 \le p < \infty$, as follows
\begin{gather*}
(C_\Sigma f)(z) = \int_\Sigma \frac{f(s)}{s-z} \dbar s,
\end{gather*}
where recall $\dbar s = \frac{{\rm d}s}{2\pi i}$. For almost all $z \in \Sigma$, the limits
\begin{gather*}
(C_\Sigma^\pm f)(z) := \lim_{z' \to z^\pm} \int_\sigma \frac{f(s)}{s - z'} \dbar s
\end{gather*}
exist. Furthermore,
\begin{gather*}
(C^+f)(z) - (C^-f)(z) = f(z)
\end{gather*}
for almost all $z \in \Sigma$, and
\begin{gather*}
(C^+f)(z) + (C^-f)(z) = iHf(z),
\end{gather*}
where $H$ is the Hilbert transform
\begin{gather*}
Hf(z) := \lim_{\epsilon \to 0} \int_{\substack{\Sigma\\ \vert z - z'\vert > \epsilon}} \frac{f(z')}{z' - z} \frac{{\rm d}z'}{\pi},
\end{gather*}
which exists for almost all $z \in \Sigma$. An oriented, rectifiable contour $\Sigma$ is called Carleson (or Ahlfors--David) if
\begin{gather*}
K_\Sigma = \sup_{\substack{z \in \Sigma \\ r > 0}} \frac{\left\vert U_r(z) \cap \Sigma \right\vert}{r} < \infty,
\end{gather*}
where $U_r(z)$ denotes a ball of radius $r$ centered at $z$.

The fundamental theorem, starting with the work of Calderon \cite{Calderon}, continuing through the work of Coifman, Meyer, and McIntosh \cite{ CMM}, and culminating in the work of Guy David \cite{David}, is the following: The operators $C_\Sigma^\pm$ are bounded from $L^2(\Sigma) \to L^2(\Sigma)$ (and in fact in $L^p(\Sigma)$ for all $1 < p < \infty$) if and only if $\Sigma$ is Carleson. Note, in particular, that the contour in this paper is clearly Carleson.

We will say that a pair of functions $A_\pm \in L^p(\Sigma)$ belongs to $\partial C(L^p(\Sigma))$ iff $A_\pm = C_\Sigma^\pm f$ for some (unique) $f \in L^p(\Sigma)$.
\begin{prop}
\label{ABCR}
Let $\Sigma$ be a Carleson curve, and let $A_\pm \in I+ \partial C(L^p(\Sigma))$ and $B_\pm \in I +\partial C(L^q(\Sigma))$ for $1 < p,q < \infty$ where $1\ge \frac{1}{r} = \frac{1}{p} + \frac{1}{q}$. Then if~$A(z)$,~$B(z)$ are the extensions of~$A_\pm$,~$B_\pm$ respectively, then $(AB)(z)$ is the extension of
\begin{gather*}
(AB)_\pm \in I + \partial C\big(L^p(\Sigma)+L^q(\Sigma)+L^r(\Sigma)\big).
\end{gather*}
If $\Sigma$ is compact, then
\begin{gather*}
(AB)_\pm \in I + \partial C\big(L^r(\Sigma)\big).
\end{gather*}
\end{prop}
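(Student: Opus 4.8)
The plan is to expand the product $AB$ and to show that everything beyond the two linear terms is the extension of the Cauchy transform of one explicit $L^r$ density. Let $f\in L^p(\Sigma)$ and $g\in L^q(\Sigma)$ be the unique densities with $A_\pm = I + C_\Sigma^\pm f$ and $B_\pm = I + C_\Sigma^\pm g$, so that $A(z) = I + (C_\Sigma f)(z)$ and $B(z) = I + (C_\Sigma g)(z)$ on $\mathbb{C}\backslash\Sigma$ (all entries being $2\times2$ matrices and all products matrix products). Then
\[
(AB)(z) = I + (C_\Sigma f)(z) + (C_\Sigma g)(z) + P(z), \qquad P(z) := (C_\Sigma f)(z)\,(C_\Sigma g)(z).
\]
The first three terms are already of the desired form --- $I$ plus the extension of $C_\Sigma^\pm f \in \partial C(L^p(\Sigma))$ plus the extension of $C_\Sigma^\pm g \in \partial C(L^q(\Sigma))$ --- so the whole proposition reduces to showing that $P = C_\Sigma h$ for a suitable $h \in L^r(\Sigma)$.

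First I would compute the jump of $P$ across $\Sigma$. The function $P$ is analytic on $\mathbb{C}\backslash\Sigma$, tends to $0$ at infinity, and has boundary values a.e.\ on $\Sigma$; by the Plemelj relation $C_\Sigma^+ - C_\Sigma^- = \mathrm{id}$ (valid on $L^p$ and on $L^q$),
\[
P_+ - P_- = (C_\Sigma^+ f)(C_\Sigma^+ g) - (C_\Sigma^- f)(C_\Sigma^- g) = (C_\Sigma^+ f)\, g + f\, (C_\Sigma^- g) =: h.
\]
Since $\Sigma$ is Carleson, $C_\Sigma^+$ is bounded on $L^p(\Sigma)$ and $C_\Sigma^-$ on $L^q(\Sigma)$, so $C_\Sigma^+ f\in L^p(\Sigma)$ and $C_\Sigma^- g\in L^q(\Sigma)$; Hölder's inequality with the conjugate exponents $p/r$, $q/r$ (legitimate since $r/p + r/q = r(\tfrac1p+\tfrac1q)=1$ and, from $1 \ge \tfrac1r = \tfrac1p+\tfrac1q$, one has $r\ge1$ and $r\le p,q$) then gives $h\in L^r(\Sigma)$ with $\|h\|_{L^r}\le c\,\|f\|_{L^p}\|g\|_{L^q}$.

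Next I would show $P = C_\Sigma h$. The difference $\Psi := P - C_\Sigma h$ is analytic on $\mathbb{C}\backslash\Sigma$, vanishes at infinity, and has boundary values with $\Psi_+ - \Psi_- = h - h = 0$ a.e.\ on $\Sigma$. By the standard characterization of $\partial C(L^r(\Sigma))$ on Carleson curves (see \cite{DeiftNLS}; for a line or a circle this is classical Hardy-space theory) --- namely, that a function analytic off $\Sigma$, vanishing at infinity, whose $L^r$ boundary values agree a.e.\ from the two sides must be identically zero --- we conclude $\Psi\equiv0$, i.e.\ $P = C_\Sigma h$. Equivalently, $\Psi$ extends across $\Sigma$ to an entire function by a Morera-type argument (over a small disk crossing $\Sigma$ the two arcs on either side contribute cancelling integrals because $\Psi_+=\Psi_-$), whence $\Psi\equiv0$ by Liouville together with $\Psi(\infty)=0$. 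Therefore $(AB)(z) = I + C_\Sigma(f+g+h)(z)$ and $(AB)_\pm = I + C_\Sigma^\pm(f+g+h) \in I + \partial C\big(L^p(\Sigma)+L^q(\Sigma)+L^r(\Sigma)\big)$. When $\Sigma$ is compact one has $L^p(\Sigma),L^q(\Sigma)\subseteq L^r(\Sigma)$ (since $r\le p$ and $r\le q$), so $f+g+h\in L^r(\Sigma)$ and $(AB)_\pm\in I+\partial C(L^r(\Sigma))$.

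The main obstacle is the identification $P = C_\Sigma h$: on a general Carleson curve one cannot invoke continuity up to $\Sigma$, so one must either quote the Hardy-space/Cauchy-transform characterization of $\partial C(L^r(\Sigma))$ or carefully justify the Morera step with only $L^r$ (rather than continuous) attainment of the boundary values. The jump computation and the Hölder estimate are routine once this tool is in place.
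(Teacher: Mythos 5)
Your proof is correct and arrives at the same density $h$ as the paper, but by a genuinely different route. The paper starts from the candidate density $\frac{i}{2}\big(f(Hg) + (Hf)g\big)$ and verifies, by Fubini together with the partial-fraction identity $\frac{1}{t-s}\big(\frac{1}{s-z}-\frac{1}{t-z}\big)=\frac{1}{(s-z)(t-z)}$, that its Cauchy transform equals $(C_\Sigma f)(C_\Sigma g)$ off $\Sigma$, so no uniqueness lemma is needed. You instead compute the jump $P_+-P_- = (C_\Sigma^+f)\,g+f\,(C_\Sigma^-g)$ by telescoping --- and by the Plemelj relations $C^\pm_\Sigma=\frac{1}{2}(\pm\,\mathrm{id}+iH)$ this is exactly the paper's $\frac{i}{2}\big((Hf)g+f(Hg)\big)$ --- and then invoke a Liouville/Morera argument for functions with zero jump and $L^r$ boundary values on a Carleson curve to conclude $P=C_\Sigma h$. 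The jump computation is algebraically lighter than the paper's Fubini manipulation, but the cost is the uniqueness step, which you rightly identify as the main point requiring care: when $r=1$ (the case that actually arises in Theorem \ref{ABInverse}, where $p$ and $q$ are conjugate), boundary values of $C_\Sigma h$ for $h\in L^1(\Sigma)$ are not attained in $L^1$-mean, so the Morera step must be justified via the nontangential maximal-function estimates on $C_\Sigma f$ and $C_\Sigma g$, whose product is controlled in $L^r$ by H\"older. The paper's direct verification sidesteps this delicacy, which is the main thing its approach buys.
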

\begin{proof}
Suppose $f \in L^p(\Sigma)$ and $g \in L^q(\Sigma)$, then $f(Hg), (Hf)g \in L^r(\Sigma)$, and
\begin{gather*}
\frac{i}{2}\left[\left(C_\Sigma f(Hg) + C_\Sigma (Hf) g\right)\right](z) \nonumber\\
\qquad {}= \int_\Sigma f(s) \lim_{\epsilon\to 0}\left(\int_{\vert s-t\vert > \epsilon} g(t) \frac{\dbar t}{t-s}\right)\frac{ \dbar s}{s-z} + \int_\Sigma \lim_{\epsilon\to 0}\left(\int_{\vert s-t\vert > \epsilon} f(s) \frac{\dbar s}{s-t} \right)g(t) \frac{\dbar t}{t-z}\nonumber\\
\qquad{} = \lim_{\epsilon \to 0}\int_{\substack{s,t \in \Sigma\\
\vert s-t\vert > \epsilon}} \frac{f(s)g(t) }{t-s} \left(\frac{1}{s-z} - \frac{1}{t-z}\right)\dbar s\dbar t = \lim_{\epsilon \to 0}\int_{\substack{s,t \in \Sigma\\
\vert s-t\vert > \epsilon}} f(s) g(t) \frac{1}{s-z} \frac{1}{t-z} \dbar s \dbar t \nonumber\\
\qquad{} = (C_\Sigma fC_\Sigma g)(z),
\end{gather*}
where we have used the fact that the Hilbert transform converges in $L^p(\Sigma)$ and $L^q(\Sigma)$. Therefore, if $A_\pm = I + C_\Sigma^\pm f$ and $B_\pm = I + C_\Sigma^\pm g$, (and hence $A = I + C_\Sigma f$, $B = I+ C_\Sigma g$), then $(AB)(z)$ is the extension of
\begin{gather*}
(AB)_\pm = I + C_\Sigma^\pm f + C_\Sigma^\pm g + C_\Sigma^\pm f C_\Sigma^\pm g = I + C_\Sigma^\pm \left( f + g + \frac{i}{2}( f(Hg) + (Hf)g)\right)
\end{gather*}
and, since $f \in L^p(\Sigma), g \in L^q(\Sigma)$,
\begin{gather*}
f+ g+ \frac{i}{2}( f(Hg) + (Hf)g) \in L^p(\Sigma)+L^q(\Sigma)+L^r(\Sigma),
\end{gather*}
which completes the proof.
\end{proof}
\begin{Remark}
Note in the case $r = 1$ that $\partial C(L^1(\Sigma))$ is not contained in $L^1(\Sigma)$.
\end{Remark}
\begin{Remark}
Suppose $A_\pm \in I+ \partial C(L^p(\Sigma))$ and suppose that $A_\pm^{-1}(x)$ exists a.e. and lies in $I + \partial C(L^q(\Sigma))$, $\frac{1}{r} = \frac{1}{p} + \frac{1}{q} \le 1$. Let $A(z), B(z)$ be the analytic extensions of $A_\pm, A_\pm^{-1}$ off $\Sigma$ respectively. Then $A(z)$ is invertible in $\mathbb{C}\backslash \Sigma$ and $A^{-1}(z) = B(z)$. Indeed by Proposition \ref{ABCR} $(AB)_\pm = I+ C_\Sigma^\pm h$ for some $h \in L^p(\Sigma) +L^q(\Sigma) + L^r(\Sigma)$, but $(AB)_+ = (AB)_- = I$ and so $h = C_\Sigma^+h - C_\Sigma^-h = 0$. Thus $AB(z) = A(z)B(z) = I$. \label{InverseRemark}
\end{Remark}
\begin{thm}\label{ABInverse}
Suppose $v_A, v_B \in L^\infty(\Sigma)$ and $A_\pm$ solves the RHP:
\begin{gather*}
A_+(x) = A_-(x) v_A(x), \qquad \text{for $x \in \Sigma$}, \qquad A_\pm \in I + \partial C\big( L^p(\Sigma)\big),
\end{gather*}
and $B_\pm$ the RHP:
\begin{gather*}
B_+(x) = B_-(x) v_B(x), \qquad \text{for $x \in \Sigma$}, \qquad B_\pm \in I + \partial C\big( L^p(\Sigma)\big).
\end{gather*}
Suppose further that $v_B^{-1} \in L^\infty(\Sigma)$ and $B_\pm^{-1} \in I + \partial C(L^q(\Sigma))$ where $\frac{1}{p} +\frac{1}{q} = 1$, then if $A, B$ are the analytic extensions of $A_\pm$ and $B_\pm$ respectively,
\begin{gather*}
AB^{-1} = I + C_\Sigma A_-\big(v_A v_B^{-1} - I\big)B_-^{-1}.
\end{gather*}
\end{thm}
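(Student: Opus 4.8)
The plan is to verify directly that the right-hand side, call it $M(z) := I + C_\Sigma\big[A_-(v_A v_B^{-1}-I)B_-^{-1}\big](z)$, is analytic off $\Sigma$, has the correct jump, and tends to $I$ at infinity, and then to invoke the uniqueness of solutions to the RHP for $AB^{-1}$ — or, more cleanly, simply to compute $(AB^{-1})_\pm$ and $M_\pm$ and check they agree a.e.\ on $\Sigma$, since both are of the form $I + \partial C$ of an $L^1$-class function and such a representation is unique (if $C_\Sigma^+ h = C_\Sigma^- h$ then $h = C_\Sigma^+h - C_\Sigma^- h = 0$, exactly as in Remark~\ref{InverseRemark}). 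First I would record that $AB^{-1}$ makes sense: by the hypothesis $v_B^{-1}\in L^\infty$ together with $B_\pm^{-1}\in I + \partial C(L^q(\Sigma))$ and Remark~\ref{InverseRemark}, $B(z)$ is invertible on $\mathbb{C}\backslash\Sigma$ with $B^{-1}(z)$ the analytic extension of $B_\pm^{-1}$, so $(AB^{-1})(z)$ is a well-defined analytic function on $\mathbb{C}\backslash\Sigma$, equal to $I$ at infinity, and by Proposition~\ref{ABCR} (the compact case, with $\tfrac1p+\tfrac1q=1$) its boundary values satisfy $(AB^{-1})_\pm \in I + \partial C(L^1(\Sigma))$.

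Next I would compute the jump of $AB^{-1}$ on $\Sigma$. Using $A_+ = A_- v_A$ and $(B^{-1})_+ = v_B^{-1}(B^{-1})_- = v_B^{-1} B_-^{-1}$ (from $B_+ = B_- v_B$), we get
\begin{gather*}
(AB^{-1})_+ = A_- v_A v_B^{-1} B_-^{-1}, \qquad (AB^{-1})_- = A_- B_-^{-1},
\end{gather*}
so that $(AB^{-1})_+ - (AB^{-1})_- = A_-\big(v_A v_B^{-1} - I\big)B_-^{-1}$ a.e.\ on $\Sigma$. On the other hand, since $A_-\big(v_A v_B^{-1}-I\big)B_-^{-1} \in L^1(\Sigma)$ — here one needs $A_- \in L^p$, $B_-^{-1}\in L^q$ with conjugate exponents, and $v_A v_B^{-1}-I \in L^\infty$ — the function $M(z) = I + C_\Sigma\big[A_-(v_Av_B^{-1}-I)B_-^{-1}\big](z)$ is analytic off $\Sigma$, tends to $I$ at infinity, and by the basic jump property $C_\Sigma^+ g - C_\Sigma^- g = g$ satisfies $M_+ - M_- = A_-(v_A v_B^{-1}-I)B_-^{-1}$ as well. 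Thus $AB^{-1}$ and $M$ have the same additive jump and the same value at infinity; setting $h := A_-(v_Av_B^{-1}-I)B_-^{-1}$ and $h' $ for the (a priori $L^1$-class) density representing $(AB^{-1})_\pm - I$, the difference $(AB^{-1}) - M$ is the Cauchy extension of $h' - h$ with zero jump, hence $h' - h = C_\Sigma^+(h'-h) - C_\Sigma^-(h'-h) = 0$, so $AB^{-1} = M$ identically.

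The main obstacle I anticipate is purely bookkeeping about integrability classes: one must be careful that every product appearing — $A_-(v_Av_B^{-1}-I)B_-^{-1}$, and the products invoked when applying Proposition~\ref{ABCR} to $AB^{-1}$ — genuinely lands in the $L^1$ (or $L^p + L^q + L^1$) class for which the Cauchy operator and the "$\partial C$ representation is unique" principle apply, and in particular that $v_A v_B^{-1} - I$, not merely $v_A$ and $v_B^{-1}$ separately, is in $L^\infty$ (which it is, since $v_A, v_B^{-1} \in L^\infty$). The subtlety flagged in the Remark after Proposition~\ref{ABCR} — that $\partial C(L^1(\Sigma))$ is not contained in $L^1(\Sigma)$ — is exactly why the uniqueness step must be phrased via the boundary-value difference $C_\Sigma^+ - C_\Sigma^-$ rather than by comparing $L^1$ densities directly; once that is handled correctly the rest is a one-line jump computation.
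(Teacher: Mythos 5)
Your proposal is correct and follows essentially the same route as the paper's proof: compute the additive jump of $AB^{-1}$ from the multiplicative jumps of $A$ and $B$, observe that $C_\Sigma\big[A_-(v_Av_B^{-1}-I)B_-^{-1}\big]$ has the same jump, invoke Proposition~\ref{ABCR} (with $\tfrac1p+\tfrac1q=1$, so $r=1$) to put $(AB^{-1})_\pm$ in $I+\partial C(L^1(\Sigma))$, and then use the $C_\Sigma^+ - C_\Sigma^-$ uniqueness principle to conclude the two densities coincide. You also correctly flag the one genuine subtlety — that $\partial C(L^1(\Sigma)) \not\subset L^1(\Sigma)$ forces the uniqueness step to go through the jump identity $h = C_\Sigma^+h - C_\Sigma^-h$ rather than through any direct comparison of densities — which is exactly how the paper handles it.
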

\begin{proof}
The following calculation holds almost everywhere on $\Sigma$,
\begin{gather*}
 C_\Sigma^+ A_-\big(v_A v_B^{-1} - I\big)B_-^{-1} = C_\Sigma^- A_-\big(v_A v_B^{-1} - I\big)B_-^{-1} + A_-\big(v_Av_B^{-1} - I\big) B_-^{-1} \nonumber\\
\hphantom{C_\Sigma^+ A_-\big(v_A v_B^{-1} - I\big)B_-^{-1}}{} = C_\Sigma^- A_-\big(v_A v_B^{-1} - I\big)B_-^{-1} + A_+B_+^{-1} - A_-B_-^{-1}.
\end{gather*}
Therefore, almost everywhere on $\Sigma$,
 \begin{gather}
 I+C_\Sigma^+ A_-\big(v_A v_B^{-1} - I\big)B_-^{-1} - A_+B_+^{-1} = I+C_\Sigma^- A_-\big(v_A v_B^{-1} - I\big)B_-^{-1} - A_-B_-^{-1}. \label{IsEntire}
\end{gather}
However, using Proposition \ref{ABCR},
\begin{gather*}
I + C_\Sigma^\pm A_-\big(v_A v_B^{-1} - I\big)B_-^{-1} - \big(AB^{-1}\big)_\pm \in \partial C\big(L^1(\Sigma)\big).
\end{gather*}
Thus
\begin{gather*}
I + C_\Sigma^\pm A_-\big(v_A v_B^{-1} - I\big)B_-^{-1} - \big(AB^{-1}\big)_\pm= C_\Sigma^\pm h
\end{gather*}
for some $h \in L^1(\Sigma) + L^p(\Sigma) + L^q(\Sigma)$. However, (\ref{IsEntire}) then implies that $h\equiv 0$, and therefore
\begin{gather*}
I + C_\Sigma A_-\big(v_A v_B^{-1} - I\big)B_-^{-1} - AB^{-1} \equiv 0
\end{gather*}
completing the proof.
\end{proof}

Proposition \ref{ABInverse} clearly implies for $z \in \mathbb{C}\backslash \Sigma$,
\begin{gather*}
A(z) = B(z) + \big[C_\Sigma A_-\big(v_A v_B^{-1} - I\big)B_-^{-1}\big](z)B(z)\nonumber\\
\hphantom{A(z)}{} = B(z) + \big[C_\Sigma B_-\big(v_A v_B^{-1} - I\big)B_-^{-1}\big](z)B(z)\\
\hphantom{A(z)=}{}+ \big[C_\Sigma (A_--B_-)\big(v_A v_B^{-1} - I\big)B_-^{-1}\big](z)B(z).
\end{gather*}
If it is possible to control the $L^2(\Sigma)$ difference $A_--B_-$ then quantities related to $A$ away from the contour $\Sigma$ can be estimated by quantities that only rely on $B$ and the jump func\-tions~$v_A$,~$v_B$. The results of this section will lay the groundwork of controlling this difference, with the remainder of the work being done in the following section.

For compact $\Sigma$ and $f \in L^p(\Sigma)$, $p \ge 1$, we have that $(C_\Sigma f)(z) = O\big(\frac{1}{z}\big)$ as $z \to \infty$. The contour $\Sigma$ defined in Section~\ref{RHPsSection} is clearly compact, we will use this fact repeatedly without further comment.

\subsection{The resolvent bounds}\label{Machinery1}
\qquad Let $h,f$ refer to matrix valued functions, $h \in L^\infty(\Sigma)$ and $f \in L^2(\Sigma)$, and define the operator $C_{h}$ acting on $L^2(\Sigma)$ by
\begin{gather*}
(C_h f)(x) = C_\Sigma^- (f (h-I)) (x) = \lim_{z \to x^-} \int_\Sigma \frac{f(s)(h(s)-I)}{s-z} \dbar s \qquad \text{for $x\in \Sigma$}.
\end{gather*}
Clearly $C_h$ is bounded from $L^2(\Sigma)\to L^2(\Sigma)$. The importance of this definition lies in its role within the general theory of Riemann--Hilbert problems, see for example~\cite{ClancyGohberg,DeiftNLS} and the references therein. In particular, let $\mu$ be any $L^2(\Sigma)$ solution of the equation
\begin{gather}
(1 - C_{v_\Sigma}) \mu = I .\label{Something300}
\end{gather}
Note that $I\in L^2(\Sigma)$ since $\Sigma$ is bounded. If $1-C_{v_\Sigma}$ is an invertible operator from~$L^2(\Sigma)$ to~$L^2(\Sigma)$, then
\begin{gather}
\mu = (1-C_{v_\Sigma})^{-1}I \label{ResolventIntro}
\end{gather}
is the unique solution to (\ref{Something300}). In this case the unique solution $X$ to the RHP normalized at infinity with jump $v_\Sigma$ is given by
\begin{gather*}
X(z) = I + C_\Sigma (\mu (v_\Sigma - I)).
\end{gather*}
Our \textit{first} step to controlling the difference $Q-\tilde{Q}$, will be showing that the resolvent in~(\ref{ResolventIntro}) is bounded uniformly of~$n$ for the following modified version of the Legendre Riemann--Hilbert problem:

Define, as in (\ref{TildeQRHP2}), for the same contour $\Sigma$, and subcontours $\Sigma_1$, $\Sigma_2$ as depicted in Fig.~\ref{SigmaDef},
\begin{gather*}
\tilde{v}_\Sigma = \begin{cases}
\left(\begin{matrix}
1 & 0\\
\phi^{-2n}(s) & 1
\end{matrix}\right) & \text{for $s \in \Sigma_1\cup\Sigma_2$},\\
\left(\begin{matrix}
1 & 0\\
2\phi^{-2n}(s) & 1
\end{matrix}\right) & \text{for $s \in [1, 1+\delta]$},\\
\left(\begin{matrix}
0 & 1\\
-1 & 0
\end{matrix}\right) & \text{for $s \in [-1,1]$},
\end{cases}
\end{gather*}
then we have the following theorem:
\begin{thm}\label{LegOpBound}
The operator $1-C_{\tilde{v}_\Sigma}$ is invertible for all $n\ge 0$. Moreover $(1-C_{\hat{v}_\Sigma})^{-1}$ is bounded uniformly of $n$ in operator norm from $L^2(\Sigma) \to L^2(\Sigma)$.
\end{thm}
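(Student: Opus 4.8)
The plan is to reduce the claim to the known, classical asymptotic analysis of the Legendre Riemann--Hilbert problem, and then to upgrade the pointwise-in-$n$ control to a uniform-in-$n$ bound. First I would establish invertibility of $1-C_{\tilde v_\Sigma}$ for each fixed $n$. Since the jump $\tilde v_\Sigma$ has determinant $1$ and satisfies the standard symmetry $\tilde v_\Sigma(s)^{-1} = \overline{\tilde v_\Sigma(\bar s)}^{\,*}$-type relations (the triangular factors are unipotent, the middle block $\left(\begin{smallmatrix}0&1\\-1&0\end{smallmatrix}\right)$ is the classical Legendre jump), one invokes the standard vanishing-lemma argument: any element of the nullspace of $1-C_{\tilde v_\Sigma}$ generates, via $X(z)=C_\Sigma(\mu(\tilde v_\Sigma-I))$, a solution of the homogeneous RHP with zero asymptotics at infinity, and a Cauchy-theorem/Liouville argument as in the proof of Proposition~\ref{Fokas} forces $\mu\equiv 0$. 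Hence $1-C_{\tilde v_\Sigma}$ is injective; since it is a Fredholm operator of index zero (the jump is a bounded multiplier on a Carleson contour, and $\tilde v_\Sigma - I$ is compactly supported and bounded, so $C_{\tilde v_\Sigma}$ differs from a compact perturbation of something invertible, or one argues directly via the factorization $\tilde v_\Sigma = (I-w_-)^{-1}(I+w_+)$), invertibility follows.

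Next, and this is the heart of the matter, I would prove the uniform-in-$n$ bound by importing the steepest-descent analysis of $\tilde Q$. The contour $\Sigma$ was designed (Fig.~\ref{SigmaDef}) so that $\phi^{-2n}$ is exponentially small on the lens boundaries $\Sigma_1\cup\Sigma_2$ away from the endpoints $\pm 1$: by Proposition~\ref{PhiProps}(5), $|\phi(z)|>1$ strictly on these arcs, with $|\phi(z)|\to 1$ only as $z\to\pm 1$. Thus $\tilde v_\Sigma - I$ is uniformly small in $L^\infty$ (hence as an operator on $L^2$) except in fixed neighborhoods of $-1$ and $1$. The standard way to finish is to split $\Sigma = \Sigma^{\mathrm{out}} \cup \Sigma^{\mathrm{edge}}_{-1} \cup \Sigma^{\mathrm{edge}}_{1}$, use the global Legendre parametrix (the function built from $\phi$ that solves the RHP with only the $\left(\begin{smallmatrix}0&1\\-1&0\end{smallmatrix}\right)$ jump on $[-1,1]$) together with the Bessel parametrices at $\pm 1$ to construct an explicit approximate solution $\tilde Q^{\mathrm{app}}$, and observe that $\tilde Q(\tilde Q^{\mathrm{app}})^{-1}$ solves a small-norm RHP. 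This is exactly the content of \cite{Kuijlaars}, as the authors note the problem $\tilde Q$ ``can be asymptotically solved as in~\cite{Kuijlaars}.'' From the fact that the associated operator $1 - C_{v^{\mathrm{small}}}$ has norm $\|C_{v^{\mathrm{small}} - I}\|\to 0$ it is invertible with uniformly bounded inverse for large $n$; conjugating back through the (uniformly bounded, with uniformly bounded inverse) parametrix multiplication operators, one gets a uniform bound on $(1-C_{\tilde v_\Sigma})^{-1}$ for $n\ge n_0$. Combined with the per-$n$ invertibility from the first step, which handles the finitely many $n<n_0$, this gives the bound for all $n\ge 0$.

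A technical point I would be careful about is the passage between the operator-norm bound on $(1-C_{\tilde v_\Sigma})^{-1}$ and the parametrix construction: the lens arcs $\Sigma_1,\Sigma_2$ terminate on the small circle around $z=1$ rather than at $z=1$ itself (radius $\delta$ fixed), and the horizontal segment $[1,1+\delta]$ carries the jump $\left(\begin{smallmatrix}1&0\\2\phi^{-2n}&1\end{smallmatrix}\right)$; on that segment $\phi$ is real and $>1$ so $\phi^{-2n}$ is genuinely exponentially small and the $2$ vs $1$ discrepancy versus the logarithmic case is irrelevant for $\tilde Q$. The Bessel parametrix at $1$ must be matched across the circle $|z-1|=\delta$ with error $O(1/n)$; the Bessel parametrix at $-1$ likewise. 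One must verify the matching ratio jumps are $I + O(1/n)$ uniformly, which is classical. The main obstacle, then, is not any single estimate but organizing the equivalence between ``$1-C_{\tilde v_\Sigma}$ has uniformly bounded inverse'' and ``the steepest-descent parametrix construction succeeds'': concretely, one needs that left/right multiplication by the parametrix $P(z)$ and its boundary values induces a bounded invertible map on the relevant $L^2$ spaces with norms controlled independently of $n$, which follows because $P_\pm$ and $P_\pm^{-1}$ are uniformly bounded in $L^\infty(\Sigma)$ — this is where the explicit form of the Bessel and global parametrices, and the fact that $F\equiv 1$ simplifies everything in the Legendre case, is used.
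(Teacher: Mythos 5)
Your approach diverges substantially from the paper's: you propose to establish the uniform bound by constructing the Legendre parametrix and conjugating the singular integral operator through it, whereas the paper invokes the equivalence in Proposition~\ref{NLSProp} (from Deift--Zhou) and then \emph{directly constructs} the solution of the inhomogeneous RHP $m_+ = m_-\tilde v_\Sigma + g$. Concretely, the paper undoes the lens opening ($m \to t$), conjugates by $\phi^{n\sigma_3}$ ($t \to u$), reducing the jump on $[-1,1]$ to $\left(\begin{smallmatrix}1&1\\0&1\end{smallmatrix}\right)$, and then solves for $u$ explicitly via Plemelj's formula: $u_1 = p_{n-1} + C_{[-1,1]}(\hat g_1\phi_+^n)$ with $p_{n-1}$ a polynomial of degree $\le n-1$ fixed by $n$ moment conditions that force $u_2 = O(z^{-n-1})$. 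The $L^2$ bound on $p_{n-1}$ comes from an orthogonality identity $\|p_{n-1}\|_{L^2}^2 = -\int \overline{p_{n-1}}\,(\cdots)$ and Cauchy--Schwarz, with constants visibly independent of $n$. No parametrix appears; the argument is elementary and is in fact the reason the theorem holds for \emph{all} $n \ge 0$, not merely $n$ large.

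Your route has a genuine gap at the last step. You write that ``$P_\pm$ and $P_\pm^{-1}$ are uniformly bounded in $L^\infty(\Sigma)$,'' and that this is what lets you transfer the small-norm bound on $(1-C_{v_R})^{-1}$ back to a bound on $(1-C_{\tilde v_\Sigma})^{-1}$. That uniform $L^\infty$ bound is false. Inside the disk $U_\delta(1)$ the Bessel parametrix has the form $E(z)\,(2\pi n)^{\sigma_3/2}\,\Psi\big(n^2 f(z)\big)\,\phi^{-n\sigma_3}(z)$: the $(2\pi n)^{\sigma_3/2}$ conjugation forces entries of size $n^{1/2}$ on the portion of $\Sigma$ near $z=1$, and closer to the endpoint ($|z-1| \ll n^{-2}$) the $\Psi_{12}$-component adds an extra $\log$ factor because $K_0$ is logarithmically singular at the origin (cf.\ (\ref{PsiAsymptotics})). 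This is exactly the phenomenon the paper flags in the remark following (\ref{JDef}): $\tilde Q_{12},\tilde Q_{22}\sim c_n\log|z-1|$ with $c_n\sim c\,n^{1/2}$, and the parametrix differs from $\tilde Q$ only by an $I+O(1/n)$ factor, so it has the same growth. Consequently the multiplication operators by $P_\pm$ and $P_\pm^{-1}$ on $L^2(\Sigma)$ have norms that grow with $n$, and the conjugation $(1-C_{\tilde v_\Sigma}) \leftrightarrow (1-C_{v_R})$ loses the uniformity you need. To salvage a parametrix-based proof one would have to track carefully that the blow-up occurs only on a region of $\Sigma$ whose measure shrinks fast enough to preserve $L^2\to L^2$ bounds, i.e., one ends up re-deriving exactly the quantitative content of Proposition~\ref{NLSProp} together with an $L^2$-weighted estimate on $P_\pm$; the paper sidesteps all of this by never introducing a parametrix for this theorem.

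The first half of your argument --- uniqueness via a vanishing-lemma/Liouville argument, Fredholm index zero for per-$n$ invertibility --- is standard and correct, and the paper implicitly concedes it in the remark after Theorem~\ref{LogOpBound}.
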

\begin{proof}
The following proposition follows from Proposition 2.14 from \cite{DeiftNLS}, see also \cite{ClancyGohberg}.
\begin{prop}\label{NLSProp}
A family of operators $(1 - C_{h_n})^{-1}$ is bounded uniformly with respect to $n$ if and only if the inhomogeneous RHP
\begin{gather*}
m_+(s) = m_- (s)h_n(s) + g(s) \qquad \text{for $s \in \Sigma$}, \quad m_\pm \in \partial C\big(L^2(\Sigma)\big)
\end{gather*}
is $($uniquely$)$ solvable for all $g \in L^2 (\Sigma)$ with $\vert\vert m_\pm\vert\vert_{L^2} \le c\vert\vert g\vert\vert_{L^2}$ with $c$ independent of both~$n$ and~$g$.
\end{prop}
We want to apply Proposition \ref{NLSProp} to $h_n = \tilde{v}_\Sigma$. To this end, suppose a solution $m_\pm(s) = m_\pm(g,n,s)$ exists to
\begin{gather*}
m_+(s) = m_- (s)\tilde{v}_\Sigma(s) + g(s) \qquad \text{for $s \in \Sigma$}, \qquad m_\pm \in \partial C\big(L^2(\Sigma)\big) 
\end{gather*}
and note $m(z) := (C_\Sigma f)(z)$ for $z \in \mathbb{C}\backslash \Sigma$ is the analytic continuation of~$m_\pm$. It will follow from the calculation below that such a solution indeed exists. For $z \in \mathbb{C}\backslash [-1,1]$, let
\begin{gather*}
\tilde{v}(z) = \left(\begin{matrix}
1 & 0\\
\phi^{-2n}(z) & 1
\end{matrix}\right)
\end{gather*}
 and, in the notation of Fig.~\ref{SigmaDef}, set
\begin{gather*}
t(z) = \begin{cases}
m(z) - C_\Sigma \tilde{g}(z) & \text{for $z \in \Omega_0$},\\
m(z)\tilde{v}(z) - C_\Sigma \tilde{g}(z) & \text{for $z \in \Omega_1$},\\
m(z) \tilde{v}^{-1}(z)- C_\Sigma \tilde{g}(z) & \text{for $z \in \Omega_2$},
\end{cases}
\end{gather*}
where
\begin{gather*}
\tilde{g}(s) = \begin{cases}
g(s) & \text{for $s \in \Sigma_1$},\\
g(s) \tilde{v}^{-1}(s) & \text{for $s \in \Sigma_2$},\\
g(s) \tilde{v}^{-1}(s)& \text{for $s \in [1, 1+\delta]$},\\
0 & \text{for $s \in [-1,1]$}.
\end{cases}
\end{gather*}

Since $\tilde{v}(s)$ is uniformly bounded, it is clear that $\Vert \tilde{g}\Vert_{L^2(\Sigma)} \le c \Vert g \Vert_{L^2(\Sigma)}$, and it is also clear that $\Vert m_\pm \Vert_{L^2(\Sigma)} \le c \Vert t_\pm\Vert_{L^2(\Sigma)} + c\Vert g \Vert_{L^2(\Sigma)}$. For $s \in\Sigma_1, \Sigma_2$, or $[1, 1+\delta]$, straightforward calculations will show that $t_+(s) = t_-(s)$. Therefore $t(z)$ is actually analytic for $z \in \mathbb{C}\backslash [-1,1]$. Lastly, for $z \in (-1,1)$, first note that $\tilde{g} = 0$, and therefore $C_\Sigma^+\tilde{g} = C_\Sigma^-\tilde{g}$, which we label $C_\Sigma \tilde{g}$. Another calculations shows:
\begin{gather*}
t_+ = t_- \left(\begin{matrix}
\left(\dfrac{\phi_-}{\phi_+}\right)^n & 1\\
0 & \left(\dfrac{\phi_+}{\phi_-}\right)^n
\end{matrix}\right) + C_\Sigma \tilde{g}\left(\begin{matrix}
\left(\dfrac{\phi_-}{\phi_+}\right)^n & 1\\
0 & \left(\dfrac{\phi_+}{\phi_-}\right)^n
\end{matrix}\right) + g\left(\begin{matrix}
1 & 0 \\
\phi_+^{-2n} & 1
\end{matrix}\right) - C_\Sigma\tilde{g}.
\end{gather*}
So $t(z)$ solves a RHP:
\begin{enumerate}\itemsep=0pt
\item[(a)]
\begin{gather}
t(z) \text{ is analytic for $z \in \mathbb{C} \backslash [-1,1]$} ,\label{TsmallRHP1}
\end{gather}
\item[(b)] for $x \in (-1,1)$,
\begin{gather}
 t_+(x) =t_-(x)\left(\begin{matrix}
\left(\dfrac{\phi_-}{\phi_+}\right)^n & 1\\
0 & \left(\dfrac{\phi_+}{\phi_-}\right)^n
\end{matrix}\right) + \hat{g}(x), \label{TsmallRHP2}
\end{gather}
\item[(c)] as $z \to \infty$
\begin{gather}
t(z) =O_n\left(\frac{1}{z}\right),\label{TsmallRHP3}
\end{gather}
\end{enumerate}
where
\begin{gather*}
\hat{g} = C_\Sigma \tilde{g}\left(\begin{matrix}
\left(\dfrac{\phi_-}{\phi_+}\right)^n & 1\\
0 & \left(\dfrac{\phi_+}{\phi_-}\right)^n
\end{matrix}\right) + g\left(\begin{matrix}
1 & 0 \\
\phi_+^{-2n} & 1
\end{matrix}\right) - C_\Sigma\tilde{g}.
\end{gather*}
Since the restriction of $C_\Sigma$ to $[-1, 1]$ is bounded as an operator from $L^2(\Sigma)$ to $L^2([-1,1])$, and since $\vert \phi_\pm(x)\vert = 1$ for $x \in [-1,1]$. It is clear that $\Vert \hat{g}\Vert_{L^2([-1,1])} \le c\Vert g \Vert_{L^2(\Sigma)}$.

Next, let
\begin{gather*}
u(z) = t(z) \phi^{n \sigma_3}(z)
\end{gather*}
it is clear that $\Vert t_\pm\Vert_{L^2([-1,1])}\le \Vert u_\pm\Vert_{L^2([-1,1])}$. The jump conditions becomes
\begin{gather*}
u_+(x) = u_-(x) \left(\begin{matrix}
1 & 1 \\
0 & 1
\end{matrix}\right)+ \hat{g}\phi_+^{n\sigma_3}
\end{gather*}
for $x \in [-1,1]$, and, as $z \to \infty$,
\begin{gather*}
u(z)= t(z) \phi^{n\sigma_3}(z) = O_n\left(\frac{1}{z}\right) \left(\begin{matrix}
z^n + O_n(z^{n-1}) & 0\\
0 & z^{-n} + O_n\big(z^{-n-1}\big)
\end{matrix}\right) \\
\hphantom{u(z)}{} =O_n\left(\begin{matrix}
z^{n-1} & z^{-n-1} \\
z^{n-1} & z^{-n-1}
\end{matrix}\right).
\end{gather*}
In particular, a solution to the RHP described below can be transformed in a bounded fashion to the solution~$m$,
\begin{gather*}
u(z) \text{ is analytic in $\mathbb{C} \backslash [-1,1]$}, \\
u_+(x) = u_-(x) \left(\begin{matrix}
1 & 1 \\
0 & 1
\end{matrix}\right)+ \hat{g}\phi_+^{n\sigma_3}(x)\text{ for }x \in [-1,1], \\
u(z) =O_n\left(\begin{matrix}
z^{n-1} & z^{-n-1} \\
z^{n-1} & z^{-n-1}
\end{matrix}\right) \text{ as $z \to \infty$}.
\end{gather*}
This RHP can be solved as follows. Let $\hat{g} = (\hat{g}_1, \hat{g}_2)$ and $u = (u_1,u_2)$ where $\hat{g}_1$, $\hat{g}_2$, $u_1$, $u_2$ are the column vectors of~$\hat{g}$ and~$u$. Then
\begin{gather*}
u_{1+} = u_{1-} + \hat{g}_1 \phi_+^n, \qquad
u_{2+} = u_{2-} + u_{1-} + \hat{g}_2 \phi_+^n.
\end{gather*}
Thus, since $u_1 = O_n(z^{n-1})$ as $z \to \infty$, any polynomial $p_{n-1}$ with $\deg(p_{n-1}) \le n-1$ provides a potential solution for $u_1$ through Plemelj's formula,
\begin{gather}
u_1(z) = p_{n-1}(z) + \int_{-1}^1 \frac{\hat{g}_1(s) \phi_+^n(s)}{s-z} \dbar s\label{U1Def}
\end{gather}
and, since $u_2 = O_n\big(z^{-n-1}\big)$, if the solution exists, it must be, again by Plemelj's formula,
\begin{gather}
u_2 = \int_{-1}^1 \frac{u_{1-}(s) + \hat{g}_2(s) \phi_+^{-n}(s)}{s-z}\dbar s\nonumber\\
\hphantom{u_2}{} = -\frac{1}{z}\int_{-1}^1\left( u_{1-}(s) + \hat{g}_2(s) \phi_+^{-n}(s)\right)\sum_{k=0}^\infty \left( \frac{s}{z}\right)^k\dbar s.\label{U2Def}
\end{gather}
The condition $u_2 = O_n\big(z^{-n-1}\big)$ is therefore equivalent to a system of $n$ equations for the $n$ coefficients of $p_{n-1}$. Specifically, we have
\begin{gather}
\int_{-1}^1 s^k\big(u_{1-}(s) + \hat{g}_2(s)\phi_+^{-n}\big){\rm d}s = 0 \qquad \text{for $0 \le k \le n-1$},\nonumber\\
\int_{-1}^1 s^kp_{n-1}(s) {\rm d}s = -\int_{-1}^1 \big(C_{\Sigma}^- \hat{g} \phi_+^n\big)(s) +\hat{g}_2(s)\phi_+^{-n}{\rm d}s \qquad \text{for $0 \le k \le n-1$}.\label{MSomething1}
\end{gather}
As $\det\big(\int_{-1}^1 s^{k+j}{\rm d}s \big)_{k,j = 0}^{n-1} \neq 0$, equation (\ref{MSomething1}) is a system of $n$ independent, linear equations for the $n$ unknowns, $a_0, a_1, \dots, a_{n-1}$, where
\begin{gather*}
p_{n-1}(s) = a_0 + a_1s + \dots + a_{n-1}s^{n-1}.
\end{gather*}
It follows that the solution $u$ exists and is unique. Moreover,
\begin{gather*}
 \int_{-1}^1 s^k\big(u_{1-}(s) + \hat{g}_2(s)\phi_+^{-n}\big){\rm d}s = 0 \qquad \text{for $0 \le k \le n-1$}, \nonumber\\
 \int_{-1}^1 P_k(s)\big(p_{n-1}(s) + C_{[-1,1]}^-(\hat{g}_1 \phi_+^{n}) + \hat{g}_2(s)\phi_+^{-n}\big){\rm d}s = 0.
 \end{gather*}
For all polynomials with $\deg (P_k) = k\le n-1$
 \begin{gather*}
 \int_{-1}^1 \big(\vert p_{n-1}\vert^2(s) + \overline{p_{n-1}(s)}C_{[-1,1]}^-(\hat{g}_1 \phi_+^{n}) + \overline{p_{n-1}(s)}\hat{g}_2(s)\phi_+^{-n}\big){\rm d}s = 0,\\
\vert\vert p_{n-1}\vert\vert_{L^2(-1,1)}^2= -\int_{-1}^1 \overline{p_{n-1}(s)}\big( C_{[-1,1]}^-(\hat{g}_1 \phi_+^{n}) +\hat{g}_2(s)\phi_+^{-n}\big){\rm d}s \nonumber\\
\hphantom{\vert\vert p_{n-1}\vert\vert_{L^2(-1,1)}^2}{} \le \vert\vert p_{n-1}\vert\vert_{L^2} \vert\vert C_{[-1,1]}^-(\hat{g}_1 \phi_+^n) + \hat{g}_2 \phi_+^{-n}\vert\vert_{L^2}. 
\end{gather*}
So, using the fact that $\vert \phi_+\vert = 1$ and the boundedness of the Cauchy operator, we have
\begin{gather*}
\vert \vert p_{n-1} \vert \vert_{L^2} \le c\vert \vert \hat{g}_2 \vert \vert_{L^2}.
\end{gather*}
Most importantly, we notice that $c$ is independent of $n$. Therefore, by using the boundedness of the Cauchy transform and the fact that $\vert \phi_\pm\vert = 1$, as well as~(\ref{U1Def}) and~(\ref{U2Def}), we see that,
\begin{gather*}
\vert \vert (u_1, u_2)_\pm \vert \vert_{L^2([-1,1])} \le c'' \vert \vert \hat{g}\vert\vert_{L^2(\Sigma)} \le c'''\vert \vert g\vert\vert_{L^2(\Sigma)},
\end{gather*}
where all the constants are independent of $n$. Up until now, we have been assuming solutions~$m$ and~$t$ existed. However, reversing our construction,
\begin{gather*}
t(z) = u(z) \phi^{-n\sigma_3}
\end{gather*}
is a solution to the RHP for $t$, (\ref{TsmallRHP1})--(\ref{TsmallRHP3}) with $\Vert t_\pm \Vert_{L^2([-1,1])} \le \Vert u_\pm\Vert_{L^2([-1,1])} \le c \Vert g \Vert_{L^2(\Sigma)}$, and
\begin{gather*}
m(z) = \begin{cases}
t(z) + C_\Sigma \tilde{g}(z) & \text{for $z \in \Omega_0$},\\
(t(z)+ C_\Sigma \tilde{g}(z)) \tilde{v}^{-1}(z) & \text{for $z \in \Omega_1$},\\
(t(z)+ C_\Sigma \tilde{g}(z)) \tilde{v}(z) & \text{for $z \in \Omega_2$}
\end{cases}
\end{gather*}
solves
\begin{gather}
m(z) \text{ is analytic in $\mathbb{C} \backslash \Sigma$}, \nonumber\\
m_+(s) = m_-(s) \tilde{v}_\Sigma(s)+ g(s) \text{ for $s \in \Sigma$},\nonumber\\
m(z) = O_n\left(\frac{1}{z}\right) \text{ as $z \to \infty$} \label{mDecay}
\end{gather}
and satisfies $\Vert m_\pm\Vert_{L^2(\Sigma)} \le c \Vert t \Vert_{L^2([-1,1])} + c \Vert \tilde{g}\Vert_{L^2(\Sigma)}\le c\Vert g \Vert_{L^2(\Sigma)}$.

All that remains is to show $m_\pm(z) \in \partial C(L^2(\Sigma))$. To that end, let $\tilde{\mu}(s) = m_-(s)$ and define for $z \in \mathbb{C}\backslash \Sigma$,
\begin{gather}
H(z) := (C_{\Sigma}( \tilde{\mu} (\tilde{v}_\Sigma -I) + g))(z). \label{HDef}
\end{gather}
Then for $s \in \Sigma$
\begin{gather*}
H_+(s) =( (C_{\Sigma}^+ \tilde{\mu} (\tilde{v}_\Sigma -I) + g))(s)=( (C_{\Sigma}^- \tilde{\mu} (\tilde{v}_\Sigma -I) + g))(s) + \tilde{\mu}(s)(\tilde{v}_\Sigma(s) - I)) + g(s) \\
\hphantom{H_+(s)}{} = H_-(s) + m_+ - m_-.
\end{gather*}
Therefore
\begin{gather*}
H_+ - m_+ = H_- - m_-.
\end{gather*}
It follows from the same argument as in Proposition \ref{Fokas} that the function $H(z) - m(z)$ is entire. From (\ref{mDecay}) and (\ref{HDef}), $H(z) - m(z) = O_n\big(\frac{1}{z}\big)$ as $z\to\infty$. Thus, by Liouville's theorem,
\begin{gather*}
m(z) \equiv H(z) = (C_{\Sigma}( \tilde{\mu} (\tilde{v}_\Sigma -I) + g))(z)
\end{gather*}
and so
\begin{gather*}
m_\pm \in \partial C(L^2(\Sigma))
\end{gather*}
concluding the proof of Theorem \ref{LegOpBound}.
\end{proof}

Theorem \ref{LegOpBound} has, as a corollary, the equivalent result for the modified log-orthogonal problem:
\begin{thm} \label{LogOpBound} The operator $1-C_{v_\Sigma}$ is uniformly invertible for all sufficiently large $n$.
\end{thm}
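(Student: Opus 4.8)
The plan is to deduce Theorem~\ref{LogOpBound} from Theorem~\ref{LegOpBound} by a perturbation argument: the jump matrices $v_\Sigma$ and $\tilde v_\Sigma$ differ by something small in $L^\infty(\Sigma)$, so $1-C_{v_\Sigma}$ is a small perturbation of the uniformly invertible operator $1-C_{\tilde v_\Sigma}$. Since $C_{v_\Sigma}-C_{\tilde v_\Sigma}$ is the operator $f\mapsto C_\Sigma^-\big(f(v_\Sigma-\tilde v_\Sigma)\big)$, its norm on $L^2(\Sigma)$ is at most $\|C_\Sigma^-\|\,\|v_\Sigma-\tilde v_\Sigma\|_{L^\infty(\Sigma)}$, the Cauchy operators being bounded on $L^2(\Sigma)$ since $\Sigma$ is Carleson. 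Granting that this quantity tends to $0$ as $n\to\infty$, one writes
\begin{gather*}
1-C_{v_\Sigma}=\big(1-C_{\tilde v_\Sigma}\big)\Big[I+\big(1-C_{\tilde v_\Sigma}\big)^{-1}\big(C_{\tilde v_\Sigma}-C_{v_\Sigma}\big)\Big] .
\end{gather*}
By Theorem~\ref{LegOpBound} there is an $M$, independent of $n$, with $\big\|(1-C_{\tilde v_\Sigma})^{-1}\big\|\le M$, so for $n$ large the bracketed factor is invertible by a Neumann series and $\big\|(1-C_{v_\Sigma})^{-1}\big\|\le 2M$. Thus the theorem reduces to the claim $\|v_\Sigma-\tilde v_\Sigma\|_{L^\infty(\Sigma)}\to 0$ as $n\to\infty$.

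To prove that claim I would compare the jump matrices directly. They coincide on $[-1,1]$; on $\Sigma_1\cup\Sigma_2$ their difference has only the $(2,1)$ entry $\big(\frac{F^2}{w}-1\big)\phi^{-2n}$, and on $[1,1+\delta]$ only the $(2,1)$ entry $\big(\frac{F^2}{w_+}+\frac{F^2}{w_-}-2\big)\phi^{-2n}$. Everywhere $|\phi^{-2n}|\le 1$ by property~\eqref{prop5} of Proposition~\ref{PhiProps}, and $\frac{F^2}{w}$ is bounded by Remark~\ref{F2/wRemark}. Fix a small $\eps>0$. On the part of $\Sigma_1\cup\Sigma_2\cup[1,1+\delta]$ lying outside fixed small discs about $\pm1$ one has $|\phi|\ge 1+c_0>1$, so $|\phi^{-2n}|\le(1+c_0)^{-2n}<\eps$ for $n$ large. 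Near $z=-1$, property~\eqref{prop3-2.3} of Proposition~\ref{FProps} gives $\big|\frac{F^2}{w}(z)-1\big|=O(|z+1|^{1/2})$, so shrinking the disc about $-1$ makes the difference $<\eps$ there, uniformly in $n$. The remaining region $z\to1$ is the delicate one: property~\eqref{prop2-2.3} of Proposition~\ref{FProps} only gives the logarithmic bound $\big|\frac{F^2}{w}(z)-1\big|=O\big(1/|\log|z-1||\big)$ (and Proposition~\ref{FCancellation} gives $O\big(1/\log^2|x-1|\big)$ on $[1,1+\delta]$), while property~\eqref{1branch} of Proposition~\ref{PhiProps}, together with the fact that $\Sigma$ meets $1$ at an angle $\theta$ with $\cos(\theta/2)>0$, gives only $\log|\phi(z)|\ge c'\sqrt{|z-1|}$, i.e.\ $|\phi^{-2n}(z)|\le e^{-c'n\sqrt{|z-1|}}$. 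One then uses the elementary fact that $\sup_{0<r\le\delta}\frac{e^{-c'n\sqrt r}}{|\log r|}\to 0$ as $n\to\infty$ — it is $O(1/\log n)$, the extremal $r$ being of size $(n\log n)^{-2}$ — to conclude the difference is $<\eps$ there for $n$ large. Combining the three regions yields $\|v_\Sigma-\tilde v_\Sigma\|_{L^\infty(\Sigma)}\to 0$.

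The only genuine obstacle is this last estimate near the logarithmic singularity at $x=1$, and it is an obstacle precisely because no local parametrix is available: one cannot localize at $1$, and must instead trade the merely logarithmic smallness of $\frac{F^2}{w}-1$ against the merely $e^{-c'n\sqrt{|z-1|}}$, non-uniform, decay of $\phi^{-2n}$, via the elementary supremum bound above. Everything else — the Neumann-series step and the exponential estimates in the bulk and near $-1$ — is routine; the one point to verify is that the contour $\Sigma$ has been chosen so that its approach to $1$ lies in the sector where property~\eqref{1branch} of Proposition~\ref{PhiProps} yields $|\phi|>1$ with the square-root rate used above, which is built into the construction of $\Sigma$.
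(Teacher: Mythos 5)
Your proof is correct and takes essentially the same route as the paper: deduce uniform invertibility of $1-C_{v_\Sigma}$ from Theorem~\ref{LegOpBound} by a Neumann-series perturbation argument, reducing everything to $\Vert v_\Sigma-\tilde v_\Sigma\Vert_{L^\infty(\Sigma)}\to 0$. The paper asserts this last convergence ``follows from Propositions~\ref{PhiProps} and~\ref{FProps}'' with no further detail; you have correctly supplied the missing details (the split into the bulk, the neighbourhood of $-1$ where $\frac{F^2}{w}-1=O(|z+1|^{1/2})$ saves the day, and the neighbourhood of $+1$ where the merely logarithmic smallness of the jump discrepancy has to be traded against the non-uniform $e^{-cn\sqrt{|z-1|}}$ decay of $\phi^{-2n}$ via the elementary supremum estimate). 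One small simplification you could make: in the paper's contour $\Sigma$ (Fig.~\ref{SigmaDef}), the lens lips $\Sigma_1,\Sigma_2$ do not approach $z=1$; they bend around it along an arc of fixed radius $\delta$ and terminate at $1+\delta$. The only part of $\Sigma$ that approaches $1$ is $[1,1+\delta]$, so your analysis of $\Sigma_1\cup\Sigma_2$ near $1$ (with the $O(1/|\log|z-1||)$ bound from property~\eqref{prop2-2.3}) is vacuous, while the $[1,1+\delta]$ case (with the stronger $O(1/\log^2|x-1|)$ bound from Proposition~\ref{FCancellation}) is the one that matters — and that part of your argument is exactly right.
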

\begin{proof}
Note that
\begin{gather*}
v_\Sigma - \tilde{v}_\Sigma = \begin{cases}
\left(\begin{matrix}
0 & 0\\
\left(\dfrac{F^2}{w} - 1\right) \phi^{-2n} & 0
\end{matrix}\right) & \text{for $s \in \Sigma_1 \cup \Sigma_2$},\\
\left(\begin{matrix}
0 & 0\\
\left(\dfrac{F^2}{w_+} + \dfrac{F^2}{w_-} - 2\right) \phi^{-2n} & 0
\end{matrix}\right) & \text{for $s \in [1, 1+\delta]$}, \\
0 & \text{for $s \in [-1, 1]$}.
\end{cases}
\end{gather*}
So it follows from Propositions \ref{PhiProps} and~\ref{FProps} that
\begin{gather*}
\Vert v_\Sigma - \tilde{v}_\Sigma\Vert_{L^\infty(\Sigma)} \to 0,
\end{gather*}
which, in particular, implies that
\begin{gather*}
\Vert C_{v_\Sigma} - C_{\tilde{v}_\Sigma} \Vert_{L^2 \to L^2}\to 0. 
\end{gather*}
Combined with Theorem \ref{LegOpBound}, we see that
\begin{gather}
 \big\Vert (C_{v_\Sigma} - C_{\tilde{v}_\Sigma} ) (1 - C_{\tilde{v}_\Sigma} )^{-1} \big\Vert_{L^2 \to L^2}\to 0.\label{NotIntL2Bound}
\end{gather}
Writing
\begin{gather}
 \left(1 - C_{v_\Sigma}\right) \left(1 - C_{\tilde{v}_\Sigma}\right)^{-1} = 1 - \left(C_{v_\Sigma} - C_{\tilde{v}_\Sigma}\right) \left(1 - C_{\tilde{v}_\Sigma}\right)^{-1} \label{TrivRewrite}
\end{gather}
and using (\ref{NotIntL2Bound}), we see that for $n$ sufficiently large, the series
\begin{gather*}
\big( 1 - (C_{v_\Sigma} - C_{\tilde{v}_\Sigma}) (1 - C_{\tilde{v}_\Sigma})^{-1} \big) ^{-1}=
\sum_{k = 0}^\infty \big[ (C_{v_\Sigma} - C_{\tilde{v}_\Sigma} ) (1 - C_{\tilde{v}_\Sigma} )^{-1} \big]^k
\end{gather*}
converges in operator norm and
\begin{gather*}
 \big\Vert\big( 1 - (C_{v_\Sigma} - C_{\tilde{v}_\Sigma}) (1 - C_{\tilde{v}_\Sigma})^{-1} \big) ^{-1}\big\Vert_{L^2 \to L^2} \le C < \infty.
 \end{gather*}
It follows from (\ref{TrivRewrite}) that $(1-C_{v_\Sigma})^{-1}$ exists and is given by
\begin{gather*}
 (1 - C_{v_\Sigma})^{-1} = (1 - C_{\tilde{v}_\Sigma})^{-1}\big( 1 - (C_{v_\Sigma} - C_{\tilde{v}_\Sigma})(1 - C_{\tilde{v}_\Sigma})^{-1} \big) ^{-1}
\end{gather*}
for $n$ sufficiently large and hence
\begin{gather*}
 \Vert(1 - C_{v_\Sigma})^{-1} \Vert_{L^2 \to L^2} \le \big\Vert(1 - C_{\tilde{v}_\Sigma})^{-1}\big\Vert_{L^2 \to L^2} \big\Vert \big( 1 - (C_{v_\Sigma} - C_{\tilde{v}_\Sigma}) (1 - C_{\tilde{v}_\Sigma})^{-1} \big) ^{-1}\big\Vert_{L^2 \to L^2} \nonumber\\
\hphantom{\Vert(1 - C_{v_\Sigma})^{-1} \Vert_{L^2 \to L^2}}{} \le C \big\Vert(1 - C_{\tilde{v}_\Sigma})^{-1}\big\Vert_{L^2 \to L^2}.
 \end{gather*}
 So by Theorem \ref{LegOpBound}, $(1 - C_{v_\Sigma})^{-1}$ is bounded in operator norm uniformly for $n$ sufficiently large.
\end{proof}

\begin{Remark}
In fact, a standard RHP argument shows that $(1 - C_{v_\Sigma})^{-1}$ exists and is bounded in $L^2 \to L^2$ for all $n\ge 0$. The point of the above theorem is that the bound is independent of~$n$.
\end{Remark}

\subsection[A formula for $Q_1 - \tilde{Q}_1$]{A formula for $\boldsymbol{Q_1 - \tilde{Q}_1}$}

\begin{prop}\label{QQProp}
\begin{gather}
Q_1 - \tilde{Q}_1 = -\int_\Sigma \mu (v_\Sigma - \tilde{v}_\Sigma) \tilde{\mu}^{-1}\dbar z,\label{QQFormula}
\end{gather}
where
\begin{gather*}
\mu = (1 - C_{v_\Sigma} )^{-1} I, \qquad \tilde{\mu} = (1 - C_{\tilde{v}_\Sigma} )^{-1} I.
\end{gather*}
\end{prop}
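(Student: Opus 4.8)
The plan is to apply Theorem~\ref{ABInverse} to the pair $A=Q$, $B=\tilde Q$ and then read off the coefficient of $1/z$ in the resulting identity as $z\to\infty$. First I would record the integral representations of the modified problems: since $Q$ and $\tilde Q$ are the unique normalized solutions of their respective RHPs on $\Sigma$, one has $Q(z)=I+C_\Sigma(\mu(v_\Sigma-I))(z)$ and $\tilde Q(z)=I+C_\Sigma(\tilde\mu(\tilde v_\Sigma-I))(z)$, where $\mu=(1-C_{v_\Sigma})^{-1}I$ and $\tilde\mu=(1-C_{\tilde v_\Sigma})^{-1}I$ are well defined by Theorems~\ref{LegOpBound} and~\ref{LogOpBound} (and the Remark following the latter). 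The defining equation $(1-C_{v_\Sigma})\mu=I$ rewrites as $\mu=I+C_\Sigma^-(\mu(v_\Sigma-I))=Q_-$, and likewise $\tilde\mu=\tilde Q_-$, so $\tilde Q_-^{-1}=\tilde\mu^{-1}$. These representations also show $Q_\pm,\tilde Q_\pm\in I+\partial C(L^2(\Sigma))$ because $\mu(v_\Sigma-I),\tilde\mu(\tilde v_\Sigma-I)\in L^2(\Sigma)$; a parallel argument (using $\det\tilde Q\equiv 1$, so that $\tilde Q^{-1}$ is the adjugate and inherits the endpoint behaviour from conditions (d), (e) of the $\tilde Q$-RHP) gives $\tilde Q_\pm^{-1}\in I+\partial C(L^2(\Sigma))$.

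With these facts in hand the hypotheses of Theorem~\ref{ABInverse} hold with $p=q=2$ ($v_\Sigma,\tilde v_\Sigma,\tilde v_\Sigma^{-1}\in L^\infty(\Sigma)$ by Remark~\ref{F2/wRemark} and Proposition~\ref{PhiProps}), and the theorem yields
\begin{gather*}
Q\tilde Q^{-1}=I+C_\Sigma\, Q_-\big(v_\Sigma \tilde v_\Sigma^{-1}-I\big)\tilde Q_-^{-1}=I+C_\Sigma\,\mu\big(v_\Sigma \tilde v_\Sigma^{-1}-I\big)\tilde\mu^{-1}.
\end{gather*}
The key simplification is the algebraic identity $v_\Sigma \tilde v_\Sigma^{-1}-I=v_\Sigma-\tilde v_\Sigma$. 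Indeed, from the formula for $v_\Sigma-\tilde v_\Sigma$ displayed in the proof of Theorem~\ref{LogOpBound}, this difference vanishes on $[-1,1]$ and, on $\Sigma_1\cup\Sigma_2\cup[1,1+\delta]$, is a matrix whose only nonzero entry sits in the $(2,1)$ slot; on that part of the contour $\tilde v_\Sigma^{-1}$ is lower triangular and unipotent, and right multiplication by such a matrix preserves matrices of the form $\left(\begin{smallmatrix}0&0\\ *&0\end{smallmatrix}\right)$, so $(v_\Sigma-\tilde v_\Sigma)\tilde v_\Sigma^{-1}=v_\Sigma-\tilde v_\Sigma$, which rearranges to the claimed identity. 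Hence $Q\tilde Q^{-1}=I+C_\Sigma\,\mu(v_\Sigma-\tilde v_\Sigma)\tilde\mu^{-1}$.

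Finally I would expand both sides as $z\to\infty$. On the left, $Q=I+Q_1/z+O_n(z^{-2})$ and $\tilde Q^{-1}=I-\tilde Q_1/z+O_n(z^{-2})$, so $Q\tilde Q^{-1}=I+(Q_1-\tilde Q_1)/z+O_n(z^{-2})$; on the right, since $\Sigma$ is compact, $\tfrac1{s-z}=-\tfrac1z+O(z^{-2})$ uniformly for $s\in\Sigma$, whence $C_\Sigma\,\mu(v_\Sigma-\tilde v_\Sigma)\tilde\mu^{-1}=-\tfrac1z\int_\Sigma \mu(v_\Sigma-\tilde v_\Sigma)\tilde\mu^{-1}\dbar z+O(z^{-2})$. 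Matching the $1/z$ coefficients gives \eqref{QQFormula}.

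I expect the only genuine work to be the verification of the function-space hypotheses of Theorem~\ref{ABInverse}: confirming that $Q_\pm$, $\tilde Q_\pm$, and $\tilde Q_\pm^{-1}$ really lie in $I+\partial C(L^2(\Sigma))$ despite the logarithmic endpoint blow-ups at $\pm 1$ (which are controlled precisely by conditions (d), (e) of the $Q$- and $\tilde Q$-RHPs), together with the identifications $\mu=Q_-$, $\tilde\mu=\tilde Q_-$. Once those are secured, the algebraic collapse $v_\Sigma \tilde v_\Sigma^{-1}-I=v_\Sigma-\tilde v_\Sigma$ and the expansion at infinity are routine.
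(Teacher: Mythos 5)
Your proof is correct and follows essentially the same route as the paper's: identify $\mu = Q_-$ and $\tilde\mu = \tilde Q_-$, check the $I+\partial C(L^2(\Sigma))$ hypotheses, invoke Theorem~\ref{ABInverse}, use the algebraic collapse $v_\Sigma\tilde v_\Sigma^{-1}-I=v_\Sigma-\tilde v_\Sigma$, and match $1/z$ coefficients at infinity. The only cosmetic difference is that the paper verifies the matrix identity by a displayed piecewise computation (equation~\eqref{RefComment2}), while you give a slightly slicker structural argument from the strictly-lower-triangular form of $v_\Sigma-\tilde v_\Sigma$ and the unipotent lower-triangular form of $\tilde v_\Sigma^{-1}$.
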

\begin{proof}
The functions $\mu$ and $\tilde{\mu}$ are given in terms of the RHP solutions $Q$ and $\tilde{Q}$, see \cite{DeiftNLS}, by
\begin{gather*}
\mu = Q_-, \qquad \tilde{\mu} = \tilde{Q}_-,
\end{gather*}
and so
\begin{gather*}
Q_\pm = I + C_\Sigma^\pm (\mu (v_\Sigma - I)),
\end{gather*}
and it is straightforward to show that
\begin{gather*}
\tilde{Q}_\pm^{-1} = I +C_\Sigma^\pm \big(\big(v_\Sigma^{-1} - I\big) \tilde{\mu}^{-1}\big).
\end{gather*}
However $\tilde{\mu} = \tilde{Q}_- \in L^2(\Sigma)$ and $\det\big(\tilde{Q}_\pm\big) \equiv 1$, therefore $\tilde{\mu}^{-1} \in L^2(\Sigma)$ and clearly $v_\Sigma^{-1} \in L^\infty(\Sigma)$, and so $\tilde{Q}_\pm^{-1} \in I + \partial C(L^2(\Sigma))$. From Theorem \ref{ABInverse}, we have
\begin{gather}
Q(z)\tilde{Q}^{-1}(z) = I + C_\Sigma Q_- \big(v_\Sigma \tilde{v}_\Sigma^{-1} - I\big) \tilde{Q}_-^{-1},\label{RefComment1}
\end{gather}
but
\begin{gather}
v_\Sigma \tilde{v}_\Sigma^{-1}(s) - I = \begin{cases} \left(\begin{matrix}
0 & 0\\
\left(\frac{F^2}{w}(s) - 1\right)\phi^{-2n}(s) & 0
\end{matrix}\right) & \text{for $s\in \Sigma_1\cup \Sigma_2$} \\
\left(\begin{matrix}
0 & 0\\
\left(\frac{F^2}{w_+}(s)+\frac{F^2}{w_-}(s) - 2\right)\phi^{-2n}(s) & 0
\end{matrix}\right) & \text{for $s\in 1+\delta,1]$} \\
\mathbf{0} & \text{for $s\in [-1,1]$}
\end{cases} \nonumber\\
\hphantom{v_\Sigma \tilde{v}_\Sigma^{-1}(s) - I}{} = v_\Sigma - \tilde{v}_\Sigma.\label{RefComment2}
\end{gather}
Therefore
\begin{gather*}
C_\Sigma Q_- \big(v_\Sigma \tilde{v}_\Sigma^{-1} - I\big) \tilde{Q}_-^{-1} = C_\Sigma \mu_- (v_\Sigma - \tilde{v}_\Sigma) \tilde{\mu}_-^{-1}
= -\frac{1}{z}\int_\Sigma \mu_- (v_\Sigma - \tilde{v}_\Sigma) \tilde{\mu}^{-1}\dbar s + O\big(z^{-2}\big)
\end{gather*}
as $z \to \infty$. Therefore
\begin{gather*}
I + \frac{Q_1 - \tilde{Q}_1}{z} + O\big(z^{-2}\big) = Q\tilde{Q}^{-1}(z) = I -\frac{1}{z}\int_\Sigma \mu_- (v_\Sigma - \tilde{v}_\Sigma) \tilde{\mu}^{-1}\dbar s + O\big(z^{-2}\big).
\end{gather*}
So
\begin{gather*}
Q_1 - \tilde{Q}_1 = -\int_\Sigma \mu_- (v_\Sigma - \tilde{v}_\Sigma) \tilde{\mu}^{-1}\dbar s,
\end{gather*}
which completes the proof.
\end{proof}

\begin{Remark}The goal of this section is to develop the machinery that allows effective comparison of the difference of the RHPs $Q$ and~$\tilde{Q}$. As mentioned in the introduction, the standard approach of comparing the RHPs for $Q$, $\tilde{Q}$ by their quotient breaks down. Indeed, for $x \in [1, 1 + \delta]$,
\begin{gather*}
\big(Q \tilde{Q}^{-1}\big)_+ = Q_- \left(\begin{matrix}
1 & 0\\
\left(\dfrac{F^2}{w_+} + \dfrac{F^2}{w_-}\right) \phi^{-2n} & 1
\end{matrix}\right)\left(\begin{matrix}
1 & 0\\
-2\phi^{-2n} & 1
\end{matrix}\right)\tilde{Q}_-^{-1} \nonumber\\
\hphantom{\big(Q \tilde{Q}^{-1}\big)_+}{} = Q_- \tilde{Q}_-^{-1} \tilde{Q}_- \left(\begin{matrix}
1 & 0\\
\left(\dfrac{F^2}{w_+} + \dfrac{F^2}{w_-} -2\right)\phi^{-2n} & 1
\end{matrix}\right)\tilde{Q}_-^{-1}\nonumber\\
\hphantom{\big(Q \tilde{Q}^{-1}\big)_+}{}
=Q_- \tilde{Q}_- \left(I + \left(\dfrac{F^2}{w_+} + \dfrac{F^2}{w_-} - 2\right) \phi^{-2n} \left(\begin{matrix}
\tilde{Q}_{12} \tilde{Q}_{22} & -\tilde{Q}_{12}^2\\
\tilde{Q}_{22}^2 & -\tilde{Q}_{12} \tilde{Q}_{22}
\end{matrix}\right)_-\right).
\end{gather*}
So $Q\tilde{Q}^{-1}$ is the solution to a standard RHP on $\Sigma$ with jump
\begin{gather}
J :=I + \left(\frac{F^2}{w_+} + \frac{F^2}{w_-} - 2\right) \phi^{-2n} \left(\begin{matrix}
\tilde{Q}_{12} \tilde{Q}_{22} & -\tilde{Q}_{12}^2\\
\tilde{Q}_{22}^2 & -\tilde{Q}_{12} \tilde{Q}_{22}.
\end{matrix}\right)_-.\label{JDef}
\end{gather}
Typically, we would like to draw conclusions about $Q\tilde{Q}^{-1}$ by demonstrating that $\Vert J -I\Vert_{L^\infty} \to 0$. However, while the jump functions associated with $Q$, $\tilde{Q}$ are close in an $L^\infty$ sense, and specifically, near the point~$1$,
\begin{gather*}
 \left(\frac{F^2}{w_+}(z) + \frac{F^2}{w_-}(z) - 2\right) \phi^{-2n}(z)= \frac{c}{\log^2 (\vert z -1\vert)} + O\left(\frac{1}{\log^3(\vert z-1\vert)}\right), \qquad \text{as $z \to 1$},
\end{gather*}
this behavior is counteracted by the singular behavior of $\tilde{Q}$. From (\ref{TildeQRHP5}), we have that $\tilde{Q}_{12}, \tilde{Q}_{22} = O_n(\log (\vert z -1\vert))$ as $z\to1$. More is true, in fact, $\tilde{Q}_{12}, \tilde{Q}_{22} = c_n \log(\vert z -1 \vert) (1 + o(1))$ as $z\to1$ where $c_n \sim cn^{1/2}$, which we will not prove, but which can be inferred from the behavior of $\tilde{\mu}$ described in Section \ref{LegendreSummary}. Therefore, from (\ref{JDef}), it is clear that
\begin{gather*}
\left\Vert J \right\Vert_{L^\infty([1, 1+\delta])} \not \to 0 \qquad \text{as $n \to \infty$}.
\end{gather*}
Thus it is not clear how to proceed in an analysis of the quotient $Q\tilde{Q}^{-1}$. Ultimately, this difficulty motivated our examination of the difference $Q -\tilde{Q}$.
\end{Remark}
\begin{Remark}We note that formulae (\ref{RefComment1}) and (\ref{RefComment2}) can be used, in principle, not only to derive our basic formula (\ref{QQFormula}), but also to obtain asymptotic information about the orthogonal polynomials for general values of $z$, not just for $z=\infty$. Certainly for general $z$ the analysis should go through as long as $z$ is at a fixed distance away from the interval $[-1,1]$, but as $z$ approaches the end points $\pm1$, in particular, technical difficulties arise that we have not yet been able to address. Also, as $z \to \infty$, one can, in principle, use~(\ref{RefComment1}) and~(\ref{RefComment2}) to obtain lower order terms for the recurrence coefficients, but again certain technical difficulties arise that we have not yet overcome.
\end{Remark}

\section{Conclusion of the proof of Theorem \ref{MainResult}}\label{ProofSection}
In this section we combine the previous results of the paper in order to prove the main result, Theorem \ref{MainResult}.
\subsection{Calculation of the integral in (\ref{QQFormula})}

The integral in (\ref{QQFormula}) involves both $\tilde{\mu}$ and $\mu$. The asymptotics of $\tilde{\mu}$ are known, see \cite{Kuijlaars}, or Appendix \ref{LegendreSummary} for a summary. Theorems~\ref{LegOpBound} and~\ref{LogOpBound} together with the known asymptotics of $\tilde{\mu}$ give us control over $\mu$ in an $L^2(\Sigma)$ sense, which will be sufficient for our purposes.

Namely, in Proposition \ref{NormsApp} we prove:
\begin{prop}\label{Norms}The functions $\mu$, $\tilde{\mu}$ obey the following estimates:
\begin{gather}
\big\vert \big\vert \tilde{\mu}^{(n)}\big( v_{\Sigma}^{(n)} - \tilde{v}_\Sigma^{(n)} \big) \big\vert \big\vert_{L^2(\Sigma)} = O\left(\frac{1}{n^{1/2}\log^2 n }\right),\label{Mu-MuL2Pre}
\\
\big\vert \big\vert \mu^{(n)} - \tilde{\mu}^{(n)} \big\vert \big\vert_{L^2(\Sigma)} = O\left(\frac{1}{n^{1/2}\log^2 n }\right),\label{Mu-MuL2}
\\
\big\vert \big\vert \tilde{\mu}^{(n)}\big( v_{\Sigma}^{(n)} - \tilde{v}_\Sigma^{(n)} \big) - \tilde{\mu}^{(n+1)}\big( v_{\Sigma}^{(n+1)} - \tilde{v}_\Sigma^{(n+1)} \big) \big\vert \big\vert_{L^2(\Sigma)} = O\left(\frac{1}{n^{3/2}\log^2 n }\right),\label{Mu-MuL2HardPre}
\\
\big\vert \big\vert \big(\mu^{(n)} - \tilde{\mu}^{(n)}\big) - \big(\mu^{(n+1)} - \tilde{\mu}^{(n+1)}\big) \big\vert \big\vert_{L^2(\Sigma)} = O\left(\frac{1}{n^{3/2}\log^2 n }\right).\label{Mu-MuL2Hard}
\end{gather}
\end{prop}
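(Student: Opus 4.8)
The plan is to derive the two ``$\mu$'' bounds \eqref{Mu-MuL2} and \eqref{Mu-MuL2Hard} from the two ``$\tilde\mu$'' bounds \eqref{Mu-MuL2Pre} and \eqref{Mu-MuL2HardPre} by a resolvent argument, and to prove the latter two directly by estimating the relevant $L^2$-integrals after localising them near $z=1$.

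For the reduction, the second resolvent identity applied to $(1-C_{v_\Sigma})\mu = I = (1-C_{\tilde v_\Sigma})\tilde\mu$ gives $\mu-\tilde\mu = (1-C_{v_\Sigma})^{-1} C_\Sigma^-\big(\tilde\mu(v_\Sigma-\tilde v_\Sigma)\big)$. As $C_\Sigma^-$ is bounded on $L^2(\Sigma)$ and $\|(1-C_{v_\Sigma})^{-1}\|_{L^2\to L^2}$ is bounded uniformly in $n$ by Theorem~\ref{LogOpBound}, \eqref{Mu-MuL2} follows from \eqref{Mu-MuL2Pre}. For \eqref{Mu-MuL2Hard} I would write $\mu^{(n)}-\tilde\mu^{(n)} = R^{(n)} C_\Sigma^-\big(\tilde\mu^{(n)}(v_\Sigma^{(n)}-\tilde v_\Sigma^{(n)})\big)$ with $R^{(n)}=(1-C_{v_\Sigma^{(n)}})^{-1}$ and split the $n$-increment into $R^{(n)}C_\Sigma^-$ applied to $\tilde\mu^{(n)}(v_\Sigma^{(n)}-\tilde v_\Sigma^{(n)})-\tilde\mu^{(n+1)}(v_\Sigma^{(n+1)}-\tilde v_\Sigma^{(n+1)})$, controlled by \eqref{Mu-MuL2HardPre} and the uniform bound, plus $(R^{(n)}-R^{(n+1)})C_\Sigma^-\big(\tilde\mu^{(n+1)}(v_\Sigma^{(n+1)}-\tilde v_\Sigma^{(n+1)})\big)$; since $R^{(n)}-R^{(n+1)}=R^{(n)}(C_{v_\Sigma^{(n)}}-C_{v_\Sigma^{(n+1)}})R^{(n+1)}$ and $\|v_\Sigma^{(n)}-v_\Sigma^{(n+1)}\|_{L^\infty(\Sigma)}=O(n^{-1})$ — the difference being a bounded multiple of $\phi^{-2n}(1-\phi^{-2})$, which is exponentially small away from $z=1$ and $O(n^{-1})$ near $z=1$ because $|1-\phi^{-2}|=O(|z-1|^{1/2})$ while $|\phi^{-2n}|=O\big(e^{-cn|z-1|^{1/2}}\big)$ — feeding \eqref{Mu-MuL2Pre} into this second piece gives the extra $n^{-1}$.

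For the $\tilde\mu$-bounds, $v_\Sigma-\tilde v_\Sigma$ vanishes on $[-1,1]$ and is strictly lower triangular elsewhere, equal to the scalar $\big(\frac{F^2}{w}-1\big)\phi^{-2n}$ on $\Sigma_1\cup\Sigma_2$ and $\big(\frac{F^2}{w_+}+\frac{F^2}{w_-}-2\big)\phi^{-2n}$ on $[1,1+\delta]$ times a fixed nilpotent matrix, so $\tilde\mu(v_\Sigma-\tilde v_\Sigma)$ is governed by the second column of $\tilde\mu$ times that scalar. Since $|\phi|\ge 1+c$ at a fixed distance from $z=1$, the factor $\phi^{-2n}$ is $O(e^{-cn})$ there and only a shrinking neighbourhood of $z=1$ contributes; on $[1,1+\delta]$, with $z=1+r$, $\phi(1+r)^{-2n}=e^{-2\sqrt2\,nr^{1/2}(1+o(1))}$ concentrates the $L^2$-mass on the scale $r\sim n^{-2}$, where $\log\frac1r\sim 2\log n$, and there Proposition~\ref{FProps} gives $\frac{F^2}{w}-1=O(1/\log n)$ and, crucially, Proposition~\ref{FCancellation} gives $\frac{F^2}{w_+}+\frac{F^2}{w_-}-2=O(1/\log^2 n)$. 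Inserting these together with the precise local behaviour of the Legendre resolvent $\tilde\mu$ at $z=1$ supplied by the Legendre analysis of Appendix~\ref{LegendreSummary} (and $\det\tilde\mu\equiv 1$) into $\int_\Sigma|\tilde\mu(v_\Sigma-\tilde v_\Sigma)|^2\,|dz|$, and reducing to elementary integrals $\int_0^\delta(\log r)^{-m}e^{-cnr^{1/2}}\,dr=O\big(n^{-2}(\log n)^{-m}\big)$ via $r=u^2/n^2$, one obtains \eqref{Mu-MuL2Pre}; the same localised estimate applied to the $n$-increment yields \eqref{Mu-MuL2HardPre}, the extra $O(n^{-1})$ coming from $\phi^{-2n}(1-\phi^{-2})$, the $F^2/w_\pm$ increment from Proposition~\ref{F-FAsymptotics}, and the $\tilde\mu$-increment again from the Legendre summary.

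The hardest step is this last one: getting the exact power $(\log n)^{-2}$ requires feeding the precise leading local behaviour of $\tilde\mu$ near $z=1$ against the double-logarithmic cancellation of Proposition~\ref{FCancellation} on $[1,1+\delta]$ (and Proposition~\ref{F-FAsymptotics} for the increment), keeping careful track of every logarithmic factor so that the $\phi^{-2n}$ envelope localises the integral to $|z-1|\sim n^{-2}$ and the stated power of $\log n$ emerges. This bookkeeping, rather than the resolvent reduction, is the real content of the proof and is done in detail in Appendix~\ref{NormsSection}.
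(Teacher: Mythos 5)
Your resolvent reductions are exactly those of the paper: \eqref{Mu-MuL2} follows from \eqref{Mu-MuL2Pre} via $\mu-\tilde\mu = (1-C_{v_\Sigma})^{-1}C_\Sigma^-(\tilde\mu(v_\Sigma-\tilde v_\Sigma))$ and Theorem~\ref{LogOpBound}, and your two-term splitting of the $n$-increment with the second resolvent identity for $R^{(n)}-R^{(n+1)}$ and the bound $\|v_\Sigma^{(n)}-v_\Sigma^{(n+1)}\|_{L^\infty}=O(1/n)$ via $\phi^{-2n}(1-\phi^{-2})$ matches the paper's treatment of \eqref{Mu-MuL2Hard}. The localization of the $z=1$ contribution to the scale $|z-1|\sim n^{-2}$, with the $\log^{-2}n$ coming from Proposition~\ref{FCancellation} on $[1,1+\delta]$ and the $n^{1/2}$ from the $(2\pi n)^{\sigma_3/2}$ normalization of the Bessel parametrix, is also the paper's argument, and your model integral with the substitution $r=u^2/n^2$ captures the right scaling.

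However, there is a genuine gap at the other endpoint. You assert ``$|\phi|\ge 1+c$ at a fixed distance from $z=1$, so $\phi^{-2n}=O(e^{-cn})$ there and only a shrinking neighbourhood of $z=1$ contributes.'' That statement is false: $|\phi(z)|\to 1$ as $z\to -1$, and the lips $\Sigma_1$, $\Sigma_2$ emanate from $-1$ at angle $\pm\pi/4$, so they pass arbitrarily close to $-1$. On $\Sigma\cap U_{1/n}(-1)$ one has $|\phi|-1 \sim |z+1|^{1/2}$, which is $\le n^{-1/2}$, so $\phi^{-2n}$ is $O(1)$ there, not exponentially small. Consequently the $z=-1$ contribution is not negligible by brute force; it has to be estimated separately, and the reason it is small is different in kind from the $z=1$ contribution. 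Near $z=-1$ there is no logarithmic cancellation; instead one uses statement~\eqref{prop3-2.3} of Proposition~\ref{FProps}, $\frac{F^2}{w}(z)-1=O(|z+1|^{1/2})$, together with the local Bessel parametrix and the change of variables $y=n^2f(-z)$ to get a squared-$L^2$ contribution of order $n^{-2}$ (resp.~$n^{-4}$ for the increment). These happen to be dominated by the $z=1$ contributions $n^{-1}\log^{-4}n$ (resp.~$n^{-3}\log^{-4}n$), which is why your final bound is still correct, but you cannot conclude this without carrying out the $z=-1$ estimate, since the claim you are using to dismiss it is untrue. The paper's proof in Appendix~\ref{NormsSection} therefore splits $\Sigma$ into three pieces --- within $1/n$ of $1$, within $1/n$ of $-1$, and the rest --- and only the last piece is exponentially small; your proposal needs the same three-way decomposition and the separate $z=-1$ estimate to be complete.
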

\begin{proof}See Proposition \ref{NormsApp}.
\end{proof}

Proposition \ref{Norms} together with Theorems \ref{LegOpBound} and \ref{LogOpBound} allow us to estimate the integral in (\ref{QQFormula}) by a similar integral involving only Legendre quantities. This is done through the following proposition:
\begin{prop}\label{Replacement}
The integrals can be estimated by
\begin{gather*}
-\int_\Sigma \mu (v_\Sigma - \tilde{v}_\Sigma) \tilde{\mu}^{-1}\dbar s = -\int_\Sigma \tilde{\mu} (v_\Sigma - \tilde{v}_\Sigma) \tilde{\mu}^{-1}\dbar s + O\left(\frac{1}{n\log^4n}\right)
\end{gather*}
and
\begin{gather*}
-\int_\Sigma \mu^{(n)} \big(v^{(n)}_\Sigma - \tilde{v}^{(n)}_\Sigma\big) \big(\tilde{\mu}^{(n)}\big)^{-1}\dbar s + \int_\Sigma \mu^{(n+1)} \big(v^{(n+1)}_\Sigma - \tilde{v}^{(n+1)}_\Sigma\big) \big(\tilde{\mu}^{(n+1)}\big)^{-1}\dbar s \nonumber\\
\qquad{} = -\int_\Sigma \tilde{\mu}^{(n)} \big(v^{(n)}_\Sigma - \tilde{v}^{(n)}_\Sigma\big) \big(\tilde{\mu}^{(n)}\big)^{-1}\dbar s + \int_\Sigma \tilde{\mu}^{(n+1)} \big(v^{(n+1)}_\Sigma - \tilde{v}^{(n+1)}_\Sigma\big) \big(\tilde{\mu}^{(n+1)}\big)^{-1}\dbar s \nonumber\\
 \qquad\quad{} + O\left(\frac{1}{n^2\log^4n}\right).
\end{gather*}
\end{prop}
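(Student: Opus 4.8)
The plan is to obtain this proposition directly from the $L^2(\Sigma)$ estimates of Proposition~\ref{Norms}. Recall that $\mu=Q_-$, $\tilde{\mu}=\tilde{Q}_-$, and that $\det\tilde{\mu}\equiv1$, so $\tilde{\mu}^{-1}\in L^2(\Sigma)$ as well, and that $\Sigma$ is compact; hence all the matrix products occurring below are well-defined elements of $L^1(\Sigma)$. The only structural input needed beyond Proposition~\ref{Norms} is that $v_\Sigma-\tilde{v}_\Sigma$ is a nilpotent lower-triangular matrix: on $\Sigma_1\cup\Sigma_2\cup[1,1+\delta]$ it equals $g\left(\begin{smallmatrix}0&0\\1&0\end{smallmatrix}\right)$ for a scalar function $g$ (namely $g=(F^2/w-1)\phi^{-2n}$ on $\Sigma_1\cup\Sigma_2$ and $g=(F^2/w_+ + F^2/w_- -2)\phi^{-2n}$ on $[1,1+\delta]$), and it vanishes on $[-1,1]$. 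Since $\det\tilde{\mu}\equiv1$, a one-line computation shows that the nonzero entries of $\tilde{\mu}(v_\Sigma-\tilde{v}_\Sigma)$ are exactly $g\tilde{\mu}_{12}$ and $g\tilde{\mu}_{22}$, while those of $(v_\Sigma-\tilde{v}_\Sigma)\tilde{\mu}^{-1}$ are $g\tilde{\mu}_{22}$ and $-g\tilde{\mu}_{12}$. These two matrix products therefore have comparable $L^2(\Sigma)$ norms, so \eqref{Mu-MuL2Pre} yields
\[
\big\Vert (v_\Sigma-\tilde{v}_\Sigma)\tilde{\mu}^{-1}\big\Vert_{L^2(\Sigma)} = O\left(\frac{1}{n^{1/2}\log^2 n}\right),
\]
and the same argument applied to \eqref{Mu-MuL2HardPre} gives
\[
\big\Vert (v^{(n)}_\Sigma-\tilde{v}^{(n)}_\Sigma)(\tilde{\mu}^{(n)})^{-1} - (v^{(n+1)}_\Sigma-\tilde{v}^{(n+1)}_\Sigma)(\tilde{\mu}^{(n+1)})^{-1}\big\Vert_{L^2(\Sigma)} = O\left(\frac{1}{n^{3/2}\log^2 n}\right).
\]

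For the first formula, subtracting the two integrals and using linearity turns the error into $-\int_\Sigma (\mu-\tilde{\mu})(v_\Sigma-\tilde{v}_\Sigma)\tilde{\mu}^{-1}\dbar s$. The integrand is a product of $\mu-\tilde{\mu}\in L^2(\Sigma)$ with $(v_\Sigma-\tilde{v}_\Sigma)\tilde{\mu}^{-1}\in L^2(\Sigma)$, hence lies in $L^1(\Sigma)$, and by the Cauchy--Schwarz inequality together with submultiplicativity of the matrix norm,
\[
\left\Vert \int_\Sigma (\mu-\tilde{\mu})(v_\Sigma-\tilde{v}_\Sigma)\tilde{\mu}^{-1}\dbar s\right\Vert \le \frac{1}{2\pi}\Vert \mu-\tilde{\mu}\Vert_{L^2(\Sigma)}\big\Vert (v_\Sigma-\tilde{v}_\Sigma)\tilde{\mu}^{-1}\big\Vert_{L^2(\Sigma)}.
\]
By \eqref{Mu-MuL2} the first factor is $O(1/(n^{1/2}\log^2 n))$, and by the bound just established so is the second; the product is $O(1/(n\log^4 n))$, which proves the first claim.

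For the second formula, I would write $D^{(m)}:=\mu^{(m)}-\tilde{\mu}^{(m)}$ and $G^{(m)}:=(v^{(m)}_\Sigma-\tilde{v}^{(m)}_\Sigma)(\tilde{\mu}^{(m)})^{-1}$, so that after replacing $\mu^{(m)}$ by $\tilde{\mu}^{(m)}$ in both integrals the error becomes $-\int_\Sigma D^{(n)}G^{(n)}\dbar s+\int_\Sigma D^{(n+1)}G^{(n+1)}\dbar s$. I would then split
\[
D^{(n)}G^{(n)}-D^{(n+1)}G^{(n+1)} = \big(D^{(n)}-D^{(n+1)}\big)G^{(n)} + D^{(n+1)}\big(G^{(n)}-G^{(n+1)}\big)
\]
and estimate each term in $L^1(\Sigma)$ by Cauchy--Schwarz. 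For the first term, \eqref{Mu-MuL2Hard} gives $\Vert D^{(n)}-D^{(n+1)}\Vert_{L^2}=O(1/(n^{3/2}\log^2 n))$ while $\Vert G^{(n)}\Vert_{L^2}=O(1/(n^{1/2}\log^2 n))$ by the first paragraph; for the second term, \eqref{Mu-MuL2} gives $\Vert D^{(n+1)}\Vert_{L^2}=O(1/(n^{1/2}\log^2 n))$ while the $G$-difference bound from the first paragraph gives $\Vert G^{(n)}-G^{(n+1)}\Vert_{L^2}=O(1/(n^{3/2}\log^2 n))$. Both terms are therefore $O(1/(n^2\log^4 n))$, and since $\Sigma$ is bounded the integral is too, giving the stated error.

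I do not expect a deep obstruction here: the proposition is essentially a corollary of Proposition~\ref{Norms}. The one genuinely necessary step is the passage, via the nilpotent lower-triangular structure of $v_\Sigma-\tilde{v}_\Sigma$ together with $\det\tilde{\mu}\equiv1$, from the bounds on $\tilde{\mu}(v_\Sigma-\tilde{v}_\Sigma)$ supplied by Proposition~\ref{Norms} to the corresponding bounds on $(v_\Sigma-\tilde{v}_\Sigma)\tilde{\mu}^{-1}$; once that is in hand, the remaining work is bookkeeping — pairing each of the four estimates of Proposition~\ref{Norms} with the correct factor of the telescoped integrand and applying the $L^2\times L^2\to L^1$ Hölder inequality.
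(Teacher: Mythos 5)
Your proposal is correct and follows essentially the same route as the paper: convert the $L^2$ bounds on $\tilde{\mu}(v_\Sigma-\tilde{v}_\Sigma)$ from Proposition~\ref{Norms} into bounds on $(v_\Sigma-\tilde{v}_\Sigma)\tilde{\mu}^{-1}$ via $\det\tilde{\mu}\equiv1$, then split the integrand by a Leibniz-type decomposition and apply Cauchy--Schwarz term by term. The only cosmetic difference is that you add and subtract $D^{(n+1)}G^{(n)}$ whereas the paper adds and subtracts $D^{(n)}G^{(n+1)}$, and you spell out the nilpotent lower-triangular structure of $v_\Sigma-\tilde v_\Sigma$ slightly more explicitly than the paper does — neither affects the estimate.
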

\begin{proof}
Recall that $\det(\mu), \det(\tilde{\mu}) = \det(Q_-), \det(\tilde{Q}_-) = 1$. Therefore the norms (\ref{Mu-MuL2}) and (\ref{Mu-MuL2Hard}) also imply
\begin{gather*}
\big\vert \big\vert (v_\Sigma - \tilde{v}_\Sigma) \tilde{\mu}^{-1} \big\vert \big\vert_{L^2(\Sigma)} = O\left(\frac{1}{n^{1/2}\log^2 n }\right),
\\
\big\vert \big\vert \big(v^{(n)}_\Sigma - \tilde{v}^{(n)}_\Sigma\big) \big(\tilde{\mu}^{(n)}\big)^{-1} - \big(v^{(n+1)}_\Sigma - \tilde{v}^{(n+1)}_\Sigma\big) \big(\tilde{\mu}^{(n+1)}\big)^{-1} \big\vert \big\vert_{L^2(\Sigma)} = O\left(\frac{1}{n^{3/2}\log^2 n }\right),
\end{gather*}
and therefore,
\begin{gather*}
-\int_\Sigma \mu (v_\Sigma - \tilde{v}_\Sigma) \tilde{\mu}^{-1}\dbar s = -\int_\Sigma \tilde{\mu} (v_\Sigma - \tilde{v}_\Sigma) \tilde{\mu}^{-1}\dbar s -\int_\Sigma\left(\mu - \tilde{\mu}\right) (v_\Sigma - \tilde{v}_\Sigma)\tilde{\mu}^{-1}\dbar s,
\end{gather*}
and by (\ref{Mu-MuL2Pre}) and (\ref{Mu-MuL2}),
\begin{gather*}
\left\vert\int_\Sigma \tilde{\mu} (v_\Sigma - \tilde{v}_\Sigma) \big(\mu^{-1} - \tilde{\mu}^{-1}\big)\dbar s\right\vert \le \left\Vert \tilde{\mu} (v_\Sigma - \tilde{v}_\Sigma) \right\Vert_{L^2(\Sigma)} \left\Vert \mu^{-1} - \tilde{\mu}^{-1}\right\Vert_{L^2(\Sigma)} = O\left(\frac{1}{n\log^4n}\right).
\end{gather*}
So
\begin{gather*}
-\int_\Sigma \tilde{\mu} (v_\Sigma - \tilde{v}_\Sigma) \mu^{-1}\dbar s = -\int_\Sigma \tilde{\mu} (v_\Sigma - \tilde{v}_\Sigma) \tilde{\mu}^{-1}\dbar s + O\left(\frac{1}{n\log^4n}\right).
\end{gather*}
Somewhat similarly,
\begin{gather*}
-\int_\Sigma \mu^{(n)} \big(v^{(n)}_\Sigma - \tilde{v}^{(n)}_\Sigma\big) \big(\tilde{\mu}^{(n)}\big)^{-1}\dbar s + \int_\Sigma \mu^{(n+1)} \big(v^{(n+1)}_\Sigma - \tilde{v}^{(n+1)}_\Sigma\big) \big(\tilde{\mu}^{(n+1)}\big)^{-1}\dbar s\nonumber\\
\qquad{} =-\int_\Sigma \tilde{\mu}^{(n)} \big(v^{(n)}_\Sigma - \tilde{v}^{(n)}_\Sigma\big) \big(\tilde{\mu}^{(n)}\big)^{-1}\dbar s + \int_\Sigma \tilde{\mu}^{(n+1)} \big(v^{(n+1)}_\Sigma - \tilde{v}^{(n+1)}_\Sigma\big) \big(\tilde{\mu}^{(n+1)}\big)^{-1}\dbar s \nonumber\\
\qquad\quad{} - \int_\Sigma \big(\mu^{(n)} - \tilde{\mu}^{(n)}\big) \big( \big(v^{(n)}_\Sigma - \tilde{v}^{(n)}_\Sigma\big)\big(\tilde{\mu}^{(n)}\big)^{-1} - \big(v_\Sigma^{(n+1)} - \tilde{v}_\Sigma^{(n+1)} \big) \big(\tilde{\mu}^{(n+1)}\big)^{-1}\big)\dbar s\nonumber\\
\qquad \quad{}- \int_\Sigma \big(\mu^{(n)} - \tilde{\mu}^{(n)} - \big(\mu^{(n+1)} - \tilde{\mu}^{(n+1)}\big)\big) \big(v^{(n+1)}_\Sigma - \tilde{v}^{(n+1)}_\Sigma\big) \big(\tilde{\mu}^{(n+1)}\big)^{-1}\dbar s.
\end{gather*}
Now, by (\ref{Mu-MuL2}) and (\ref{Mu-MuL2HardPre}),
\begin{gather*}
\left\vert \int_\Sigma \big(\mu^{(n)} - \tilde{\mu}^{(n)}\big) \big( \big(v^{(n)}_\Sigma - \tilde{v}^{(n)}_\Sigma\big)\big(\tilde{\mu}^{(n)}\big)^{-1} - \big(v_\Sigma^{(n+1)} - \tilde{v}_\Sigma^{(n+1)} \big) \big(\tilde{\mu}^{(n+1)}\big)^{-1}\big)\dbar s\right\vert \nonumber\\
 \qquad{} \le \big\vert \big\vert \mu^{(n)} - \tilde{\mu}^{(n)} \big\vert \big\vert_{L^2(\Sigma)}\big\vert \big\vert \big(v^{(n)}_\Sigma - \tilde{v}^{(n)}_\Sigma\big) \big(\tilde{\mu}^{(n)}\big)^{-1} - \big(v^{(n+1)}_\Sigma - \tilde{v}^{(n+1)}_\Sigma\big) \big(\tilde{\mu}^{(n+1)}\big)^{-1} \big\vert \big\vert_{L^2(\Sigma)} \nonumber\\
\qquad{} = O\left(\frac{1}{n^2\log^4n}\right),
\end{gather*}
and by (\ref{Mu-MuL2Pre}) and (\ref{Mu-MuL2Hard}),
\begin{gather*}
\left\vert\int_\Sigma \big(\mu^{(n)} - \tilde{\mu}^{(n)} - \big(\mu^{(n+1)} - \tilde{\mu}^{(n+1)}\big)\big) (v^{(n+1)}_\Sigma - \tilde{v}^{(n+1)}_\Sigma) \big(\tilde{\mu}^{(n+1)}\big)^{-1}\dbar s\right\vert \nonumber\\
 \qquad {}\le \big\vert \big\vert \big(\mu^{(n)} - \tilde{\mu}^{(n)}\big) - \big(\mu^{(n+1)} - \tilde{\mu}^{(n+1)}\big) \big\vert \big\vert_{L^2(\Sigma)}\big\vert \big\vert \tilde{\mu}^{(n+1)}\big( v_{\Sigma}^{(n+1)} - \tilde{v}_\Sigma^{(n+1)} \big) \big\vert \big\vert_{L^2(\Sigma)} \nonumber\\
 \qquad{} = O\left(\frac{1}{n^2\log^4n}\right).
 \end{gather*}
Therefore, we see that
\begin{gather*}
-\int_\Sigma \mu^{(n)} \big(v^{(n)}_\Sigma - \tilde{v}^{(n)}_\Sigma\big) \big(\tilde{\mu}^{(n)}\big)^{-1}\dbar s + \int_\Sigma \mu^{(n+1)} \big(v^{(n+1)}_\Sigma - \tilde{v}^{(n+1)}_\Sigma\big) \big(\tilde{\mu}^{(n+1)}\big)^{-1}\dbar s \nonumber\\
\qquad{} = -\int_\Sigma \tilde{\mu}^{(n)} \big(v^{(n)}_\Sigma - \tilde{v}^{(n)}_\Sigma\big) \big(\tilde{\mu}^{(n)}\big)^{-1}\dbar s + \int_\Sigma \tilde{\mu}^{(n+1)} \big(v^{(n+1)}_\Sigma - \tilde{v}^{(n+1)}_\Sigma\big) \big(\tilde{\mu}^{(n+1)}\big)^{-1}\dbar s \nonumber\\
 \qquad\quad{} + O\left(\frac{1}{n^2\log^4n}\right),
\end{gather*}
which completes the proof of Proposition \ref{Replacement}.
\end{proof}

The integrals in Proposition \ref{Replacement} can be computed asymptotically using the known asymptotics of $\tilde{\mu}$, yielding:
\begin{prop}\label{EIntegralProp}
\begin{gather*}
\int_\Sigma \tilde{\mu}(z) (v_\Sigma(z) - \tilde{v}_\Sigma(z)) \tilde{\mu}^{-1}(z) \dbar z = \frac{3}{16 n\log^2n} \left(\begin{matrix}
1 & -i\\
-i& -1
\end{matrix}\right) + O\left(\frac{1}{n\log^3n}\right). 
\end{gather*}
\end{prop}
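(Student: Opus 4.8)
The plan is to use the rank-one structure of the jump difference, to localize the integral at the singularity $z=1$, there to substitute for $\tilde\mu$ the explicit Legendre (Bessel) local parametrix, and then to evaluate the resulting integral in closed form. Recall from the proof of Theorem~\ref{LogOpBound} that $v_\Sigma-\tilde v_\Sigma$ vanishes on $[-1,1]$ and, on $\Sigma_1\cup\Sigma_2\cup[1,1+\delta]$, equals $g(z)\,\phi^{-2n}(z)\left(\begin{smallmatrix}0&0\\1&0\end{smallmatrix}\right)$ with $g=\frac{F^2}{w}-1$ on $\Sigma_1\cup\Sigma_2$ and $g=\frac{F^2}{w_+}+\frac{F^2}{w_-}-2$ on $[1,1+\delta]$. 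Since $\det\tilde\mu=1$ (as $\det\tilde Q\equiv1$), the integrand is the rank-one nilpotent matrix $g(z)\,\phi^{-2n}(z)\left(\begin{smallmatrix}\tilde\mu_{12}\tilde\mu_{22}&-\tilde\mu_{12}^2\\\tilde\mu_{22}^2&-\tilde\mu_{12}\tilde\mu_{22}\end{smallmatrix}\right)$, $\tilde\mu=\tilde Q_-$; in particular the value of the integral is automatically trace- and determinant-free, as is $\left(\begin{smallmatrix}1&-i\\-i&-1\end{smallmatrix}\right)$, so only a scalar and a direction remain to be determined. On the parts of the three contours lying at a fixed distance from $\pm1$ one has $|\phi|\ge1+c$ by \eqref{prop5}, so $\phi^{-2n}$ is exponentially small in $n$ while $g$ and $\tilde\mu$ grow only polynomially, and these parts contribute $O(e^{-cn})$; near $z=-1$, $g=O(|z+1|^{1/2})$ by \eqref{prop3-2.3} and the decay of $\phi^{-2n}$ confines that contribution to a set of size $O(n^{-2})$, making it $O(n^{-2})$. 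It therefore suffices to evaluate the integral over the three arcs issuing from $z=1$ inside a fixed disk $D$.

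In $D$ I substitute the known Legendre asymptotics (see Section~\ref{LegendreSummary}): $\tilde Q=\tilde R\,\tilde E\,\Psi_{\mathrm{Be}}(n^2\lambda(z))$, where $\tilde R=I+O(1/n)$ uniformly, $\Psi_{\mathrm{Be}}$ is the Bessel model RHP of order $0$, $\lambda$ is the local conformal map with $\lambda(1)=0$ — one may take $\lambda(z)=(\log\phi(z))^2=2(z-1)+O((z-1)^2)$, so that $\phi^{-2n}=e^{-2(n^2\lambda(z))^{1/2}}$ — and $\tilde E$ is the analytic matching factor built from the global Legendre parametrix $\tilde N$, which has a $(z\mp1)^{-1/4}$ behaviour at $\pm1$. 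Setting $\zeta=n^2\lambda(z)$, so that $\dbar z=\frac{\dbar\zeta}{2n^2}\big(1+O(n^{-2})\big)$ near $z=1$, the three arcs become three rays from $\zeta=0$. On the relevant scale $|z-1|$ of order $n^{-2}$ one has $\zeta$ of order $1$ and $\phi^{-2n}$ of order $1$, while $\tilde N$, hence $\tilde\mu$, is of order $n^{1/2}$; and since $\tilde\mu\left(\begin{smallmatrix}0&0\\1&0\end{smallmatrix}\right)\tilde\mu^{-1}$ is the conjugate of a nilpotent matrix by an ill-conditioned matrix of determinant $1$, it is of order $n$. Hence $g\,\phi^{-2n}\,\tilde\mu\left(\begin{smallmatrix}0&0\\1&0\end{smallmatrix}\right)\tilde\mu^{-1}=\frac{n}{\log^2 n}G(\zeta)+(\text{error})$ for a fixed matrix function $G$, and $\int\dbar z\sim\frac{1}{2n^2}\int\dbar\zeta$ produces the stated order $\frac{1}{n\log^2 n}$. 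Here Propositions~\ref{FProps} and~\ref{FCancellation}, used on this scale (so $w(z)=2\log n+O(1)$ for $\zeta$ bounded away from $0$ and $\infty$), give $g=\mp\frac{i\pi}{2\log n}+O\big(\frac{1}{\log^2 n}\big)$ on $\Sigma_1,\Sigma_2$ and $g=-\frac{3\pi^2}{4\log^2 n}+O\big(\frac{1}{\log^3 n}\big)$ on $[1,1+\delta]$; the two $\mp\frac{i\pi}{2\log n}$ pieces cancel between $\Sigma_1$ and $\Sigma_2$ by the Schwarz symmetry $\overline{\Sigma_1}=\Sigma_2$, $\phi(\bar z)=\overline{\phi(z)}$, $F(\bar z)=\overline{F(z)}$ together with the induced symmetry of $\tilde N$ and $\Psi_{\mathrm{Be}}$, so all three rays contribute at the order $\frac{1}{n\log^2 n}$.

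There remains the $n$-independent matrix integral over the three $\zeta$-rays of $\tilde E\,\Psi_{\mathrm{Be}}(\zeta)\left(\begin{smallmatrix}0&0\\1&0\end{smallmatrix}\right)\Psi_{\mathrm{Be}}(\zeta)^{-1}\tilde E^{-1}$ (with $\tilde E$ evaluated at $z=1$ and the sector value of $\Psi_{\mathrm{Be}}$ appropriate to each ray), weighted by the leading coefficients of $g$. Using the closed form of $\Psi_{\mathrm{Be}}$ in terms of Hankel functions — and the fact that the nilpotent integrand is, on each sector, an exact differential modulo lower order — the three pieces can be merged and the integral evaluated explicitly; collecting the constants (the $-\frac{3\pi^2}{4}$ from $g$ on $[1,1+\delta]$ and its $\Sigma_{1,2}$-analogue, the $\frac{1}{2\pi i}$ from $\dbar\zeta$, the Wronskian and normalization of $\Psi_{\mathrm{Be}}$, and the prefactor products that produce the direction $\left(\begin{smallmatrix}1&-i\\-i&-1\end{smallmatrix}\right)$) yields exactly $\frac{3}{16 n\log^2 n}\left(\begin{smallmatrix}1&-i\\-i&-1\end{smallmatrix}\right)$. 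The discarded terms — the $O(1/n)$ from $\tilde R$, the $O\big(\frac{1}{\log^3 n}\big)$ and the $\log\zeta$-dependent parts of $g$, and the $O((z-1)^2)$ errors in $\lambda$ and in $\phi$ — are each bounded, after integration, by $O\big(\frac{1}{n\log^3 n}\big)$.

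The main obstacle is this final step together with the uniformity used in the previous one: one must carry the exact Bessel parametrix — including the degenerating matching factor $\tilde E$ and the $\tilde R=I+O(1/n)$ term — through the change of variables, verify that every correction contributes no worse than $O\big(\frac{1}{n\log^3 n}\big)$, in particular the logarithmic scale-dependence of $w$ near $\zeta=0$, and then carry out the closed-form Hankel-function integral and the constant bookkeeping that deliver the precise coefficient $\frac{3}{16}$ and the matrix $\left(\begin{smallmatrix}1&-i\\-i&-1\end{smallmatrix}\right)$.
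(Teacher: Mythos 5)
Your overall strategy — localize at $z=1$, substitute the Bessel local parametrix for $\tilde\mu$, pass to the rescaled variable $\zeta=n^2 f(z)$, and evaluate an $n$-independent model integral while discarding $O(1/n)$ and $O(1/\log n)$ corrections — is the same as the paper's. However, there is a genuine gap in the reduction to the model integral, stemming from a misreading of the contour $\Sigma$ near $z=1$, and it undermines the cancellation argument you rely on.

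In the paper's contour (Fig.~\ref{SigmaDef}), the lens curves $\Sigma_1$, $\Sigma_2$ do \emph{not} issue from $z=1$: they join the circle $\lvert z-1\rvert=\delta$ at angles roughly $\pm 135^\circ$ and follow arcs of that circle down to the point $1+\delta$. The only piece of $\Sigma$ that actually reaches $z=1$ is the segment $[1,1+\delta]$, and on that segment the jump difference $v_\Sigma-\tilde v_\Sigma$ already carries the factor $\frac{F^2}{w_+}+\frac{F^2}{w_-}-2 = O\bigl(\frac{1}{\log^2\lvert z-1\rvert}\bigr)$ by Proposition~\ref{FCancellation}. The lens contributions $\frac{F^2}{w}-1 \sim \mp\frac{i\pi}{w}$ never come within distance $\delta$ of $z=1$: on $\Sigma_1\cup\Sigma_2$ one has $\lvert\phi(z)\rvert\ge 1+c(\delta)$ uniformly near $1+\delta$, so $\phi^{-2n}$ is exponentially small there and those arcs go into the exponentially small remainder. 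This is precisely the reason the paper uses a \emph{modified} lens transformation (Section~\ref{TToQ}, Remark~\ref{F2/wRemark}, and the text preceding \eqref{QDef}): the ``dangerous'' $O(1/\log)$ pieces from the two lips are deliberately rerouted onto $[1,1+\delta]$ where they combine and cancel to order $1/\log^2$ via Proposition~\ref{FCancellation}. Your ``three arcs issuing from $z=1$'' picture does not describe $\Sigma$, so your ansatz $g=\mp\frac{i\pi}{2\log n}$ on $\Sigma_{1,2}$ near $z=1$ is applied on a set where $v_\Sigma-\tilde v_\Sigma$ is in fact negligible, and the entire Schwarz-symmetry cancellation step is addressing a problem that does not arise.

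Moreover, the symmetry argument as stated would not close even if the contour were as you describe. For $z\in\Sigma_1$ and $\bar z\in\Sigma_2$ the integrand at $\bar z$ is the complex conjugate of the integrand at $z$ (using $g(\bar z)=\overline{g(z)}$, $\phi(\bar z)=\overline{\phi(z)}$, $\tilde\mu(\bar z)=\overline{\tilde\mu(z)}$), and because $\dbar s=\frac{ds}{2\pi i}$ picks up a sign under conjugation, the sum $\int_{\Sigma_1}+\int_{\Sigma_2}$ equals $2i\,\operatorname{Im}\int_{\Sigma_1}$, not zero. You would still have to show the $1/\log n$ contribution is imaginary to the relevant order, and also be careful that $\tilde\mu=\tilde Q_-$ picks out opposite sides of the lens on $\Sigma_1$ versus $\Sigma_2$, which spoils the naive $\tilde\mu(\bar z)=\overline{\tilde\mu(z)}$ identity (one gets $\overline{\tilde Q_+(z)}$ instead). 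None of this is needed once one uses the paper's contour.

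With the contour corrected, the computation reduces, as in the paper, to the single integral over $[1,1+1/n]$ of $\Psi_{12}^2(n^2 f(z))\bigl(\frac{F^2}{w_+}+\frac{F^2}{w_-}-2\bigr)$, conjugated by $E(1)$; the constant $\frac{3}{16}$ then comes from $\int_0^\infty K_0^2(2y^{1/2})\,dy=1$ (equivalently $\int_0^\infty K_0^2(v)\,v\,dv=\frac12$), the factor $-3\pi^2$ from Proposition~\ref{FCancellation}, $\Psi_{12}=\frac{i}{\pi}K_0$, the Jacobian $dz\approx \frac{2\,dy}{n^2}$, and the $(2\pi n)^{\pm\sigma_3/2}$ conjugation — this is where the direction $\left(\begin{smallmatrix}1&-i\\-i&-1\end{smallmatrix}\right)$ comes from via $E(1)\left(\begin{smallmatrix}0&1\\0&0\end{smallmatrix}\right)E^{-1}(1)=\tfrac12\left(\begin{smallmatrix}i&1\\1&-i\end{smallmatrix}\right)$. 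Your proposal asserts this bookkeeping ``can be carried out'' but does not do it, and because of the contour confusion, the bookkeeping as you set it up would include spurious terms. Finally, a minor point: the contribution near $z=-1$ is $O(n^{-3/2})$, not $O(n^{-2})$, reflecting the $g=O(\lvert z+1\rvert^{1/2})=O(n^{-1/2})$ loss against the $n\cdot n^{-2}$ gain from the parametrix conjugation and the Jacobian; both are of course negligible against $\frac{1}{n\log^3 n}$.
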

\begin{proof}
See Proposition \ref{EIntegralPropApp}.
\end{proof}

\begin{prop}\label{HIntegralProp}
\begin{gather*}
\int_\Sigma \tilde{\mu}^{(n)}(z) \big(v_\Sigma^{(n)}(z) - \tilde{v}^{(n)}_\Sigma(z)\big) \big(\tilde{\mu}^{(n)}\big)^{-1}(z) \dbar z \nonumber\\
\qquad\quad{} - \int_\Sigma \tilde{\mu}^{(n+1)}(z) \big(v_\Sigma^{(n+1)}(z) - \tilde{v}^{(n+1)}_\Sigma(z)\big) \big(\tilde{\mu}^{(n+1)}\big)^{-1}(z) \dbar z \nonumber\\
\qquad{} = \frac{3}{16 n^2\log^2n} \left(\begin{matrix}
1 & -i\\
-i& -1
\end{matrix}\right) + O\left(\frac{1}{n^2\log^3n}\right).\label{HIntegral}
\end{gather*}
\end{prop}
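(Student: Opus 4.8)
The plan is to re-run the computation behind Proposition~\ref{EIntegralProp} a second time, but now tracking the increment $n \to n+1$ at every step, so that the answer comes out as a \emph{discrete derivative} in $n$ of the quantity in Proposition~\ref{EIntegralProp}. One cannot merely subtract two copies of Proposition~\ref{EIntegralProp}: its error is only $O\big(\tfrac{1}{n\log^3 n}\big)$, and the difference of two such errors is again $O\big(\tfrac{1}{n\log^3 n}\big)$, much larger than the asserted $O\big(\tfrac{1}{n^2\log^3 n}\big)$, so the gain of one power of $n$ has to be extracted by hand.

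First I would localize. On $[-1,1]$ the integrand vanishes since $v_\Sigma - \tilde v_\Sigma = 0$ there; on $\Sigma_1\cup\Sigma_2$ and on $[1+\eps,1+\delta]$ away from $z=1$ the factor $\phi^{-2n}$ decays exponentially in $n$ by property~\eqref{prop5} of Proposition~\ref{PhiProps}, while $\tilde\mu^{(n)}$ and $(\tilde\mu^{(n)})^{-1}$ are bounded uniformly in $n$ there (Legendre asymptotics away from the edge, Section~\ref{LegendreSummary}). Hence, up to $O(n^{-100})$, both integrals and their difference reduce to the window $|z-1|\le n^{-2}\log^2 n$, on the three pieces $[1,1+\delta]$, $\Sigma_1$, $\Sigma_2$. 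On that window I would insert the Bessel-regime Legendre asymptotics for $\tilde\mu^{(n)}$ from Section~\ref{LegendreSummary} — whose $n$-dependence enters to leading order only through $n(z-1)^{1/2}$ — and write the integrand as a scalar factor (namely $\big(\tfrac{F^2}{w_+}+\tfrac{F^2}{w_-}-2\big)\phi^{-2n}$ on $[1,1+\delta]$, and $\big(\tfrac{F^2}{w}-1\big)\phi^{-2n}$ on $\Sigma_{1,2}$) times a matrix $\tilde\mu^{(n)}N(\tilde\mu^{(n)})^{-1}$ with $N=\left(\begin{matrix}0&0\\1&0\end{matrix}\right)$.

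Next I would compare the two integrals at matched rescaled points: put $z=1+r$ for the $n$-integral and $z=1+\tilde r$ with $n^2 r=(n+1)^2\tilde r$ for the $(n+1)$-integral, so that $r,\tilde r=O(n^{-2})$ and $n(r/\tilde r-1)=\tfrac{2n+1}{n}\in[-3,3]$ — precisely the hypotheses of Proposition~\ref{F-FAsymptotics}. Telescope the difference of integrands into (i) the difference of scalar factors at the matched points, (ii) the difference of the conjugation matrices $\tilde\mu^{(n)}N(\tilde\mu^{(n)})^{-1}$, and (iii) the Jacobian discrepancy $\tfrac{n^2}{(n+1)^2}-1=O(1/n)$. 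Each acquires a factor $1/n$: for (ii) and (iii) this is elementary, since the Bessel-regime asymptotics, $\phi^{-2n}$ and the Jacobian all depend on $n$ to leading order only through $n^2(z-1)$; for (i), the $\phi^{-2n}$ part contributes $1-\phi^{-2}=O\big((z-1)^{1/2}\big)=O(1/n)$ on the window (from property~\eqref{1branch} of Proposition~\ref{PhiProps}), while the genuinely delicate $\tfrac{F^2}{w_\pm}$ part is exactly what Proposition~\ref{F-FAsymptotics} controls, giving $O\big(\tfrac{1}{n\log^3 n}\big)$. Since the windowed $L^1$-norm of the remaining matrix factor $\phi^{-2n}\tilde\mu^{(n)}N(\tilde\mu^{(n)})^{-1}$ is $O(1/n)$ (this is Proposition~\ref{EIntegralProp} after dividing out the $\log^{-2}n$), the contribution of the $\tfrac{F^2}{w}$-difference in (i) is $O\big(\tfrac{1}{n\log^3 n}\big)\cdot O(1/n)=O\big(\tfrac{1}{n^2\log^3 n}\big)$ and is absorbed into the error; the leading term comes from (ii), where — just as in the proof of Proposition~\ref{EIntegralProp} (Proposition~\ref{EIntegralPropApp}) — one extracts the scalar $\tfrac{F^2}{w_+}+\tfrac{F^2}{w_-}-2\sim-\tfrac{3\pi^2}{4\log^2 n}$ via Proposition~\ref{FCancellation}, a convergent Bessel integral producing $\left(\begin{matrix}1&-i\\-i&-1\end{matrix}\right)$ with the numerical constant, and the new factor $1/n$ from the increment, giving $\tfrac{3}{16n^2\log^2 n}\left(\begin{matrix}1&-i\\-i&-1\end{matrix}\right)+O\big(\tfrac{1}{n^2\log^3 n}\big)$.

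The hard part will be the uniform bookkeeping of two competing logarithms near $z=1$: the scalar $\tfrac{F^2}{w_+}+\tfrac{F^2}{w_-}-2$ grows like $\log^{-2}(z-1)$ (so like $\log^{-2}n$ at the edge scale), while $\tilde\mu^{(n)}$ itself grows logarithmically as $z\to1$; one must verify that the off-diagonal form of $N$ tames this growth in $\tilde\mu^{(n)}N(\tilde\mu^{(n)})^{-1}$ so the windowed integral converges, and that the $1/n$ gain is genuinely uniform over the window (which is non-compact in the rescaled variable $n^2(z-1)$), not merely pointwise. Keeping all error contributions at $O\big(\tfrac{1}{n^2\log^3 n}\big)$ rather than $O\big(\tfrac{1}{n^2\log^2 n}\big)$ is the crux; the two-point estimate of Proposition~\ref{F-FAsymptotics} (proved in Proposition~\ref{F-FAsymptoticsApp}) and the corresponding sharper Legendre-increment bounds are precisely the tools prepared for this, and I expect the detailed verification to form the bulk of the appendix argument.
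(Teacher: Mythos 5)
Your plan for the contribution near $z=1$ matches the paper's proof in Proposition~\ref{HIntegralPropApp}: localize, rescale via the Bessel variable $n^2f(z)$, compare at matched points (your condition $n^2r=(n+1)^2\tilde r$ is exactly what the substitutions $y_1=n^2f(z)$, $y_2=(n+1)^2f(z)$ implement), telescope the difference, use Proposition~\ref{F-FAsymptotics} to control the $F$-increment, and extract the $1/n$ gain from the change-of-variables prefactor so that the leading term is $\tfrac{1}{n+1}$ times the integral of Proposition~\ref{EIntegralProp}. That part is sound, and the structure of error terms you list (each piece $O(1/n)$ smaller, with Proposition~\ref{F-FAsymptotics} giving $O(1/(n\log^3 n))$ for the $F$-difference) is what the paper works out in displays (\ref{OhDearGod})--(\ref{I1H}).

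However, there is a genuine gap in your localization step. You claim that on $\Sigma_1\cup\Sigma_2$ ``away from $z=1$'' the factor $\phi^{-2n}$ decays exponentially in $n$ and that $\tilde\mu^{(n)}$, $(\tilde\mu^{(n)})^{-1}$ are uniformly bounded there, and conclude that everything reduces to a window around $z=1$. Both claims fail near $z=-1$: property~\eqref{prop5} gives $|\phi|>1$ off $[-1,1]$, but $|\phi|\to 1$ as $z\to -1$ along $\Sigma_1\cup\Sigma_2$, so within $U_{1/n}(-1)$ one only has $|\phi|\geq 1+O(n^{-1/2})$ and $\phi^{-2n}$ is not small; simultaneously the Bessel-regime Legendre asymptotics (\ref{TildeMuExpansion}) show $\tilde\mu^{(n)}$ grows like $O(n)$ near $-1$, not $O(1)$. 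So the neighborhood of $z=-1$ cannot be discarded by exponential decay; it must be estimated on its own. In the paper this is the term $I_2$, which is controlled using the fact that the jump difference there carries the factor $\tfrac{F^2}{w}-1=O(|z+1|^{1/2})$ (Proposition~\ref{FProps}, property~\eqref{prop3-2.3}) together with the square-integrability of $\Psi_{12},\Psi_{22}$ along the rays $\arg y=\pm 2\pi/3$, yielding $|I_2|=O(n^{-5/2})$, which is small enough. Your argument is missing this entire piece, and as written would fail without it.
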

\begin{proof}
See Proposition \ref{HIntegralPropApp}.
\end{proof}

Combining the results of this section, we can compute the difference $Q_1^{(n)}- \tilde{Q}_1^{(n)}$ asymptotically,

\begin{prop}\label{Q-TildeQProp}The difference $Q_1- \tilde{Q}_1 := Q_1^{(n)}- \tilde{Q}_1^{(n)}$ has the following asymptotic expansions
\begin{gather}
Q_1 - \tilde{Q}_1 = \frac{3}{16 n \log^2n} \left(\begin{matrix}
-1 & i \\
i & 1
\end{matrix}\right) + O\left(\frac{1}{n\log^3n}\right),\label{Q-TildeQ}\\
Q_1^{(n)} - \tilde{Q}_1^{(n)} - \big(Q_1^{(n+1)} - \tilde{Q}_1^{(n+1)}\big) = \frac{3}{16 n^2 \log^2n} \left(\begin{matrix}
-1 & i \\
i & 1
\end{matrix}\right) + O\left(\frac{1}{n^2\log^3n}\right).\label{Q-TildeQHard}
\end{gather}
\end{prop}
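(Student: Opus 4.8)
The plan is to combine the exact identity of Proposition~\ref{QQProp} with the replacement estimate of Proposition~\ref{Replacement} and the explicit Legendre integral asymptotics of Propositions~\ref{EIntegralProp} and~\ref{HIntegralProp}; once these are in hand the proposition is pure bookkeeping.

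First I would recall formula~(\ref{QQFormula}), which gives
\begin{gather*}
Q_1^{(n)} - \tilde{Q}_1^{(n)} = -\int_\Sigma \mu^{(n)} \big(v_\Sigma^{(n)} - \tilde{v}_\Sigma^{(n)}\big) \big(\tilde{\mu}^{(n)}\big)^{-1}\dbar z,
\end{gather*}
so that~(\ref{Q-TildeQ}) and~(\ref{Q-TildeQHard}) become assertions about this integral and about the difference of its values at $n$ and $n+1$. For~(\ref{Q-TildeQ}) I would then apply the first estimate of Proposition~\ref{Replacement} to replace $\mu^{(n)}$ by $\tilde{\mu}^{(n)}$ in the integrand at the cost of an $O\big(\frac{1}{n\log^4 n}\big)$ error, and invoke Proposition~\ref{EIntegralProp}, negating the sign, to obtain
\begin{gather*}
-\int_\Sigma \tilde{\mu}^{(n)} \big(v_\Sigma^{(n)} - \tilde{v}_\Sigma^{(n)}\big) \big(\tilde{\mu}^{(n)}\big)^{-1}\dbar z = \frac{3}{16 n\log^2 n}\left(\begin{matrix} -1 & i \\ i & 1\end{matrix}\right) + O\left(\frac{1}{n\log^3 n}\right).
\end{gather*}
Since $O\big(\frac{1}{n\log^4 n}\big)$ is subsumed in $O\big(\frac{1}{n\log^3 n}\big)$, the replacement error disappears into the error term and~(\ref{Q-TildeQ}) follows.

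For~(\ref{Q-TildeQHard}) the argument is identical in structure: by~(\ref{QQFormula}) the left side equals the difference of the two integrals with $\mu$, which by the second estimate of Proposition~\ref{Replacement} equals the difference of the two integrals with $\tilde{\mu}$ up to an $O\big(\frac{1}{n^2\log^4 n}\big)$ error; Proposition~\ref{HIntegralProp} evaluates that Legendre difference, and negating its matrix (and absorbing $O\big(\frac{1}{n^2\log^4 n}\big)$ into $O\big(\frac{1}{n^2\log^3 n}\big)$) yields the claim.

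I do not expect any serious obstacle in this proposition itself — all the analytic work is carried out in the cited results, namely the uniform resolvent bounds of Theorems~\ref{LegOpBound} and~\ref{LogOpBound}, the $L^2$ estimates of Proposition~\ref{Norms}, and the residue/saddle-point computations behind Propositions~\ref{EIntegralProp} and~\ref{HIntegralProp}. The only point needing attention is the arithmetic of the error orders: one must verify that the $\mu \to \tilde{\mu}$ replacement errors, which carry an extra power of $\log n$ in the denominator, are genuinely dominated by the $O\big(\frac{1}{n\log^3 n}\big)$ and $O\big(\frac{1}{n^2\log^3 n}\big)$ errors produced by the explicit integral evaluations, which is clear since $\log^4 n \gg \log^3 n$.
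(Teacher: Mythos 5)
Your proposal is correct and follows exactly the same route the paper takes: invoke Proposition~\ref{QQProp} for the exact formula, Proposition~\ref{Replacement} to swap $\mu$ for $\tilde{\mu}$ at cost $O\big(\frac{1}{n\log^4 n}\big)$ (resp.\ $O\big(\frac{1}{n^2\log^4 n}\big)$), then Propositions~\ref{EIntegralProp} and~\ref{HIntegralProp} with the leading sign and matrix flipped, and finally absorb the replacement error into the larger $O\big(\frac{1}{n\log^3 n}\big)$ (resp.\ $O\big(\frac{1}{n^2\log^3 n}\big)$) term. The sign and matrix bookkeeping you describe matches the paper's, so there is nothing substantive to add.
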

\begin{proof}
Combining Propositions \ref{QQProp}, \ref{Replacement}, and~\ref{EIntegralProp},
\begin{gather*}
Q_1 - \tilde{Q}_1 = -\int_\Sigma \tilde{\mu}(v_\Sigma - \tilde{v}_\Sigma) \mu^{-1} \dbar z
= -\int_\Sigma \tilde{\mu}(v_\Sigma - \tilde{v}_\Sigma) \tilde{\mu}^{-1} \dbar z + O\left(\frac{1}{n\log^3n}\right)\nonumber\\
\hphantom{Q_1 - \tilde{Q}_1}{}= \frac{3}{16 n \log^2n} \left(\begin{matrix}
-1 & i \\
i & 1
\end{matrix}\right) + O\left(\frac{1}{n\log^3n}\right)
\end{gather*}
demonstrating (\ref{Q-TildeQ}). Similarly, combining Propositions \ref{QQProp}, \ref{Replacement}, and~\ref{HIntegralProp},
\begin{gather*}
Q_1^{(n)} - \tilde{Q}_1^{(n)}- \big(Q_1^{(n+1)} - \tilde{Q}_1^{(n+1)}\big) \nonumber\\
\qquad{}= -\int_\Sigma \tilde{\mu}^{(n)}\big(v_\Sigma^{(n)} - \tilde{v}_\Sigma^{(n)}\big) \big(\mu^{(n)}\big)^{-1} \dbar z +\int_\Sigma \tilde{\mu}^{(n+1)}\big(v_\Sigma^{(n+1)} - \tilde{v}_\Sigma^{(n+1)}\big) \big(\mu^{(n+1)}\big)^{-1} \dbar z \nonumber\\
\qquad{} = -\int_\Sigma \tilde{\mu}^{(n)}\big(v_\Sigma^{(n)} - \tilde{v}_\Sigma^{(n)}\big) \big(\tilde{\mu}^{(n)}\big)^{-1} \dbar z \\
\qquad\quad{} +\int_\Sigma \tilde{\mu}^{(n+1)}\big(v_\Sigma^{(n+1)} - \tilde{v}_\Sigma^{(n+1)}\big) \big(\tilde{\mu}^{(n+1)}\big)^{-1} \dbar z + O\left(\frac{1}{n^2\log^3n}\right) \nonumber\\
\qquad{} = \frac{3}{16 n^2 \log^2n} \left(\begin{matrix}
-1 & i \\
i & 1
\end{matrix}\right) + O\left(\frac{1}{n^2\log^3n}\right)
\end{gather*}
demonstrating (\ref{Q-TildeQHard}).
\end{proof}

\subsection[Computing the asymptotic behavior of $a_n$, $b_n$]{Computing the asymptotic behavior of $\boldsymbol{a_n}$, $\boldsymbol{b_n}$}
\begin{proof}[Proof of Theorem \ref{MainResult}] Recall that by Proposition \ref{RecCoeffFormula},
\begin{gather*}
a_n - \tilde{a}_n = \big(Q_1^{(n)}\big)_{11} - \big(\tilde{Q}_1^{(n)}\big)_{11}- \big(\big(Q_1^{(n+1)}\big)_{11} - \big(\tilde{Q}_1^{(n+1)}\big)_{11}\big),\\
b_{n-1}^2 - \tilde{b}_{n-1}^2 = \big(\big(Q_1^{(n)}\big)_{12} - \big(\tilde{Q}_1^{(n)}\big)_{12}\big) \big(\big(Q_1^{(n)}\big)_{21} - \big(Q_1^{(n+1)}\big)_{21}\big)\nonumber \\
\hphantom{b_{n-1}^2 - \tilde{b}_{n-1}^2 =}{} + \big(\tilde{Q}_1^{(n)}\big)_{12} \big[ \big(Q_1^{(n)}\big)_{21} - \big(\tilde{Q}_1^{(n)}\big)_{21} - \big(\big(Q_1^{(n+1)}\big)_{21} - \big(\tilde{Q}_1^{(n+1)}\big)_{21}\big)\big].
\end{gather*}
The calculation for $a_{n} - \tilde{a}_n$ is read directly from Proposition \ref{Q-TildeQProp}
\begin{gather*}
a_n - \tilde{a}_n= \left(\frac{3}{16 n^2 \log^2n} \left(\begin{matrix}
-1 & i \\
i & 1
\end{matrix}\right) + O\left(\frac{1}{n^2\log^3n}\right)\right)_{11} \\
\hphantom{a_n - \tilde{a}_n}{} = - \frac{3}{16 n^2 \log^2 n} + O\left(\frac{1}{n^2 \log^3 n} \right).
\end{gather*}
To calculate $b_{n-1}^2 - \tilde{b}_{n-1}^2$, note from (\ref{StoTildeQ}) and (\ref{SAsymptotics1}) that as $z \to \infty$,
\begin{gather}
\tilde{Q}^{(n)} = S^{(n)}(z) = R^{(n)}(z) N(z) \nonumber\\
\hphantom{\tilde{Q}^{(n)}}{} = \left( 1 + \frac{R_1(z)}{n} + O\left(\frac{1}{n^2}\right)\right) \left(\begin{matrix}
\dfrac{a + a^{-1}}{2} & \dfrac{a - a^{-1}}{2i} \vspace{1mm}\\
\dfrac{a - a^{-1}}{-2i} & \dfrac{a + a^{-1}}{2}
\end{matrix}\right).\label{TildeQStuff}
\end{gather}
From (\ref{ThingyDelta}),
\begin{gather*}
R^{(n)}(z) = I + \frac{R_1(z)}{n} + \Delta(z),
\end{gather*}
where $R_1$, $\Delta(z)$ are analytic in for $z \in \mathbb{C}\backslash \Sigma_S$ and
\begin{gather*}
\vert R_1(z) \vert \le \frac{c}{\vert z \vert }\text{ and } \vert \Delta(z) \vert \le \frac{c}{\vert z \vert n^2} \qquad \text{as $z \to \infty$}.
\end{gather*}
We can write $N(z) = I + \frac{N_1}{z} + O\big(z^{-2}\big)$ and $N(z) = \left(\begin{smallmatrix}
N_{11}(z) & N_{12}(z)\\
N_{21}(z) & N_{22}(z)
\end{smallmatrix}\right)$. So, in particular, $\tilde{Q}_1 = N_1 + O\left(\frac{1}{n}\right)$ and
\begin{gather*}
N_{12}(z) = \frac{a(z) - a^{-1}(z)}{2i} = \frac{1}{2i}\left(\left(\frac{z-1}{z+1}\right)^{1/4} - \left(\frac{z+1}{z-1}\right)^{1/4}\right) \nonumber\\
\hphantom{N_{12}(z)}{} = \frac{1}{2i} \frac{(z-1)^{1/2} - (z+1)^{1/2}}{\big(z^2-1\big)^{1/4}}= \frac{1}{2i} \frac{\big(1-\frac{1}{z}\big)^{1/2} - \big(1 + \frac{1}{z}\big)^{1/2}}{\big(1-\frac{1}{z^2}\big)^{1/4}} \nonumber\\
\hphantom{N_{12}(z)}{}= \frac{1}{2i} \left( 1 - \frac{1}{2z} - 1 - \frac{1}{2z} + O\big(z^{-2}\big) \right)\big(1 + O\big(z^{-2}\big)\big)
 = \left(- \frac{1}{2i z} + O\big(z^{-2}\big)\right)
\end{gather*}
from which it follows that
\begin{gather}
\big(\tilde{Q}_1\big)_{12} = -\frac{1}{2i} + O\left(\frac{1}{n}\right).\label{TildeQ1}
\end{gather}
Combining Proposition \ref{Q-TildeQProp} with (\ref{TildeQStuff}) we see that
\begin{gather}
Q_1^{(n)} - Q_1^{(n+1)} = \tilde{Q}_1^{(n)} - \tilde{Q}_1^{(n+1)} + O\left(\frac{1}{n^2\log^2 n}\right) \nonumber\\
\hphantom{Q_1^{(n)} - Q_1^{(n+1)}}{} = \lim_{z \to \infty}z \big(R^{(n)}(z) - R^{(n+1)}(z) \big) N(z)+ O\left(\frac{1}{n^2\log^2 n}\right) = O\left(\frac{1}{n^2}\right).\label{TildeQ2}
\end{gather}
Using Propositions \ref{RecCoeffFormula} and~\ref{Q-TildeQProp}, and substituting in (\ref{TildeQ1}) and (\ref{TildeQ2}), we see that
\begin{gather}
b_{n-1}^2 - \tilde{b}_{n-1}^2 = \big(\big(Q_1^{(n)}\big)_{12} - \big(\tilde{Q}_1^{(n)}\big)_{12}\big) \big(\big(Q_1^{(n)}\big)_{21} - \big(Q_1^{(n+1)}\big)_{21}\big)\nonumber\\
\hphantom{b_{n-1}^2 - \tilde{b}_{n-1}^2 =}{} + \big(\tilde{Q}_1^{(n)}\big)_{12} \big[ \big(Q_1^{(n)}\big)_{21} - \big(\tilde{Q}_1^{(n)}\big)_{21} - \big(\big(Q_1^{(n+1)}\big)_{21} - \big(\tilde{Q}_1^{(n+1)}\big)_{21}\big)\big] \nonumber\\
\hphantom{b_{n-1}^2 - \tilde{b}_{n-1}^2 }{}= O\left(\frac{1}{n\log^2 n }\right) \cdot O\left(\frac{1}{n^2}\right) + \left(-\frac{1}{2i} + O\left(\frac{1}{n}\right)\right) \nonumber\\
\hphantom{b_{n-1}^2 - \tilde{b}_{n-1}^2 =}{} \times \left(\frac{3}{16 n^2 \log^2n} \left(\begin{matrix}
-1 & i \\
i & 1
\end{matrix}\right) + O\left(\frac{1}{n^2\log^3n}\right)\right)_{21} \nonumber\\
\hphantom{b_{n-1}^2 - \tilde{b}_{n-1}^2 }{}= \frac{3}{32 n^2 \log^2n} + O\left(\frac{1}{n^2\log^3n}\right).\label{NearlyThere}
\end{gather}

Next, note that
\begin{gather*}
b_{n-1}^2 - \tilde{b}_{n-1}^2 = \big(b_{n-1} - \tilde{b}_{n-1}\big) \big(b_{n-1} + \tilde{b}_{n-1}\big) = \big(b_{n-1} - \tilde{b}_{n-1}\big) \big(2\tilde{b}_{n-1} + b_{n-1} - \tilde{b}_{n-1}\big).
\end{gather*}
Recall that $\tilde{b}_{n-1} = \frac{1}{2} +O\big(\frac{1}{n^2}\big)$. Therefore
\begin{gather}
b_{n-1}^2 - \tilde{b}_{n-1}^2 = \big(b_{n-1} - \tilde{b}_{n-1}\big) \left(1 + \big(b_{n-1} - \tilde{b}_{n-1}\big) + O\left(\frac{1}{n^2}\right)\right).
\label{Silliness}
\end{gather}
By definition, $b_{n-1} > 0$, so $ \big(1 + \big(b_{n-1} - \tilde{b}_{n-1}\big) + O\big(\frac{1}{n^2}\big)\big) \ge \frac{1}{2} -\epsilon$, which combined with~(\ref{NearlyThere}) and~(\ref{Silliness}), first demonstrates
\begin{gather*}
b_{n-1} - \tilde{b}_{n-1} = O\left(\frac{1}{n^2 \log^2 n}\right),
\end{gather*}
which, substituted into (\ref{Silliness}) and using (\ref{NearlyThere}) a second time, gives
\begin{gather*}
b_{n-1} - \tilde{b}_{n-1} = \frac{3}{32 n^2 \log^2n} + O\left(\frac{1}{n^2\log^3n}\right)
\end{gather*}
concluding the proof of Theorem \ref{MainResult}.
\end{proof}

\appendix
\section{Asymptotics of the Szeg\H{o} function}\label{FProofsSection}
\subsection[Asymptotics of $F$]{Asymptotics of $\boldsymbol{F}$}\label{AppendixA}
The following proves assertion~\eqref{prop2-2.3} of Proposition~\ref{FProps}
\begin{prop}\label{FAsymptoticsZApp}For $z \in \mathbb{C}_\pm$,
\begin{gather*}
\frac{F^2}{w}(z) = \left(1 \mp \frac{i\pi}{w(z)} - \frac{\pi^2}{2w^2(z)} + O\left(\frac{1}{w^3(z)}\right)\right)
\end{gather*}
as $z \to 1$.
\end{prop}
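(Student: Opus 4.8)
The plan is to set $L(s):=\log w(s)$ and work with $g(z):=\log\bigl(\tfrac{F^{2}}{w}(z)\bigr)=2h(z)-L(z)$, where $h(z):=\bigl(z^{2}-1\bigr)^{1/2}\int_{-1}^{1}\frac{L(s)}{(s^{2}-1)^{1/2}_{+}}\frac{\dbar s}{s-z}$ so that $F^{2}=e^{2h}$. Because the right-hand side of the asserted identity equals $e^{\mp i\pi/w(z)}+O\bigl(1/w^{3}(z)\bigr)$, it suffices to prove
\[
g(z)=\mp\frac{i\pi}{w(z)}+O\!\left(\frac{1}{w^{3}(z)}\right)\qquad\text{as }z\to1,\ z\in\mathbb{C}_{\pm}.
\]
The first step is to rewrite $2h(z)$ as a closed-contour integral: for fixed $z\notin[-1,1]$ put $\Phi(\zeta)=\dfrac{(z^{2}-1)^{1/2}}{(\zeta^{2}-1)^{1/2}}\dfrac{L(\zeta)}{\zeta-z}$, which by Lemma~\ref{NoFurtherComment} and the square-root conventions is analytic off $[-1,1]\cup[1,\infty)\cup(-\infty,1-2k]\cup\{z\}$ and is $O(L(\zeta)/\zeta^{2})$ at $\infty$. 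Collapsing a small positively oriented loop around $[-1,1]$ onto the interval, and using $(x^{2}-1)^{1/2}_{-}=-(x^{2}-1)^{1/2}_{+}$, gives $\tfrac1{2\pi i}\oint\Phi=-2h(z)$; expanding that loop to a large circle then picks up the residue $L(z)$ at $\zeta=z$ together with the loop integrals $J_{1}(z)$, $J_{2}(z)$ of $\Phi$ around the two cuts $[1,\infty)$ and $(-\infty,1-2k]$ of $L$, the large-circle term going to $0$. Hence $g(z)=J_{1}(z)+J_{2}(z)$.

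Since $1-2k<-1$, the cut $(-\infty,1-2k]$ stays a fixed distance from $z$, and on it $\Phi$ carries the factor $(z^{2}-1)^{1/2}=O(|z-1|^{1/2})$ times a uniformly bounded integral, so $J_{2}(z)=O(|z-1|^{1/2})=o(1/w^{3})$. The substance is $J_{1}(z)$. Collapsing its loop onto $[1,\infty)$ and using that only $L$ jumps there, with $w_{\pm}(\xi)=\log\tfrac{2k}{\xi-1}\pm i\pi$ and hence $L_{+}(\xi)-L_{-}(\xi)=2i\arctan\!\bigl(\pi/\log\tfrac{2k}{\xi-1}\bigr)$ for $\xi$ near $1$, one obtains
\[
J_{1}(z)=-\frac{(z^{2}-1)^{1/2}}{\pi}\int_{1}^{\infty}\frac{\arctan\!\bigl(\pi/\log\tfrac{2k}{\xi-1}\bigr)}{(\xi^{2}-1)^{1/2}(\xi-z)}\,d\xi+O\bigl(|z-1|^{1/2}\bigr),
\]
where the error swallows the tail $\xi\ge2$ (there $\arctan$ is read as $\arg(\log\tfrac{2k}{\xi-1}+i\pi)$, still bounded by $\pi$). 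Now substitute $\xi-1=2ke^{-u}$ and then $u=w(z)+v$; since $2ke^{-w(z)}=1-z$ this yields $\xi-1=(1-z)e^{-v}$, $\xi-z=(1-z)(e^{-v}+1)$, and, with the elementary identity $(z^{2}-1)^{1/2}=\pm i\sqrt2\,(1-z)^{1/2}\bigl(1+O(z-1)\bigr)$ for $z\in\mathbb{C}_{\pm}$ (the branch of $(1-z)^{1/2}$ positive for $z<1$),
\[
J_{1}(z)=\mp\frac{i}{\pi}\bigl(1+O(z-1)\bigr)\int\frac{\arctan\!\bigl(\pi/(w(z)+v)\bigr)\,e^{-v/2}}{\bigl(1+O((1-z)e^{-v})\bigr)(e^{-v}+1)}\,dv,
\]
the $v$-integral taken over the horizontal line $\operatorname{Im}v=-\operatorname{Im}w(z)\in(-\pi,\pi)\setminus\{0\}$, which meets no pole of $1/\cosh(v/2)$.

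It remains to evaluate this integral as $w:=w(z)\to\infty$. Expanding $\arctan\!\bigl(\pi/(w+v)\bigr)=\tfrac{\pi}{w}-\tfrac{\pi v}{w^{2}}+O(1/w^{3})$ near the peak and using $\tfrac{e^{-v/2}}{e^{-v}+1}=\tfrac1{2\cosh(v/2)}$: the $1/w$ term contributes $\tfrac{\pi}{w}\int\tfrac{dv}{2\cosh(v/2)}=\tfrac{\pi}{w}\bigl[\arctan(\sinh(v/2))\bigr]_{-\infty}^{+\infty}=\tfrac{\pi^{2}}{w}$; the $1/w^{2}$ term vanishes because $v/\cosh(v/2)$ is odd and analytic in $|\operatorname{Im}v|<\pi$, so its line integral equals its integral over $\mathbb{R}$, namely $0$; and the contributions of the factor $O((1-z)e^{-v})$, of the region $|\operatorname{Re}v|$ large (where $1/\cosh(v/2)$ is exponentially small), and of the $O(z-1)$ prefactor are all $o(1/w^{3})$ since $|z-1|=O(e^{-\operatorname{Re}w})$. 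Therefore $J_{1}(z)=\mp\tfrac{i\pi}{w(z)}+O(1/w^{3}(z))$, so $g(z)=\mp\tfrac{i\pi}{w(z)}+O(1/w^{3}(z))$ and the claim follows on exponentiating.

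The hard part is not conceptual but a matter of careful bookkeeping: fixing all branch cuts and orientations in the contour-collapsing step — in particular the behaviour at the common endpoint $\zeta=1$ of the cuts $[-1,1]$ and $[1,\infty)$, where integrability of $\Phi$ rests on $L(\zeta)=O\bigl(\log\log\tfrac1{|\zeta-1|}\bigr)$ being beaten by the integrability of $(\zeta^{2}-1)^{-1/2}$ — and then checking, uniformly as $z\to1$, that each error term generated by the two substitutions and by the tail $\xi\ge2$ is genuinely of order $o(1/w^{3})$.
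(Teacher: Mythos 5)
Your argument is correct, and it is genuinely different from the paper's. The paper works with $\log F$ itself: it isolates the $\tfrac{1}{\sqrt{1-s}}$ factor via Lemma~\ref{WAnalytic}, rotates the $t=1-s$ contour to the ray at $\arg(z-1)=\theta$, rescales by $t=re^{i\theta}u$, $\gamma=\sqrt u$, splits the $\gamma$-range into three pieces $N_1$, $N_2$, $N_3$, and Taylor-expands $\log\bigl(\log\tfrac{2k}{re^{i\theta}}-2\log\gamma\bigr)$ in the middle region, which requires the three moments $\int_0^\infty\tfrac{{\rm d}\gamma}{\gamma^2+1}$, $\int_0^\infty\tfrac{\log\gamma\,{\rm d}\gamma}{\gamma^2+1}$, $\int_0^\infty\tfrac{\log^2\gamma\,{\rm d}\gamma}{\gamma^2+1}$; the final answer comes from expanding $F^2/w$ using $\log\tfrac{2k}{re^{i\theta}}=w(z)\mp i\pi$. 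You instead go straight for $g=\log(F^2/w)$, reorganize it by collapsing/expanding a contour so that only the branch-cut jumps of $L=\log w$ survive ($g=J_1+J_2$), show $J_2=O(|z-1|^{1/2})$ is beyond all orders, and after the substitution $\xi-1=2ke^{-u}$, $u=w(z)+v$, reduce $J_1$ to an $\arctan/\cosh$ integral on a horizontal line. What this buys: you never have to compute $\int_0^\infty\tfrac{\log^2\gamma\,{\rm d}\gamma}{\gamma^2+1}=\tfrac{\pi^3}{8}$ (the source of the paper's $-\tfrac{\pi^2}{4\log^2}$ term in $\log F$), because you noticed that the claimed $\tfrac{1}{w^2}$ coefficient is forced once the $\tfrac{1}{w}$ coefficient of $g$ is pinned down, i.e.\ the Proposition is equivalent to $g=\mp\tfrac{i\pi}{w}+O(1/w^3)$, and the $\tfrac{1}{w^2}$ term of $J_1$ vanishes by parity of $v/\cosh(v/2)$ — structurally cleaner. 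What it costs: the contour-collapsing must be done on a single loop around $[-1,\infty)$ (the cuts of $(\zeta^2-1)^{-1/2}$ and $L$ share the endpoint $1$), with the jump type changing there, which you correctly flag; and the error terms have to be made uniform in $\theta$, which needs a small extra step: the horizontal line $\operatorname{Im} v=-\operatorname{Im} w(z)$ approaches the poles of $1/\cosh(v/2)$ at $v=\mp i\pi$ as $\theta\to 0$ (resp.\ $\pi$), so absolute-value bounds on the $O(1/w^3)$ remainder would deteriorate near those lines. The fix — shift the contour to $\operatorname{Im} v=0$ for $|\operatorname{Re} v|\lesssim \operatorname{Re} w$, where $\cosh(v/2)\ge 1$, which is permissible since the only obstructions ($\arctan$'s branch points at $v=-w\mp i\pi$) sit at $\operatorname{Re} v\approx -\operatorname{Re} w$ where everything is exponentially small — is the same contour-shift you already invoke for the two explicit moments, so you should state it once for the whole integrand rather than per term.
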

\begin{proof}
Let $z = 1 + re^{i\theta}$ where $0 < \vert \theta \vert < \pi$, $r \to 0$. We will first prove that
\begin{gather*}
\int_{-1}^1 \frac{\log w(s)}{\big(s^2-1\big)^{1/2}_+} \frac{\dbar s}{s -z} = \frac{1}{\sqrt{2}}r^{-1/2}e^{-i\theta/2}\left(\frac{1}{2}\log \log \frac{k}{re^{i\theta}} - \frac{\pi^2}{4} \frac{1}{\log^2\frac{k}{re^{i\theta}}} + O\left(\frac{1}{\log^3 r}\right)\right),
\end{gather*}
where $O\big(\frac{1}{\log^3r}\big)$ is uniform in $0 <\vert \theta \vert < \pi$. Note that
\begin{gather}
\int_{-1}^1 \frac{\log w(s)}{\big(s^2-1\big)^{1/2}_+} \frac{\dbar s}{s- z} = -i \int_{-1}^1 \frac{\log w(s)}{\sqrt{1-s^2}} \frac{\dbar s}{s-z} \label{Referee2}\\
\qquad{} = -i \left( \int_{-1}^1 \frac{\log w(s)}{\sqrt{2}\sqrt{1-s}} \frac{\dbar s}{s- z} + \int_{-1}^1 \log w(s)\left[\frac{1}{\sqrt{1-s^2}} - \frac{1}{\sqrt{2}\sqrt{1-s}}\right] \frac{\dbar s}{s-z}\right).\nonumber
\end{gather}
A calculation shows that
\begin{gather}
\int_{-1}^1 \log w(s)\left[\frac{1}{\sqrt{1-s^2}} - \frac{1}{\sqrt{2}\sqrt{1-s}}\right] \frac{\dbar s}{s- z} \label{LemmaControlledInt}\\
 = \int_{-1}^0 \log w(s)\left[ \frac{\sqrt{1-s}}{2\sqrt{1+ s} + \sqrt{2}(1+s) }\right] \frac{\dbar s}{s-z} + \int_{0}^1 \log w(s)\left[ \frac{\sqrt{1-s}}{2\sqrt{1+ s} + \sqrt{2}(1+s) }\right] \frac{\dbar s}{s-z}. \nonumber
\end{gather}
The first integral is clearly bounded as $z \to 1$. The second integral is easily seen to be controlled via the following lemma:
\begin{Lemma}\label{WAnalytic}
Let $f(x)$ be an absolutely continuous function on the interval $[a,1]$ with $f(1) = 0$ and with a derivative $f'(x) \in L^p([a,1])$ for some $p > 1$. Then
\begin{gather*}
\int_{a}^1 f(s) \frac{\dbar s}{s-z} = O(1), \qquad \text{as $z \to +1$}
\end{gather*}
for $z \in \mathbb{C}\backslash[a,1]$.
\end{Lemma}
\begin{proof}
\begin{gather*}
\int_{a}^1 \frac{f(s)\dbar s}{s-z} = \int_{a}^1 \frac{f(s) - f(1)\dbar s}{s-z} = -\int_{a}^1\int_s^1 \frac{ f'(t){\rm d}t \dbar s}{s-z} \nonumber\\
\hphantom{\int_{a}^1 \frac{f(s)\dbar s}{s-z}}{} =\int_{a}^1\int_{a}^t \frac{ \dbar s}{z-s}f'(t){\rm d}t = -\frac{1}{2\pi i}\int_{a}^1 f'(t) \left(\log (z-t) - \log(z-a)\right){\rm d}t.
\end{gather*}
Therefore
\begin{gather*}
\left\vert \int_{a}^1 \frac{f(s)\dbar s}{s-z}\right\vert\le \Vert f' \Vert_{L^p([a,1])} \times \left\Vert \log(z-t) - \log(z-a)\right\Vert_{L^q([a,1])} = O(1),
\end{gather*}
where $\frac{1}{q} = 1- \frac{1}{p}$. Here we have used the fact that $\log(z-t) - \log(z -a)$ is in $L^q([a,1])$ uniformly as $z \to +1$, $z \in \mathbb{C}\backslash[a,1]$, for all $q < \infty$.
\end{proof}

Applying Lemma \ref{WAnalytic} to (\ref{LemmaControlledInt}) and combining with (\ref{Referee2}),
\begin{gather*}
\int_{-1}^1 \frac{\log w(s)}{\big(s^2-1\big)^{1/2}_+} \frac{\dbar s}{s-z}= -\frac{i}{\sqrt{2}} \int_{-1}^1 \frac{\log w(s)}{\sqrt{1-s}} \frac{\dbar s}{s-z}+ O(1) \label{CauchyFirstStep}
\end{gather*}
as $z \to 1$ uniformly for $z \in \mathbb{C}_\pm$. Recall $z = 1 + re^{i\theta}$ and make the change of variables $t = 1-s$, then
\begin{gather*}
 -\frac{i}{\sqrt{2}} \int_{-1}^1 \frac{\log w(s)}{\sqrt{1-s}} \frac{\dbar s}{s-z}= \frac{i}{\sqrt{2}} \int_{0}^2 \frac{\log w(1-t)}{\sqrt{t}} \frac{\dbar t}{t + re^{i\theta}}.
 \end{gather*}
From Lemma \ref{WAnalytic}, $w(1-t)$ is analytic for $s \in \mathbb{C}\backslash (-\infty, 0]$ and $\log w(1-t)$ is analytic in the region $\mathbb{C}\backslash((-\infty, 0] \cup [2k, \infty))$, so
 \begin{gather*}
\frac{i}{\sqrt{2}} \int_{0}^2 \frac{\log w(1-t)}{\sqrt{t}} \frac{\dbar t}{t + re^{i\theta}} = \frac{i}{\sqrt{2}} \left(\int_{C_1} + \int_{C_2}\right)\frac{\log w(1-t)}{t^{1/2}} \frac{\dbar t}{t + re^{i\theta}},
 \end{gather*}
where the contours $C_1$, $C_2$ are depicted in Fig.~\ref{CFigure} for $0 < \theta < \pi$; there is an analogous picture for $-\pi < \theta < 0$. Here $t^{1/2} = \vert t \vert^{1/2} e^{i\theta /2}$ where $-\pi < \theta < \pi$ is the analytic continuation of $\sqrt{t}$ from $(0, \infty)$ to $\mathbb{C} \backslash (-\infty, 0]$.
 \begin{figure}[t]
\centering
\begin{tikzpicture}[scale=5.]
\coordinate (up) at (.5,.2);
\coordinate (dn) at (.5,-.2);
\draw[line width = 1,directed] (-1,0) to (.5,0) ;
\draw[line width = .7,directed] (-1,0) to (.06,1.06);
\draw [black, domain= 45:0, directed] plot ({1.5 * cos(\x) -1}, {1.5 * sin(\x)});
\fill (-.8, .2) circle (.4pt);
\node[below right] at (-1,0) {$0$};
\node[above left] at (-.8, .2) {$z-1= re^{i\theta}$};
\node[below right] at (-.5, .5){$C_1$};
\node[left] at (.4, .5){$C_2$};
\node[above right] at (.06, 1.06) {$2e^{i\theta}$};
\node[right] at (.5, 0){$2$};
\end{tikzpicture}
\caption{Definition of $C_1$, $C_2$.}\label{CFigure}
\end{figure}
The integral over the contour $C_2$ is bounded away from all singularities of the integrand, and thus, by dominated convergence, tends uniformly in $0 < \vert \theta\vert < \pi$ as $r \to 0$ to
\begin{gather*}
 \int_{C_2}\log w(1-t) \frac{\dbar t}{t^{3/2}},
\end{gather*}
which is finite. Therefore,
\begin{gather*}
\int_{-1}^1 \frac{\log w(s)}{\big(s^2-1\big)^{1/2}_+} \frac{\dbar s}{s -z} = \frac{i}{\sqrt{2}} \int_{C_1} \frac{\log w(1-t)}{t^{1/2}} \frac{\dbar t}{t + re^{i\theta}} + O(1).
\end{gather*}
Making the change of variables $t = re^{i\theta}u$, $u > 0$, $\gamma = \sqrt{u}$,
 \begin{gather}
 \frac{i}{\sqrt{2}} \int_{C_1} \frac{\log w(1-t)}{t^{1/2}} \frac{\dbar t}{t + re^{i\theta}} = \frac{ir^{-1/2}e^{-i\theta/2}}{\sqrt{2}} \int_0^{2/r}\log w(1 - re^{i\theta}u) \frac{\dbar u}{u^{3/2} + u^{1/2}}\nonumber\\
\qquad {} =\sqrt{2}ir^{-1/2}e^{-i\theta/2} \int_0^{\sqrt{2}r^{-1/2}}\log w\big(1-re^{i\theta}\gamma^2\big) \frac{\dbar\gamma}{\gamma^2 + 1} \nonumber\\
\qquad{} = \sqrt{2}ir^{-1/2}e^{-i\theta/2} \int_0^{\sqrt{2}r^{-1/2}}\log \left(\log \frac{2k}{r} - 2\log \gamma - i\theta\right)\frac{\dbar\gamma}{\gamma^2 + 1}\nonumber\\
\qquad{} = \sqrt{2}ir^{-1/2}e^{-i\theta/2}\left( \int_0^{\left(\frac{r}{2k}\right)^{1/4}} + \int_{\left(\frac{r}{2k}\right)^{1/4}}^{\left(\frac{r}{2k}\right)^{-1/4}} + \int_{\left(\frac{r}{2k}\right)^{-1/4}}^{\sqrt{2}r^{-1/2}}\right)\nonumber\\
\qquad\quad{}\times \log \left(\log \frac{2k}{r} - 2\log \gamma - i\theta\right)\frac{\dbar\gamma}{\gamma^2 + 1}\nonumber\\
\qquad {}= \sqrt{2}ir^{-1/2}e^{-i\theta/2} (N_1(r) + N_2(r) + N_3(r)). \label{ApproxWithN}
\end{gather}
When $\gamma \le \left(\frac{r}{2k}\right)^{1/4}$, we have that $\log \frac{2k}{r} \le - 4 \log \gamma$. So as $-2 \log \gamma > 0$,
\begin{gather*}
\vert N_1 (r) \vert = \left\vert \int_0^{\left(\frac{r}{2k}\right)^{1/4}}\log \left(\log \frac{2k}{r} - 2\log \gamma - i\theta\right)\frac{\dbar \gamma}{\gamma^2 + 1} \right\vert \\
\hphantom{\vert N_1 (r) \vert}{}
 \le \left\vert \int_0^{\left(\frac{r}{2k}\right)^{1/4}}\log \left(\log \frac{2k}{r} - 2\log \gamma\right)\frac{\dbar \gamma}{\gamma^2 + 1} \right\vert \\
\hphantom{\vert N_1 (r) \vert=}{}
+ \left\vert \int_0^{\left(\frac{r}{2k}\right)^{1/4}}\log \left(1 - \frac{i\theta}{\log \frac{2k}{r} - 2\log \gamma }\right)\frac{\dbar \gamma}{\gamma^2 + 1} \right\vert.
 \end{gather*}
As $\log\frac{2k}{r} -2\log \gamma \ge \log\frac{2k}{r} \to \infty$ as $r \to \infty$, the integrand in the second integral is bounded above and therefore
\begin{gather*}
\vert N_1(r)\vert\le \frac{1}{2\pi} \int_0^{\left(\frac{r}{2k}\right)^{1/4}}\left \vert \log \left(\log \frac{2k}{r} - 2\log \gamma \right)\right\vert\frac{{\rm d}\gamma}{\gamma^2 + 1} + O\big(r^{1/4}\big).
\end{gather*}
$\vert \log x \vert$ is an increasing function when $x > 1$, therefore since $\log\frac{2k}{r} \le -4\log \gamma$ and $\log\frac{2k}{r} - 2\log \gamma >1$ as $r \to 0$,
\begin{gather*}
\vert N_1(r)\vert\le \frac{1}{2\pi} \int_0^{\left(\frac{r}{2k}\right)^{1/4}} \log \left(- 6\log \gamma \right) \frac{{\rm d}\gamma}{\gamma^2 + 1} +O\big(r^{1/4}\big)\nonumber\\
\hphantom{\vert N_1(r)\vert}{} \le \frac{1}{2\pi} \int_0^{\left(\frac{r}{2k}\right)^{1/4}} \log \left(- 6\log \gamma \right) {\rm d}\gamma + O\big(r^{1/4}\big)
\end{gather*}
Integrating by parts,
\begin{gather*}
 \int_0^{\left(\frac{r}{2k}\right)^{1/4}} \log (- 6\log \gamma ) {\rm d}\gamma = \log(-6\log \gamma) \gamma \bigg\vert_0^{r^{1/4}/(2k)^{1/4}} - \int_0^{r^{1/4}/(2k)^{1/4}} \frac{1}{\log \gamma }{\rm d}\gamma\\
 \hphantom{\int_0^{\left(\frac{r}{2k}\right)^{1/4}} \log (- 6\log \gamma ) {\rm d}\gamma}{}
 = O\big(r^{1/4} \log (-\log r) \big),
\end{gather*}
and so
\begin{gather}
\vert N_1(r)\vert = O\big(r^{1/4} \log (-\log r) \big).\label{N1Estimate}
\end{gather}
Now,
\begin{gather*}
\vert N_3 (r) \vert = \left\vert \int_{\left(\frac{r}{2k}\right)^{-1/4}}^{\sqrt{2}r^{-1/2}}\log \left(\log \frac{2k}{r} - 2\log \gamma - i\theta \right)\frac{\dbar\gamma}{\gamma^2 + 1} \right\vert,
\end{gather*}
which, after the change of variables $y = \frac{1}{\gamma}$, becomes
\begin{gather*}
\left\vert \int^{\left(\frac{r}{2k}\right)^{1/4}}_{r^{1/2}/\sqrt{2}}\log \left(\log \frac{2k}{r} + 2\log y - i\theta\right)\frac{\dbar y}{y^2 + 1}\right\vert\nonumber\\
\qquad{}\le \frac{1}{2\pi}\int^{\left(\frac{r}{2k}\right)^{1/4}}_{r^{1/2}/\sqrt{2}}\left\vert \log \left(\log \frac{2k}{r} + 2\log y - i\theta\right)\right\vert\frac{{\rm d} y}{y^2 + 1}\nonumber\\
\qquad{} \le \frac{1}{2\pi}\int^{\left(\frac{r}{2k}\right)^{1/4}}_{r^{1/2}/\sqrt{2}}\left\vert \frac{1}{2}\log\left(\left(\log \frac{2k}{r} + 2 \log y\right)^2 + \theta^2\right)\right. \nonumber\\
\left. \qquad\quad{} + i\arg\left( \left(\log \frac{2k}{r} + 2\log y - i\theta\right)\right)\right\vert\frac{{\rm d} y}{y^2 + 1}\nonumber\\
\qquad{} \le \frac{1}{4\pi}\int^{\left(\frac{r}{2k}\right)^{1/4}}_{r^{1/2}/\sqrt{2}}\left\vert \log\left(\left(\log\frac{2k}{r} + 2 \log y\right)^2 + \theta^2\right)\right\vert\frac{{\rm d} y}{y^2 + 1} + O\big(r^{1/4}\big).
\end{gather*}
When $\frac{r^{1/2}}{\sqrt{2}} \le y \le \big(\frac{r}{2k}\big)^{1/4}$, we have that $-2 \log y + \log\frac{2k}{2}\le \log \frac{2k}{r} \le - 4 \log y$. So
\begin{gather}
\vert N_3(r)\vert \le \frac{1}{4\pi}\int^{\left(\frac{r}{2k}\right)^{1/4}}_{r^{1/2}/\sqrt{2}}\left(\big\vert \log \big((- 2\log y)^2 + \theta^2\big)\big\vert +\left\vert \log\left(\left(\log\frac{2k}{2}\right)^2 +\theta^2\right)\right\vert\right)\frac{{\rm d} y}{y^2 + 1}\nonumber\\
\qquad{} \le \frac{1}{4\pi}\int^{\left(\frac{r}{2k}\right)^{1/4}}_{r^{1/2}/\sqrt{2}}\left\vert \log \left(4\log^2y + \theta^2\right)\right\vert\frac{{\rm d} y}{y^2 + 1} + O\big(r^{1/4}\big) \nonumber\\
\qquad{} \le \frac{1}{4\pi}\int^{\left(\frac{r}{2k}\right)^{1/4}}_{r^{1/2}/\sqrt{2}}\left\vert \log \left(4\log^2y\right)\right\vert\frac{{\rm d} y}{y^2 + 1} \nonumber\\
\qquad\quad{} + \frac{1}{2\pi}\int^{\left(\frac{r}{2k}\right)^{1/4}}_{r^{1/2}/\sqrt{2}}\left\vert \log \left(1 + \frac{\theta^2}{4\log^2y}\right)\right\vert\frac{{\rm d} y}{y^2 + 1} + O\big(r^{1/4}\big)\nonumber\\
\qquad{} \le \frac{1}{2\pi} \int^{\left(\frac{r}{2k}\right)^{1/4}}_{r^{1/2}/\sqrt{2}}\log \left(- 2\log y\right)\frac{{\rm d} y}{y^2 + 1}+ O\big(r^{1/4}\big)\nonumber\\
\qquad{} = O\big(r^{1/4} \log (-\log r)\big),\label{N3Estimate}
\end{gather}
where, in the final step, we use the same argument as in the proof of (\ref{N1Estimate}).

When $\left(\frac{r}{2k}\right)^{1/4} \le \gamma \le \left(\frac{r}{2k}\right)^{-1/4}$, we have $\vert \log \gamma\vert \le \frac{1}{4} \left\vert\log \frac{2k}{re^{i\theta}}\right\vert$. Therefore for $\gamma$ in this region, we can use the power series expansion for the logarithm
\begin{gather*}
\log \left(\log \frac{2k}{r} - 2\log \gamma - i\theta \right)= \log \left(\log \frac{2k}{re^{i\theta}} - 2\log \gamma\right) = \log \log\frac{2k}{re^{i\theta}} +\log \left(1 - \frac{2 \log \gamma}{\log\frac{2k}{re^{i\theta}}} \right) \nonumber\\
\hphantom{\log \left(\log \frac{2k}{r} - 2\log \gamma - i\theta \right)}{} = \log \log \frac{2k}{re^{i\theta}} - \frac{2\log \gamma}{\log \frac{2k}{re^{i\theta}}} - \frac{2 \log^2 \gamma}{\log^2 \frac{2k}{re^{i\theta}}} + O\left(\frac{\log^3 \gamma}{\log^3r}\right),
\end{gather*}
where again the term $O\left(\frac{\log^3\gamma}{\log^3r}\right)$ is uniform in $\theta$. Therefore
\begin{gather}
N_2(r) = \int_{\left(\frac{r}{2k}\right)^{1/4}}^{\left(\frac{r}{2k}\right)^{-1/4}} \left[\log \log \frac{2k}{re^{i\theta}} - \frac{2\log \gamma}{\log \frac{2k}{re^{i\theta}}} -\frac{2 \log^2 \gamma}{\log^2 \frac{2k}{re^{i\theta}}} + O\left(\frac{\log^3 \gamma}{\log^3r}\right)\right] \frac{\dbar \gamma}{\gamma^2 + 1} \nonumber\\
\qquad{} =\log \log\frac{2k}{re^{i\theta}}\int_{\left(\frac{r}{2k}\right)^{1/4}}^{\left(\frac{r}{2k}\right)^{-1/4}} \frac{\dbar \gamma}{\gamma^2 + 1} - \frac{1}{\log\frac{2k}{re^{i\theta}}} \int_{\left(\frac{r}{2k}\right)^{1/4}}^{\left(\frac{r}{2k}\right)^{-1/4}} \frac{2\log \gamma \dbar \gamma}{\gamma^2 + 1}\nonumber\\
\qquad\quad{} - \frac{1}{\log^2\frac{2k}{re^{i\theta}}}\int_{\left(\frac{r}{2k}\right)^{1/4}}^{\left(\frac{r}{2k}\right)^{-1/4}} \frac{2\log^2\gamma \dbar \gamma}{\gamma^2 + 1} + \frac{1}{\log^3 r}\int_{\left(\frac{r}{2k}\right)^{1/4}}^{\left(\frac{r}{2k}\right)^{-1/4}}\frac{O\left(\log^3 \gamma\right)}{\gamma^2 + 1}\nonumber\\
\qquad{} =\log \log\frac{2k}{re^{i\theta}}\int_0^\infty \frac{\dbar \gamma}{\gamma^2 + 1} - \frac{1}{\log\frac{2k}{re^{i\theta}}}\int_0^\infty \frac{2\log \gamma \dbar \gamma}{\gamma^2 + 1}\nonumber\\
\qquad\quad{} - \frac{1}{\log^2\frac{2k}{re^{i\theta}}}\int_0^\infty \frac{2\log^2\gamma \dbar \gamma}{\gamma^2 + 1} + O\left(\frac{1}{\log^3r}\right).\label{N2Expansion}
\end{gather}
The first integral in (\ref{N2Expansion}) is easily calculated
\begin{gather}
\int_0^\infty \frac{\dbar \gamma}{\gamma^2+1} = \frac{1}{2\pi i}\arctan \gamma \bigg\vert_0^\infty = \frac{1}{4 i}.\label{N2FirstTerm}
\end{gather}
The second integral, after a change of variables $\eta = \frac{1}{\gamma}$, satisfies
\begin{gather*}
\int_0^\infty \frac{\log \gamma \dbar \gamma}{\gamma^2 + 1}= -\int_0^\infty \frac{\log \eta \dbar\eta}{\eta^2 + 1},
\end{gather*}
and so
\begin{gather}
\int_0^\infty \frac{\log \gamma \dbar \gamma}{\gamma^2 + 1}= 0.\label{N2SecondTerm}
\end{gather}
The final integral can be calculated by the method of residues, which yields:
\begin{gather}
2\int_0^\infty \frac{\log^2 \gamma \dbar \gamma}{\gamma^2+1} = \frac{\pi^2}{8i}.\label{N2ThirdTerm}
\end{gather}
Combining (\ref{N2Expansion}) with (\ref{N2FirstTerm}), (\ref{N2SecondTerm}), and (\ref{N2ThirdTerm}), we see that
\begin{gather}
N_2(r) = \frac{1}{4i} \log \log\frac{2k}{re^{i\theta}} - \frac{\pi^2}{8 i} \frac{1}{\log^2\frac{2k}{re^{i\theta}}} + O\left(\frac{1}{\log^3 r}\right), \label{N2Asymptotic}
\end{gather}
and combining (\ref{ApproxWithN}) with (\ref{N1Estimate}), (\ref{N3Estimate}), and (\ref{N2Asymptotic}), we see that
\begin{gather}
\int_{-1}^1 \frac{\log w(s)}{\big(s^2-1\big)^{1/2}_+} \frac{\dbar s}{s -z}\nonumber\\
\qquad{} = \frac{1}{\sqrt{2}}r^{-1/2}e^{-i\theta/2}\left(\frac{1}{2}\log \log \frac{2k}{re^{i\theta}} - \frac{\pi^2}{4} \frac{1}{\log^2\frac{2k}{r}} + O\left(\frac{1}{\log^3 r}\right)\right) + O(1).\label{CauchyEstimate}
\end{gather}
Now note that for $z = 1 + re^{i\theta}$,
\begin{gather}
\big( z^2-1\big)^{1/2} = \sqrt{2} r^{1/2}e^{i\theta/2} + O(r). \label{FrontEstimate}
\end{gather}
So combining (\ref{CauchyEstimate}) and (\ref{FrontEstimate}) with the definition of $F$, we see that
\begin{gather*}
\log F(z) = \frac{1}{2}\log \log \frac{2k}{re^{i\theta}} - \frac{\pi^2}{4} \frac{1}{\log^2\frac{2k}{re^{i\theta}}} + O\left(\frac{1}{\log^3 r}\right),
\end{gather*}
and so
\begin{gather*}
F^2(z) = \log \frac{2k}{re^{i\theta}} \left( 1 - \frac{\pi^2}{2} \frac{1}{\log^2 \frac{2k}{re^{i\theta}}} + O\left(\frac{1}{\log^3 r}\right)\right).
\end{gather*}
Now for $z = 1+re^{i\theta} \in \mathbb{C}_\pm$, and recalling that $\log$ always refers to the principle branch,
\begin{gather*}
\log \frac{2k}{re^{i\theta}} = \log\frac{2k}{re^{i(\theta \mp \pi)}} \mp i\pi= \log\frac{2k}{-re^{i\theta}} \mp i\pi = w(z) \mp i\pi
\end{gather*}
from which it follows that
\begin{gather*}
\frac{F^2}{w}(z) = 1 \mp \frac{i\pi}{w(z)} - \frac{\pi^2}{2 w^2(z)} + O\left(\frac{1}{w^3(z)}\right)
\end{gather*}
uniformly for $z \in \mathbb{C}_\pm$ as $z \to 1$, which completes the proof of Proposition \ref{FAsymptoticsZApp}.
\end{proof}

The following proves assertion~\eqref{prop3-2.3} of Proposition \ref{FProps}
\begin{prop}\label{-1FAsymptotics}
\begin{gather*}
\frac{F^2}{w}(z) = 1 + O\big(\vert z+1\vert^{1/2}\big) \label{Something237}
\end{gather*}
as $z \to -1$, $z \in \mathbb{C} \backslash [-1, 1]$.
\end{prop}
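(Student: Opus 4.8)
The plan is to work with $L(z):=\log w(z)$, which by Lemma~\ref{NoFurtherComment} is analytic in a neighborhood of $z=-1$ (since $1-2k<-1$ when $k>1$), and to exploit the cancellation that occurs when the constant $L(-1)=\log w(-1)=\log k$ is subtracted from the density in the integral defining $F$. Near $s=-1$ the only singularity of that density comes from $\big(s^2-1\big)^{1/2}_+$, and $L$ is regular there; subtracting $L(-1)$ makes the remaining density vanish at $-1$, which is exactly what is needed.

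First I would record the elementary identity
\begin{gather*}
\big(z^2-1\big)^{1/2}\int_{-1}^1\frac{\dbar s}{\big(s^2-1\big)^{1/2}_+\,(s-z)}=\frac12 ,
\end{gather*}
which follows from $\big(s^2-1\big)^{1/2}_+=i\sqrt{1-s^2}$ (property~\eqref{prop6} of Proposition~\ref{PhiProps}), the classical evaluation $\int_{-1}^1\frac{{\rm d}s}{\sqrt{1-s^2}(s-z)}=-\pi\big(z^2-1\big)^{-1/2}$, and the convention $\dbar s={\rm d}s/(2\pi i)$. Subtracting $L(-1)$ times this identity from the exponent in the definition of $F$ gives
\begin{gather*}
\log F(z)=\frac12 L(-1)+\big(z^2-1\big)^{1/2} J(z),\qquad J(z):=\int_{-1}^1\frac{L(s)-L(-1)}{\big(s^2-1\big)^{1/2}_+}\,\frac{\dbar s}{s-z},
\end{gather*}
hence $\dfrac{F^2}{w}(z)=\dfrac{w(-1)}{w(z)}\,\exp\!\big(2\big(z^2-1\big)^{1/2}J(z)\big)$.

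It then remains to assemble three ingredients: (i) $\dfrac{w(-1)}{w(z)}=1+O(|z+1|)$, immediate from the analyticity of $w$ near $-1$ together with $w(-1)=\log k\neq 0$; (ii) $\big(z^2-1\big)^{1/2}=O\big(|z+1|^{1/2}\big)$ as $z\to-1$, which is property~\eqref{2branch} of Proposition~\ref{PhiProps} (equivalently $|z^2-1|^{1/2}=|z-1|^{1/2}|z+1|^{1/2}\le C|z+1|^{1/2}$); and (iii) $J(z)=O(1)$ as $z\to-1$, $z\in\mathbb{C}\setminus[-1,1]$. Granting these, $2\big(z^2-1\big)^{1/2}J(z)=O\big(|z+1|^{1/2}\big)\to 0$, so $\exp\!\big(2\big(z^2-1\big)^{1/2}J(z)\big)=1+O\big(|z+1|^{1/2}\big)$, and multiplying by (i) yields $\frac{F^2}{w}(z)=1+O\big(|z+1|^{1/2}\big)$.

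The only real obstacle is (iii), and the subtlety there is that $z$ may approach $-1$ tangentially to $[-1,1]$; the argument must not reduce to a naive absolute-value estimate. I would split $J(z)=\int_{-1}^0+\int_0^1$. On $[0,1]$ the density $\frac{L(s)-L(-1)}{(s^2-1)^{1/2}_+}$ lies in $L^1$ (the $\log\log\frac{2k}{1-s}$ growth at $s=1$ divided by $\sqrt{1-s}$ is integrable) while $|s-z|$ is bounded below for $z$ near $-1$, so that piece is $O(1)$. For the piece on $[-1,0]$, write $L(s)-L(-1)=(s+1)g(s)$ with $g$ analytic near $-1$, and note that $\frac{(s^2-1)^{1/2}_+}{(s+1)^{1/2}}=i\sqrt{1-s}$ is analytic and non-vanishing there; hence $f(s):=\frac{L(s)-L(-1)}{(s^2-1)^{1/2}_+}=(s+1)^{1/2}h(s)$ with $h\in C^1([-1,0])$. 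Thus $f$ is absolutely continuous on $[-1,0]$, $f(-1)=0$, and $f'(s)=O\big((s+1)^{-1/2}\big)\in L^p([-1,0])$ for any $p\in(1,2)$, so Lemma~\ref{WAnalytic}, applied after the reflection $s\mapsto-s$ so that the distinguished endpoint is $-1$, gives $\int_{-1}^0\frac{f(s)}{s-z}\,\dbar s=O(1)$ as $z\to-1$. (The integration-by-parts in the proof of Lemma~\ref{WAnalytic} is precisely what makes this robust under tangential approach, since $f(-1)=0$ kills the would-be $\log|z+1|$ term.) Adding the two pieces gives (iii) and completes the proof.
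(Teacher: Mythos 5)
Your proof is correct and takes essentially the same route as the paper's: split the Szeg\H{o} density as $L(-1)+\big(L(s)-L(-1)\big)$, evaluate the constant piece in closed form (you use the classical Chebyshev-weight Cauchy integral, the paper a contour/residue argument, which amounts to the same thing), bound the remainder via Lemma~\ref{WAnalytic}, and combine. You are a bit more careful than the paper in step (iii): the paper asserts the $O(1)$ bound ``by the proof of Lemma~\ref{WAnalytic}'' without spelling out the reflection to $-1$ or checking that the density $f(s)=\frac{L(s)-L(-1)}{(s^2-1)^{1/2}_+}$ vanishes at $-1$ with $f'\in L^p$, $1<p<2$; your explicit verification of those hypotheses fills that in cleanly.
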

\begin{proof}
Note that as $z\to -1$, uniformly for $z \in \mathbb{C} \backslash [-1,1]$,
\begin{gather*}
\int_{-1}^1 \frac{\log w(s)}{\big(s^2-1\big)^{1/2}_+} \frac{\dbar s}{s-z} = \int_{-1}^1 \frac{\log w(-1)}{\big(s^2-1\big)^{1/2}_+} \frac{\dbar s}{s-z} + \int_{-1}^1 \frac{\log w(s) - \log (w(-1))}{\big(s^2-1\big)^{1/2}_+} \frac{\dbar s}{s-z}.
\end{gather*}
By the proof of Lemma \ref{WAnalytic}, the second integral is easily seen to be $O(1)$. Therefore
\begin{gather*}
\int_{-1}^1 \frac{\log w(s)}{\big(s^2-1\big)^{1/2}_+} \frac{\dbar s}{s-z} = \int_{-1}^1 \frac{\log w(-1)}{\big(s^2-1\big)^{1/2}_+} \frac{\dbar s}{s-z} + O(1)\\
\hphantom{\int_{-1}^1 \frac{\log w(s)}{\big(s^2-1\big)^{1/2}_+} \frac{\dbar s}{s-z}}{}
=\frac{1}{2}\log w(-1) \int_C \frac{1}{(s^2 - 1)^{1/2}}\frac{\dbar s}{s-z} + O(1),
\end{gather*}
where $C$ refers to any clockwise contour containing $[-1,1]$ but not the point $z$. Letting $C$ go to infinity, a residue calculation gives
\begin{gather*}
\frac{1}{2}\log w(-1) \int_C \frac{1}{\big(s^2 - 1\big)^{1/2}}\frac{\dbar s}{s-z} = \frac{1}{2} \frac{\log w(-1)}{\big(z^2-1\big)^{1/2}}.
\end{gather*}
Thus
\begin{gather*}
F^2(z) = \exp \left(2\big(z^2-1\big)^{1/2} \int_{-1}^1 \frac{\log w(s)}{\big(s^2-1\big)^{1/2}_+} \frac{\dbar s}{s-z}\right) = \exp \big(\log w(-1) + O\big( \vert z + 1\vert^{1/2}\big)\big) \nonumber\\
\hphantom{F^2(z)}{} = w(-1) + O\big( \vert z + 1\vert^{1/2}\big),
\end{gather*}
and so
\begin{gather*}
\frac{F^2}{w}(z) = 1 + O\big(\vert z+1\vert^{1/2}\big)
\end{gather*}
as $z \to -1$, $z \in \mathbb{C} \backslash [-1,1]$, which proves Proposition \ref{-1FAsymptotics}.
\end{proof}

\subsection[Estimate on $\frac{F^2}{w}(1+r) - \frac{F^2}{w}(1+\tilde{r})$, $r, \tilde{r} > 0$]{Estimate on $\boldsymbol{\frac{F^2}{w}(1+r) - \frac{F^2}{w}(1+\tilde{r})}$, $\boldsymbol{r, \tilde{r} > 0}$}
\begin{prop} \label{F-FAsymptoticsApp}
Fix $R > 0$ and suppose $r, \tilde{r} > 0$ obey
\begin{gather*}
r, \tilde{r} = O\left(\frac{1}{n^2}\right), \qquad n\left(\frac{r}{\tilde{r}} - 1\right) \in [-R, R].
\end{gather*}
Then
\begin{gather*}
\frac{F^2}{w_+}(1 + r) - \frac{F^2}{w_+} (1 + \tilde{r}) + \frac{F^2}{w_-}(1 + r) - \frac{F^2}{w_-} (1 + \tilde{r}) = O_R\left(\frac{1}{n\log^3n}\right).
\end{gather*}
\end{prop}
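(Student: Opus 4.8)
The plan is to turn the two-point estimate into a derivative bound for a function of one real variable. Write $\ell(s):=\log\frac{2k}{s}$ for $s>0$ small. The first point is that, by its very definition, $F(z)=\exp\!\big((z^2-1)^{1/2}\int_{-1}^1\frac{\log w(s)}{(s^2-1)^{1/2}_+}\frac{\dbar s}{s-z}\big)$ is analytic on all of $\mathbb{C}\setminus[-1,1]$, which contains $(1,\infty)$: the chosen branch of $(z^2-1)^{1/2}$ and the Cauchy integral are both analytic there. Hence $F^2$ has no jump across $(1,1+\delta)$, $F^2(1+s)$ is a single number, real by \eqref{prop4-2.3}, while $w$ \emph{does} jump, with boundary values $w_\pm(1+s)=\ell(s)\pm i\pi$ on $(1,\infty)$ — the identification made at the end of the proof of Proposition~\ref{FAsymptoticsZApp}. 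Consequently the quantity under study collapses to
\begin{gather*}
\Big[\tfrac{F^2}{w_+}+\tfrac{F^2}{w_-}\Big](1+r)=F^2(1+r)\Big(\frac{1}{\ell(r)+i\pi}+\frac{1}{\ell(r)-i\pi}\Big)=\frac{2\ell(r)\,F^2(1+r)}{\ell(r)^2+\pi^2}=:\phi(r),
\end{gather*}
so the proposition is the statement $\phi(r)-\phi(\tilde r)=O_R\!\big(\frac{1}{n\log^3 n}\big)$.

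I would then establish the pointwise bound $|\phi'(s)|=O\!\big(\frac{1}{s\,\ell(s)^3}\big)$ as $s\to0^+$ and finish with the fundamental theorem of calculus: $n(\tfrac{r}{\tilde r}-1)\in[-R,R]$ forces $r,\tilde r$ to be in ratio $1+O(1/n)$, so on the interval between them $\ell$ is constant up to $O(1)$ and, using $r,\tilde r=O(1/n^2)$, bounded below by $c\log n$; therefore
\begin{gather*}
|\phi(r)-\phi(\tilde r)|\le\int\frac{C\,ds}{s\,\ell(s)^3}\le\frac{C}{(c\log n)^3}\,\Big|\log\frac{r}{\tilde r}\Big|\le\frac{C'}{\log^3 n}\cdot\frac{2R}{n}=O_R\!\Big(\frac{1}{n\log^3 n}\Big),
\end{gather*}
using $|\log(1+x)|\le2|x|$ for $|x|\le1/2$.

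For the derivative bound I would argue in the disk $D_s:=\{\,|z-(1+s)|<s\,\}$: it meets $\mathbb{R}$ only in $(1,1+2s)$ and is otherwise disjoint from $[-1,1]$, so $F^2$ is analytic on $D_s$; moreover $z-1$ lies in the open right half-plane throughout $D_s$, so $L(z):=\log\frac{2k}{z-1}$ (principal branch) and $1/L$ are analytic on $D_s$, with $|L(z)|$ comparable to $\ell(s)$ on the circle $|z-(1+s)|=s/2$. On that circle, Proposition~\ref{FAsymptoticsZApp} (uniform in $\arg(z-1)$) together with the branch relation $w(z)=L(z)\pm i\pi$ on $\mathbb{C}_\pm$ gives, after the cancellation $\pm i\pi\mp i\pi$,
\begin{gather*}
F^2(z)=L(z)-\frac{\pi^2}{2L(z)}+O\!\big(\ell(s)^{-2}\big),
\end{gather*}
so the analytic function $\widetilde G(z):=F^2(z)-L(z)+\frac{\pi^2}{2L(z)}$ is $O(\ell(s)^{-2})$ on $|z-(1+s)|=s/2$, and Cauchy's estimate yields $\widetilde G(1+s)=O(\ell(s)^{-2})$ and $|\widetilde G'(1+s)|=O\!\big(\frac{1}{s\,\ell(s)^2}\big)$. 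Substituting $F^2(1+s)=\ell(s)-\frac{\pi^2}{2\ell(s)}+O(\ell(s)^{-2})$ and $(F^2)'(1+s)=-\frac1s+O\!\big(\frac{1}{s\,\ell(s)^2}\big)$ into the quotient-rule expression for $\phi'(s)=\frac{d}{ds}\frac{2\ell(s)F^2(1+s)}{\ell(s)^2+\pi^2}$, the dominant $\pm\ell(s)^3/s$ contributions in the numerator cancel and what survives is $O\!\big(\frac{1}{s\,\ell(s)^3}\big)$. (Equivalently: $\phi(s)=2-\frac{3\pi^2}{\ell(s)^2+\pi^2}+\Psi(s)$, where $\Psi$ extends analytically to $D_s$ with $\Psi=O(\ell(s)^{-3})$ on the half-radius circle, so $\Psi'(1+s)=O(\frac{1}{s\,\ell(s)^3})$ by Cauchy while the elementary term is differentiated directly.)

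The main obstacle is exactly this cancellation. A crude Cauchy estimate applied to $F^2$ itself only gives $|(F^2)'(1+s)|=O(\ell(s)/s)$, which propagates to $|\phi'(s)|=O(1/s)$ and the useless bound $O(1/n)$. One is forced to subtract from $F^2$ not one but two terms of its expansion near $1$ — the logarithm $L$ and the correction $-\pi^2/(2L)$ — before differentiating, and to verify that the remainder is genuinely $O(\ell(s)^{-2})$, not merely $O(\ell(s)^{-1})$, on circles shrinking toward $1$; this is precisely what Proposition~\ref{FAsymptoticsZApp} supplies, to the $O(1/w^3)$ order. The reduction to $\phi$, the branch bookkeeping $w=L\pm i\pi$, and the final integration are routine.
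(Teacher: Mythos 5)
Your argument is correct and takes a genuinely different route from the paper's. The paper attacks $\log\bigl(F(1+r)/F(1+\tilde r)\bigr)$ head-on from the Cauchy-integral representation of the Szeg\H{o} function: it deforms contours, rescales $t\mapsto rt$ and $t\mapsto\tilde r t$, expands $\log w(1-rt)-\log w(1-\tilde r t)$, and arrives at $\log\bigl(F^2(1+r)/F^2(1+\tilde r)\bigr)=\tfrac{a}{n\log(2k/\tilde r)}+O(\tfrac{1}{n\log^3 n})$; it then separately expands $w_\pm(1+r)/w_\pm(1+\tilde r)$, finds that each $\tfrac{F^2}{w_\pm}(1+r)-\tfrac{F^2}{w_\pm}(1+\tilde r)$ is $\pm\tfrac{i\pi a}{n\log^2(2k/\tilde r)}+O(\tfrac{1}{n\log^3 n})$, and the $\pm$-antisymmetric leading terms cancel on summing. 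You instead sum over $\pm$ at the outset, collapsing the four-term expression to $\phi(r)-\phi(\tilde r)$ for the single real-analytic function $\phi(s)=2\ell(s)F^2(1+s)/(\ell(s)^2+\pi^2)$, and prove the one-variable derivative bound $|\phi'(s)|=O(1/(s\,\ell(s)^3))$ by subtracting the two leading singular terms $L-\pi^2/(2L)$ from $F^2$ and applying Cauchy's estimate to the analytic remainder on circles $|z-(1+s)|=s/2$; the mean-value step then finishes. The critical cancellation that the paper sees as destructive interference between the $\pm$ boundary values you see as the cancellation of the $\pm\ell^3/s$ terms in the numerator of $\phi'$ — it is the same phenomenon, but your packaging is arguably cleaner because it isolates it in a single calculus identity rather than in the interplay of two separately computed quantities. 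The ingredients you need from the paper — Proposition~\ref{FAsymptoticsZApp} with its uniformity in $\arg(z-1)$, and the branch identification $w=L\pm i\pi$ on $\mathbb{C}_\pm$ — are exactly those the paper also uses, so no new input is required. One small bookkeeping point to make fully explicit: the application in Proposition~\ref{HIntegralPropApp} takes $r,\tilde r$ only up to size $O(1/n)$, but your argument needs nothing more than $\ell(s)\gtrsim\log n$ on the interval between $r$ and $\tilde r$, which holds whenever $r,\tilde r=O(1/n^\alpha)$ for any fixed $\alpha>0$, so your version is in fact robust to that range.
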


\begin{Remark}
Notice that if $r = O\big(\frac{1}{n^2}\big)$, then Proposition~\ref{FCancellation} implies $\frac{F^2}{w_+}(1 + r) + \frac{F^2}{w_-}(1 + r) - 2 = O\big(\frac{1}{\log^2n}\big)$. Also notice that $\frac{1}{\log^2 n} - \frac{1}{\log^2(n + c)} = O\big(\frac{1}{n\log^3n}\big)$. Therefore we would expect $\frac{F^2}{w_+}(1 + r) - \frac{F^2}{w_+} (1 + \tilde{r}) + \frac{F^2}{w_-}(1 + r) - \frac{F^2}{w_-} (1 + \tilde{r}) = O\big(\frac{1}{n\log^3n}\big)$ for sufficiently close $r$, $\tilde{r}$. Proposition~\ref{F-FAsymptoticsApp} makes this rigorous.
\end{Remark}

\begin{proof}
We define $a = a(r,\tilde{r}): = n \big(\frac{r}{\tilde{r}} - 1\big)$. Clearly $\frac{r}{\tilde{r}} = 1 + \frac{a}{n}$. In the calculations that follow, we assume without loss of generality that $r > \tilde{r}$, and hence $a > 0$. First, note that
\begin{gather}
\log F(1+r)= \big((1+r)^2 - 1\big)^{1/2} \int_{-1}^1 \frac{\log w(s)}{\big(s^2 - 1\big)^{1/2}_+} \frac{\dbar s}{s - (1 + r)} \nonumber\\
\hphantom{\log F(1+r)}{} = \big(\sqrt{2} r^{1/2} + O\big(r^{3/2}\big)\big) \int_{-1}^1 \frac{\log w(s)}{\big(s^2 - 1\big)^{1/2}_+} \frac{\dbar s}{s - (1 + r)}. \label{yoyo}
\end{gather}
By (\ref{CauchyEstimate}), which holds uniformly for $z \in \mathbb{C}_\pm$, we have
\begin{gather*}
 \int_{-1}^1 \frac{\log w(s)}{(s^2 - 1)^{1/2}_+} \frac{\dbar s}{s - (1 + r)} = O\big(r^{-1/2} \log \log r\big),
\end{gather*}
and so, since $r = O\big(n^{-2}\big)$,
\begin{gather*}
\log F(1+r) = \sqrt{2}r^{1/2} \int_{-1}^1 \frac{\log w(s)}{(s^2 - 1)^{1/2}_+} \frac{\dbar s}{s - (1 + r)} + O\left(\frac{\log \log n}{n^2}\right).
\end{gather*}
Therefore
\begin{gather}
\log\frac{F(1 + r)}{F(1 + \tilde{r})} =\sqrt{2}r^{1/2}\int_{-1}^1 \frac{\log w(s)}{\big(s^2-1\big)_+^{1/2}} \frac{\dbar s}{s-(1 + r)} \nonumber\\
\hphantom{\log\frac{F(1 + r)}{F(1 + \tilde{r})} =}{} - \sqrt{2}\tilde{r}^{1/2}\int_{-1}^1 \frac{\log w(s)}{\big(s^2-1\big)_+^{1/2}} \frac{\dbar s}{s-(1 + \tilde{r})} + O\left(\frac{\log \log n}{n^2}\right).\label{3.6}
\end{gather}
In (\ref{3.6}),
\begin{gather*}
\sqrt{2}r^{1/2}\int_{-1}^1 \frac{\log w(s)}{\big(s^2-1\big)_+^{1/2}} \frac{\dbar s}{s-(1 + r)} - \sqrt{2}\tilde{r}^{1/2}\int_{-1}^1 \frac{\log w(s)}{\big(s^2-1\big)_+^{1/2}} \frac{\dbar s}{s-(1 + \tilde{r})} \nonumber\\
\qquad{} = -\sqrt{2}ir^{1/2}\int_{-1}^1 \frac{\log w(s)}{\sqrt{1-s^2}} \frac{\dbar s}{s-(1 + r)} + \sqrt{2}i\tilde{r}^{1/2}\int_{-1}^1 \frac{\log w(s)}{\sqrt{1-s^2}} \frac{\dbar s}{s-(1 + \tilde{r})}.
\end{gather*}
Making the change of variables $t = 1-s$,
\begin{gather}
 -\sqrt{2}ir^{1/2}\int_{-1}^1 \frac{\log w(s)}{\sqrt{1-s^2}} \frac{\dbar s}{s-(1 + r)} + \sqrt{2}i\tilde{r}^{1/2}\int_{-1}^1 \frac{\log w(s)}{\sqrt{1-s^2}} \frac{\dbar s}{s-(1 + \tilde{r})}\nonumber\\
\qquad{}= -\sqrt{2}ir^{1/2}\int_{2}^0 \frac{\log w(1-t)}{\sqrt{t (2-t)}} \frac{-\dbar t}{-t -r} +\sqrt{2}i\tilde{r}^{1/2}\int_{2}^0 \frac{\log w(1-t)}{\sqrt{t (2-t)}} \frac{-\dbar t}{-t -\tilde{r}} \nonumber \\
\qquad{}= \sqrt{2}ir^{1/2}\int_0^2 \frac{\log w(1-t)}{\sqrt{t (2-t)}} \frac{\dbar t}{t +r} -\sqrt{2}i\tilde{r}^{1/2}\int_0^2 \frac{\log w(1-t)}{\sqrt{t (2-t)}} \frac{\dbar t}{t +\tilde{r}}\nonumber \\
\qquad{} = ir^{1/2}\int_0^2 \frac{\log w(1-t)}{\sqrt{t}} \frac{\dbar t}{t +r} -i\tilde{r}^{1/2}\int_0^2 \frac{\log w(1-t)}{\sqrt{t}} \frac{\dbar t}{t +\tilde{r}} + iH(r,\tilde{r}),\label{35}
\end{gather}
where
\begin{gather*}
H(r,\tilde{r}) :=
 \sqrt{2}\left[r^{1/2}\int_0^{2} \frac{\log w(1-t)}{\sqrt{t (2-t)}} \frac{\dbar t}{t +r} -\tilde{r}^{1/2}\int_0^{2} \frac{\log w(1-t)}{\sqrt{t (2-t)}} \frac{\dbar t}{t +\tilde{r}}\right]\nonumber\\
\hphantom{H(r,\tilde{r}) :=}{} -\left[r^{1/2}\int_0^2 \frac{\log w(1-t)}{\sqrt{t}} \frac{\dbar t}{t +r} -\tilde{r}^{1/2}\int_0^2 \frac{\log w(1-t)}{\sqrt{t}} \frac{\dbar t}{t +\tilde{r}}\right].
\end{gather*}
The same calculations as in the proof of Proposition \ref{FAsymptoticsZApp} show that
\begin{gather*}
H(r,\tilde{r})
 =r^{1/2} \int_0^{2} \log w(1-t)\left[ \frac{\sqrt{t}}{\sqrt{2}\sqrt{2-t} + (2-t) }\right] \frac{\dbar t}{t+r} \nonumber\\
\hphantom{H(r,\tilde{r})=}{} -\tilde{r}^{1/2}\int_0^{2} \log w(1-t)\left[ \frac{\sqrt{t}}{\sqrt{2}\sqrt{2-t} + (2-t) }\right] \frac{\dbar t}{t+\tilde{r}} \nonumber\\
\hphantom{H(r,\tilde{r})}{} = \big(r^{1/2} - \tilde{r}^{1/2}\big) \int_0^{2} \log w(1-t)\left[ \frac{\sqrt{t}}{\sqrt{2}\sqrt{2-t} + (2-t) }\right] \frac{\dbar t}{t+r}\nonumber\\
\hphantom{H(r,\tilde{r})=}{} + \tilde{r}^{1/2} \int_0^2 \log w(1-t)\left[ \frac{\sqrt{t}}{\sqrt{2}\sqrt{2-t} + (2-t) }\right] \left( \frac{\dbar t}{t+r} - \frac{\dbar t}{t+\tilde{r}}\right).
 \end{gather*}
 So
 \begin{gather*}
\vert H(r, \tilde{r})\vert \le \big\vert r^{1/2}- \tilde{r}^{1/2}\big\vert\int_0^{2} \vert \log w(1-t)\vert \left[ \frac{\sqrt{t}}{\sqrt{2}\sqrt{2-t} + (2-t) }\right] \frac{{\rm d} t}{t}\nonumber\\
 \hphantom{\vert H(r, \tilde{r})\vert \le}{} + \tilde{r}^{1/2}\int_0^2 \vert \log w(1-t)\vert\left[ \frac{\sqrt{t}}{\sqrt{2}\sqrt{2-t} +(2-t) }\right] \frac{\vert \tilde{r}-r\vert {\rm d} t}{\vert t + r \vert \cdot \vert t + \tilde{r}\vert }\nonumber\\
 \hphantom{\vert H(r, \tilde{r})\vert}{} \le c_1 \tilde{r}^{1/2} \left\vert \frac{r^{1/2}}{\tilde{r}^{1/2}} - 1\right\vert + \tilde{r}^{1/2} \frac{\vert \tilde{r}-r\vert }{ \tilde{r} }\int_0^2 \vert \log w(1-t)\vert\left[ \frac{\sqrt{t}}{\sqrt{2}\sqrt{2-t} +(2-t) }\right] \frac{{\rm d}t}{t} \nonumber\\
 \hphantom{\vert H(r, \tilde{r})\vert }{} \le c_1 \tilde{r}^{1/2} \left\vert \frac{r^{1/2}}{\tilde{r}^{1/2}} - 1\right\vert + c_2 \tilde{r}^{1/2} \left\vert\frac{r}{\tilde{r}} - 1\right\vert = O\big(n^{-2}\big).
\end{gather*}

 Making the changes of variables $t \to rt$ and $t \to \tilde{r} t$ in (\ref{35}) within the respective integrals, we have
\begin{gather}
 ir^{1/2}\int_0^2 \frac{\log w(1-t)}{\sqrt{t}} \frac{\dbar t}{t +r} -i\tilde{r}^{1/2}\int_0^2 \frac{\log w(1-t)}{\sqrt{t}} \frac{\dbar t}{t +\tilde{r}} \nonumber\\
\quad{} = ir^{1/2}\int_0^{2/r} \frac{\log w(1-rt)}{\sqrt{rt}} \frac{\dbar t}{t +1} -i\tilde{r}^{1/2}\int_0^{2/\tilde{r}} \frac{\log w(1-\tilde{r}t)}{\sqrt{\tilde{r}t}} \frac{\dbar t}{t +1}\nonumber\\
\quad{}= i\int_0^{2/r} \frac{\log w(1-rt)}{\sqrt{t}} \frac{\dbar t}{t +1} -i\int_0^{2/r} \frac{\log w(1-\tilde{r}t)}{\sqrt{t}} \frac{\dbar t}{t +1} -i \int_{2/r}^{2/\tilde{r}} \frac{\log w(1-\tilde{r}t)}{\sqrt{t}} \frac{\dbar t}{t +1}\nonumber\\
\quad{}= i \int_0^{2/r} \left(\log w(1-rt) - \log w(1-\tilde{r}t)\right) \frac{\dbar t}{t^{3/2}+ t^{1/2}} -i \int_{2/r}^{2/\tilde{r}} \frac{\log w(1-\tilde{r}t)}{\sqrt{t}} \frac{\dbar t}{t +1}.
\label{r-rtildeReduction}
\end{gather}
Note that, as $\log k \le \log\frac{2k}{\tilde{r}t} \le \log\frac{k}{1 + O\left(\frac{1}{n}\right)}$ for $t \in [\frac{2}{r}, \frac{2}{\tilde{r}}]$, we have
\begin{gather*}
\left\vert \int_{2/r}^{2/\tilde{r}} \frac{\log w(1-\tilde{r}t)}{\sqrt{t}} \frac{\dbar t}{t +1}\right\vert\le \left\Vert \frac{1}{\sqrt{t}} \frac{1}{t +1} \right\Vert_{L^\infty(2/r, 2/\tilde{r})} \int_{2/r}^{2/\tilde{r}} \left\vert \log w(1 - \tilde{r}t)\right\vert \dbar t \nonumber\\
\hphantom{\left\vert \int_{2/r}^{2/\tilde{r}} \frac{\log w(1-\tilde{r}t)}{\sqrt{t}} \frac{\dbar t}{t +1}\right\vert}{} \le c r^{3/2}\left\vert \frac{2}{\tilde{r}} - \frac{2}{r}\right\vert = c r^{1/2} \left\vert \frac{r}{\tilde{r}} - 1\right\vert = O\big(n^{-2}\big).
 \end{gather*}
To estimate the remaining term in (\ref{r-rtildeReduction}),
\begin{gather}
i \int_0^{2/r} \left(\log w(1-rt) - \log w(1-\tilde{r}t)\right) \frac{\dbar t}{t^{3/2}+ t^{1/2}} \nonumber\\
\qquad{} =i \int_0^{2/r} \log \left(\frac{w(1 - rt)}{w(1-\tilde{r}t)}\right) \frac{\dbar t}{t^{3/2}+ t^{1/2}}. \label{FurtherReduction}
\end{gather}
Now, for $t \in [0, \frac{2}{r}]$,
\begin{gather*}
\frac{w(1 - rt)}{w(1-\tilde{r}t)} = \frac{\log\big(\frac{2k}{rt}\big)}{\log\big(\frac{2k}{\tilde{r}t}\big)} = 1 + \frac{\log\big(\frac{2k}{rt}\big) -\log\big(\frac{2k}{\tilde{r}t}\big) }{\log\big(\frac{2k}{\tilde{r}t}\big)}\nonumber\\
\hphantom{\frac{w(1 - rt)}{w(1-\tilde{r}t)}}{} = 1 + \frac{\log \frac{\tilde{r}}{r}}{\log\big(\frac{2k}{\tilde{r}t}\big)} = 1 + \frac{\log \big(1 + \frac{a}{n}\big)}{\log\big(\frac{2k}{\tilde{r}t}\big)} = 1 + \frac{a}{n\log\big(\frac{2k}{\tilde{r}t}\big)} + O\big(n^{-2}\big).
\end{gather*}
So
\begin{gather}
\log\left(\frac{w(1 - rt)}{w(1-\tilde{r}t)}\right) = \frac{a}{n\log\big(\frac{2k}{\tilde{r}t}\big)} + O\big(n^{-2}\big). \label{3.16}
\end{gather}
Substituting (\ref{3.16}) into (\ref{FurtherReduction}) yields
\begin{gather*}
i\int_0^{2/r} \left( \frac{a}{n\log\big(\frac{2k}{\tilde{r}t}\big)} + O\big(n^{-2}\big)\right)\frac{\dbar t}{t^{3/2}+ t^{1/2}} \nonumber\\
\qquad{}= i \int_{r^{1/2}}^{r^{-1/2}}\frac{a}{n\log\big(\frac{2k}{\tilde{r}t}\big)}\frac{\dbar t}{t^{3/2}+ t^{1/2}} + E(r) + O\big(n^{-2}\big),
\end{gather*}
where
\begin{gather*}
E(r) = i \int_0^{r^{1/2}}\frac{a}{n\log\big(\frac{2k}{\tilde{r}t}\big)}\frac{\dbar t}{t^{3/2}+ t^{1/2}} + i \int_{r^{-1/2}}^{2/r}\frac{a}{n\log\big(\frac{2k}{\tilde{r}t}\big)}\frac{\dbar t}{t^{3/2}+ t^{1/2}}.
\end{gather*}
Now
\begin{gather*}
\left\vert \int_0^{r^{1/2}}\frac{a}{n\log\big(\frac{2k}{\tilde{r}t}\big)}\frac{\dbar t}{t^{3/2}+ t^{1/2}}\right\vert \le \int_0^{r^{1/2}}\frac{c}{n\vert\log r\vert}\frac{d t}{t^{1/2}} \le \frac{cr^{1/4}}{n\log r} = O\big(n^{-3/2}\big).
\end{gather*}
Similarly
\begin{gather*}
\left\vert \int_{r^{-1/2}}^{2/r}\frac{a}{n\log\big(\frac{2k}{\tilde{r}t}\big)}\frac{\dbar t}{t^{3/2}+ t^{1/2}}\right\vert \le \int_{r^{-1/2}}^{2/r}\frac{c}{n\vert\log r\vert}\frac{d t}{t^{3/2}} \le \frac{cr^{1/4}}{n\log r} = O\big(n^{-3/2}\big).
\end{gather*}
Therefore,
\begin{gather*}
\vert E(r)\vert = O\big(n^{-3/2}\big),
\end{gather*}
and so
\begin{gather}
i\int_0^{2/r} \left( \frac{a}{n\log\big(\frac{2k}{\tilde{r}t}\big)} + O\big(n^{-2}\big)\right)\frac{\dbar t}{t^{3/2}+ t^{1/2}}\nonumber\\
\qquad{}= i \int_{r^{1/2}}^{r^{-1/2}}\frac{a}{n\log\big(\frac{2k}{\tilde{r}t}\big)}\frac{\dbar t}{t^{3/2}+ t^{1/2}} + O\big(n^{-3/2}\big)\nonumber\\
\qquad{} = i \int_{r^{1/2}}^{r^{-1/2}}\frac{a}{n\log\big(\frac{2k}{\tilde{r}}\big) - n\log t}\frac{\dbar t}{t^{3/2}+ t^{1/2}}+ O\big(n^{-3/2}\big) \nonumber\\
\qquad{} = i\int_{r^{1/2}}^{r^{-1/2}}\frac{a}{n\log\big(\frac{2k}{\tilde{r}}\big)} \frac{1}{1 - \frac{\log t}{\log\big(\frac{2k}{\tilde{r}}\big)}}\frac{\dbar t}{t^{3/2}+ t^{1/2}}+ O\big(n^{-3/2}\big),\label{3.22}
\end{gather}
but $t \in [r^{1/2}, r^{-1/2}]$ and so $\vert \log t \vert \le \frac{1}{2}\log \frac{1}{r} \le \frac{1}{2} \log \frac{2k}{r}$. Therefore, (\ref{3.22}) is
\begin{gather*}
\frac{ ia}{n\log\big(\frac{2k}{\tilde{r}}\big)} \int_{r^{1/2}}^{r^{-1/2}}\left(1 + \frac{\log t}{\log\big(\frac{2k}{\tilde{r}}\big)} + O\left(\frac{\log^2 t}{\log^2\big(\frac{2k}{\tilde{r}}\big)}\right)\right)\frac{\dbar t}{t^{3/2}+ t^{1/2}}+ O\big(n^{-3/2}\big).
\end{gather*}
Making the change of variables $\gamma = t^{1/2}$, this becomes
\begin{gather*}
 \frac{2i a}{n\log\big(\frac{2k}{\tilde{r}}\big)} \int_{r^{1/4}}^{r^{-1/4}}\left(1 + \frac{2\log \gamma}{\log\big(\frac{2k}{\tilde{r}}\big)} + O\left(\frac{\log^2 \gamma}{\log^2\big(\frac{2k}{\tilde{r}}\big)}\right)\right)\frac{\dbar \gamma}{\gamma^2 + 1}+ O\big(n^{-3/2}\big)\nonumber\\
\qquad{} = \frac{2i a}{n\log\big(\frac{2k}{\tilde{r}}\big)} \left(\int_{r^{1/4}}^{r^{-1/4}}\frac{\dbar \gamma}{\gamma^2 + 1} + \int_{r^{1/4}}^{r^{-1/4}}\frac{2\log \gamma}{\log\big(\frac{2k}{\tilde{r}}\big)}\frac{\dbar \gamma}{\gamma^2 + 1}\right) + O\left(\frac{1}{n\log^3 n }\right)\nonumber\\
\qquad{} = \frac{2i a}{n\log\big(\frac{2k}{\tilde{r}}\big)} \left(\int_{0}^{\infty}\frac{\dbar \gamma}{\gamma^2 + 1} + \int_{0}^{\infty}\frac{2\log \gamma}{\log\big(\frac{2k}{\tilde{r}}\big)}\frac{\dbar \gamma}{\gamma^2 + 1} \right)+ O\left(\frac{1}{n\log^3 n }\right).
\end{gather*}
So, using the same calculations of the above integrals, see (\ref{N2FirstTerm}) and (\ref{N2SecondTerm}), as in the proof of Proposition \ref{FAsymptoticsZApp},
\begin{gather}
i \int_0^{2/r} \big(\log w(1-rt) - \log w(1-\tilde{r}t)\big) \frac{\dbar t}{t^{3/2}+ t^{1/2}} = \frac{a}{2n\log \frac{2k}{\tilde{r}}} + O\left(\frac{1}{n\log^3 n }\right) . \label{AThing}
\end{gather}
So, combining (\ref{yoyo}), (\ref{35}), (\ref{r-rtildeReduction}), and (\ref{AThing}), we obtain
\begin{gather*}
\log \left(\frac{F^2(1+r)}{F^2(1 +\tilde{r})}\right) = 2 \log \frac{F(1+r)}{F(1 + \tilde{r})}= \frac{a}{n\log\frac{2k}{\tilde{r}}} + O\left(\frac{1}{n\log^3 n}\right),
\end{gather*}
which implies
\begin{gather*}
\frac{F^2(1+r)}{F^2(1 +\tilde{r})} = \exp \left( \frac{a}{n\log\frac{2k}{\tilde{r}}} + O\left(\frac{1}{n\log^3 n}\right)\right) = 1 + \frac{a}{n\log\frac{2k}{\tilde{r}}} + O\left(\frac{1}{n\log^3n}\right).
\end{gather*}
Now,
\begin{gather}
\frac{F^2}{w_\pm}(1 + r) - \frac{F^2}{w_\pm} (1 + \tilde{r}) = \frac{F^2(1 + r) - F^2(1 + \tilde{r})}{w_\pm(1 + r)} + F^2(1 + \tilde{r}) \left(\frac{1}{w_\pm(1 + r)} - \frac{1}{w_\pm(1 + \tilde{r})}\right) \nonumber\\
\hphantom{\frac{F^2}{w_\pm}(1 + r) - \frac{F^2}{w_\pm} (1 + \tilde{r}) }{} = \frac{F^2(1 + \tilde{r})}{w_\pm(1+r) }\left( \frac{F^2(1 + r)}{F^2(1 + \tilde{r})} -1\right) + \frac{F^2(1 + \tilde{r})}{w_\pm(1 + r)} \left(1- \frac{w_\pm(1 + r)}{w_\pm(1 + \tilde{r})}\right) \nonumber\\
\hphantom{\frac{F^2}{w_\pm}(1 + r) - \frac{F^2}{w_\pm} (1 + \tilde{r}) }{} = \frac{F^2(1 + \tilde{r})}{w_\pm(1 + r)} \left( \frac{a}{n\log \frac{2k}{r}} + O\!\left(\frac{1}{n\log^3n}\right) + 1 - \frac{w_\pm(1 +r)}{w_\pm(1 + \tilde{r})} \right)\!,\!\!\!\!\!\!\!\label{StupidWHandling}
\end{gather}
where
\begin{gather*}
 \frac{w_\pm(1 +r)}{w_\pm(1 + \tilde{r})} = \frac{\log \frac{2k}{r} \pm i\pi}{\log \frac{2k}{\tilde{r}} \pm i \pi} = 1 + \frac{\log \frac{2k}{r} - \log \frac{2k}{\tilde{r}}}{\log \frac{2k}{\tilde{r}} \pm i \pi} \nonumber\\
\hphantom{\frac{w_\pm(1 +r)}{w_\pm(1 + \tilde{r})}}{} = 1 + \frac{\log \frac{\tilde{r}}{r}}{\log \frac{2k}{\tilde{r}} \pm i\pi} = 1 + \frac{a}{n \left(\log \frac{2k}{\tilde{r}} \pm i\pi\right)} + O\big(n^{-2}\big),
\end{gather*}
so
\begin{gather}
\frac{a}{n\log \frac{2k}{r}} + 1 - \frac{w_\pm(1 +r)}{w_\pm(1 + \tilde{r})} = \frac{a}{n\log\frac{2k}{\tilde{r}}} - \frac{a}{n \big(\log \frac{2k}{\tilde{r}} \pm i\pi\big)} \nonumber\\
\hphantom{\frac{a}{n\log \frac{2k}{r}} + 1 - \frac{w_\pm(1 +r)}{w_\pm(1 + \tilde{r})}}{}
= \frac{\pm i \pi a}{n \log\frac{2k}{\tilde{r}} \big( \log\frac{2k}{\tilde{r}} \pm i\pi\big)} = \pm \frac{i \pi a}{n \log^2 \frac{2k}{\tilde{r}}} + O\left(\frac{1}{n\log^3n}\right).\label{3.28}
\end{gather}
Substituting (\ref{3.28}) into (\ref{StupidWHandling}),
\begin{gather*}
\frac{F^2}{w_\pm}(1 + r) - \frac{F^2}{w_\pm} (1 + \tilde{r}) = \frac{F^2(1 + \tilde{r})}{w_\pm(1+r)} \left(\pm \frac{i \pi a}{n \log^2 \frac{2k}{\tilde{r}}} + O\left(\frac{1}{n\log^3n}\right) \right)\nonumber\\
\hphantom{\frac{F^2}{w_\pm}(1 + r) - \frac{F^2}{w_\pm} (1 + \tilde{r})}{} =\left( \pm \frac{i \pi a}{n \log^2 \frac{2k}{\tilde{r}}} + O\left(\frac{1}{n\log^3n}\right)\right)\left(1 + O\left(\frac{1}{\log n}\right)\right),
\end{gather*}
where we have used Proposition \ref{FAsymptoticsZApp}. So, in particular
\begin{gather*}
\frac{F^2}{w_+}(1 + r) - \frac{F^2}{w_+} (1 + \tilde{r}) + \frac{F^2}{w_-}(1 + r) - \frac{F^2}{w_-} (1 + \tilde{r}) = O\left(\frac{1}{n\log^3n}\right),
\end{gather*}
which concludes the proof of Proposition~\ref{F-FAsymptotics}.
\end{proof}

\section{A summary of Legendre asymptotics}\label{NormsSection}
In this appendix we prove a series of $L^2$ estimates, ultimately allowing us to replace the integral in Proposition \ref{QQProp} with an integral only involving Legendre quantities, as in Proposition~\ref{Replacement}.

\subsection{A summary of Legendre asymptotics}\label{LegendreSummary}
In \cite{Kuijlaars} the authors derive the asymptotics of the solution $S(z) = S^{(n)}(z)$ to the RHP $(\Sigma_S, v_S)$:
\begin{enumerate}\itemsep=0pt
\item[(a)]
\begin{gather}
 S(z) \text{ is analytic for $z \in \mathbb{C}\backslash \Sigma_S$},\label{SRHP1}
 \end{gather}
\item[(b)] $S$ satisfies the following jump relation on $s \in \Sigma_S$
\begin{gather}
S_+(s) = S_-(s)v_S(s),\label{SRHP2}
\end{gather}
\item[(c)] $S(z)$ has the following behavior at infinity
\begin{gather} S(z) = \left(I + O\left(\frac{1}{z}\right)\right), \qquad\text{as $z \to \infty$}, \label{SRHP3}
\end{gather}
\item[(d)] $S(z)$ has the following behavior as $z\to 1$
\begin{gather}
 S(z) = O\left(\begin{matrix}
\log \vert z-1\vert & \log \vert z-1\vert \\
\log\vert z-1\vert & \log\vert z-1\vert
\end{matrix}\right), \label{SRHP4}
\end{gather}
\item[(e)] $S(z)$ has the following behavior as $z \to -1$
\begin{gather}
 S(z) = O\left(\begin{matrix}
\log \vert z+1\vert & \log \vert z+1\vert \\
\log \vert z+1\vert & \log\vert z+1\vert
\end{matrix}\right), \label{SRHP5}
\end{gather}
\end{enumerate}
where $\Sigma_S$ is depicted in Fig.~\ref{SigmaSDef},
\begin{gather*}
v_S(s) = \begin{cases}
\left(\begin{matrix}
1 & 0\\
\phi(s)^{-2n} & 1
\end{matrix}\right) & \text{for $s$ in the upper and lower lips}, \\
\left(\begin{matrix}
0 & 1\\
-1 & 0
\end{matrix}\right) & \text{for $s \in [-1,1]$}.
\end{cases}
\end{gather*}
The asymptotics are derived for $z$ in the regions $A$, $B$, $C$, $D$, and $E$, see Fig.~\ref{RegionsDef}. We will summarize the necessary results in this section.

Recall that $\tilde{Q}$ is the solution to (\ref{TildeQRHP1})--(\ref{TildeQRHP5}) $(\Sigma, \tilde{v}_\Sigma)$. $S$ is the solution to (\ref{SRHP1})--(\ref{SRHP5}) $(\Sigma_S, v_S$). See Figs.~\ref{SigmaDef2} and~\ref{SigmaSDef} for depictions of the contours $\Sigma$ and $\Sigma_S$. The RHP for $S$ differs from the RHP for $\tilde{Q}$ near the point~1, but the problems are related via the deformation:
\begin{gather}
\tilde{Q}(z) = \begin{cases}
S(z) \left(\begin{matrix}
1 & 0\\
\phi^{-2n}(z) & 1
\end{matrix}\right) & \text{for $z \in \Omega'_1$},\\
S(z) \left(\begin{matrix}
1 & 0\\
-\phi^{-2n}(z) & 1
\end{matrix}\right) & \text{for $z \in \Omega'_2$},\\
 S(z) & \text{otherwise},
\end{cases} \label{StoTildeQ}
\end{gather}
where the regions $\Omega_1'$ and $\Omega_2'$ are depicted in Fig.~\ref{OmegaPrime}.
\begin{figure}[t]
\centering
\begin{tikzpicture}[scale=10.]
\fill (1,0) circle (.006cm);
\draw[line width = 1,directed] (1.1,0) to (1, 0);
\draw[line width = 1, directed] (0,0) to (1, 0);
\coordinate (up) at (.5,.2);
\coordinate (dn) at (.5,-.2);
\draw[line width = 1, domain=135:0 , directed] plot ({1 + .1* cos(\x) }, {.1 * sin(\x)});
\draw[line width = 1, domain=225:360 , directed] plot ({1+.1* cos(\x) }, {.1 * sin(\x)});
\draw[line width = 1,directed] (0,0) to [out=45,in=180] (up) to [out=0, in=180-45] (.932, .068);
\draw[line width = 1,directed] (0,0) to [out=-45,in=180] (dn) to [out=0, in=-180+45] (.932,-.068);
\node[below left] at (0,0) {$-1$};
\node[below left] at (1,0) {$1$};
\node at (.2,.2){$\Sigma$};
\node[right] at (1.1, 0){$1+\delta$};
\end{tikzpicture}
\caption{Depiction of $\Sigma$.} \label{SigmaDef2}
\end{figure}
\begin{figure}[t]
\centering
\begin{tikzpicture}[scale=10.]
\draw[line width = 1, directed] (0,0) to (1, 0);
\coordinate (up) at (.5,.2);
\coordinate (dn) at (.5,-.2);
\draw[line width = 1, directed] (0,0) to [out=45,in=180] (up) to [out=0, in=180-45] (1, 0);
\draw[line width = 1, directed] (0,0) to [out=-45,in=180] (dn) to [out=0, in=-180+45] (1,0);
\node[below left] at (0,0) {$-1$};
\node[below right] at (1,0) {$1$};
\node at (.2, .2){$\Sigma_S$};
\end{tikzpicture}
\caption{Depiction of $\Sigma_S$.} \label{SigmaSDef}
\end{figure}
\begin{figure}[t]
\centering
\begin{tikzpicture}[scale=10.]
\draw[line width = 1] (0,0) to (1, 0);
\coordinate (up) at (.5,.2);
\coordinate (dn) at (.5,-.2);
\draw[line width = 1, domain=0:360 ] plot ({1+.1* cos(\x) }, {.1 * sin(\x)});
\draw[line width = 1, domain=0:360 ] plot ({.1* cos(\x) }, {.1 * sin(\x)});
\draw[line width = 1] (0,0) to [out=45,in=180] (up) to [out=0, in=180-45] (1, 0);
\draw[line width = 1] (0,0) to [out=-45,in=180] (dn) to [out=0, in=-180+45] (1,0);
\node[below left] at (0,0) {$-1$};
\node[below right] at (1,0) {$1$};
\node [above] at (.2, .2) {$A$};
\node[above left] at (0,0) {$C$};
\node[above right] at (1,0) {$B$};
\node [below right] at (.2, .1){$D$};
\node [above right] at (.2, -.1){$E$};
\node [above right] at (1.1, 0){$U_\delta(1)$};
\node [above left] at (-.1, 0){$U_\delta(-1)$};
\end{tikzpicture}
\caption{Definition of the regions $A$, $B$, $C$, $D$, and $E$.} \label{RegionsDef}
\end{figure}
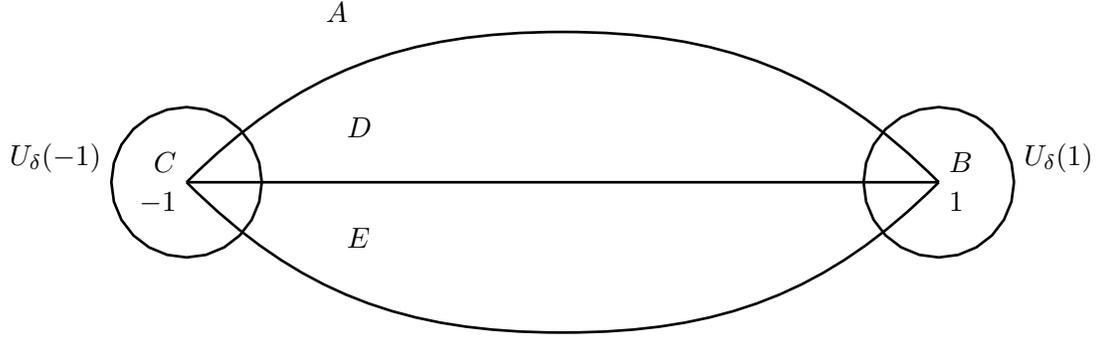

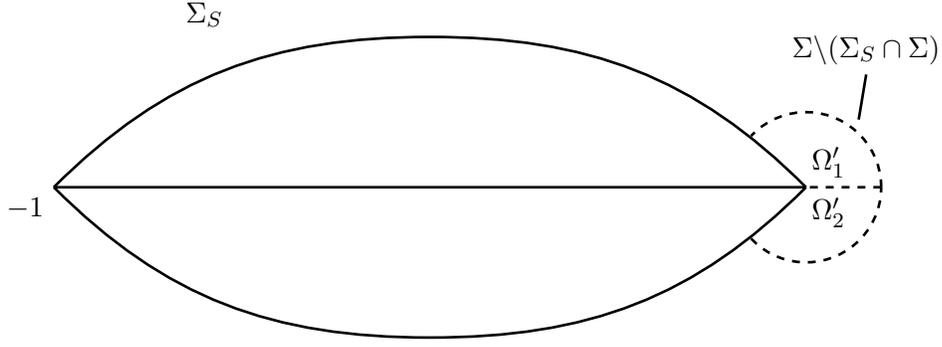
\begin{figure}[t]
\centering
\begin{tikzpicture}[scale=10.]
\draw[line width = 1] (0,0) to (1, 0);
\coordinate (up) at (.5,.2);
\coordinate (dn) at (.5,-.2);
\draw[line width = 1, domain=0:137, dashed ] plot ({1+.1* cos(\x) }, {.1 * sin(\x)});
\draw[line width = 1] (1.08, .15) to (1.07, .09);
\draw[line width = 1, domain=223:360, dashed ] plot ({1+.1* cos(\x) }, {.1 * sin(\x)});
\draw[line width = 1, dashed] (1.1, 0) to (1, 0);
\draw[line width = 1] (0,0) to [out=45,in=180] (up) to [out=0, in=180-45] (1, 0);
\draw[line width = 1] (0,0) to [out=-45,in=180] (dn) to [out=0, in=-180+45] (1,0);
\node[below left] at (0,0) {$-1$};
\node [above] at (.2, .2) {$\Sigma_S$};
\node [above] at (1.08, .15) {$\Sigma \backslash (\Sigma_S \cap \Sigma)$};
\node [above] at (1.03, 0) {$\Omega'_1$};
\node [below] at (1.03, 0) {$\Omega'_2$};
\end{tikzpicture}
\caption{Definition of the regions $\Omega_1'$, $\Omega_2'$. \label{OmegaPrime}}
\end{figure}
For $z \in \mathbb{C} \backslash [-1,1]$, we define, as in \cite{Kuijlaars},
\begin{gather*}
N(z) = \left(\begin{matrix}
\dfrac{a+a^{-1}}{2} & \dfrac{a - a^{-1}}{2i} \vspace{1mm}\\
\dfrac{a - a^{-1}}{-2i} & \dfrac{a + a^{-1}}{2}
\end{matrix}\right),
\end{gather*}
where
\begin{gather*}
a := a(z) = \left(\frac{z - 1}{z+1}\right)^{1/4},
\end{gather*}
where $a(z)$ is analytic for $z \in \mathbb{C}\backslash[-1,1]$ and $a(z) >0$ when $z > 1$. Let $U_\delta(s)$ refer to the closed ball of radius $\delta$ centered as $s = \pm 1$. The regions $B$ and $C$ are given by $B = U_\delta(1)\backslash \Sigma_S$ and $C = U_\delta(-1)\backslash \Sigma_S$ respectively. We define
\begin{gather*}
f(z) = \frac{\log^2 \phi(z)}{4}.
\end{gather*}
$f(z)$ is analytic in a neighborhood of $1$ and
\begin{gather}
f(1) = 0,\nonumber\\
f(z) = \frac{1}{2}(z-1) + O\big(\vert z-1\vert^2\big) \qquad \text{as $z \to 1$},\nonumber \\
f(-z) = -\frac{1}{2}(z+1) + O\big(\vert z+1\vert^2\big) \qquad \text{as $z \to -1$}. \label{fFact}
\end{gather}
So, in particular, we can pick $\delta$ sufficiently small so that $f(z)$ is locally conformal for $z \in B$, which clearly also implies $f(-z)$ will be locally conformal in $C$. We define
\begin{gather}
\Psi(\zeta) = \begin{cases}
\left(\begin{matrix}
I_0\big(2\zeta^{1/2}\big) & \frac{i}{\pi} K_0\big(2\zeta^{1/2}\big) \\
2 \pi i \zeta^{1/2} I_0' \big(2\zeta^{1/2}\big) & -2\zeta^{1/2} K_0'\big(2\zeta^{1/2}\big)
\end{matrix}\right) \\
\hspace*{80mm} \text{for $\vert \arg (\zeta) \vert< 2\pi/3$},\\
\left(\begin{matrix}
\frac{1}{2}H_0^{(1)}\big(2(-\zeta)^{1/2}\big) & \frac{1}{2} H_0^{(2)}(2(-\zeta^{1/2})) \\
\pi\zeta^{1/2} \big(H_0^{(1)}\big)' \big(2(-\zeta)^{1/2}\big) & \pi\zeta^{1/2} \big(H_0^{(2)}\big)'\big(2(-\zeta)^{1/2}\big)
\end{matrix}\right) \\
\hspace*{80mm} \text{for $ 2\pi/3 < \arg \zeta < \pi$},\\
\left(\begin{matrix}
\frac{1}{2}H_0^{(2)}\big(2(-\zeta)^{1/2}\big) &- \frac{1}{2} H_0^{(1)}(2(-\zeta^{1/2})) \\
-\pi\zeta^{1/2} \big(H_0^{(2)}\big)' \big(2(-\zeta)^{1/2}\big) & \pi\zeta^{1/2} \big(H_0^{(1)}\big)'\big(2(-\zeta)^{1/2}\big)
\end{matrix}\right) \\
\hspace*{80mm} \text{for $ -\pi < \arg \zeta < -2\pi/3$},
\end{cases} \!\!\!\!\!\label{PsiDef}
\end{gather}
for $\zeta \in \mathbb{C}\backslash \Gamma$, and where $\Gamma$ is defined in Fig.~\ref{GammaDef}. The contour $\Sigma_S$ is chosen within $U_\delta(1)$, (resp.~$-1$) to be the preimage of $\Gamma$ under the map $n^2f(z)$ (resp.~$n^2f(-z)$). Therefore, around the point~$-1$, $\Sigma$ is the preimage of $\Gamma$ under the map $n^2f(-z)$ as well. In other words
\begin{gather}
n^2 f(- (\Sigma \cap U_{\delta}(-1))) \subset \Gamma \qquad \text{for all $n$}. \label{SigmaToGamma}
\end{gather}

In (\ref{PsiDef}), $I_0$, $K_0$ refer to the modified Bessel functions of the first and second kinds with parameter $0$, and $H^{(1)}_0$, $H^{(2)}_0$ refer to the Hankel functions of the first and second kinds, see, for instance, \cite{Stegun}. As before, the square root in the above definition is given by its principle value.

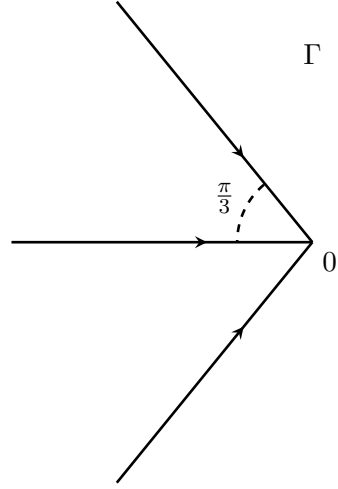
\begin{figure}[t]
\centering
\begin{tikzpicture}[scale=5.]
\draw[line width = 1, directed] (.2,0) to (1, 0);
\draw[line width = 1, directed] (.48, .64) to (1, 0);
\draw[line width = 1, directed] (.48,-.64) to (1, 0);
\draw[line width = 1, domain = 130:180, dashed] plot ({1+.2* cos(\x) }, {.2 * sin(\x)});
\node [below right] at (1,0) {$0$};
\node at (1,.5){$\Gamma$};
\node[above left] at (.82, .05){$\frac{\pi}{3}$};
\end{tikzpicture}
\caption{Definition of $\Gamma$.} \label{GammaDef}
\end{figure}
For any fixed $\delta$ sufficiently small, the following holds uniformly,
\begin{gather}
S^{(n)}(z) = \begin{cases}
R^{(n)}(z)N(z) & \text{for $z \in A \cup D \cup E$}, \\
R^{(n)}(z)E(z) (2 \pi n)^{\sigma_3/2} \Psi\big(n^2f(z)\big) \phi^{-n\sigma_3}(z)& \text{for $z \in B$},\\
\sigma_3 R^{(n)}(-z)E(-z) (2 \pi n)^{\sigma_3/2} \Psi\big(n^2f(-z)\big) \phi^{-n\sigma_3}(-z) \sigma_3 & \text{for $z \in C$},
\end{cases}\hspace{-10mm} \label{SAsymptotics1}
\end{gather}
where $R^{(n)}(z)$ is bounded, analytic in $\mathbb{C} \backslash \Sigma_R$, with $\Sigma_R$ depicted in Fig.~\ref{SigmaRDef}, and
\begin{gather*}
R^{(n)}(z) = I + \frac{R_1(z)}{n} + O\left(\frac{1}{n^2}\right) \qquad \text{uniformly in $z \in \mathbb{C}\backslash \Sigma_R$},
\end{gather*}
where $R_1(z)$ is an $n$-independent, bounded, analytic function for $z \in \mathbb{C} \backslash \Sigma_R$. Additionally
\begin{gather}
\left \vert R^{(n)}(z) - I - \frac{R_1(z)}{n} \right\vert \le \frac{1}{n^2 \vert z \vert}.\label{ThingyDelta}
\end{gather}
\begin{figure}[t]\centering
\begin{tikzpicture}[scale=10.]
\coordinate (up) at (.5,.2);
\coordinate (dn) at (.5,-.2);
\draw[line width = 1, domain=0:360, directed ] plot ({1+.1* cos(\x) }, {.1 * sin(\x)});
\draw[line width = 1, domain=0:360, directed ] plot ({.1* cos(\x) }, {.1 * sin(\x)});
\draw[line width = 1, domain=223:360, dashed ] plot ({1+.1* cos(\x) }, {.1 * sin(\x)});
\draw[line width = 1, directed] (.08,.06) to [out=45,in=180] (up) to [out=0, in=180-45] (.92, .06);
\draw[line width = 1, directed] (.08,-.06) to [out=-45,in=180] (dn) to [out=0, in=-180+45] (.92,-.06);
\node[below] at (0,0) {$-1$};
\node[below] at (1,0) {$1$};
\node[circle,fill,inner sep=1pt] at (0,0){};
\node[circle,fill,inner sep=1pt] at (1,0){};
\node [above] at (.2, .2) {$\Sigma_R$};
\end{tikzpicture}
\caption{Depiction of $\Sigma_R$.} \label{SigmaRDef}
\end{figure}
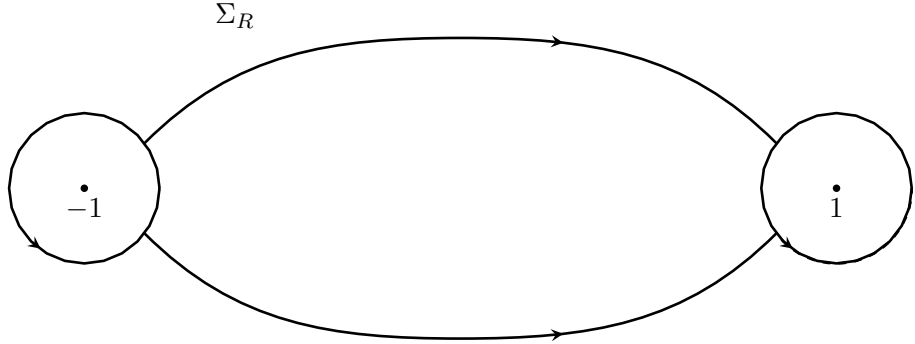
Finally, $E(z)$ is defined by (see Remark~\ref{NotationalDifferences})
\begin{gather*}
E(z) = N(z) \frac{1}{\sqrt{2}}\left(\begin{matrix}
1 & -i\\
-i & 1
\end{matrix}\right) f(z)^{\sigma_3/4}.
\end{gather*}
$E(z)$ is analytic and bounded for $z \in B$ and has determinant 1. It follows, in particular, that $E^{-1}(z)$ is also analytic and bounded for $z\in B$.
\begin{Remark}\label{NotationalDifferences}
We use slightly different notation than in \cite{Kuijlaars} in a few ways,
\begin{itemize}\itemsep=0pt
\item We refer to $R(z)$ as $R^{(n)}(z)$ to emphasize its dependence on $n$.
\item The neighborhoods $U_\delta$ and $\tilde{U}_\delta$ we refer to as $U_\delta(1)$ and $U_\delta(-1)$ respectively.
\item The matching factor $E_n(z)$ in \cite{Kuijlaars} we write $E_n(z) = E(z) (2\pi n)^{\sigma_3/2}$, where $E(z)$ is clearly independent of $n$.
\end{itemize}
\end{Remark}

In estimating the integral
\begin{gather*}
- \int_\Sigma \mu(s)(v_\Sigma(s) - \tilde{v}_\Sigma(s)) \tilde{\mu}^{-1}(s) \dbar s
\end{gather*}
we need asymptotic information about the function $\tilde{\mu}$. However, since $v_\Sigma(s) - \tilde{v}_\Sigma(s)$ is identically~0 for $s \in (-1,1)$ and is strictly lower triangular otherwise, we actually only need information about $\tilde{\mu}$'s second column. To that end, let $\tilde{\mu}_2, \tilde{Q}_2$, and $S_2$ refer to the second columns of the matrices $\tilde{\mu}$, $\tilde{Q}$, and~$S$.

It is clear from the RHP for $\tilde{Q}$, (\ref{TildeQRHP1})--(\ref{TildeQRHP5}), that $\tilde{Q}_2$ is actually analytic in $\mathbb{C}\backslash [-1,1]$, and it is clear from the deformation~(\ref{StoTildeQ}) that $\tilde{Q}_2 = S_2$. Similarly, $S_2$ is analytic in $\mathbb{C}\backslash[-1,1]$. Therefore, for $z \in \Sigma \backslash [-1,1]$,
\begin{gather*}
\tilde{\mu}_2(z) = \big(\tilde{Q}_2(z)\big)_- = \tilde{Q}_2(z) = S_2(z).
\end{gather*}
For $z \in (\Sigma \cap \Sigma_2) \backslash [-1,1]$, $\tilde{\mu}_2(z) = S_+(z) = S_-(z)$ and hence, in evaluating $\tilde{\mu}_2(z)$ we can, when convenient, use either applicable expression from (\ref{PsiDef}) or (\ref{SAsymptotics1}).

We will always pick the ``outside'' interpretation for $S_2(z)$ in such cases. When $z \in \Sigma \backslash (\Sigma_S \cap \Sigma)$ (see again Fig.~\ref{OmegaPrime}), we are outside of the contour $\Sigma_S$. It follows from (\ref{SAsymptotics1}), uniformly as $n\to \infty$ for $z \in \Sigma \backslash[-1,1]$,
\begin{gather}
\tilde{\mu}_2(z) = \begin{cases}R^{(n)}(z)\left(\begin{matrix}
\dfrac{a-a^{-1}}{2i} \\
\dfrac{a + a^{-1}}{2}
\end{matrix}\right) \\ \hspace*{50mm} \text{for $z \in \Sigma \backslash ([-1,1] \cup U_\delta(1) \cup U_\delta(-1))$}, \\
R^{(n)}(z)E(z) (2 \pi n)^{\sigma_3/2} \Psi_2\big(n^2f(z)\big) \phi^{n}(z) \\ \hspace*{50mm} \text{for $z \in (\Sigma\backslash [-1,1]) \cap U_\delta(1)$}, \\
-\sigma_3 R^{(n)}(-z)E(-z) (2 \pi n)^{\sigma_3/2} \Psi_2\big(n^2f(-z)\big) \phi^{n}(-z) \\ \hspace*{50mm} \text{for $z \in (\Sigma\backslash [-1,1]) \cap U_\delta(-1)$},
\end{cases}\label{TildeMu2Def}
\end{gather}
where
\begin{gather*}
\Psi_2(\zeta) = \left(\begin{matrix}
\frac{i}{\pi} K_0\big(2\zeta^{1/2}\big) \\
 -2\zeta^{1/2} K_0'\big(2\zeta^{1/2}\big)
\end{matrix}\right).
\end{gather*}
Note, from the properties of the modified Bessel function $K_0$, that $\Psi_2(\zeta)$ has an analytic extension to $\mathbb{C}\backslash (-\infty, 0]$. From (\ref{SAsymptotics1}) and (\ref{TildeMu2Def}), we have
\begin{gather}
 \tilde{\mu}(z) = \begin{cases}
R^{(n)}(z)E(z) (2 \pi n)^{\sigma_3/2} \Psi\big(n^2f(z)\big) \phi^{-n\sigma_3}(z) \\
\hspace*{50mm} \text{for $(\Sigma\backslash [-1,1]) \cap U_\delta(1)$}, \\
-\sigma_3 R^{(n)}(-z)E(-z) (2 \pi n)^{\sigma_3/2} \Psi\big(n^2f(-z)\big) \phi^{-n\sigma_3}(-z)\sigma_3 \\
\hspace*{50mm} \text{for $z \in (\Sigma\backslash [-1,1]) \cap U_\delta(-1)$},
\end{cases} \label{TildeMuExpansion}
\end{gather}
where
\begin{gather*}
\Psi = \left(\begin{matrix}
\Psi_{11} & \Psi_{12}\\
\Psi_{21} & \Psi_{22}
\end{matrix}\right),\qquad
\Psi_{12}(\zeta) = \frac{i}{\pi} K_0\big(2\zeta^{1/2}\big), \qquad \Psi_{22}(\zeta) = -2\zeta^{1/2} K_0'\big(2\zeta^{1/2}\big).
\end{gather*}
We will repeatedly make use of the following asymptotic estimates for the functions $\Psi_{12}$, $\Psi_{22}$:
\begin{gather}
\Psi_{12}(\zeta), \Psi_{22}(\zeta),\Psi_{12}'(\zeta), \Psi_{22}'(\zeta)= O\big(\zeta^{1/4} e^{-2\zeta^{1/2}}\big) \text{ as $\zeta\to \infty$, uniformly for $\vert \arg \zeta \vert < \pi$},\nonumber\\
\Psi_{12}(\zeta) = O\left(\log \zeta\right) \text{ as $\zeta \to 0$}, \qquad \Psi_{22}(\zeta) = O(1) \text{ as $\zeta \to 0$}, \nonumber\\
\Psi_{12}'(\zeta) = O\left(\frac{1}{\zeta}\right) \text{ as $\zeta \to 0$}, \qquad \Psi_{22}'(\zeta) = O\left(\frac{1}{\zeta}\right) \text{ as $\zeta \to 0$}.\label{PsiAsymptotics}
\end{gather}
These estimates follow from \cite{Stegun} formulas (9.6.11), (9.6.13), (9.6.27), (9.7.2), (9.7.4), and the fact that $K_0$ is a solution to the modified Bessel equation
\begin{gather*}
x^2K_0''(x) + xK_0'(x) - x^2 = 0.
\end{gather*}
For clarity, we depict the contours in which $\tilde{\mu}$ is given by different expansions, see (\ref{TildeMu2Def}), in Figs.~\ref{SigmaFar},~\ref{SigmaNear1}, and~\ref{SigmaNear-1}. Note that $(\Sigma\backslash [-1,1]) \cap U_\delta(1) = [1+\delta, 1]$. In all instances where we integrate over the contour $\Sigma$, the integral over $[-1,1]$ will not contribute.

It will be routine in the calculations that follow to break up an integral over $\Sigma$ into an integral over $ (\Sigma\backslash [-1,1]) \cap U_{1/n}(1)$, an integral over $(\Sigma\backslash [-1,1]) \cap U_{1/n}(-1)$, and an integral over $ \Sigma \backslash ([-1,1] \cup U_{1/n}(1) \cup U_{1/n}(-1))$, where $U_{1/n}(s)$ denotes a ball of radius $1/n$ centered at $s = \pm 1$. This decomposition is shown in Fig.~\ref{SigmaDecomposition}.

\begin{figure}[t]
\centering
\begin{tikzpicture}[scale=10.]
\fill (1,0) circle (.006cm);
\draw[line width = 1, dashed] (1.1,0) to (1, 0);
\draw[line width = 1, dashed] (0,0) to (.068, .068);
\draw[line width = 1, dashed] (0,0) to (.068, -.068);
\coordinate (up) at (.5,.2);
\coordinate (dn) at (.5,-.2);
\draw[line width = 1, domain=135:0 , directed] plot ({1 + .1* cos(\x) }, {.1 * sin(\x)});
\draw[line width = 1, domain=225:360 , directed] plot ({1+.1* cos(\x) }, {.1 * sin(\x)});
\draw[line width = 1,directed] (.068, .068) to [out=45,in=180] (up) to [out=0, in=180-45] (.932, .068);
\draw[line width = 1,directed] (.068, -.068) to [out=-45,in=180] (dn) to [out=0, in=-180+45] (.932,-.068);
\node[below left] at (0,0) {$-1$};
\node[below left] at (1,0) {$1$};
\node[right] at (1.1, 0){$1+\delta$};
\end{tikzpicture}
\caption{Depiction of $\Sigma \backslash ([-1,1] \cup U_\delta(1) \cup U_\delta(-1))$.\label{SigmaFar}}
\text{ }\\ \text{ }
\centering
\begin{tikzpicture}[scale=10.]
\fill (1,0) circle (.006cm);
\draw[line width = 1, directed] (1.1,0) to (1, 0);
\draw[line width = 1, dashed] (0,0) to (.068, .068);
\draw[line width = 1, dashed] (0,0) to (.068, -.068);
\coordinate (up) at (.5,.2);
\coordinate (dn) at (.5,-.2);
\draw[line width = 1, domain=135:0 , dashed] plot ({1 + .1* cos(\x) }, {.1 * sin(\x)});
\draw[line width = 1, domain=225:360 , dashed] plot ({1+.1* cos(\x) }, {.1 * sin(\x)});
\draw[line width = 1,dashed] (.068, .068) to [out=45,in=180] (up) to [out=0, in=180-45] (.932, .068);
\draw[line width = 1,dashed] (.068, -.068) to [out=-45,in=180] (dn) to [out=0, in=-180+45] (.932,-.068);
\node[below left] at (0,0) {$-1$};
\node[below left] at (1,0) {$1$};
\node[right] at (1.1, 0){$1+\delta$};
\end{tikzpicture}
\caption{Depiction of $(\Sigma\backslash [-1,1]) \cap U_\delta(1)$.\label{SigmaNear1}}
\text{ }\\ \text{ }
\centering
\begin{tikzpicture}[scale=10.]
\fill (1,0) circle (.006cm);
\draw[line width = 1, dashed] (1.1,0) to (1, 0);
\draw[line width = 1, directed] (0,0) to (.068, .068);
\draw[line width = 1, directed] (0,0) to (.068, -.068);
\coordinate (up) at (.5,.2);
\coordinate (dn) at (.5,-.2);
\draw[line width = 1, domain=135:0 , dashed] plot ({1 + .1* cos(\x) }, {.1 * sin(\x)});
\draw[line width = 1, domain=225:360 , dashed] plot ({1+.1* cos(\x) }, {.1 * sin(\x)});
\draw[line width = 1,dashed] (.068, .068) to [out=45,in=180] (up) to [out=0, in=180-45] (.932, .068);
\draw[line width = 1,dashed] (.068, -.068) to [out=-45,in=180] (dn) to [out=0, in=-180+45] (.932,-.068);
\node[below left] at (0,0) {$-1$};
\node[below left] at (1,0) {$1$};
\node[right] at (1.1, 0){$1+\delta$};
\end{tikzpicture}
\caption{Depiction of $(\Sigma\backslash [-1,1]) \cap U_\delta(-1)$.\label{SigmaNear-1}}
\end{figure}
\begin{figure}[t]
\centering
\begin{tikzpicture}[scale=10.]
\fill (1,0) circle (.006cm);
\draw[line width = 1, dashed] (1.05,0) to (1, 0);
\draw[line width = 1, directed] (1.1, 0) to (1.05, 0);
\draw[line width = 1, dashed] (0,0) to (.034, .034);
\draw[line width = 1, dashed] (0,0) to (.034, -.034);
\coordinate (up) at (.5,.2);
\coordinate (dn) at (.5,-.2);
\draw[line width = 1, domain=135:0 , directed] plot ({1 + .1* cos(\x) }, {.1 * sin(\x)});
\draw[line width = 1, domain=225:360 , directed] plot ({1+.1* cos(\x) }, {.1 * sin(\x)});
\draw[line width = 1,directed] (.034, .034) to [out=45,in=180] (up) to [out=0, in=180-45] (.932, .068);
\draw[line width = 1,directed] (.034, -.034) to [out=-45,in=180] (dn) to [out=0, in=-180+45] (.932,-.068);
\node[right] at (1.1, 0){$1+\delta$};
\draw[line width = 1, domain=360:0 , loosely dotted] plot ({1 + .05* cos(\x) }, {.05 * sin(\x)});
\draw[line width = 1, domain=360:0 , loosely dotted] plot ({ .05* cos(\x) }, {.05 * sin(\x)});
\node[left] at (-.05, 0){$U_{1/n}(-1)$};
\node[left] at (.95, 0){$U_{1/n}(1)$};
\end{tikzpicture}
\caption{Decomposition of $\Sigma\backslash[-1,1]$ into $\Sigma \backslash ([-1,1] \cup U_{1/n}(1) \cup U_{1/n}(-1))$ and $(\Sigma\backslash [-1,1]) \cap U_{1/n}(1)$ and $(\Sigma\backslash [-1,1]) \cap U_{1/n}(-1)$.}\label{SigmaDecomposition}
\end{figure}

\subsection{Some important norms}
\begin{prop}\label{NormsApp} The functions $\mu$, $\tilde{\mu}$ obey the following estimates:
\begin{gather}
\big\vert \big\vert \tilde{\mu}^{(n)}\big( v_{\Sigma}^{(n)} - \tilde{v}_\Sigma^{(n)} \big) \big\vert \big\vert_{L^2(\Sigma)} = O\left(\frac{1}{n^{1/2}\log^2 n }\right),\label{Mu-MuL2PreApp}
\\
\big\vert \big\vert \mu^{(n)} - \tilde{\mu}^{(n)} \big\vert \big\vert_{L^2(\Sigma)} = O\left(\frac{1}{n^{1/2}\log^2 n }\right),\label{Mu-MuL2App}
\\
\big\vert \big\vert \tilde{\mu}^{(n)}\big( v_{\Sigma}^{(n)} - \tilde{v}_\Sigma^{(n)} \big) - \tilde{\mu}^{(n+1)}\big( v_{\Sigma}^{(n+1)} - \tilde{v}_\Sigma^{(n+1)} \big) \big\vert \big\vert_{L^2(\Sigma)} = O\left(\frac{1}{n^{3/2}\log^2 n }\right),\label{Mu-MuL2HardPreApp}
\\
\big\vert \big\vert \big(\mu^{(n)} - \tilde{\mu}^{(n)}\big) - \big(\mu^{(n+1)} - \tilde{\mu}^{(n+1)}\big) \big\vert \big\vert_{L^2(\Sigma)} = O\left(\frac{1}{n^{3/2}\log^2 n }\right).\label{Mu-MuL2HardApp}
\end{gather}
\end{prop}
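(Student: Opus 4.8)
The plan is to reduce all four estimates to the first and the third, and then to prove those two by comparing $\tilde\mu$ directly against the Legendre parametrix of Appendix~\ref{LegendreSummary}. For the reduction, recall from Proposition~\ref{QQProp} that $\mu=(1-C_{v_\Sigma})^{-1}I$ and $\tilde\mu=(1-C_{\tilde v_\Sigma})^{-1}I$, and that $(C_{v_\Sigma}-C_{\tilde v_\Sigma})f=C_\Sigma^-\big(f(v_\Sigma-\tilde v_\Sigma)\big)$ by the definition of $C_h$. Then $(1-C_{v_\Sigma})\tilde\mu=I-C_\Sigma^-\big(\tilde\mu(v_\Sigma-\tilde v_\Sigma)\big)$, whence $\mu-\tilde\mu=(1-C_{v_\Sigma})^{-1}C_\Sigma^-\big(\tilde\mu(v_\Sigma-\tilde v_\Sigma)\big)$, and by Theorem~\ref{LogOpBound} together with the $L^2(\Sigma)$-boundedness of $C_\Sigma^-$ one gets $\|\mu-\tilde\mu\|_{L^2(\Sigma)}\le c\,\|\tilde\mu(v_\Sigma-\tilde v_\Sigma)\|_{L^2(\Sigma)}$, so \eqref{Mu-MuL2App} follows from \eqref{Mu-MuL2PreApp}. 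Taking the $n$-difference of this identity and using in addition the resolvent identity $(1-C_{v_\Sigma^{(n)}})^{-1}-(1-C_{v_\Sigma^{(n+1)}})^{-1}=(1-C_{v_\Sigma^{(n)}})^{-1}C_\Sigma^-\big(\,\cdot\,(v_\Sigma^{(n)}-v_\Sigma^{(n+1)})\big)(1-C_{v_\Sigma^{(n+1)}})^{-1}$ together with $\|v_\Sigma^{(n)}-v_\Sigma^{(n+1)}\|_{L^\infty(\Sigma)}=O(1/n)$ (a special case of the $\phi^{-2n}-\phi^{-2(n+1)}$ bound below), reduces \eqref{Mu-MuL2HardApp} to \eqref{Mu-MuL2PreApp} and \eqref{Mu-MuL2HardPreApp}. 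So only the two estimates on $\tilde\mu(v_\Sigma-\tilde v_\Sigma)$ and its discrete $n$-derivative remain, and they involve only the Szeg\H{o} function $F$ and the explicitly known Legendre quantities.

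To prove \eqref{Mu-MuL2PreApp}, note first (from the proof of Theorem~\ref{LogOpBound}) that $v_\Sigma-\tilde v_\Sigma$ is strictly lower triangular, supported on $\Sigma_1\cup\Sigma_2\cup[1,1+\delta]$, with lower-left entry $(\frac{F^2}{w}-1)\phi^{-2n}$ on $\Sigma_1\cup\Sigma_2$ and $(\frac{F^2}{w_+}+\frac{F^2}{w_-}-2)\phi^{-2n}$ on $[1,1+\delta]$; hence $\tilde\mu(v_\Sigma-\tilde v_\Sigma)$ involves only the second column $\tilde\mu_2$, whose $n\to\infty$ behavior is given by \eqref{TildeMu2Def} and the Bessel estimates \eqref{PsiAsymptotics}. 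I would split $\Sigma$ as in Figure~\ref{SigmaDecomposition}. Away from $\pm1$ (including the arc of $\partial U_\delta(1)$) one has $|\phi|\ge 1+c$ by Proposition~\ref{PhiProps}, so $\phi^{-2n}$ is exponentially small while $\tilde\mu_2$ is bounded by the outer parametrix $R^{(n)}N$, giving an $O(e^{-cn})$ contribution. On $U_\delta(-1)\cap\Sigma$ one uses $\frac{F^2}{w}-1=O(|z+1|^{1/2})$ from \eqref{prop3-2.3}, the Bessel parametrix at $-1$, and --- since $\log\phi=2\sqrt f$, so that (using $\phi(-z)=-\phi(z)$) $\phi^n(-z)\phi^{-2n}(z)=\phi^{-n}(-z)=e^{-2\zeta^{1/2}}$ with $\zeta=n^2f(-z)$ --- the change of variables $\zeta=n^2f(-z)$; the extra factor $|z+1|^{1/2}=O(|\zeta|^{1/2}/n)$ then yields an $O(n^{-3/2})$ contribution. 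The essential term is on $[1,1+\delta]$: there $\frac{F^2}{w_+}+\frac{F^2}{w_-}-2=-3\pi^2/\log^2(2k/|z-1|)+O(1/\log^3|z-1|)$ by Proposition~\ref{FCancellation}, $\tilde\mu_2$ is the Bessel parametrix at $1$, and with $\zeta=n^2f(z)$ one has $\phi^{-n}=e^{-2\zeta^{1/2}}$ and $dz\sim 2\,d\zeta/n^2$; the factor $(2\pi n)^{\sigma_3/2}$ contributes at most $c\,n^{1/2}$, the functions $K_0(2\zeta^{1/2})e^{-2\zeta^{1/2}}$ and $\zeta^{1/2}K_0'(2\zeta^{1/2})e^{-2\zeta^{1/2}}$ built from $\Psi_2$ are square-integrable in $\zeta$ (exponential decay at $\zeta=\infty$, at worst logarithmic growth at $\zeta=0$, by \eqref{PsiAsymptotics}), and on the effective window $\zeta=O(\mathrm{polylog}\,n)$ one has $\log(2k/|z-1|)\asymp\log n$; collecting, this piece is $O\big((n\cdot n^{-2}\cdot(\log n)^{-4})^{1/2}\big)=O(1/(n^{1/2}\log^2 n))$, which therefore dominates.

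For the discrete derivative \eqref{Mu-MuL2HardPreApp}, the key point is that the Szeg\H{o} factors $\frac{F^2}{w}$, $\frac{F^2}{w_\pm}$ are independent of $n$, so $v_\Sigma^{(n)}-\tilde v_\Sigma^{(n)}=G\,\phi^{-2n}$ with $G$ an $n$-independent strictly lower triangular matrix. The $n$-difference of $\tilde\mu^{(n)}(v_\Sigma^{(n)}-\tilde v_\Sigma^{(n)})$ is then a finite sum of terms in each of which exactly one of the following is differenced in $n$, and in each case the difference gains a factor $O(1/n)$ on the effective windows near $\pm1$: (i) $\phi^{-2n}-\phi^{-2(n+1)}=\phi^{-2n}(1-\phi^{-2})$, with $1-\phi^{-2}=O(|z\mp1|^{1/2})=O(\zeta^{1/2}/n)$ by \eqref{1branch}--\eqref{2branch}; (ii) $R^{(n)}-R^{(n+1)}=O(1/n^2)$ from the expansion $R^{(n)}=I+R_1/n+O(1/n^2)$; (iii) $(2\pi n)^{\sigma_3/2}$; and (iv) $\Psi(n^2f(\pm\cdot))$ and $\phi^{\pm n}$, for which $\zeta^{(n+1)}-\zeta^{(n)}=O(\zeta^{(n)}/n)$ while the bounds $\Psi_{12}',\Psi_{22}'=O(1/\zeta)$ as $\zeta\to0$ from \eqref{PsiAsymptotics} keep the corresponding variation $O(1/n)$. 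Since the contour $\Sigma$ and --- by \eqref{SigmaToGamma} --- its images under $z\mapsto n^2f(\pm z)$ are $n$-independent, the exponentially small off-contour part remains exponentially small after differencing. Carrying this extra $1/n$ through the estimate of \eqref{Mu-MuL2PreApp} gives $O(1/(n^{3/2}\log^2 n))$.

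The step I expect to be the main obstacle is the bookkeeping on $[1,1+\delta]$, and its $n$-derivative: one must simultaneously control the algebraic-versus-exponential behavior of the Bessel/Hankel building blocks of $\Psi$ in \eqref{PsiDef}--\eqref{PsiAsymptotics}, the exact cancellation between $\phi^{-n}=e^{-2\zeta^{1/2}}$ and $K_0(2\zeta^{1/2})$ (so that $\tilde\mu_2$, which carries $\phi^n$, only decays algebraically in $\zeta$ while $\tilde\mu_2(v_\Sigma-\tilde v_\Sigma)$ decays exponentially), and the purely logarithmic scale of $\frac{F^2}{w_\pm}$ produced by Proposition~\ref{FCancellation}, and then verify that the resulting $\zeta$-integrals --- and, for \eqref{Mu-MuL2HardPreApp}, their $n$-derivatives, where the $\zeta^{-1}$-type singularities of $\Psi_{12}',\Psi_{22}'$ and the $n$-dependence of $R^{(n)}$ enter --- converge uniformly in $n$. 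The reductions in the first paragraph and the contributions away from $\pm1$ are routine.
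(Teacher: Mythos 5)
Your proof is correct and follows essentially the same route as the paper: reduce \eqref{Mu-MuL2App} and \eqref{Mu-MuL2HardApp} to \eqref{Mu-MuL2PreApp} and \eqref{Mu-MuL2HardPreApp} via the second-resolvent identity and the $O(1/n)$ bound on $\|v_\Sigma^{(n)}-v_\Sigma^{(n+1)}\|_{L^\infty}$, then decompose $\Sigma$ into neighborhoods of $\pm1$ and the exponentially-suppressed remainder, and estimate the dominant $[1,1+\delta]$ piece via the Bessel parametrix after the rescaling $\zeta=n^2f(z)$, with the $n$-difference handled by term-by-term subtraction of the Legendre factors. (Your bound $O(n^{-3/2})$ near $-1$ is even slightly sharper than the paper's $O(n^{-1})$, which simply bounds $|F^2/w-1|$ by $O(n^{-1/2})$ on $U_{1/n}(-1)$ and discards $|\phi^{-n}(-z)|\le1$; both are subdominant, so this does not change the result.)
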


\begin{proof}
Recall that $\delta > 0$ is fixed as $n \to \infty$ and that $v_\Sigma(s) - \tilde{v}_\Sigma(s) \equiv 0$ for $s\in [-1,1]$ and $U_{1/n}(s)$ refers to the ball of radius $1/n$ centered at $s = \pm 1$. We have that
\begin{gather}
\big\vert \big\vert \tilde{\mu}\left( v_{\Sigma} - \tilde{v}_\Sigma \right) \big\vert \big\vert_{L^2(\Sigma)}^2 = \int_{\Sigma\backslash[-1,1]} \left\vert \tilde{\mu}\left( v_{\Sigma} - \tilde{v}_\Sigma \right)\right\vert^2 \vert {\rm d}s\vert \nonumber\\
=\left( \int_{(\Sigma\backslash[-1,1]) \cap U_{1/n}(1)}\!+ \int_{(\Sigma\backslash[-1,1])\cap U_{1/n}(-1) }\! + \int_{\Sigma \backslash ([-1,1]\cup U_{1/n}(-1) \cup U_{1/n}(1))} \right) \! \left\vert\tilde{\mu}\left( v_{\Sigma} - \tilde{v}_\Sigma \right)\right\vert^2 \vert {\rm d}s\vert \nonumber\\
= I_1 + I_2 + I_3.\label{IBreakdown}
\end{gather}
First we will bound
\begin{gather*}
I_3 = \int_{\Sigma \backslash ([-1,1]\cup U_{1/n}(-1) \cup U_{1/n}(1))} \left\vert\tilde{\mu}\left( v_{\Sigma} - \tilde{v}_\Sigma \right)\right\vert^2 \vert {\rm d}s\vert,
\end{gather*}
which will be done by first proving $L^\infty$ bounds on $\tilde{\mu}_2$ and $\phi^{-2n}$. From (\ref{TildeMu2Def}),
\begin{gather*}
\sup_{z \in [1+ 1/n,1+\delta]}\left \vert \tilde{\mu}_2(z) \right\vert = \sup_{\frac{1}{n} \le z -1 \le \delta}\big\vert R^{(n)}(z) E(z) (2\pi n)^{\sigma_3/2} \Psi_2(n^2 f(z))\phi^n(z) \big\vert \nonumber\\
\le c n^{1/2}\sup_{\frac{1}{n} \le z -1 \le \delta}\big\vert\Psi_2\big(n^2 f(z)\big)\phi^n(z)\big\vert \nonumber\\
\hphantom{\sup_{z \in [1+ 1/n,1+\delta]}\left \vert \tilde{\mu}_2(z) \right\vert}{} = c n^{1/2}\sup_{\frac{1}{n} \le z-1 \le \delta}\big\vert\Psi_2\big(n^2 f(z)\big)e^{2 (n^2 f(z))^{1/2}} \big\vert\nonumber\\
\hphantom{\sup_{z \in [1+ 1/n,1+\delta]}\left \vert \tilde{\mu}_2(z) \right\vert}{}= c n^{1/2}\sup_{O(n) \le y \le O(n^2)} \left\vert\begin{matrix}
\Psi_{12}(y)e^{2y^{1/2}} \\
\Psi_{22}(y)e^{2 y^{1/2}}
\end{matrix}\right\vert,
\end{gather*}
where $y = n^2f(z)$. Substituting in the asymptotics of $\Psi$ as in $(\ref{PsiAsymptotics})$, we see that
\begin{gather}
\sup_{z \in [1+1/n, 1+\delta]}\left \vert \tilde{\mu}_2(z) \right\vert = O(n)\label{MuInfty1}
\end{gather}
Similarly
\begin{gather*}
\sup_{z \in (\Sigma \backslash [-1,1]) \cap (U_\delta(-1) \backslash U_{1/n}(-1))}\left \vert \tilde{\mu}_2(z) \right\vert \nonumber\\
 \qquad{} \le c n^{1/2}\sup_{z \in (\Sigma \backslash [-1,1]) \cap (U_\delta(-1) \backslash U_{1/n}(-1))}\big\vert\Psi_2(n^2 f(z))e^{2 (n^2 f(z))^{1/2}} \big\vert\nonumber\\
 \qquad{} = c n^{1/2}\sup_{\substack{
 O(n) \le \vert y\vert \le O(n^2) \\
 \arg y = \pm 2\pi /3
 }}\left\vert\begin{matrix}
\Psi_{12}(y)e^{2y^{1/2}} \\
\Psi_{22}(y)e^{2 y^{1/2}}
\end{matrix}\right\vert,
\end{gather*}
where in the last step we have used the fact that $n^2f(-\Sigma \cap U_\delta(-1)) \subset \Gamma$ for any choice of $n$, see~(\ref{SigmaToGamma}). Once again, substituting in the asymptotics for $\Psi$ from~(\ref{PsiAsymptotics}),
\begin{gather}
\sup_{z \in (\Sigma \cap U_\delta(-1)) \backslash U_{1/n}(-1)}\left \vert \tilde{\mu}_2(z) \right\vert = O(n).\label{MuInfty2}
\end{gather}
 Lastly, it is clear from (\ref{TildeMu2Def}) that $\tilde{\mu}_2$ is uniformly bounded for $z \in \Sigma \backslash ([-1,1] \cup U_\delta(-1) \cup U_\delta(1))$. Combining this with (\ref{MuInfty1}) and (\ref{MuInfty2}), if we are at least a distance $1/n$ from $\pm1$ on $\Sigma\backslash [-1,1]$, then $\vert \tilde \mu \vert \le cn$, that is,
\begin{gather}
\left\Vert \tilde{\mu}_2 \right\Vert_{L^\infty (\Sigma \backslash ([-1,1]\cup U_{1/n}(-1) \cup U_{1/n}(1))} = O(n).\label{MuInfty}
\end{gather}

 Similarly, we prove an $L^\infty$ bound on the function $\phi(z)$: A straightforward calculation using properties \eqref{1branch} and~\eqref{2branch} of Proposition \ref{PhiProps} shows that
\begin{gather}
\inf_{z \in (\Sigma \backslash [-1,1]) \cap (U_\delta(-1) \backslash U_{1/n}(-1))} \vert \phi(z)\vert = 1 + \frac{c}{\sqrt{n}}, \qquad \text{ for some $c > 0$}\label{PhiBound1}
\end{gather}
and, since the contour $\Sigma \backslash ([-1,1] \cup U_\delta(-1) \cup U_\delta(1))$ is bounded away from the interval $[-1,1]$, property~\eqref{prop5} of Proposition \ref{PhiProps} implies
\begin{gather}
\inf_{z \in \Sigma \backslash ([-1,1] \cup U_\delta(-1) \cup U_\delta(1))} \vert \phi(z)\vert = 1 + c, \qquad \text{for some fixed $c > 0$}.\label{PhiBound2}
\end{gather}
Equations (\ref{PhiBound1}) and (\ref{PhiBound2}) together imply
\begin{gather}
\big\Vert \phi^{-1}(z) \big\Vert_{L^\infty(\Sigma \backslash ([-1,1] \cup U_{1/n}(1) \cup U_{1/n}(-1)))} \le 1 - \frac{c}{\sqrt{n}}\qquad \text{ for some $c > 0$},\nonumber\\
\big\Vert \phi^{-2n}(z) \big\Vert_{L^\infty(\Sigma \backslash ([-1,1] \cup U_{1/n}(1) \cup U_{1/n}(-1)))} \le c_1 e^{-c_2 n^{1/2}}.\label{PhiBound}
\end{gather}
So, using the fact that $\frac{F^2}{w}$ is bounded, (\ref{MuInfty}), and (\ref{PhiBound}),
\begin{gather*}
 \left\vert\tilde{\mu}\left( v_{\Sigma} - \tilde{v}_\Sigma \right)\right\vert^2 = \left\vert \left(\mathbf{0}, \tilde{\mu}_2\right)(v_\Sigma - \tilde{v}_\Sigma)\right\vert^2= O\big(e^{-cn^{1/2}}\big)
\end{gather*}
uniformly for $s \in \Sigma \backslash ([-1,1] \cup U_{1/n}(-1) \cup U_{1/n}(1))$ and so
\begin{gather}
I_3 = O\big(e^{-cn^{1/2}}\big). \label{I3}
\end{gather}
Turning to $I_1$ and using (\ref{TildeMuExpansion}),
\begin{gather*}
I_1 = \int_{U_{1/n}(1) \cap \Sigma} \left\vert\tilde{\mu}\left( v_{\Sigma} - \tilde{v}_\Sigma \right)\right\vert^2 \vert {\rm d}s\vert = \int_1^{1+\frac{1}{n}}\left\vert\tilde{\mu}\left( v_{\Sigma} - \tilde{v}_\Sigma \right)\right\vert^2 {\rm d}s \nonumber\\
 = \int_1^{1+\frac{1}{n}}\!\left\vert R^{(n)}(s) E(s) (2\pi n)^{\sigma_3/2}\Psi\big(n^2f(s)\big) \phi^{-n\sigma_3}(s) \!\left( \!\begin{matrix}
0 & 0\\
\left(\frac{F^2}{w_+}(s) + \frac{F^2}{w_-}(s) - 2\right) \phi^{-2n}\! & 0
\end{matrix}\right)\right\vert^2\! {\rm d}s \nonumber\\
 = \int_1^{1+\frac{1}{n}}\left\vert \left(I + O\left(\frac{1}{n}\right)\right) E(s) (2\pi n)^{\sigma_3/2}\Psi\big(n^2f(s)\big)\left( \begin{matrix}
0 & 0\\
1 & 0
\end{matrix}\right)\right\vert^2\\
\quad{}\times \left\vert \left(\frac{F^2}{w_+}(s) + \frac{F^2}{w_-}(s) - 2\right)\phi^{-n}(s)\right\vert^2 {\rm d}s \nonumber\\
\le cn\int_1^{1+\frac{1}{n}}\left\vert \left( \begin{matrix}
\Psi_{12}\big(n^2f(s)\big) & 0\\
\Psi_{22}\big(n^2f(s)\big) & 0
\end{matrix}\right)\right\vert^2 \left\vert \frac{F^2}{w_+}(s) + \frac{F^2}{w_-}(s) - 2\right\vert^2 {\rm d}s.
\end{gather*}
By Proposition \ref{FCancellation},
\begin{gather*}
\frac{F^2}{w_+}(s) + \frac{F^2}{w_-}(s) - 2 = O\left(\frac{1}{\log^2\vert s-1\vert}\right) \qquad \text{as $s \to 1$},
\end{gather*}
so for $\vert s-1 \vert \le \frac{1}{n}$,
\begin{gather}
\frac{F^2}{w_+}(s) + \frac{F^2}{w_-}(s) - 2 = O\left(\frac{1}{\log^2n}\right). \label{FTonRight}
\end{gather}
So
\begin{gather*}
cn\int_1^{1+\frac{1}{n}}\left\vert \left( \begin{matrix}
\Psi_{12}\big(n^2f(s)\big) & 0\\
\Psi_{22}\big(n^2f(s)\big) & 0
\end{matrix}\right)\right\vert^2 \left\vert \frac{F^2}{w_+}(s) + \frac{F^2}{w_-}(s) - 2\right\vert^2 {\rm d}s \nonumber\\
\qquad {} \le c \frac{n}{\log^4 n}\int_1^{1+\frac{1}{n}}\left\vert \left( \begin{matrix}
\Psi_{12}\big(n^2f(s)\big) & 0\\
\Psi_{22}\big(n^2f(s)\big) & 0
\end{matrix}\right)\right\vert^2 {\rm d}s.
\end{gather*}
Making the substitution $y =n^2f(s)$,
\begin{gather*}
 c \frac{n}{\log^4 n}\int_1^{1+\frac{1}{n}}\left\vert \left( \begin{matrix}
\Psi_{12}\big(n^2f(s)\big) & 0\\
\Psi_{22}\big(n^2f(s)\big) & 0
\end{matrix}\right)\right\vert^2 {\rm d}s \nonumber\\
\qquad{} = c \frac{n}{\log^4 n}\int_0^{n^2f(1+\frac{1}{n})}\left\vert \left( \begin{matrix}
\Psi_{12}(y) & 0\\
\Psi_{22}(y) & 0
\end{matrix}\right)\right\vert^2 {\rm d}f^{-1} \left(\frac{y}{n^2}\right).
\end{gather*}
Using (\ref{fFact}), we see that $n^2f(1 + \frac{1}{n}) = O(n)$ and ${\rm d}f^{-1}\big(\frac{y}{n^2}\big) = 2\frac{{\rm d}y}{n^2}\big(1 + O\big(\frac{1}{n}\big)\big)$. Therefore,
\begin{gather*}
c \frac{n}{\log^4 n}\int_0^{n^2f(1+\frac{1}{n})}\left\vert \left( \begin{matrix}
\Psi_{12}(y) & 0\\
\Psi_{22}(y) & 0
\end{matrix}\right)\right\vert^2 {\rm d}f^{-1} \left(\frac{y}{n^2}\right) \nonumber\\
\qquad{} \le \frac{c}{n\log^4 n} \int_0^{n^2f(1+\frac{1}{n})}\left\vert \left( \begin{matrix}
\Psi_{12}(y) & 0\\
\Psi_{22}(y) & 0
\end{matrix}\right)\right\vert^2{\rm d}y = \frac{c}{n\log^4n},
\end{gather*}
where we have used that $\Psi_{12}, \Psi_{22} \in L^2(0,\infty)$, which is clear from (\ref{PsiAsymptotics}). So we have proved
\begin{gather}
I_1 = O\left(\frac{1}{n\log^4 n}\right).\label{I1}
\end{gather}
Similarly
\begin{gather*}
I_2 = \int_{(\Sigma \cap U_{1/n}(-1)) \backslash [-1,1]} \left\vert\tilde{\mu}\left( v_{\Sigma} - \tilde{v}_\Sigma \right)\right\vert^2 \vert {\rm d}s\vert = \int_{(\Sigma \cap U_{1/n}(-1)) \backslash [-1,1]}\left\vert\tilde{\mu}\left( v_{\Sigma} - \tilde{v}_\Sigma \right)\right\vert^2 \vert {\rm d}s\vert \nonumber\\
\hphantom{I_2 }{}= \int_{(\Sigma \cap U_{1/n}(-1)) \backslash [-1,1]}\left\vert \sigma_3R^{(n)}(-s) E(-s) (2\pi n)^{\sigma_3/2}\Psi\big(n^2f(-s)\big) \phi^{-n\sigma_3}(-s)\sigma_3\vphantom{\left( \begin{matrix}
0 & 0\\
\left(\dfrac{F^2}{w}(s) -1\right) \phi^{-2n}(s) & 0
\end{matrix}\right)} \right. \nonumber\\
\left. \hphantom{I_2 =}{} \times\left( \begin{matrix}
0 & 0\\
\left(\dfrac{F^2}{w}(s) -1\right) \phi^{-2n}(s) & 0
\end{matrix}\right)\right\vert^2 \vert {\rm d}s\vert\nonumber\\
\hphantom{I_2 }{}
\le cn \int_{(\Sigma \cap U_{1/n}(-1)) \backslash [-1,1]}\left\vert \left( \begin{matrix}
\Psi_{12}\big(n^2f(-s)\big) & 0\\
\Psi_{22}\big(n^2f(-s)\big) & 0
\end{matrix}\right)\right\vert^2 \left\vert \left(\frac{F^2}{w}(s)-1\right)\phi^{-n}(-s)\right\vert^2 \vert {\rm d}s \vert \nonumber\\
\hphantom{I_2 }{}\le cn\int_{(\Sigma \cap U_{1/n}(-1)) \backslash [-1,1]}\left\vert \left( \begin{matrix}
\Psi_{12}\big(n^2f(-s)\big) & 0\\
\Psi_{22}\big(n^2f(-s)\big) & 0
\end{matrix}\right)\right\vert^2 \left\vert \frac{F^2}{w}(s)-1\right\vert^2 \vert {\rm d}s\vert.
\end{gather*}
By Proposition \ref{FProps},
\begin{gather*}
\frac{F^2}{w}(s) - 1 = O\big(\vert s + 1\vert^{1/2}\big), \qquad \text{as $s \to -1$},
\end{gather*}
and so for $\vert s + 1 \vert \le \frac{1}{n}$,
\begin{gather}
\left\vert \frac{F^2}{w}(s) - 1 \right\vert = O\big(n^{-1/2}\big),\label{FTonLeft}
\end{gather}
and so
\begin{gather*}
cn\int_{(\Sigma \cap U_{1/n}(-1)) \backslash [-1,1]}\left\vert \left( \begin{matrix}
\Psi_{12}\big(n^2f(-s)\big) & 0\\
\Psi_{22}\big(n^2f(-s)\big) & 0
\end{matrix}\right)\right\vert^2 \left\vert \frac{F^2}{w}(s)-1\right\vert^2 \vert {\rm d}s\vert \nonumber\\
\qquad{} \le c \int_{(\Sigma \cap U_{1/n}(-1)) \backslash [-1,1]}\left\vert \left( \begin{matrix}
\Psi_{12}\big(n^2f(-s)\big) & 0\\
\Psi_{22}\big(n^2f(-s)\big) & 0
\end{matrix}\right)\right\vert^2 \vert {\rm d}s\vert.
\end{gather*}
Making the change of variables $y = n^2 f(-s)$, and using (\ref{fFact}),
\begin{gather*}
c \int_{(\Sigma \cap U_{1/n}(-1)) \backslash [-1,1]}\left\vert \left( \begin{matrix}
\Psi_{12}\big(n^2f(-s)\big) & 0\\
\Psi_{22}\big(n^2f(-s)\big) & 0
\end{matrix}\right)\right\vert^2 \vert {\rm d}s\vert \nonumber\\
\qquad{} =\frac{c}{n^2}\left(1 + O\left(\frac{1}{n}\right)\right) \int_{n^2f(-(\Sigma \cap U_{1/n}(-1)) \backslash [-1,1])}\left\vert \left( \begin{matrix}
\Psi_{12}(y) & 0\\
\Psi_{22}(y) & 0
\end{matrix}\right)\right\vert^2\vert {\rm d}y\vert.
\end{gather*}
Recall that $n^2f(- (\Sigma \cap U_{\delta}(-1))) \subset \Gamma$, see (\ref{SigmaToGamma}), therefore
\begin{gather*}
\frac{c}{n^2}\left(1 + O\left(\frac{1}{n}\right)\right) \int_{n^2f(-(\Sigma \cap U_{1/n}(-1)) \backslash [-1,1])}\left\vert \left( \begin{matrix}
\Psi_{12}(y) & 0\\
\Psi_{22}(y) & 0
\end{matrix}\right)\right\vert^2\vert {\rm d}y\vert \nonumber\\
\qquad{} =\frac{c}{n^2}\int_{\substack{0 \le \vert y \vert \le O(n) \\ \vert \arg y \vert =2\pi/3}}\left\vert \left( \begin{matrix}
\Psi_{12}(y) & 0\\
\Psi_{22}(y) & 0
\end{matrix}\right)\right\vert^2\vert {\rm d}y\vert \le \frac{c}{n^2},
\end{gather*}
where the asymptotics of $\Psi_{12}$, $\Psi_{22}$ in (\ref{PsiAsymptotics}) imply $\Psi_{12}(y), \Psi_{22}(y) \in L^2(C_0)$, where $C_0$ is the contour $\vert \arg y \vert = 2\pi/3$. This proves
\begin{gather}
I_2 = O\left(\frac{1}{n^2}\right).\label{I2}
\end{gather}
Thus combining (\ref{IBreakdown}), (\ref{I1}), (\ref{I2}), and (\ref{I3}), we see that
\begin{gather*}
\big\vert \big\vert \tilde{\mu}^{(n)}\big( v_{\Sigma}^{(n)} - \tilde{v}_\Sigma^{(n)} \big) \big\vert \big\vert_{L^2(\Sigma)}^2 = O\left(\frac{1}{n\log^4n}\right),\\
\big\vert \big\vert \tilde{\mu}^{(n)}\big( v_{\Sigma}^{(n)} - \tilde{v}_\Sigma^{(n)} \big) \big\vert \big\vert_{L^2(\Sigma)} = O\left(\frac{1}{n^{1/2}\log^2n}\right),
\end{gather*}
thus proving (\ref{Mu-MuL2PreApp}).

The equation (\ref{Mu-MuL2App}) is a relatively straightforward consequence of (\ref{Mu-MuL2PreApp}) as follows
\begin{gather*}
\mu - \tilde{\mu} = (1 - C_{v_\Sigma})^{-1}I - (1 - C_{\tilde{v}_\Sigma})^{-1} I = (1 - C_{v_\Sigma})^{-1} (C_{v_\Sigma} - C_{\tilde{v}_\Sigma}) (1-C_{\tilde{v}_\Sigma})^{-1}I \nonumber\\
\hphantom{\mu - \tilde{\mu}}{} = (1 - C_{v_\Sigma})^{-1} (C_{v_\Sigma} - C_{\tilde{v}_\Sigma}) \tilde{\mu} = (1-C_{v_\Sigma})^{-1} C_\Sigma^- (\tilde{\mu} (v_\Sigma - \tilde{v}_\Sigma)).
\end{gather*}
Therefore, using the fact that $C_\Sigma^-$ and $(1 - C_{v_\Sigma})^{-1}$ are both bounded as $L^2 \to L^2$ operators uniformly in $n$, we see that
\begin{gather*}
\big\Vert \mu - \tilde{\mu}\big\Vert_{L^2} \le \big\Vert (1 - C_{v_\Sigma})^{-1}\big\Vert_{L^2 \to L^2} \big\Vert C_{\Sigma}^-\big\Vert_{L^2 \to L^2} \big\Vert \tilde{\mu}(v_\Sigma - \tilde{v}_\Sigma)\big\Vert_{L^2} = O\left(\frac{1}{n^{1/2}\log^2n}\right),
\end{gather*}
thus proving (\ref{Mu-MuL2App}).

As in the proof of (\ref{Mu-MuL2PreApp}), we have that
\begin{gather}
\big\vert \big\vert \tilde{\mu}^{(n)}\big( v_{\Sigma}^{(n)} - \tilde{v}_\Sigma^{(n)} \big) - \tilde{\mu}^{(n+1)}\big( v_{\Sigma}^{(n+1)} - \tilde{v}_\Sigma^{(n+1)} \big) \big\vert \big\vert_{L^2(\Sigma)}^2 \nonumber\\
\qquad{} =\left( \int_{\Sigma \cap U_{1/n}(1)}+ \int_{\Sigma\cap U_{\frac{1}{n}}(-1)} + \int_{\Sigma \backslash (U_{\frac{1}{n}}(-1) \cup U_{\frac{1}{n}}(1))} \right) \nonumber\\
\qquad\quad{}\times \big\vert\tilde{\mu}^{(n)}\big( v_{\Sigma}^{(n)} - \tilde{v}_\Sigma^{(n)} \big) - \tilde{\mu}^{(n+1)}\big( v_{\Sigma}^{(n+1)} - \tilde{v}_\Sigma^{(n+1)} \big) \big\vert^2 \vert {\rm d}s\vert = I_1 + I_2 + I_3,\label{IBreakdownHard}
\end{gather}
and the same estimates as before imply
\begin{gather}
I_3 = O\big(e^{-cn^{1/2}}\big). \label{I3Hard}
\end{gather}
For $I_1$, we see that
\begin{gather}
I_1 = \int_1^{1+\frac{1}{n}}\big\vert\tilde{\mu}^{(n)}\big( v_{\Sigma}^{(n)} - \tilde{v}^{(n)}_\Sigma \big)-\tilde{\mu}^{(n+1)}\big( v_{\Sigma}^{(n+1)} - \tilde{v}^{(n+1)}_\Sigma \big)\big\vert^2 {\rm d}s \nonumber\\
\hphantom{I_1 }{}= \int_1^{1+\frac{1}{n}}\Bigg\vert \left(I + \frac{R_1(s)}{n} + O\left(\frac{1}{n^2}\right)\right) E(s) (2\pi n)^{\sigma_3/2}\Psi_2\big(n^2f(s)\big) \phi^{n}(s) \nonumber\\
\hphantom{I_1 =}{} \times\left( \begin{matrix}
0 & 0\\
\left(\dfrac{F^2}{w_+}(s) + \dfrac{F^2}{w_-}(s) - 2\right) \phi^{-2n} & 0
\end{matrix}\right)
- \big( (n) \leftrightarrow (n+1)\big) \Bigg \vert^2 {\rm d}s \nonumber\\
\hphantom{I_1 }{}= \int_1^{1+\frac{1}{n}}\Bigg\vert \left(I + \frac{R_1(s)}{n} + O\left(\frac{1}{n^2}\right)\right) E(s) (2\pi n)^{\sigma_3/2}\left( \begin{matrix}
\Psi_{12}\big(n^2f(s)\big) & 0\\
\Psi_{22}\big(n^2f(s)\big) & 0
\end{matrix}\right)\nonumber\\
\hphantom{I_1 =}{} \times\left(\frac{F^2}{w_+}(s) + \frac{F^2}{w_-}(s) - 2\right) \phi^{-n}
- \big( (n) \leftrightarrow (n+1)\big) \Bigg \vert^2 {\rm d}s. \label{Something12}
\end{gather}
Making the substitution $y = n^2f(s)$, and defining $s = s_n(y) = f^{-1}_1\big(\frac{y}{n^2}\big)$, where $f_1^{-1}$ denotes the inverse of $f_1$ in the neighborhood~$B$ where it is conformal, (\ref{Something12}) becomes
\begin{gather}
\int_0^{n^2f\left(1+\frac{1}{n}\right)}\left\vert \left(I + \frac{R_1(s)}{n} + O\left(\frac{1}{n^2}\right)\right) E(s) (2\pi n)^{\sigma_3/2} \left( \begin{matrix}
\Psi_{12}(y) & 0\\
\Psi_{22}(y) & 0
\end{matrix}\right) \right.\nonumber\\
\qquad{} \times\left(\frac{F^2}{w_+}(s) + \frac{F^2}{w_-}(s) - 2\right) e^{-2y^{1/2}}
- \left. \left(I + \frac{R_1(s)}{n+1} + O\left(\frac{1}{n^2}\right)\right) E(s) (2\pi (n+1))^{\sigma_3/2}\right.\nonumber\\
\left.\qquad{}\times \left( \begin{matrix}
\Psi_{12}\left(\dfrac{(n+1)^2}{n^2} y\right) & 0\vspace{1mm}\\
\Psi_{22}\left(\dfrac{(n+1)^2}{n^2} y\right) & 0
\end{matrix}\right)\left(\frac{F^2}{w_+}(s) + \frac{F^2}{w_-}(s) - 2\right) e^{-2\frac{n+1}{n}y^{1/2}}\right\vert^2 {\rm d} f_1^{-1}\left(\frac{y}{n^2}\right)\nonumber\\
= 2\pi n \int_0^{n^2f\left(1+\frac{1}{n}\right)}\left\vert \left(I + \frac{R_1(s)}{n} + O\left(\frac{1}{n^2}\right)\right) E(s) \left(\begin{matrix}
1 & 0\\
0 & \dfrac{1}{2\pi n}
\end{matrix}\right)\left( \begin{matrix}
\Psi_{12}(y) & 0\\
\Psi_{22}(y) & 0
\end{matrix}\right) \right.\nonumber\\
 \qquad{} \times\left(\frac{F^2}{w_+}(s) + \frac{F^2}{w_-}(s) - 2\right) e^{-2y^{1/2}}\nonumber\\
 \left. \qquad{} - \left(I + \frac{R_1(s)}{n+1} + O\left(\frac{1}{n^2}\right)\right) E(s) \left(\begin{matrix}
\sqrt{\dfrac{n+1}{n}} & 0\\
0 & \dfrac{1}{2\pi \sqrt{n(n+1)}}
\end{matrix}\right)\right. \nonumber\\
\left.\qquad{}\times \left( \begin{matrix}
\Psi_{12}\left(\dfrac{(n+1)^2}{n^2} y\right) & 0\vspace{1mm}\\
\Psi_{22}\left(\dfrac{(n+1)^2}{n^2} y\right) & 0
\end{matrix}\right)\left(\frac{F^2}{w_+}(s) + \frac{F^2}{w_-}(s) - 2\right) e^{-2\frac{n+1}{n}y^{1/2}}\right\vert^2 {\rm d} f_1^{-1}\left(\frac{y}{n^2}\right).\label{Something13}
\end{gather}
First note that $n^2 f(1 + \frac{1}{n}) = O(n)$ and for $\vert y \vert \le cn$, ${\rm d} f_1^{-1}\big(\frac{y}{n^2}\big) =\frac{{\rm d}y}{n^2} \big(1 + O\big(\frac{1}{n}\big)\big)$. As in (\ref{FTonRight}), $\big\vert\frac{F^2}{w_+}(s) + \frac{F^2}{w_-}(s) - 2\big\vert \le \frac{c}{\log^2n}$. Therefore (\ref{Something13}) is
\begin{gather}
\int_0^{O(n)}\left\vert \left(I + \frac{R_1(s)}{n} + O\left(\frac{1}{n^2}\right)\right) E(s) \left(\begin{matrix}
1 & 0 \\
0 & \dfrac{1}{2\pi n}
\end{matrix}\right) \left( \begin{matrix}
\Psi_{12}(y) & 0\\
\Psi_{22}(y) & 0
\end{matrix}\right)e^{-2y^{1/2}}\right. \nonumber\\
\left. \qquad{}- \left(I + \frac{R_1(s)}{n+1} + O\left(\frac{1}{n^2}\right)\right) E(s) \left(\begin{matrix}
2\pi \sqrt{\dfrac{n+1}{n}} & 0\\
0 & \dfrac{1}{2\pi \sqrt{n(n+1)}}
\end{matrix}\right)\right. \nonumber\\
 \left. \qquad{} \times \left( \begin{matrix}
\Psi_{12}\left(\dfrac{(n+1)^2}{n^2} y\right) & 0\vspace{1mm}\\
\Psi_{22}\left(\dfrac{(n+1)^2}{n^2} y\right) & 0
\end{matrix}\right) e^{-2\frac{n+1}{n}y^{1/2}}\right\vert^2 {\rm d}y \, O\left(\frac{1}{n\log^4n}\right).\label{Something14}
\end{gather}
Now $R_1$ and $E$ are uniformly bounded functions, and $\Psi_{12}$, $\Psi_{22}$ are $L^2(0,\infty)$ functions. Therefore, performing the subtraction in (\ref{Something14}) term by term, (\ref{Something14}) is
\begin{gather}
\int_0^{O(n)}\left\vert O\left(\frac{1}{n}\right)\left( \begin{matrix}
\Psi_{12}(y) & 0\\
\Psi_{22}(y) & 0
\end{matrix}\right)e^{-2y^{1/2}}\right\vert^2 {\rm d}y \, O\left(\frac{1}{n\log^4n}\right)\nonumber\\
+ \int_0^{O(n)}\left\vert\left( \begin{matrix}
\Psi_{12}(y) & 0\\
\Psi_{22}(y) & 0
\end{matrix}\right)e^{-2y^{1/2}} - \left( \begin{matrix}
\Psi_{12}\left(\dfrac{(n+1)^2}{n^2}y\right) & 0\vspace{1mm}\\
\Psi_{22}\left(\dfrac{(n+1)^2}{n^2}y\right) & 0
\end{matrix}\right)e^{-2\frac{n+1}{n}y^{1/2}}\right\vert^2 {\rm d}y \, O\left(\frac{1}{n\log^4n}\right) \nonumber\\
=O\left(\frac{1}{n^3\log^4n}\right) + \int_0^{O(n)}\left\vert \int_y^{\frac{(n+1)^2}{n^2}y}\frac{{\rm d}}{{\rm d}x}\left( \left( \begin{matrix}
\Psi_{12}(x) & 0\\
\Psi_{22}(x) & 0
\end{matrix}\right)e^{-2x^{1/2}}\right){\rm d}x\right\vert^2 {\rm d}y \, O\left(\frac{1}{n\log^4n}\right)\nonumber\\
=O\left(\frac{1}{n^3\log^4n}\right) + \int_0^{O(n)}\left\vert \frac{(n+1)^2-n^2}{n^2}y\sup_{\left(y,\frac{(n+1)^2}{n^2}y\right)}\frac{{\rm d}}{{\rm d}x} \left( \left( \begin{matrix}
\Psi_{12}(x) & 0\\
\Psi_{22}(x) & 0
\end{matrix}\right)e^{-2x^{1/2}}\right)\right\vert^2 {\rm d}y \nonumber\\
\qquad{} \times O\left(\frac{1}{n\log^4n}\right).\label{n3log4n}
\end{gather}
Now, $\Psi_2(x) = O(\log x)$ and $\Psi_2'(x) = O\big(\frac{1}{x}\big)$ as $x \to 0$ and $\Psi_2, \Psi_2'(x) = O\big(x^{1/4}e^{-2x^{1/2}}\big)$ as $x \to \infty$, see (\ref{PsiAsymptotics}). As a consequence,
\begin{gather*}
y\sup_{\left(y,\frac{(n+1)^2}{n^2}y\right)}\frac{{\rm d}}{{\rm d}x}\left( \left( \begin{matrix}
\Psi_{12}(x) & 0\\
\Psi_{22}(x) & 0
\end{matrix}\right)e^{-2x^{1/2}}\right) = \begin{cases}
O(1) & \text{for all $y$}, \\
O\big(y^{1/4}e^{-4y^{1/2}}\big) & \text{as $y \to \infty$},
\end{cases}
\end{gather*}
and thus
\begin{gather*}
\left\Vert y\sup_{\left(y,\frac{(n+1)^2}{n^2}y\right)}\frac{{\rm d}}{{\rm d}x}\left( \left( \begin{matrix}
\Psi_{12}(x) & 0\\
\Psi_{22}(x) & 0
\end{matrix}\right)e^{-2x^{1/2}}\right)\right\Vert_{L^2(0,\infty)} = O(1),
\end{gather*}
and so
\begin{gather*}
\int_0^{O(n)}\left\vert \frac{(n+1)^2-n^2}{n^2}y\sup_{\left(y,\frac{(n+1)^2}{n^2}y\right)}\frac{{\rm d}}{{\rm d}x}\left( \left( \begin{matrix}
\Psi_{12}(x) & 0\\
\Psi_{22}(x) & 0
\end{matrix}\right)e^{-2x^{1/2}}\right){\rm d}x\right\vert^2 {\rm d}y\nonumber\\
\qquad{} \le \frac{c}{n^2} \int_0^{\infty}\left\vert y\sup_{\left(y,\frac{(n+1)^2}{n^2}y\right)}\frac{{\rm d}}{{\rm d}x}\left( \left( \begin{matrix}
\Psi_{12}(x) & 0\\
\Psi_{22}(x) & 0
\end{matrix}\right)e^{-2x^{1/2}}\right){\rm d}x\right\vert^2 {\rm d}y=O\left(\frac{1}{n^2}\right),
\end{gather*}
and hence, by (\ref{n3log4n}),
\begin{gather}
I_1 = O\left(\frac{1}{n^3\log^4 n}\right).\label{I1Hard}
\end{gather}
A similar calculation shows
\begin{gather}
I_2 = O\left(\frac{1}{n^4}\right).\label{I2Hard}
\end{gather}
Thus combining (\ref{IBreakdownHard}), (\ref{I1Hard}), (\ref{I2Hard}), and (\ref{I3Hard}), we see that
\begin{gather*}
\big\vert \big\vert \tilde{\mu}^{(n)}\big( v_{\Sigma}^{(n)} - \tilde{v}_\Sigma^{(n)} \big)- \tilde{\mu}^{(n+1)}\big( v_{\Sigma}^{(n+1)} - \tilde{v}_\Sigma^{(n+1)} \big) \big\vert \big\vert_{L^2(\Sigma)}^2 = O\left(\frac{1}{n^3\log^4n}\right), \nonumber\\
\big\vert \big\vert \tilde{\mu}^{(n)}\big( v_{\Sigma}^{(n)} - \tilde{v}_\Sigma^{(n)} \big)- \tilde{\mu}^{(n+1)}\big( v_{\Sigma}^{(n+1)} - \tilde{v}_\Sigma^{(n+1)} \big) \big\vert \big\vert_{L^2(\Sigma)} = O\left(\frac{1}{n^{3/2}\log^2n}\right),
\end{gather*}
thus proving (\ref{Mu-MuL2HardPreApp}).

To move from (\ref{Mu-MuL2HardPreApp}) to (\ref{Mu-MuL2HardApp}), we proceed as follows
\begin{gather}
 \big(\mu^{(n)} - \tilde{\mu}^{(n)}\big) - \big(\mu^{(n+1)} - \tilde{\mu}^{(n+1)}\big) \nonumber\\
 \qquad{}= \big(\big(1-C_{v^{(n)}_\Sigma}\big)^{-1}I - \big(1-C_{\tilde{v}^{(n)}_\Sigma}\big)^{-1}I\big) - \big(\big(1-C_{v^{(n+1)}_\Sigma}\big)^{-1}I - \big(1-C_{v^{(n+1)}_\Sigma}\big)^{-1}I\big) \nonumber\\
 \qquad{} = \big(1-C_{v^{(n)}_\Sigma}\big)^{-1} C_\Sigma^- \tilde{\mu}^{(n)}\big(v_\Sigma^{(n)} - \tilde{v}_\Sigma^{(n)}\big) - \big(1-C_{v^{(n+1)}_\Sigma}\big)^{-1} C_\Sigma^- \tilde{\mu}^{(n+1)}\big(v_\Sigma^{(n+1)} - \tilde{v}_\Sigma^{(n+1)}\big)\nonumber\\
 \qquad{} = \big( \big(1-C_{v^{(n)}_\Sigma}\big)^{-1} - \big(1-C_{v^{(n+1)}_\Sigma}\big)^{-1}\big)C_\Sigma^- \tilde{\mu}^{(n)}\big(v_\Sigma^{(n)} - \tilde{v}_\Sigma^{(n)}\big) \nonumber\\
\qquad\quad{}+\big(1-C_{v^{(n+1)}_\Sigma}\big)^{-1} C_\Sigma^- \big(\tilde{\mu}^n \big(v_\Sigma^{(n)} - \tilde{v}_\Sigma^{(n)}\big) - \tilde{\mu}^{(n+1)}\big(v_\Sigma^{(n+1)} - \tilde{v}_\Sigma^{(n+1)}\big) \big).\label{122}
\end{gather}
Now, by (\ref{Mu-MuL2HardPreApp}) and the boundedness of the operators $C_\Sigma^-$ and $(1-C_{v^{(n+1)}_\Sigma})^{-1}$, we see that
\begin{gather}
\big\Vert \big(1-C_{v^{(n+1)}_\Sigma}\big)^{-1} C_\Sigma^- \big(\tilde{\mu}^n \big(v_\Sigma^{(n)} - \tilde{v}_\Sigma^{(n)}\big) - \tilde{\mu}^{(n+1)}\big(v_\Sigma^{(n+1)} - \tilde{v}_\Sigma^{(n+1)}\big) \big)\big\Vert_{L^2}\nonumber\\
\qquad{} = O\left(\frac{1}{n^{3/2}\log^2 n}\right)\label{122First}
\end{gather}
and
\begin{gather*}
 \big( \big(1-C_{v^{(n)}_\Sigma}\big)^{-1} - \big(1-C_{v^{(n+1)}_\Sigma}\big)^{-1}\big)C_\Sigma^- \tilde{\mu}^{(n)}\big(v_\Sigma^{(n)} - \tilde{v}_\Sigma^{(n)}\big) \nonumber\\
\qquad{} = \big(\big(1 - C_{v^{(n)}_\Sigma}\big)^{-1}\big(C_{v^{(n)}_\Sigma} - C_{v^{(n+1)}_\Sigma}\big)\big(1 - C_{v^{(n+1)}_\Sigma}\big)^{-1}\big)C_\Sigma^- \tilde{\mu}^{(n)}\big(v_\Sigma^{(n)} - \tilde{v}_\Sigma^{(n)}\big).
\end{gather*}
Now
\begin{gather*}
\big\Vert C_{v^{(n)}_\Sigma} - C_{v^{(n+1)}_\Sigma}\big\Vert_{L^2 \to L^2} = \Vert C_\Sigma^- \Vert_{L^2\to L^2} \big\Vert v_\Sigma^{(n)} - v_{\Sigma}^{(n+1)}\big\Vert_{L^\infty(\Sigma)},
\end{gather*}
where
\begin{gather*}
v_\Sigma^{(n)} - v_\Sigma^{(n+1)} = \begin{cases}\left(\begin{matrix}
0 & 0\\
\dfrac{F^2}{w}(z) \phi^{-2n}(z) \big(1 - \phi^{ - 2}(z)\big) & 0
\end{matrix}\right) & \text{for $z \in \Sigma\backslash [-1,1]$}, \\
0 & \text{for $z \in [-1,1]$}.
\end{cases}
\end{gather*}
Therefore
\begin{gather}
\big\Vert v_\Sigma^{(n)} - v_{\Sigma}^{(n+1)}\big\Vert_{L^\infty(\Sigma)} \le c \big\Vert \phi^{-2n} \big(1 - \phi^{-2}\big)\big\Vert_{L^\infty(\Sigma\backslash [-1,1])}.\label{Something20}
\end{gather}
When $z \in (\Sigma\backslash[-1,1]) \cap U_\delta(1)$, $\phi(z) = 1 + \sqrt{2}(z-1)^{1/2} + O(z-1)$, letting $y = n^2 (z-1)$,
\begin{gather*}
\phi^{-2n} (z)\big(1 - \phi^{-2}\big)(z) = \left(1 + \sqrt{2}\frac{y^{1/2}}{n} + O\left(\frac{y}{n^2}\right)\right)^{-2n}\left( 2\sqrt{2}\frac{y^{1/2}}{n}+ O\left(\frac{y}{n^2}\right)\right) \nonumber\\
\hphantom{\phi^{-2n} (z)\big(1 - \phi^{-2}\big)(z)}{} = \frac{2\sqrt{2}y^{1/2}}{n} e^{- 2\sqrt{2} y^{1/2}}\left(1 + O\left(\frac{y^{1/2}}{n}\right)\right).
\end{gather*}
Since $z \in (\Sigma\backslash[-1,1]) \cap U_\delta(1)$, $\operatorname{Re}(y^{1/2}) \ge \epsilon > 0$. Therefore $y^{1/2} e^{-2\sqrt{2} y^{1/2}} \le c < \infty$, so
\begin{gather*}
\big\vert \phi^{-2n} (z)\big(1 - \phi^{-2}\big)(z) \big\vert \le \frac{c}{n}, \qquad \text{for $z \in (\Sigma\backslash[-1,1]) \cap U_\delta(1)$}.
\end{gather*}
The same argument shows the same bound around the point $-1$. Since $\vert \phi(z)\vert \ge 1 + \delta > 1$ when $z \in \Sigma \backslash ([-1,1] \cup U_\delta(1) \cup U_\delta(-1))$,
\begin{gather*}
\left\vert \phi^{-2n} (z)\big(1 - \phi^{-2}\big)(z) \right\vert \le ce^{-n\delta}, \qquad \text{for $z \in \Sigma \backslash ([-1,1] \cup U_\delta(1) \cup U_\delta(-1))$}
\end{gather*}
Therefore,
\begin{gather*}
 \big\Vert \phi^{-2n} \big(1 - \phi^{-2}\big)\big\Vert_{L^\infty(\Sigma\backslash [-1,1])} = O\left(\frac{1}{n}\right),
\end{gather*}
which, combined with (\ref{Something20}), implies
\begin{gather*}
\big\Vert v_\Sigma^{(n)} - v_{\Sigma}^{(n+1)}\big\Vert_{L^\infty(\Sigma)} = O\left(\frac{1}{n}\right),
\qquad
\big\Vert C_{v^{(n)}_\Sigma} - C_{v^{(n+1)}_\Sigma}\big\Vert_{L^2 \to L^2} = O\left(\frac{1}{n}\right).
\end{gather*}
Therefore
\begin{gather}
\big\Vert \big( \big(1-C_{v^{(n)}_\Sigma}\big)^{-1} - \big(1-C_{v^{(n+1)}_\Sigma}\big)^{-1}\big)C_\Sigma^- \tilde{\mu}^{(n)}\big(v_\Sigma^{(n)} - \tilde{v}_\Sigma^{(n)}\big) \big\Vert_{L^2(\Sigma)}\nonumber\\
\qquad{} = \big\Vert\big(\big(1 - C_{v^{(n)}_\Sigma}\big)^{-1}\big(C_{v^{(n)}_\Sigma} - C_{v^{(n+1)}_\Sigma}\big)\big(1 - C_{v^{(n+1)}_\Sigma}\big)^{-1}\big)C_\Sigma^-\big\Vert_{L^2(\Sigma) \to L^2(\Sigma)} \nonumber\\
\qquad\quad{}\times \big\Vert \tilde{\mu}^{(n)}\big(v_\Sigma^{(n)} - \tilde{v}_\Sigma^{(n)}\big)\big\Vert_{L^2(\Sigma)}, \nonumber\\
 \big\Vert \big(C_{v^{(n)}_\Sigma} - C_{v^{(n+1)}_\Sigma}\big)\big\Vert_{L^2(\Sigma) \to L^2(\Sigma)} \big\Vert \tilde{\mu}^{(n)}\big(v_\Sigma^{(n)} - \tilde{v}_\Sigma^{(n)}\big)\big\Vert_{L^2(\Sigma)} \nonumber\\
\qquad{} = O\left(\frac{1}{n}\right)O\left(\frac{1}{n^{1/2}\log^2n}\right) = O\left(\frac{1}{n^{3/2}\log^2n}\right). \label{122Second}
\end{gather}
So, combining (\ref{122}), (\ref{122First}), and (\ref{122Second}), we see that
\begin{gather*}
\big\vert \big\vert \big(\mu^{(n)} - \tilde{\mu}^{(n)}\big) - \big(\mu^{(n+1)} - \tilde{\mu}^{(n+1)}\big) \big\vert \big\vert_{L^2(\Sigma)} = O\left(\frac{1}{n^{3/2}\log^2 n }\right)
\end{gather*}
as desired.
\end{proof}

\section{Leading order asymptotics of the difference formula}
\label{IntegralSection}
In this appendix we calculate the integrals that arise in Proposition \ref{Replacement}.
\subsection{Some preliminaries}
\begin{prop}
The following bounds hold
\begin{gather}
\int_1^{1+1/n} \left\vert \Psi_{12}^2 \big(n^2f(s)\big) \left(\frac{F^2}{w_+}(s) + \frac{F^2}{w_-}(s) - 2\right) \right\vert \vert {\rm d}s\vert = O\left(\frac{1}{n^2\log^2 n}\right),\nonumber\\
\int_1^{1+1/n} \left\vert \Psi_{22}^2 \big(n^2f(s)\big) \left(\frac{F^2}{w_+}(s) + \frac{F^2}{w_-}(s) - 2\right) \right\vert \vert {\rm d}s\vert = O\left(\frac{1}{n^2\log^2 n}\right), \nonumber\\
\int_1^{1+1/n} \left\vert \Psi_{12} \big(n^2f(s)\big)\Psi_{22} \big(n^2f(s)\big) \left(\frac{F^2}{w_+}(s) + \frac{F^2}{w_-}(s) - 2\right) \right\vert \vert {\rm d}s\vert = O\left(\frac{1}{n^2\log^2 n}\right),\label{GNEstimates}
\end{gather}
furthermore
\begin{gather}
\int_1^{1+1/n} \Psi_{12}^2 \big(n^2f(s)\big) \left(\frac{F^2}{w_+}(s) + \frac{F^2}{w_-}(s) - 2\right) \dbar s = \frac{3}{16 \pi in^2 \log^2 n} + O\left(\frac{1}{n^2 \log^3n}\right).\label{GNIntegral}
\end{gather}
\end{prop}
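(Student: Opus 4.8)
\emph{Strategy.} The plan is to reduce all four integrals to integrals over a fixed Bessel‐type region by the change of variables $y=n^2f(s)$, $f(z)=\log^2\phi(z)/4$. From \eqref{fFact} we have $f(1)=0$ and $f'(1)=\tfrac12$, so $s-1=2y/n^2+O(y^2/n^4)$ and $ds=(2/n^2)(1+O(y/n^2))\,dy$, and $s\in[1,1+1/n]$ corresponds to $y\in[0,\,n/2+O(1)]$. The estimates \eqref{PsiAsymptotics} give $\Psi_{12}(y)=O(\log y)$, $\Psi_{22}(y)=O(1)$ as $y\to 0^+$ and $\Psi_{12}(y),\Psi_{22}(y)=O(y^{1/4}e^{-2y^{1/2}})$ as $y\to\infty$; in particular every product $\Psi_{ij}\Psi_{kl}$ lies in $L^1(0,\infty)$, and moreover $\Psi_{12}^2(y)\,(1+|\log y|+y)\in L^1(0,\infty)$. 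I also record that for $s$ in this range $\log\frac{2k}{s-1}=\log\frac{kn^2}{y}+O(y/n^2)=2\log n+\log k-\log y+O(y/n^2)$.

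For the three $L^1$ bounds in \eqref{GNEstimates} I would first invoke Proposition~\ref{FCancellation}: since $|s-1|\le 1/n$ forces $\bigl|\log|s-1|\bigr|\ge\log n$, it gives $\bigl|\tfrac{F^2}{w_+}(s)+\tfrac{F^2}{w_-}(s)-2\bigr|\le c/\log^2 n$ uniformly on $[1,1+1/n]$. Pulling this bound out and changing variables, each of the three integrals is at most $\frac{c}{\log^2 n}\int_1^{1+1/n}\bigl|\Psi_{ab}\Psi_{cd}(n^2f(s))\bigr|\,|ds|=\frac{c}{n^2\log^2 n}\bigl(1+o(1)\bigr)\int_0^{O(n)}\bigl|\Psi_{ab}\Psi_{cd}(y)\bigr|\,dy=O\!\left(\tfrac{1}{n^2\log^2 n}\right)$ by the $L^1$‐integrability noted above. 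This settles \eqref{GNEstimates}.

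For the sharp evaluation \eqref{GNIntegral} I would substitute the full expansion of Proposition~\ref{FCancellation} together with $\frac{1}{\log^2\frac{2k}{s-1}}=\frac{1}{4\log^2 n}\bigl(1+O(\tfrac{1+|\log y|}{\log n})\bigr)$, the latter valid (with small relative error) on $y\le(\log n)^2$; on the complementary range $(\log n)^2\le y\le n/2+O(1)$ the factor $\Psi_{12}^2(y)=O(y^{1/2}e^{-4y^{1/2}})$ makes the contribution $O(n^{-A})$ for every $A$, hence negligible, and one may likewise extend the bulk integral to $\int_0^\infty$. After the change of variables the leading term is $-\frac{3\pi^2}{4\log^2 n}\cdot\frac{1}{2\pi i}\cdot\frac{2}{n^2}\int_0^\infty\Psi_{12}^2(y)\,dy$, while the remaining pieces — the $O(1/\log^3|1-s|)=O(1/\log^3 n)$ from Proposition~\ref{FCancellation}, the relative error $O(\tfrac{1+|\log y|}{\log n})$ weighted by $\Psi_{12}^2$, and the $O(y/n^2)$ Jacobian correction — are each $O\!\left(\tfrac{1}{n^2\log^3 n}\right)$ by the weighted integrability of $\Psi_{12}^2$. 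It remains to evaluate the constant: since $\Psi_{12}(y)=\frac{i}{\pi}K_0(2y^{1/2})$ (see \eqref{PsiDef}), the substitution $u=2y^{1/2}$ gives $\int_0^\infty\Psi_{12}^2(y)\,dy=-\frac{1}{\pi^2}\int_0^\infty K_0^2(2y^{1/2})\,dy=-\frac{1}{2\pi^2}\int_0^\infty u\,K_0^2(u)\,du=-\frac{1}{4\pi^2}$, using the classical value $\int_0^\infty u\,K_0(u)^2\,du=\tfrac12$ (a special case of the Mellin formula for $\int_0^\infty t^{s-1}K_\mu(t)K_\nu(t)\,dt$, cf.\ \cite{Stegun}). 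Plugging this in yields exactly $\frac{3}{16\pi i n^2\log^2 n}$.

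The step I expect to be the main obstacle is the sharp error control in the last paragraph: one must verify that the logarithmic‐expansion error $O(\tfrac{1+|\log y|}{\log n})$, after being weighted by $\Psi_{12}^2(y)$ and integrated, contributes $O(1/\log^3 n)$ rather than only $O(1/\log^2 n)$ — this hinges precisely on the integrability of $\Psi_{12}^2(y)|\log y|$ near $y=0$, where $\Psi_{12}^2=O(\log^2 y)$ — and that truncating the $y$‐range at $(\log n)^2$, beyond which the expansion $\log\frac{2k}{s-1}\approx 2\log n$ degrades, costs only an $O(n^{-A})$ amount. The Bessel‐integral evaluation of the constant is routine but must be carried out with care to land on the factor $3/16$.
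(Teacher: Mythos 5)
Your strategy is essentially the one the paper uses: invoke Proposition~\ref{FCancellation}, change variables to $y = n^2 f(s)$, use the $L^1(0,\infty)$ integrability of $\Psi_{12}^2$, $\Psi_{22}^2$, $\Psi_{12}\Psi_{22}$ for the crude bounds in \eqref{GNEstimates}, and then for \eqref{GNIntegral} expand $1/\log^2\frac{2k}{s-1}$ around $1/(4\log^2 n)$ with error controlled by the weighted integrability of $\Psi_{12}^2(y)\,|\log y|$, followed by the Bessel evaluation $\int_0^\infty v\,K_0^2(v)\,dv = 1/2$. The one packaging difference worth noting: the paper does not truncate at $y=(\log n)^2$; instead it writes the exact algebraic identity
\[
\frac{1}{\bigl(1 + \tfrac{\log(2k/t)}{2\log n}\bigr)^2} = 1 - \frac{1}{\log n}\cdot\frac{\log\frac{2k}{t} + \tfrac{1}{4\log n}\log^2\frac{2k}{t}}{1 + \tfrac{1}{\log n}\log\frac{2k}{t} + \tfrac{1}{4\log^2 n}\log^2\frac{2k}{t}}
\]
and observes that the denominator is a perfect square $\ge 1/4$ uniformly over the whole range $0\le t\le n$ (since $\log\frac{2k}{t} > -2\log n$ there); the error is then absorbed by $\Psi_{12}^2(y)\,(|\log y| + \log^2 y)\in L^1$, no truncation required. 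Both routes are sound. Two small corrections to your write-up: the phrase ``valid with small relative error on $y\le(\log n)^2$'' should be read only as an $O$-bound, not an asymptotic — near $y=0$ the relative error saturates at $O(1)$ while your bound $\frac{1+|\log y|}{\log n}$ becomes $\gg 1$, so the inequality still holds, but the ``small'' is misleading; and the tail contribution from $(\log n)^2\le y$ is $O(n^{-4}\log^2 n)$, not $O(n^{-A})$ for every $A$ (one would need a cutoff $A\log^2 n$ with $A$ large for that), though this is more than enough. You also correctly identify, as the paper does implicitly, that the whole argument rides on $\Psi_{12}^2(y)(1+|\log y|)$ being integrable near $y=0$ (there $\Psi_{12}^2 = O(\log^2 y)$, so the product is $O(|\log y|^3)$, integrable) and exponentially small at infinity.
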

\begin{proof}
Recall that by Proposition \ref{FCancellation}, as $s \to 1$,
\begin{gather*}
 \frac{F^2}{w_+}(s) + \frac{F^2}{w_-}(s) - 2 = -\frac{3\pi^2}{\log^2 \frac{2k}{1-s}} + O\left(\frac{1}{\log^3\vert 1- s \vert} \right).
\end{gather*}
In particular, note that for $1 \le s \le 1+1/n$, that
\begin{gather*}
\left\vert \frac{F^2}{w_+}(s) + \frac{F^2}{w_-}(s) - 2\right\vert = O\left(\frac{1}{\log^2 n}\right).
\end{gather*}
Therefore
\begin{gather*}
\int_1^{1+1/n} \left\vert \Psi_{12}^2 \big(n^2f(s)\big) \left(\frac{F^2}{w_+}(s) + \frac{F^2}{w_-}(s) - 2\right) \right\vert \vert {\rm d}s\vert\nonumber\\
\qquad{} \le \frac{c}{\log^2n} \int_1^{1+1/n}\! \big\vert \Psi_{12}^2 \big(n^2f(s)\big) \big\vert {\rm d}s
= \frac{c}{\log^2n} \int_0^{n^2f(1 + 1/n)}\! \big\vert \Psi_{12}^2 (y) \big\vert \left(\frac{1}{n^2} + O\big(n^{-3}\big)\right) {\rm d}y\nonumber\\
\qquad{} \le \frac{c}{n^2 \log^2n} \int_0^{n^2f(1 + 1/n)} \big\vert \Psi_{12}^2 (y) \big\vert {\rm d}y
\le \frac{c}{n^2 \log^2n} \int_0^\infty \big\vert \Psi_{12}^2 (y) \big\vert {\rm d}y
\le \frac{c}{n^2 \log^2n},
\end{gather*}
where we have used the fact that
\begin{gather*}
f(z) = \frac{z-1}{2} + O\big((z-1)^2\big),
\end{gather*}
and that $\Psi_{12}(y) \in L^2(0,\infty)$. Similar arguments show the other estimates in (\ref{GNEstimates}).

Paying more attention, we can make the calculation (\ref{GNIntegral}),
\begin{gather*}
 \int_1^{1+1/n}\Psi_{12}^2\big(n^2f(s)\big)\left(\frac{F^2}{w_+}(s) + \frac{F^2}{w_-}(s) - 2\right)\dbar s\nonumber\\
\qquad{} = \int_1^{1+1/n}\Psi_{12}^2\big(n^2f(s)\big)\left(-\frac{3\pi^2}{\log^2 (2k/\vert 1-s \vert)} + O\left(\frac{1}{\log^{3} \vert 1-s \vert}\right)\right) \dbar s.
\end{gather*}
Letting $s = 1 + \frac{t}{n^2}$,
\begin{gather}
 \int_1^{1+1/n}\Psi_{12}^2\big(n^2f(s)\big)\left(-\frac{3\pi^2}{\log^2 (2k/\vert 1-s \vert)} + O\left(\frac{1}{\log^{3} \vert 1-s \vert}\right)\right) \dbar s\nonumber\\
 = -\frac{3\pi^2}{n^2}\int_0^{n}\Psi_{12}^2\left(n^2f\left(1 + \frac{t}{n^2}\right)\right)\left(\frac{1}{\log^2 (2kn^2/ t)} + O\left(\frac{1}{\log^3 n}\right)\right) \dbar t\nonumber\\
 = -\frac{3\pi^2}{n^2}\int_0^{n}\Psi_{12}^2\left(n^2f\left(1 + \frac{t}{n^2}\right)\right)\left(\frac{1}{\left(2\log n + \log \frac{2k}{ t}\right)^2} + O\left(\frac{1}{\log n}\right)\right) \dbar t \nonumber\\
 = -\frac{3\pi^2}{4n^2\log^2n}\int_0^{n}\Psi_{12}^2\left(n^2f\left(1 + \frac{t}{n^2}\right)\right)\left(\frac{1}{\left(1 + \frac{\log \frac{2k}{ t}}{2\log n}\right)^2} + O\left(\frac{1}{\log n}\right)\right) \dbar t\nonumber\\
=-\frac{3\pi^2}{4n^2\log^2n}\int_0^{n}\Psi_{12}^2\left(n^2f\left(1 + \frac{t}{n^2}\right)\right)\left(\frac{1}{1 + \frac{\log \frac{2k}{t}}{\log n} + \frac{\log^2\frac{2k}{t }}{4\log^2 n}} + O\left(\frac{1}{\log n}\right)\right) \dbar t \label{TooComplicated}\\
=-\frac{3\pi^2}{4n^2\log^2n}\int_0^{n}\Psi_{12}^2\left(n^2f\left(1 + \frac{t}{n^2}\right)\right)\left(1 - \frac{1}{\log n}\frac{ \log \frac{2k}{t}+ \frac{\log^2\frac{2k}{t}}{4\log n}}{1 + \frac{\log \frac{2k}{t}}{\log n} + \frac{\log^2\frac{2k}{t }}{4\log^2n}} + O\left(\frac{1}{\log n}\right)\right) \dbar t.\nonumber
\end{gather}
Note that for $ 0 \le t \le n$, $- \infty < \log \frac{t}{2k} < \log n$, and
\begin{gather*}
\frac{\log \frac{2k}{t}}{2\log n} > -\frac{1}{2},
\end{gather*}
so
\begin{gather*}
1 + \frac{\log\frac{2k}{t}}{\log n} + \frac{\log^2 \frac{2k}{t}}{4 \log^2 n } = \left(1 + \frac{\log \frac{2k}{t }}{2\log n}\right)^2 \ge \frac{1}{4},
\end{gather*}
and therefore
\begin{gather*}
\frac{3\pi^2}{4n^2\log^2n}\int_0^n \left\vert\Psi_{12}^2\left(n^2f\left(1 + \frac{t}{n^2}\right)\right)\left(\frac{1}{\log n}\frac{ \log \frac{2k}{t}+ \frac{\log^2\frac{2k}{t }}{4\log n}}{1 + \frac{\log \frac{2k}{t}}{\log n} + \frac{\log^2\frac{2k}{t }}{4\log^2n}} + O\left(\frac{1}{\log n}\right)\right)\right\vert {\rm d}t\nonumber\\
\qquad{} \le \frac{c}{n^2\log^3n}\int_0^n \left\vert \Psi_{12}^2\left(n^2f\left(1 + \frac{t}{n^2}\right)\right)\right\vert \left(4\left\vert \log \frac{2k}{t}\right\vert + 4\left\vert \log^2 \frac{2k}{t}\right\vert\right) {\rm d}t.
\end{gather*}
Letting $y = n^2 f(1 + t/n^2)$, then ${\rm d}y = -f'(t/n^2) {\rm d}t = -\big(\frac{1}{2} + O\big(\frac{1}{n}\big)\big){\rm d}t$ and $t = 2y\big(1 + O\big(\frac{1}{n}\big)\big)$, so
\begin{gather}
 \frac{c}{n^2\log^3n}\int_0^n \left\vert \Psi_{12}^2\left(n^2f\left(1 + \frac{t}{n^2}\right)\right)\right\vert \left(4\left\vert \log \frac{2k}{t}\right\vert + 4\left\vert \log^2 \frac{2k}{t}\right\vert\right) {\rm d}t \nonumber\\
\qquad{} =\frac{c}{n^2\log^3n}\int_0^{n^2f(1 + 1/n)} \left\vert \Psi_{12}^2(y)\right\vert \left(\left\vert \log \frac{k}{y}\right\vert + \left\vert \log^2 \frac{k}{y}\right\vert + O\left(\frac{1}{n} + \frac{\log y}{n}\right)\right) {\rm d}y \nonumber\\
\qquad{} \le\frac{c}{n^2\log^3n}\int_0^\infty \left\vert \Psi_{12}^2(y)\right\vert \left(\left\vert \log y\right\vert + \left\vert \log^2y \right\vert\right) {\rm d}y \le \frac{c}{n^2\log^3n},\label{Integrability}
\end{gather}
where the integrability of (\ref{Integrability}) follows from the asymptotics of $\Psi_{12}(y)$ in (\ref{PsiAsymptotics}). Thus
\begin{gather*}
\frac{3\pi^2}{4n^2\log^2n}\int_0^n \left\vert\Psi_{12}^2\left(n^2f\left(1 + \frac{t}{n^2}\right)\right)\left(\frac{1}{\log n}\frac{ \log \frac{2k}{\vert t\vert}+ \frac{\log^2\frac{2k}{\vert t \vert }}{4\log n}}{1 + \frac{\log \frac{2k}{\vert t\vert}}{\log n} + \frac{\log^2\frac{2k}{\vert t \vert }}{4\log^2n}} + O\left(\frac{1}{\log n}\right)\right)\right\vert {\rm d}t\nonumber\\
\qquad{} = O\left(\frac{1}{n^2\log^3n}\right).
\end{gather*}
Therefore, picking up from (\ref{TooComplicated}),
\begin{gather*}
-\frac{3\pi^2}{4n^2\log^2n}\int_0^{n}\Psi_{12}^2\left(n^2f\left(1 + \frac{t}{n^2}\right)\right)\left(1 - \frac{1}{\log n}\frac{ \log \frac{2k}{\vert t\vert}+ \frac{\log^2\frac{2k}{\vert t \vert }}{4\log n}}{1 + \frac{\log \frac{2k}{\vert t\vert}}{\log n} + \frac{\log^2\frac{2k}{\vert t \vert }}{4\log^2n}} + O\left(\frac{1}{\log n}\right)\right) \dbar t \nonumber\\
\qquad{}=-\frac{3\pi^2}{4n^2\log^2n}\int_0^{n}\Psi_{12}^2\left(n^2f\left(1 + \frac{t}{n^2}\right)\right) \dbar t + O\left(\frac{1}{n^2\log^3n}\right)\nonumber\\
\qquad{} =-\frac{3\pi^2}{4n^2\log^2n}\int_0^{n}\left(-\frac{1}{\pi^2}\right)K_0^2\left(2nf^{1/2}\left(1 + \frac{t}{n^2}\right)\right) \dbar t + O\left(\frac{1}{n^2\log^3n}\right)\nonumber\\
\qquad{}= \frac{3}{4n^2\log^2 n} \int_0^n K_0^2\left(2nf^{1/2}\left(1 + \frac{t}{n^2}\right)\right) \dbar t + O\left(\frac{1}{n^2\log^3n}\right).
\end{gather*}
Making the change of variables $y = n^2f\big(1 + \frac{t}{n^2}\big)$ and noting that this implies ${\rm d}t = \big(2 + O\big(\frac{1}{n}\big)\big) {\rm d}y$,
\begin{gather*}
\frac{3}{4n^2\log^2 n} \int_0^n K_0^2\left(2nf^{1/2}\left(1 + \frac{t}{n^2}\right)\right) \dbar t + O\left(\frac{1}{n^2\log^3n}\right)\nonumber\\
\qquad{} = \frac{3}{2n^2\log^2 n} \int_0^{n^2f(1 + 1/n)} K_0^2\big(2y^{1/2}\big) \dbar y + O\left(\frac{1}{n^2\log^3n}\right).
\end{gather*}
$K_0(y)$ is exponentially decaying for $y >> 0$, see \cite[formula~(9.7.2)]{Stegun}. Therefore
\begin{gather*}
 \frac{3}{2n^2\log^2 n} \int_0^{n^2f(1 + 1/n)} K_0^2\big(2y^{1/2}\big) \dbar y + O\left(\frac{1}{n^2\log^3n}\right)\nonumber\\
\qquad{} = \frac{3}{2n^2\log^2 n} \int_0^{\infty} K_0^2\big(2y^{1/2}\big) \dbar y + O\left(\frac{1}{n^2\log^3n}\right) \nonumber\\
\qquad{}= \frac{3}{n^2\log^2 n} \int_0^{\infty} K_0^2\left(2u\right) u\dbar u + O\left(\frac{1}{n^2\log^3n}\right) \nonumber\\
\qquad{} = \frac{3}{2\pi i} \frac{1}{8 n^2\log^2 n} + O\left(\frac{1}{n^2\log^3n}\right)
 = \frac{3}{16 \pi i n^2 \log^2 n} + O\left(\frac{1}{n^2\log^3n}\right),
\end{gather*}
where we have used the fact that $\int_0^{\infty} K_0^2(v) v {\rm d}v = \frac{1}{2}$, see~\cite{Ryzhik}. We have proved~(\ref{GNIntegral}).
\end{proof}

Additionally, we have the following matrix calculation:
\begin{prop}
\begin{gather}
E(1) = \frac{1}{\sqrt{2}}\left(\begin{matrix}
1 & -i\\
-i & 1
\end{matrix}\right).\label{E1}
\end{gather}
\end{prop}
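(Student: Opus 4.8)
The plan is to read $E(1)$ as the limit $\lim_{z\to 1}E(z)$, which exists since $E$ is analytic and bounded on $B$, and to evaluate it by first collapsing the product $N(z)\cdot\tfrac{1}{\sqrt 2}\left(\begin{matrix}1 & -i\\ -i & 1\end{matrix}\right)$ into a single matrix whose entries involve only $a(z)^{\pm1}$, and then pairing those against the remaining factor $f(z)^{\sigma_3/4}$. The whole statement is a short computation; the only thing to notice is that $N$ and $f^{\sigma_3/4}$ are each individually singular at $z=1$ while their combination is not.

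First I would carry out the matrix algebra. Writing $p=\frac{a+a^{-1}}{2}$ and $q=\frac{a-a^{-1}}{2i}$, so that $N=\left(\begin{matrix}p & q\\ -q & p\end{matrix}\right)$, one has
\begin{gather*}
N\cdot\frac{1}{\sqrt2}\left(\begin{matrix}1 & -i\\ -i & 1\end{matrix}\right)=\frac{1}{\sqrt2}\left(\begin{matrix}p-iq & q-ip\\ -q-ip & p+iq\end{matrix}\right),
\end{gather*}
and, using $iq=\frac{a-a^{-1}}{2}$, the four entries simplify to $p-iq=a^{-1}$, $p+iq=a$, $q-ip=-ia$, $-q-ip=-ia^{-1}$. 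Hence the product equals $\frac{1}{\sqrt2}\left(\begin{matrix}a^{-1} & -ia\\ -ia^{-1} & a\end{matrix}\right)$, and multiplying on the right by $f(z)^{\sigma_3/4}$ (the diagonal matrix with entries $f^{1/4}$ and $f^{-1/4}$) gives the compact formula
\begin{gather*}
E(z)=\frac{1}{\sqrt2}\left(\begin{matrix}a^{-1}f^{1/4} & -iaf^{-1/4}\\ -ia^{-1}f^{1/4} & af^{-1/4}\end{matrix}\right).
\end{gather*}

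Next I would pass to the limit $z\to 1$. Although $a(z)\to 0$ and $f(z)^{1/4}\to 0$ separately, the relevant combinations are regular: from $a(z)^4=\frac{z-1}{z+1}$ and $f(z)=\frac12(z-1)+O(|z-1|^2)$ (property \eqref{fFact}),
\begin{gather*}
a(z)^{-1}f(z)^{1/4}=\left(\frac{(z+1)f(z)}{z-1}\right)^{1/4}=\left(\frac{z+1}{2}+O(z-1)\right)^{1/4}\longrightarrow 1\qquad\text{as }z\to 1,
\end{gather*}
so that $af^{-1/4}=\big(a^{-1}f^{1/4}\big)^{-1}\to 1$ as well, while the off-diagonal entries tend to $-i$. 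Since the function $g(z):=\frac{(z+1)f(z)}{z-1}$ is analytic and nonvanishing near $z=1$ with $g(1)=1$, the entries $a^{\pm1}f^{\mp1/4}=g(z)^{\pm1/4}$ are actually analytic on a full neighborhood of $1$, so the limit is unambiguous (this also re-derives the analyticity and boundedness of $E$ near $1$ asserted in the text). Therefore $E(1)=\frac{1}{\sqrt2}\left(\begin{matrix}1 & -i\\ -i & 1\end{matrix}\right)$.

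I do not expect a genuine obstacle here. The only substantive point is the algebraic identity $N\cdot\frac{1}{\sqrt2}\left(\begin{matrix}1 & -i\\ -i & 1\end{matrix}\right)=\frac{1}{\sqrt2}\left(\begin{matrix}a^{-1} & -ia\\ -ia^{-1} & a\end{matrix}\right)$, which exhibits the cancellation of the apparent singularities of $N$ at $z=1$; once that is in hand, combining it with $a(z)\sim f(z)^{1/4}$ makes the limit immediate, and everything else is routine bookkeeping.
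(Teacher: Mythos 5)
Your proof is correct and follows essentially the same route as the paper: both expand $E(z)$ into the intermediate form $\frac{1}{\sqrt2}\left(\begin{smallmatrix}a^{-1}f^{1/4} & -iaf^{-1/4}\\ -ia^{-1}f^{1/4} & af^{-1/4}\end{smallmatrix}\right)$ and then observe from the local expansion of $f$ that $af^{-1/4}\to 1$ as $z\to 1$. Your only departures are cosmetic (grouping the product $N\cdot\tfrac{1}{\sqrt2}\left(\begin{smallmatrix}1&-i\\-i&1\end{smallmatrix}\right)$ before attaching $f^{\sigma_3/4}$, and remarking explicitly that $a^{\pm1}f^{\mp1/4}=g(z)^{\pm1/4}$ for an analytic nonvanishing $g$), which slightly streamlines the limit argument but does not change the substance.
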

\begin{proof}
\begin{gather*}
E(z) =\left(\begin{matrix}
\dfrac{a + a^{-1}}{2} & \dfrac{a - a^{-1}}{2i} \\
\dfrac{a - a^{-1}}{-2i} & \dfrac{a + a^{-1}}{2}
\end{matrix}\right)\frac{1}{\sqrt{2}} \left(\begin{matrix} 1 & -i \\ -i & 1 \end{matrix}\right) f^{\sigma_3/4} \nonumber\\
\hphantom{E(z)}{} =\frac{1}{\sqrt{2}} \left(\begin{matrix}
\dfrac{a + a^{-1}}{2} & \dfrac{a - a^{-1}}{2i} \\
\dfrac{a - a^{-1}}{-2i} & \dfrac{a + a^{-1}}{2}
\end{matrix}\right)
\left(\begin{matrix}
f^{1/4}(z) & -i f^{-1/4}(z) \\
-f^{1/4}(z) & f^{-1/4}(z)
\end{matrix}\right) \\
\hphantom{E(z)}{} = \frac{1}{\sqrt{2}}\left(\begin{matrix}
a^{-1} f^{1/4}(z) & -iaf^{-1/4}(z) \\
-i a^{-1} f^{1/4}(z) & af^{-1/4}(z)
\end{matrix}\right).
\end{gather*}
Now
\begin{gather*}
af^{-1/4} = \frac{(z-1)^{1/4}}{(z+1)^{1/4}} \frac{1}{\big(\frac{1}{2} (z-1) + O\big((z-1)^2\big)\big)^{1/4}} = 1 + O(z-1).
\end{gather*}
So $af^{-1/4}(1) = 1$.

Therefore, we see that
\begin{gather*}
E(1) = \frac{1}{\sqrt{2}}\left(\begin{matrix}
1 & -i\\
-i & 1
\end{matrix}\right)
\end{gather*}
as desired.
\end{proof}

\subsection{The first integral}
\begin{prop}\label{EIntegralPropApp}
\begin{gather}
\int_\Sigma \tilde{\mu}(z) (v_\Sigma(z) - \tilde{v}_\Sigma(z)) \tilde{\mu}^{-1}(z) \dbar z = \frac{3}{16 n\log^2n} \left(\begin{matrix}
1 & -i\\
-i& -1
\end{matrix}\right) + O\left(\frac{1}{n\log^3n}\right).\label{EIntegralApp}
\end{gather}
\end{prop}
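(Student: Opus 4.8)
The plan is to reduce the integral over $\Sigma$ to an integral over the piece of the contour inside the ball $U_{1/n}(1)$, where the only non-negligible contribution comes from, and then to compute that piece explicitly using the Bessel-function parametrix $\Psi$ together with the cancellation estimate of Proposition~\ref{FCancellation}. First I would recall that $v_\Sigma - \tilde v_\Sigma$ vanishes identically on $[-1,1]$ and is strictly lower triangular on $\Sigma_1\cup\Sigma_2\cup[1,1+\delta]$, with lower-left entry $\big(\tfrac{F^2}{w}-1\big)\phi^{-2n}$ away from the singularity and $\big(\tfrac{F^2}{w_+}+\tfrac{F^2}{w_-}-2\big)\phi^{-2n}$ on $[1,1+\delta]$. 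Using the $L^\infty$ bounds $\|\phi^{-2n}\|_{L^\infty(\Sigma\setminus([-1,1]\cup U_{1/n}(1)\cup U_{1/n}(-1)))}\le c_1 e^{-c_2 n^{1/2}}$ from (\ref{PhiBound}) together with the bound $\|\tilde\mu_2\|_{L^\infty}=O(n)$ from (\ref{MuInfty}), and the bound $\big|\tfrac{F^2}{w}-1\big|=O(n^{-1/2})$ near $-1$, exactly the same estimates that gave $I_2,I_3$ in the proof of Proposition~\ref{NormsApp} show that the contributions of $\Sigma\setminus([-1,1]\cup U_{1/n}(1))$ are $O(e^{-cn^{1/2}})+O(1/n^2)$, hence absorbed into the error term. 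So the entire leading behavior comes from $\int_1^{1+1/n}\tilde\mu(s)(v_\Sigma(s)-\tilde v_\Sigma(s))\tilde\mu^{-1}(s)\,\dbar s$.

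On this remaining segment I would substitute the parametrix expansion (\ref{TildeMuExpansion}), namely $\tilde\mu(z)=R^{(n)}(z)E(z)(2\pi n)^{\sigma_3/2}\Psi(n^2f(z))\phi^{-n\sigma_3}(z)$, and its inverse. Writing the lower-left scalar entry of $v_\Sigma-\tilde v_\Sigma$ as $g(s):=\big(\tfrac{F^2}{w_+}(s)+\tfrac{F^2}{w_-}(s)-2\big)\phi^{-2n}(s)$, the integrand becomes
\begin{gather*}
g(s)\,R^{(n)}(z)E(z)(2\pi n)^{\sigma_3/2}\Psi(n^2f(z))\phi^{-n\sigma_3}(z)\,\sigma_-\,\phi^{n\sigma_3}(z)\Psi^{-1}(n^2f(z))(2\pi n)^{-\sigma_3/2}E^{-1}(z)\big(R^{(n)}(z)\big)^{-1},
\end{gather*}
where $\sigma_-=\left(\begin{smallmatrix}0&0\\1&0\end{smallmatrix}\right)$. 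The conjugation $\phi^{-n\sigma_3}\sigma_-\phi^{n\sigma_3}=\phi^{-2n}\sigma_-$ produces the extra $\phi^{-2n}$; combined with the $\phi^{-2n}$ already inside $g$, it is tempting but wrong to double-count, so I would be careful: on $[1,1+\delta]$ the jump $v_\Sigma^{-1}\tilde v_\Sigma$ analysis (cf.\ (\ref{RefComment2}) and the Remark after Proposition~\ref{QQProp}) shows the correct conjugated object only picks up one factor $\phi^{-2n}$ altogether, matching the computation in Proposition~\ref{NormsApp}. Then $(2\pi n)^{\sigma_3/2}\Psi\sigma_-\Psi^{-1}(2\pi n)^{-\sigma_3/2}$ expands, using $\det\Psi$ constant, into entries built from $\Psi_{12},\Psi_{22}$, specifically $\left(\begin{smallmatrix}\Psi_{12}\Psi_{22}&-\Psi_{12}^2\\ \Psi_{22}^2&-\Psi_{12}\Psi_{22}\end{smallmatrix}\right)$ up to the $(2\pi n)$ scalings, and replacing $R^{(n)}(z)=I+O(1/n)$, $E(z)=E(1)+O(1/n)$ reduces everything to scalar integrals of $\Psi_{12}^2 g$, $\Psi_{22}^2 g$, $\Psi_{12}\Psi_{22}g$ over $[1,1+1/n]$, conjugated by the constant matrix $E(1)=\tfrac{1}{\sqrt2}\left(\begin{smallmatrix}1&-i\\-i&1\end{smallmatrix}\right)$ from (\ref{E1}).

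Next I would invoke the scalar estimates (\ref{GNEstimates})--(\ref{GNIntegral}): the three integrals $\int_1^{1+1/n}\Psi_{12}^2 g$, $\int_1^{1+1/n}\Psi_{22}^2 g$, $\int_1^{1+1/n}\Psi_{12}\Psi_{22}g$ are all $O(1/(n^2\log^2 n))$, but after the correct power-of-$n$ bookkeeping from $(2\pi n)^{\pm\sigma_3/2}$ the surviving term is the $(1,2)$-type contribution carrying $\Psi_{12}^2$, which by (\ref{GNIntegral}) equals $\frac{3}{16\pi i n^2\log^2 n}+O(1/(n^2\log^3 n))$; the $n^2$ is promoted to $n$ by the $(2\pi n)$ factor, giving order $1/(n\log^2 n)$. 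Multiplying by $2\pi i$ (from $\dbar z=\mathrm{d}z/(2\pi i)$, already built into the statement of (\ref{GNIntegral})) and conjugating $\left(\begin{smallmatrix}0&-1\\0&0\end{smallmatrix}\right)$-type matrix by $E(1)$ gives $E(1)\left(\begin{smallmatrix}0&-1\\0&0\end{smallmatrix}\right)E(1)^{-1}=\tfrac12\left(\begin{smallmatrix}1&-i\\-i&1\end{smallmatrix}\right)\left(\begin{smallmatrix}0&-1\\0&0\end{smallmatrix}\right)\left(\begin{smallmatrix}1&i\\i&1\end{smallmatrix}\right)=\tfrac12\left(\begin{smallmatrix}-i&-1\\1&-i\end{smallmatrix}\right)\cdot$, and tracking constants yields the matrix $\left(\begin{smallmatrix}1&-i\\-i&-1\end{smallmatrix}\right)$ with coefficient $\frac{3}{16 n\log^2 n}$, as claimed.

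I expect the main obstacle to be precisely the bookkeeping in the previous paragraph: getting the powers of $(2\pi n)$, the factors of $\phi^{-2n}$, and the signs in $\Psi\sigma_-\Psi^{-1}$ exactly right so that the $O(1/(n^2\log^2 n))$ scalar integral is correctly promoted to the stated $O(1/(n\log^2 n))$ matrix answer with the right constant $3/16$ and the right matrix $\left(\begin{smallmatrix}1&-i\\-i&-1\end{smallmatrix}\right)$. A secondary technical point is justifying the uniform replacement $R^{(n)}=I+O(1/n)$ and $E(z)=E(1)+O(|z-1|)$ inside the integral: one must check that the resulting error, when paired against $\Psi_{12}^2 g$ etc.\ and integrated, is genuinely $O(1/(n\log^3 n))$, which follows from the $L^2$-integrability of $\Psi_{12},\Psi_{22}$ and of $\log y\cdot\Psi_{12}$, $\log^2 y\cdot\Psi_{12}$ established in (\ref{Integrability}) together with $\big|\tfrac{F^2}{w_+}+\tfrac{F^2}{w_-}-2\big|=O(1/\log^2 n)$ on the segment. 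Once these are in hand, assembling the pieces is routine.
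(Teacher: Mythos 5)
Your overall strategy is the paper's: split $\Sigma$ into the ball around $1$, the ball around $-1$, and the remainder; show the last two contribute only $O(n^{-3/2})$ and $O(e^{-cn^{1/2}})$ by the same estimates as in Proposition~\ref{NormsApp}; then on $[1,1+1/n]$ substitute the Bessel parametrix (\ref{TildeMuExpansion}), reduce to $\Psi\sigma_-\Psi^{-1}$ conjugated by $E(1)$ up to $O(1/n)$, and feed in (\ref{GNIntegral}) and (\ref{E1}). That is exactly how the paper proceeds.

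However, your $\phi$-bookkeeping is wrong in a way that would change the answer, not just a typo. The similarity produces $\phi^{+2n}$, not $\phi^{-2n}$:
\begin{gather*}
\phi^{-n\sigma_3}\left(\begin{matrix}0&0\\1&0\end{matrix}\right)\phi^{n\sigma_3}
=\left(\begin{matrix}\phi^{-n}&0\\0&\phi^{n}\end{matrix}\right)\left(\begin{matrix}0&0\\1&0\end{matrix}\right)\left(\begin{matrix}\phi^{n}&0\\0&\phi^{-n}\end{matrix}\right)
=\left(\begin{matrix}0&0\\\phi^{2n}&0\end{matrix}\right),
\end{gather*}
and this $\phi^{2n}$ cancels the $\phi^{-2n}$ sitting inside $v_\Sigma-\tilde v_\Sigma$ on $[1,1+\delta]$, leaving \emph{no} surviving power of $\phi$ in $\tilde\mu(v_\Sigma-\tilde v_\Sigma)\tilde\mu^{-1}$. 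That exact cancellation is precisely why the integral is of order $1/(n\log^2 n)$ rather than exponentially small. Your conclusion that "the correct conjugated object only picks up one factor $\phi^{-2n}$ altogether" would introduce an extra $e^{-c\sqrt{n}}$ on $[1,1+1/n]$ and kill the leading term. The appeal to Proposition~\ref{NormsApp} does not rescue this: there the object is $\tilde\mu(v_\Sigma-\tilde v_\Sigma)$, a one-sided multiplication, so a single $\phi^{-n}$ survives and is merely bounded by~1; under the full similarity the $\phi$'s cancel exactly.

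Two further sign issues are not carried through. The subarc $[1,1+\delta]$ of $\Sigma$ is oriented from $1+\delta$ towards $1$, so $\int_{\Sigma\cap U_{1/n}(1)}=-\int_1^{1+1/n}$; that minus sign is essential and is never invoked. And your matrix conjugation is off: with $E(1)=\tfrac{1}{\sqrt2}\left(\begin{smallmatrix}1&-i\\-i&1\end{smallmatrix}\right)$, one has $E(1)\left(\begin{smallmatrix}0&-1\\0&0\end{smallmatrix}\right)E(1)^{-1}=\tfrac12\left(\begin{smallmatrix}-i&-1\\-1&i\end{smallmatrix}\right)$, not $\tfrac12\left(\begin{smallmatrix}-i&-1\\1&-i\end{smallmatrix}\right)$. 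Tracked literally, your constants give $\frac{3}{16n\log^2 n}\left(\begin{smallmatrix}-1&i\\-i&-1\end{smallmatrix}\right)$, not the stated $\frac{3}{16n\log^2 n}\left(\begin{smallmatrix}1&-i\\-i&-1\end{smallmatrix}\right)$. You correctly identify the bookkeeping of $\phi^{-2n}$, $(2\pi n)^{\pm\sigma_3/2}$, and the signs as the main hurdle, but the proposal does not actually clear it; the final constant and matrix are asserted, not derived, and the steps as written would not reproduce them.
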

\begin{proof}
As in the proof of Proposition \ref{NormsApp}, we have
\begin{gather*}
 \int_\Sigma \tilde{\mu}(z) (v_\Sigma(z) - \tilde{v}_\Sigma(z)) \tilde{\mu}^{-1}(z) \dbar z \\
\qquad{} = \left(\int_{\Sigma \cap U_{1/n}(1)} + \int_{\Sigma \cap U_{1/n}(-1)} + \int_{\Sigma \backslash (U_{1/n}(1)\cup U_{1/n}(-1))} \right) \tilde{\mu}(z) (v_\Sigma(z) - \tilde{v}_\Sigma(z)) \tilde{\mu}^{-1}(z) \dbar z \\
\qquad{} = I_1 + I_2 + I_3.
\end{gather*}
The same argument as Proposition \ref{NormsApp}, gives
\begin{gather*}
I_3 = O\big(e^{-cn^{1/2}}\big).
\end{gather*}
For $I_1$
\begin{gather}
I_1 = -\int_1^{1 + \frac{1}{n}} \tilde{\mu}(z) (v_\Sigma(z) - \tilde{v}_\Sigma(z)) \tilde{\mu}^{-1}(z) \dbar z\nonumber\\
\hphantom{I_1}{} = -\int_1^{1 + \frac{1}{n}} \left(1 + O\left(\frac{1}{n}\right)\right)E(z) (2\pi n)^{\sigma_3/2} \Psi(n^2 f(z)) \phi^{-n\sigma_3}(z)\nonumber \\
\hphantom{I_1=}{}\times \left( \begin{matrix}
0 & 0 \\
\left(\frac{F^2}{w_+}(z) + \frac{F^2}{w_-}(z) - 2\right) \phi^{-2n}(z) & 0
\end{matrix}\right) \nonumber\\
\hphantom{I_1=}{} \times \phi^{n\sigma_3}(z)\Psi^{-1}\big(n^2f(z)\big) (2\pi n)^{-\sigma_3/2} E^{-1}(z)\left(1 + O\left(\frac{1}{n}\right)\right) \dbar z \nonumber\\
\hphantom{I_1}{}= -\int_1^{1 + \frac{1}{n}} \left(1 + O\left(\frac{1}{n}\right)\right)E(z) (2\pi n)^{\sigma_3/2}
 \Psi(n^2 f(z)) \left( \begin{matrix}
0 & 0 \\
1 & 0
\end{matrix}\right) \nonumber\\
\hphantom{I_1=}{}\times \Psi^{-1}\big(n^2f(z)\big)(2\pi n)^{-\sigma_3/2} E^{-1}(z)\left(1 + O\left(\frac{1}{n}\right)\right) \left(\frac{F^2}{w_+}(z) + \frac{F^2}{w_-}(z) - 2\right) \dbar z \nonumber\\
\hphantom{I_1}{}= -\int_1^{1 + \frac{1}{n}}\left(1 + O\left(\frac{1}{n}\right)\right)E(z) (2\pi n)^{\sigma_3/2}\left( \begin{matrix}
\Psi_{12}\Psi_{22} \big(n^2f(z)\big) & -\Psi_{12}^2\big(n^2f(z)\big) \\
\Psi_{22}^2\big(n^2f(z)\big) & -\Psi_{12}\Psi_{22}\big(n^2f(z)\big)
\end{matrix}\right) \nonumber\\
\hphantom{I_1=}{}\times (2\pi n)^{-\sigma_3/2} E^{-1}(z)\left(1 + O\left(\frac{1}{n}\right)\right) \left[ \left(\frac{F^2}{w_+}(z) + \frac{F^2}{w_-}(z) - 2 \right)\right] \dbar z\nonumber\\
\hphantom{I_1}{} = -2\pi n\int_1^{1 + \frac{1}{n}}\left(1 + O\left(\frac{1}{n}\right)\right)E(z) \left(\begin{matrix}
1 & 0 \\
0 & \dfrac{1}{2\pi n}
\end{matrix}\right)\left( \begin{matrix}
\Psi_{12}\Psi_{22} \big(n^2f(z)\big) & -\Psi_{12}^2\big(n^2f(z)\big) \\
\Psi_{22}^2\big(n^2f(z)\big) & -\Psi_{12}\Psi_{22}\big(n^2f(z)\big)
\end{matrix}\right) \nonumber\\
\hphantom{I_1=}{} \times \left(\begin{matrix}
\dfrac{1}{2\pi n} & 0 \\
0 & 1
\end{matrix}\right) E^{-1}(z)\left(1 + O\left(\frac{1}{n}\right)\right) \left[ \left(\frac{F^2}{w_+}(z) + \frac{F^2}{w_-}(z) - 2 \right)\right] \dbar z. \label{PlugInE0}
\end{gather}
Now, note that by the $n$-independence and analyticity of the functions $E(z)$, $E^{-1}(z)$, for $z \in (1, 1+1/n)$,
\begin{gather*}
E(z) = E(1) + O\left(\frac{1}{n}\right), \qquad E^{-1}(z) = E^{-1}(1) +O\left(\frac{1}{n}\right),
\end{gather*}
and clearly
\begin{gather*}
 \left(\begin{matrix}
1 & 0 \\
0 & \frac{1}{2\pi n}
\end{matrix}\right) = \left(\begin{matrix}1 & 0 \\ 0 & 0\end{matrix}\right) + O\left(\frac{1}{n}\right).
\end{gather*}
Therefore, subtracting term by term in (\ref{PlugInE0}), (\ref{PlugInE0}) becomes
\begin{gather}
 -2 \pi n\int_1^{1 + \frac{1}{n}} E(1) \left(\begin{matrix}1 & 0 \\ 0 & 0\end{matrix}\right) \left( \begin{matrix}
\Psi_{12}\Psi_{22} \big(n^2f(z)\big) & -\Psi_{12}^2\big(n^2f(z)\big) \\
\Psi_{22}^2\big(n^2f(z)\big) & -\Psi_{12}\Psi_{22}\big(n^2f(z)\big)
\end{matrix}\right) \left(\begin{matrix} 0& 0 \\ 0 & 1\end{matrix}\right) E^{-1}(1)\nonumber\\
\qquad {} \times \left[ \left(\frac{F^2}{w_+}(z) + \frac{F^2}{w_-}(z) - 2 \right) \dbar z\right] + 2\pi n\mathcal{E},\label{TossAway}
\end{gather}
where
\begin{gather*}
\mathcal{E}= O\!\left(\frac{1}{n}\right) O\!\left\Vert \!\left( \begin{matrix}
\Psi_{12}\Psi_{22} \big(n^2f(z)\big)\!\! & -\Psi_{12}^2\big(n^2f(z)\big) \\
\Psi_{22}^2\big(n^2f(z)\big)\!\! & -\Psi_{12}\Psi_{22}\big(n^2f(z)\big)
\end{matrix}\right)\! \left(\frac{F^2}{w_+}(z) + \frac{F^2}{w_-}(z) - 2 \right)\! \right\Vert_{L^1(1, 1+1/n)},
\end{gather*}
which, by (\ref{GNEstimates}), yields
\begin{gather*}
\mathcal{E} = O\left(\frac{1}{n^{3}\log^2 n}\right).
\end{gather*}
So (\ref{TossAway}) becomes
\begin{gather*}
 -2 \pi n\int_1^{1 + \frac{1}{n}} E(1) \left(\begin{matrix}1 & 0 \\ 0 & 0\end{matrix}\right) \left( \begin{matrix}
\Psi_{12}\Psi_{22} \big(n^2f(z)\big) & -\Psi_{12}^2\big(n^2f(z)\big) \\
\Psi_{22}^2\big(n^2f(z)\big) & -\Psi_{12}\Psi_{22}\big(n^2f(z)\big)
\end{matrix}\right) \left(\begin{matrix} 0& 0 \\ 0 & 1\end{matrix}\right) \nonumber\\
\qquad \quad{} \times E^{-1}(1)\left[ \left(\frac{F^2}{w_+}(z) + \frac{F^2}{w_-}(z) - 2 \right) \dbar z\right] + O\left(\frac{1}{n^{2}\log^2n}\right) \nonumber\\
\qquad{} = -2 \pi n\int_1^{1 + \frac{1}{n}} E(1)\left( \begin{matrix}
0 & -\Psi_{12}^2\big(n^2f(z)\big) \\
0 &0
\end{matrix}\right) E^{-1}(1) \left[ \left(\frac{F^2}{w_+}(z) + \frac{F^2}{w_-}(z) - 2 \right) \dbar z\right] \nonumber\\
\qquad \quad{} + O\left(\frac{1}{n^{2}\log^2n}\right)\nonumber \\
 \qquad{} = 2 \pi n\int_1^{1 + \frac{1}{n}} E(1)\left( \begin{matrix}
0 &1\\
0 &0
\end{matrix}\right) E^{-1}(1) \left[ \Psi_{12}^2\big(n^2f(z)\big) \left(\frac{F^2}{w_+}(z) + \frac{F^2}{w_-}(z) - 2 \right) \dbar z\right] \nonumber\\
 \qquad\quad{} + O\left(\frac{1}{n^{2}\log^2n}\right)\nonumber \\
 \qquad{} = 2 \pi nE(1)\left( \begin{matrix}
0 &1\\
0 &0
\end{matrix}\right) E^{-1}(1)\int_1^{1 + \frac{1}{n}} \Psi_{12}^2\big(n^2f(z)\big) \left(\frac{F^2}{w_+}(z) + \frac{F^2}{w_-}(z) - 2 \right) \dbar z\nonumber\\
 \qquad\quad{} + O\left(\frac{1}{n^{2}\log^2n}\right),
\end{gather*}
which, by (\ref{GNIntegral}), is
\begin{gather*}
 2 \pi nE(1)\left( \begin{matrix}
0 &1\\
0 &0
\end{matrix}\right) E^{-1}(1) \left( \frac{3}{16 \pi in^2 \log^2 n} + O\left(\frac{1}{n^2\log^3n}\right)\right) \nonumber\\
\qquad{} =\frac{3}{8 i n\log^2n} E(1)\left( \begin{matrix}
0 &1\\
0 &0
\end{matrix}\right) E^{-1}(1) + O\left(\frac{1}{n\log^3n}\right).\label{JustTheMatrixPart}
\end{gather*}
Substituting in (\ref{E1}),
\begin{gather}
\frac{3}{8 i n\log^2n}E(1)\left( \begin{matrix}
0 &1\\
0 &0
\end{matrix}\right) E^{-1}(1) + O\left(\frac{1}{n\log^3n}\right) \nonumber\\
\qquad{} =\frac{3}{8 i n\log^2n} \frac{1}{\sqrt{2}}\left(\begin{matrix}
1 & -i\\
-i & 1
\end{matrix}\right)\left( \begin{matrix}
0 &1\\
0 &0
\end{matrix}\right) \frac{1}{\sqrt{2}}\left(\begin{matrix}
1 & i\\
i & 1
\end{matrix}\right) + O\left(\frac{1}{n\log^3n}\right) \label{I1E}\\
\qquad{} =\frac{3}{16 i n\log^2n} \left(\begin{matrix}
i & 1\\
1& -i
\end{matrix}\right) + O\left(\frac{1}{n\log^3n}\right)
=\frac{3}{16 n\log^2n} \left(\begin{matrix}
1 & -i\\
-i& -1
\end{matrix}\right) + O\left(\frac{1}{n\log^3n}\right),\nonumber
\end{gather}
which is the right-hand side of (\ref{EIntegralApp}).

All that remains is to show that $I_2$ is of lower order. Following the same steps as the lead up to (\ref{PlugInE0}),
\begin{gather*}
I_2 = -2\pi n\int_{(\Sigma \backslash [-1,1]) \cap U_{1/n}(-1)}\sigma_3\left(1 + O\left(\frac{1}{n}\right)\right)E(-z) \left(\begin{matrix}
1 & 0 \\
0 & \dfrac{1}{2\pi n}
\end{matrix}\right) \nonumber \\
\qquad\times \left( \begin{matrix}
-\Psi_{12}\Psi_{22} \big(n^2f(-z)\big) & \Psi_{12}^2\big(n^2f(-z)\big) \\
-\Psi_{22}^2\big(n^2f(-z)\big) & \Psi_{12}\Psi_{22}\big(n^2f(-z)\big)
\end{matrix}\right) \nonumber\\
\qquad\times \left(\begin{matrix}
\dfrac{1}{2\pi n} & 0 \\
0 & 1
\end{matrix}\right) E^{-1}(-z)\left(1 + O\left(\frac{1}{n}\right)\right)\sigma_3 \left[ \left(\frac{F^2}{w}(z) -1 \right)\right] \dbar z.
\end{gather*}
Therefore,
\begin{gather*}
\vert I_2 \vert \le cn\int_{(\Sigma \backslash [-1,1]) \cap U_{1/n}(-1)}\left\vert \left( \begin{matrix}
-\Psi_{12}\Psi_{22} \left(n^2f(-z)\right) & \Psi_{12}^2\big(n^2f(-z)\big) \\
-\Psi_{22}^2\big(n^2f(-z)\big) & \Psi_{12}\Psi_{22}\big(n^2f(-z)\big)\end{matrix}\right)\right\vert \\
\hphantom{\vert I_2 \vert \le}{} \times \left\vert \left(\frac{F^2}{w}(z) -1 \right)\right\vert \vert {\rm d}z\vert.
\end{gather*}
As in (\ref{FTonLeft}), for $\vert z + 1 \vert \le \frac{1}{n}$,
\begin{gather*}
\left\vert \left(\frac{F^2}{w}(z) -1 \right)\right\vert = O\big(n^{-1/2}\big),
\end{gather*}
and so
\begin{gather*}
\vert I_2 \vert \le cn^{1/2} \int_{(\Sigma \backslash [-1,1]) \cap U_{1/n}(-1)}\left\vert \left( \begin{matrix}
-\Psi_{12}\Psi_{22} \left(n^2f(-z)\right) & \Psi_{12}^2\big(n^2f(-z)\big) \\
-\Psi_{22}^2\big(n^2f(-z)\big) & \Psi_{12}\Psi_{22}\big(n^2f(-z)\big)
\end{matrix}\right)\right\vert \vert {\rm d}z\vert.
\end{gather*}
Letting $y = n^2f(-z)$, in which case we have, as before, ${\rm d}z = \frac{2{\rm d}y}{n^2} \big(1 + O\big(\frac{1}{n}\big)\big)$, and recalling that $n^2f(-(\Sigma \cap U_{1/n}(-1))) \subset \Gamma$,
\begin{gather*}
\vert I_2 \vert \le cn^{-3/2} \int_{\substack{0 \le \vert y\vert \le O(n) \\ \arg y = \pm2\pi/3}}\left\vert \left( \begin{matrix}
-\Psi_{12}\Psi_{22} (y) & \Psi_{12}^2(y) \\
-\Psi_{22}^2(y) &\Psi_{12}\Psi_{22}(y)
\end{matrix}\right)\right\vert \vert {\rm d}y\vert = O\big(n^{-3/2}\big),
\end{gather*}
where we have used the fact that $\Psi_{12}, \Psi_{22} \in L^2(C_0)$, where $C_0$ is the contour $\vert \arg y \vert= 2\pi/3$, see (\ref{PsiAsymptotics}). Therefore we see that
\begin{gather*}
\int_\Sigma \tilde{\mu} (v_\Sigma - \tilde{v}_\Sigma) \tilde{\mu}^{-1}\dbar z = I_1 + O\big(n^{-3/2}\big)
=\frac{3}{16 n\log^2n} \left(\begin{matrix}
1 & -i\\
-i& -1
\end{matrix}\right) + O\left(\frac{1}{n\log^3n}\right),
\end{gather*}
where we have used (\ref{I1E}). Thus we have proved Proposition \ref{EIntegralPropApp}.
\end{proof}

\subsection{The second integral}
\begin{prop}\label{HIntegralPropApp}
\begin{gather}
\int_\Sigma \tilde{\mu}^{(n)}(z) \big(v_\Sigma^{(n)}(z) - \tilde{v}^{(n)}_\Sigma(z)\big) \big(\tilde{\mu}^{(n)}\big)^{-1}(z) \dbar z \nonumber\\
\qquad \quad{} - \int_\Sigma \tilde{\mu}^{(n+1)}(z) \big(v_\Sigma^{(n+1)}(z) - \tilde{v}^{(n+1)}_\Sigma(z)\big) \big(\tilde{\mu}^{(n+1)}\big)^{-1}(z) \dbar z \nonumber\\
\qquad{} = \frac{3}{16 n^2\log^2n} \left(\begin{matrix}
1 & -i\\
-i& -1
\end{matrix}\right) + O\left(\frac{1}{n^2\log^3n}\right).\label{HIntegralApp}
\end{gather}
\end{prop}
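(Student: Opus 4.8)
The plan is to re-run the proof of Proposition \ref{EIntegralPropApp} while carrying the two levels $n$ and $n+1$ side by side throughout, using the general principle that a quantity possessing an expansion in powers of $1/n$ gains an extra factor $1/n$ when one forms the difference between levels $n$ and $n+1$. First I would split each of the two integrals over $\Sigma$ in (\ref{HIntegralApp}) as $I_1+I_2+I_3$, the contributions of $(\Sigma\backslash[-1,1])\cap U_{1/n}(1)$, of $(\Sigma\backslash[-1,1])\cap U_{1/n}(-1)$, and of $\Sigma\backslash([-1,1]\cup U_{1/n}(1)\cup U_{1/n}(-1))$, exactly as in the proof of Proposition \ref{EIntegralPropApp}. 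On the third set $\vert\phi^{-2n}\vert\le c_1 e^{-c_2 n^{1/2}}$ by (\ref{PhiBound}), so $I_3^{(n)}$, $I_3^{(n+1)}$, and hence their difference, are $O(e^{-cn^{1/2}})$. Near $-1$ the bound $I_2=O(n^{-3/2})$ of Proposition \ref{EIntegralPropApp} rests on $\frac{F^2}{w}(z)-1=O(\vert z+1\vert^{1/2})$ and on the decay of $\Psi_{12},\Psi_{22}$ along $\Gamma$ through (\ref{PsiAsymptotics}); since the image $n^2f\big({-}(\Sigma\cap U_\delta(-1))\big)\subset\Gamma$ does not depend on $n$ by (\ref{SigmaToGamma}), only the cutoff and the explicit $1/n$'s in $R^{(n)}$ and $(2\pi n)^{\sigma_3/2}$ vary with $n$, so forming the difference gains a factor $1/n$ and gives $I_2^{(n)}-I_2^{(n+1)}=O(n^{-5/2})=O\big(\tfrac{1}{n^2\log^3 n}\big)$.

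The main term comes from $I_1$. Following the derivation of (\ref{PlugInE0})--(\ref{JustTheMatrixPart}) verbatim, with $M:=E(1)\left(\begin{matrix}0&1\\0&0\end{matrix}\right)E^{-1}(1)$, one writes
\begin{gather*}
I_1^{(n)} = 2\pi n\,M\,J_n + 2\pi n\,\mathcal{R}_n, \qquad J_n := \int_1^{1+1/n}\Psi_{12}^2\big(n^2f(s)\big)\Big(\frac{F^2}{w_+}+\frac{F^2}{w_-}-2\Big)(s)\,\dbar s,
\end{gather*}
where $\mathcal{R}_n$ gathers all the terms in which a factor $R^{(n)}(z)-I$, $E(z)-E(1)$, or $\left(\begin{matrix}1&0\\0&1/(2\pi n)\end{matrix}\right)-\left(\begin{matrix}1&0\\0&0\end{matrix}\right)$ has been split off; by (\ref{GNEstimates}), $\mathcal{R}_n=O(n^{-3}\log^{-2}n)$, and the same decomposition holds at level $n+1$. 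Thus $I_1^{(n)}-I_1^{(n+1)}=2\pi M\big(nJ_n-(n+1)J_{n+1}\big)+2\pi\big(n\mathcal{R}_n-(n+1)\mathcal{R}_{n+1}\big)$. By (\ref{ThingyDelta}) one has $R^{(n)}-R^{(n+1)}=O(n^{-2})$ and $E$ is $n$-independent; controlling the remaining argument shift $y\mapsto(1+1/n)^2y$ inside $\Psi$ by the fundamental theorem of calculus together with the $L^2$ bounds on $\Psi_{12},\Psi_{22},\Psi_{12}',\Psi_{22}'$ from (\ref{PsiAsymptotics})---exactly as in the passage from (\ref{Something12}) to (\ref{I1Hard})---shows that $\mathcal{R}_n-\mathcal{R}_{n+1}$ gains an extra $1/n$, so $n\mathcal{R}_n-(n+1)\mathcal{R}_{n+1}=O(n^{-3}\log^{-2}n)=O\big(\tfrac{1}{n^2\log^3 n}\big)$.

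It remains to treat $nJ_n-(n+1)J_{n+1}$. Set $A_n:=n^2J_n$; using $\tfrac{1}{n(n+1)}=\tfrac{1}{n^2}+O(n^{-3})$ gives $nJ_n-(n+1)J_{n+1}=\tfrac{A_n}{n^2}+\tfrac{A_n-A_{n+1}}{n+1}+O(n^{-3}A_n)$. By (\ref{GNIntegral}), $A_n=\tfrac{3}{16\pi i\log^2 n}+O(\log^{-3}n)$, so $\tfrac{A_n}{n^2}=\tfrac{3}{16\pi i n^2\log^2 n}+O\big(\tfrac{1}{n^2\log^3 n}\big)$, and $2\pi M\cdot\tfrac{3}{16\pi i n^2\log^2 n}$ equals the matrix on the right of (\ref{HIntegralApp}) by the computation (\ref{I1E}). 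The crux is to show $A_n-A_{n+1}=O\big(\tfrac{1}{n\log^3 n}\big)$, for then $\tfrac{A_n-A_{n+1}}{n+1}=O\big(\tfrac{1}{n^2\log^3 n}\big)$ as needed. To this end I would substitute $y=n^2f(s)$ in $A_n$ and $y=(n+1)^2f(s)$ in $A_{n+1}$; the cutoff mismatch in $s$ sits at $y\sim n/2$, where $\Psi_{12}^2(y)=O(e^{-c\sqrt n})$, and the bulk difference reduces to
\begin{gather*}
\frac{1}{\pi i}\int_0^{O(n)}\Psi_{12}^2(y)\Big[\Big(\tfrac{F^2}{w_+}+\tfrac{F^2}{w_-}-2\Big)(1+r_n(y))-\Big(\tfrac{F^2}{w_+}+\tfrac{F^2}{w_-}-2\Big)(1+r_{n+1}(y))\Big]\,dy + O\Big(\tfrac{1}{n^2\log^2 n}\Big),
\end{gather*}
where $r_m(y)=f^{-1}(y/m^2)-1=2y/m^2\,(1+O(1/m))$ and $n\big(r_n(y)/r_{n+1}(y)-1\big)=2+O(1/n)$ is bounded. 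For $0\le y\le(\log n)^2$ one has $r_n(y),r_{n+1}(y)=O\big((\log n)^2/n^2\big)$, and Proposition \ref{F-FAsymptoticsApp} (whose proof goes through under this mildly weakened hypothesis, at the cost of harmless extra $\log$ factors) bounds the bracket by $O\big(\tfrac{1}{n\log^3 n}\big)$ uniformly in $y$; since $\Psi_{12}^2\in L^1(0,\infty)$ this order is preserved under integration. For $y>(\log n)^2$ the bracket is $O(\log^{-2}n)$ by Proposition \ref{FCancellation} while $\Psi_{12}^2(y)=O(y^{-1/2}e^{-4y^{1/2}})$ by (\ref{PsiAsymptotics}), so this tail is negligible. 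Hence $A_n-A_{n+1}=O\big(\tfrac{1}{n\log^3 n}\big)$.

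Assembling $I_1^{(n)}-I_1^{(n+1)}$, $I_2^{(n)}-I_2^{(n+1)}$, and $I_3^{(n)}-I_3^{(n+1)}$ then gives (\ref{HIntegralApp}). The hard part is the last step: Proposition \ref{EIntegralPropApp} already pins $J_n$ down to order $n^{-2}\log^{-2}n$, but its error $O(n^{-2}\log^{-3}n)$ is not stable enough under differencing, so one genuinely must redo the computation of $J_n$ with the two nearby base points $1+r_n(y)$ and $1+r_{n+1}(y)$ kept together---which is exactly the situation that Proposition \ref{F-FAsymptoticsApp} was designed to handle.
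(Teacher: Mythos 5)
Your proof is correct and follows essentially the same strategy as the paper: the same $I_1+I_2+I_3$ decomposition of $\Sigma$, the same change of variables $y=n^2f(s)$ resp.\ $y=(n+1)^2f(s)$, and the same use of Proposition~\ref{F-FAsymptoticsApp} to beat the trivial bound $O\big(\tfrac{1}{n^2\log^2 n}\big)$ on the differenced integral. You repackage the algebra via $A_n:=n^2J_n$ so that the leading term drops out of the purely kinematic identity $\tfrac{A_n}{n}-\tfrac{A_{n+1}}{n+1}=\tfrac{A_n}{n(n+1)}+\tfrac{A_n-A_{n+1}}{n+1}$, whereas the paper keeps the full integrand intact and extracts the prefactor $-\tfrac{4\pi}{n}+\tfrac{4\pi}{n+1}=-\tfrac{4\pi}{n(n+1)}$ from a term-by-term subtraction in equations (\ref{HIntegralFirstMess})--(\ref{Something102}); the two computations are equivalent. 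One place where you are actually more careful than the paper: Proposition~\ref{F-FAsymptoticsApp} is stated for $r,\tilde r=O(1/n^2)$, but after the change of variables the arguments $r_n(y),r_{n+1}(y)$ range up to $O(1/n)$ when $y=O(n)$. The paper applies the proposition over the whole $y$-range without comment, while your split at $y=(\log n)^2$ — applying the (mildly weakened) proposition for $y\le(\log n)^2$ and the crude bound $O(1/\log^2 n)$ together with the exponential decay of $\Psi_{12}^2$ for the tail — makes this step fully rigorous. The one point you state somewhat imprecisely is the claim that $\mathcal{R}_n-\mathcal{R}_{n+1}$ ``gains an extra $1/n$''; this is true but requires noting that the $n$-dependence of $\mathcal{R}_n$ enters only through explicit $1/n$ factors, through $R^{(n)}-R^{(n+1)}=O(n^{-2})$, and through the rescaled $\Psi$-argument controlled by the FTC argument of Proposition~\ref{NormsApp} — the paper's term-by-term subtraction handles this implicitly without isolating $\mathcal{R}_n$. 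Also, a minor slip: $\Psi_{12}^2(y)=O\big(y^{1/2}e^{-4y^{1/2}}\big)$, not $O\big(y^{-1/2}e^{-4y^{1/2}}\big)$; this does not affect your tail estimate.
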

\begin{proof}While not sufficient to prove the above, note that
\begin{gather*}
\frac{3}{16 n \log^2n}\left(\begin{matrix}
1 & -i\\
-i& -1
\end{matrix}\right) - \frac{3}{16 (n+1) \log^2(n+1)}\left(\begin{matrix}
1 & -i\\
-i& -1
\end{matrix}\right) \nonumber\\
\qquad{}= \frac{3}{16 n^2\log^2n} \left(\begin{matrix}
1 & -i\\
-i& -1
\end{matrix}\right) + O\left(\frac{1}{n^2\log^3n}\right).
\end{gather*}
So this proposition essentially asserts that the leading order of the difference (\ref{HIntegralApp}) is the difference of the leading orders of the two terms, which are computed in (\ref{EIntegralApp}).

We have, as before,
\begin{gather*}
\int_\Sigma \tilde{\mu}^{(n)}(z) \big(v_\Sigma^{(n)}(z) - \tilde{v}^{(n)}_\Sigma(z)\big) \big(\tilde{\mu}^{(n)}\big)^{-1}(z) \dbar z \nonumber\\
\qquad {}- \int_\Sigma \tilde{\mu}^{(n+1)}(z) \big(v_\Sigma^{(n+1)}(z) - \tilde{v}^{(n+1)}_\Sigma(z)\big) \big(\tilde{\mu}^{(n+1)}\big)^{-1}(z) \dbar z
 = I_1 + I_2 + I_3,
\end{gather*}
where $I_1$, $I_2$, $I_3$ refer to the difference of the integrals of the above over $[1, 1+\delta]$, $\Sigma \cap U_{1/n}(-1)$, and $\Sigma \backslash (U_{1/n}(1)\cap U_{1/n}(-1))$ respectively. The same argument as in Proposition \ref{NormsApp} shows
\begin{gather*}
I_3 = O\big(e^{-cn^{1/2}}\big).
\end{gather*}

Following the same steps as in Proposition \ref{EIntegralPropApp}, but this time using the more precise expansion $R^{(n)}(z)= I + \frac{R_1(z)}{n} + O\big(\frac{1}{n^2}\big) $ in the expression for $\tilde{\mu}$, we see,
\begin{gather*}
 -\int_1^{1 + \frac{1}{n}} \tilde{\mu}^{(n)}(z) \big(v_\Sigma(z)^{(n)} - \tilde{v}^{(n)}_\Sigma(z)\big) \big(\tilde{\mu}^{(n)}\big)^{-1}(z) \dbar z\nonumber\\
\qquad{} = -2\pi n\int_1^{1 + \frac{1}{n}} \left(1 + \frac{R_1(z)}{n} + O\left(\frac{1}{n^2}\right)\right)E(z)\left(\begin{matrix}
1 & 0\\
0 & \dfrac{1}{2\pi n}
\end{matrix}\right) \nonumber \\
\qquad\quad{} \times\left( \begin{matrix}
\Psi_{12}\Psi_{22} \big(n^2f(z)\big) & -\Psi_{12}^2\big(n^2f(z)\big) \\
\Psi_{22}^2\big(n^2f(z)\big) & -\Psi_{12}\Psi_{22}\big(n^2f(z)\big)
\end{matrix}\right)\left(\begin{matrix}
\dfrac{1}{2\pi n} & 0\\
0 & 1
\end{matrix}\right) \nonumber\\
\qquad\quad{}\times E^{-1}(z)\left(1 - \frac{R_1(z)}{n} + O\left(\frac{1}{n^2}\right)\right) \left[ \left(\frac{F^2}{w_+}(z) + \frac{F^2}{w_-}(z) - 2 \right)\right]\dbar z, 
\end{gather*}
and therefore,
\begin{gather}
- \int_1^{1+\frac{1}{n}} \tilde{\mu}^{(n)}(z) \big(v_\Sigma^{(n)}(z) - \tilde{v}^{(n)}_\Sigma(z)\big) \big(\tilde{\mu}^{(n)}\big)^{-1}(z)\dbar z \nonumber\\
\qquad\quad {}+ \int_1^{1+\frac{1}{n+1}} \tilde{\mu}^{(n+1)}(z) \big(v_\Sigma^{(n+1)}(z) - \tilde{v}^{(n+1)}_\Sigma(z)\big) \big(\tilde{\mu}^{(n+1)}\big)^{-1}(z) \dbar z \nonumber\\
\qquad {}= -2\pi n\int_1^{1 + \frac{1}{n}} \left(1 + \frac{R_1(z)}{n} + O\left(\frac{1}{n^2}\right)\right)E(z) \left(\begin{matrix}
1 & 0\\
0 & \dfrac{1}{2\pi n}
\end{matrix}\right)\nonumber \\
\qquad\quad {}\times \left( \begin{matrix}
\Psi_{12}\Psi_{22} \big(n^2f(z)\big) & -\Psi_{12}^2\big(n^2f(z)\big) \\
\Psi_{22}^2\big(n^2f(z)\big) & -\Psi_{12}\Psi_{22}\big(n^2f(z)\big)
\end{matrix}\right) \nonumber\\
\qquad\quad{}\times\left(\begin{matrix}
\dfrac{1}{2\pi n} & 0\\
0 & 1
\end{matrix}\right) E^{-1}(z)\left(1 - \frac{R_1(z)}{n} + O\left(\frac{1}{n^2}\right)\right)
 \times \left[ \left(\frac{F^2}{w_+}(z) + \frac{F^2}{w_-}(z) - 2\right)\right]\dbar z \nonumber\\
 \qquad\quad {} +2\pi (n+1)\int_1^{1 +\frac{1}{n+1}}\left(1 + \frac{R_1(z)}{n+1} + O\left(\frac{1}{n^2}\right)\right)E(z) \left(\begin{matrix}
1 & 0\\
0 & \dfrac{1}{2\pi(n+1)}
\end{matrix}\right)\nonumber \\
\qquad\quad{}\times \left( \begin{matrix}
\Psi_{12}\Psi_{22} \big((n+1)^2f(z)\big) & -\Psi_{12}^2\big((n+1)^2f(z)\big) \\
\Psi_{22}^2\big((n+1)^2f(z)\big) & -\Psi_{12}\Psi_{22}\big((n+1)^2f(z)\big)
\end{matrix}\right) \nonumber\\
\qquad\quad {}\times \left(\begin{matrix}
\dfrac{1}{2\pi (n+1)} & 0\\
0 & 1
\end{matrix}\right) E^{-1}(z)\left(1 - \frac{R_1(z)}{(n+1)} + O\left(\frac{1}{n^2}\right)\right) \nonumber\\
\qquad\quad {}\times \left[ \left(\frac{F^2}{w_+}(z) + \frac{F^2}{w_-}(z) - 2 \right) \right]\dbar z. \label{HIntegralFirstMess}
\end{gather}

Now, let $y_1 = n^2f(z)$ and $y_2 = (n+1)^2f(z)$, note that if $z \in [1, 1+1/n]$ then $0 < y_1, y_2 < cn$. Let $f_1^{-1}$ refer to the inverse of $f$ within a neighborhood of~1. Then for $z\in [1, 1+1/n]$, and some constant~$a$,
\begin{gather*}
{\rm d}z = \frac{1}{n^2}\big(f_{1}^{-1}\big)'\left(\frac{y_1}{n^2}\right) {\rm d}y_1 = \frac{2{\rm d}y_1}{n^2}\left(1 + \frac{ay_1}{n^2} + O\left(\frac{1}{n^2}\right)\right), \nonumber\\
{\rm d}z = \frac{1}{(n+1)^2}\big(f_{1}^{-1}\big)'\left(\frac{y_2}{(n+1)^2}\right) {\rm d}y_2 = \frac{2{\rm d}y_2}{(n+1)^2}\left(1 + \frac{ay_2}{(n+1)^2} + O\left(\frac{1}{n^2}\right)\right).
\end{gather*}
Making the substitution $y_1 = n^2f(z)$ in the first integral of (\ref{HIntegralFirstMess}) and $y_2 = (n+1)^2f(z)$ in the second, we obtain
\begin{gather}
 -\frac{4\pi}{n}\int_0^{n^2f\left(1 +\frac{1}{n}\right)} \left(1 + \frac{R_1\big(f_1^{-1}\big(\frac{y_1}{n^2}\big)\big)}{n} + O\left(\frac{1}{n^2}\right)\right)E\left(f_1^{-1}\left(\frac{y_1}{n^2}\right)\right)\nonumber \\
\qquad{} \times \left(\begin{matrix}
 1 & 0\\
 0 & \dfrac{1}{2\pi n}
 \end{matrix}\right)\left( \begin{matrix}
\Psi_{12}\Psi_{22} (y_1 ) & -\Psi_{12}^2(y_1) \\
\Psi_{22}^2(y_1) & -\Psi_{12}\Psi_{22}(y_1)
\end{matrix}\right) \left(\begin{matrix}
\dfrac{1}{2\pi n} & 0\\
0 & 1
\end{matrix}\right) \nonumber\\
\qquad{}\times E^{-1}\left(f_1^{-1}\left(\frac{y_1}{n^2}\right)\right)\left(1 - \frac{R_1\big(f_1^{-1}\big(\frac{y_1}{n^2}\big)\big)}{n} + O\left(\frac{1}{n^2}\right)\right) \nonumber \\
\qquad{} \times \Bigg[ \left(\frac{F^2}{w_+}\left(f_1^{-1}\left(\frac{y_1}{n^2}\right)\right) + \frac{F^2}{w_-}\left(f_1^{-1}\left(\frac{y_1}{n^2}\right)\right) - 2 \right)
 \left(1 + \frac{ay_1}{n^2} + O\left(\frac{1}{n^2}\right) \right)\Bigg] \dbar y_1 \nonumber\\
 \qquad{} +\frac{4\pi}{n+1}\int_0^{(n+1)^2f\left(1 +\frac{1}{n+1}\right)}\! \left(1 + \frac{R_1\big(f_1^{-1}\big(\frac{y_2}{(n+1)^2}\big)\big)}{n+1} + O\left(\frac{1}{n^2}\right)\right)E\left(f_1^{-1}\left(\frac{y_2}{(n+1)^2}\right)\right) \nonumber \\
 \qquad{}\times\left(\begin{matrix}
 1 & 0\\
 0 & \dfrac{1}{2\pi (n+1)}
 \end{matrix}\right) \left( \begin{matrix}
\Psi_{12}\Psi_{22} (y_2 ) & -\Psi_{12}^2(y_2) \\
\Psi_{22}^2(y_2) & -\Psi_{12}\Psi_{22}(y_2)
\end{matrix}\right) \left(\begin{matrix}
 \frac{1}{2\pi (n+1)} & 0\\
 0 & 1
 \end{matrix}\right)\nonumber\\
\qquad{}\times E^{-1}\left(f_1^{-1}\left(\frac{y_2}{(n+1)^2}\right)\right)\left(1 - \frac{R_1\big(f_1^{-1}\big(\frac{y_2}{(n+1)^2}\big)\big)}{n+1} + O\left(\frac{1}{n^2}\right)\right) \nonumber \\
\qquad{} \times \Bigg[ \left(\frac{F^2}{w_+}\left(f_1^{-1}\left(\frac{y_2}{(n+1)^2}\right)\right) + \frac{F^2}{w_-}\left(f_1^{-1}\left(\frac{y_2}{(n+1)^2}\right)\right) - 2 \right) \nonumber\\
\qquad{}\times \left(1 + \frac{ay_2}{(n+1)^2} + O\left(\frac{1}{n^2}\right) \right) \dbar y_2.\label{OhDearGod}
\end{gather}

Note that $(n+1)^2f\big(1 + \frac{1}{n+1}\big) = O(n)$ and $(n+1)^2f\big(1 + \frac{1}{n+1}\big) - n^2f\big(1 + \frac{1}{n}\big) = O(1)$. Therefore, since all of the terms in the above integrands are bounded for large $y$, and since~$\Psi_{12}$,~$\Psi_{22}$ rapidly decay for large argument, we can replace the limits of the second integral in~(\ref{OhDearGod}) with the limits of the first and pick up an error that is exponentially small in~$n$. Next, recall that
\begin{gather*}
f^{-1}(y) - 1 = 2y + ay^2 + O\big(y^3\big)
\end{gather*}
and $y$ is in the region $\big\vert f^{-1}\big(\frac{y}{n^2}\big) - 1 \big\vert\le \frac{1}{n}$, with
\begin{gather*}
\frac{f^{-1}\big(\frac{y}{n^2}\big) }{f^{-1}\big(\frac{y}{(n+1)^2}\big)} = \frac{2\frac{y}{n^2}\big(1 + O\big(\frac{1}{n}\big)\big)}{2\frac{y}{(n+1)^2}\big(1 + O\big(\frac{1}{n}\big)\big)} = 1 + O\left(\frac{1}{n}\right).
\end{gather*}
So, by Proposition \ref{F-FAsymptotics}, we see that
\begin{gather*}
\frac{F^2}{w_+}\left(f_1^{-1}\left(\frac{y}{n^2}\right)\right) + \frac{F^2}{w_-}\left(f_1^{-1}\left(\frac{y}{n^2}\right)\right) - \frac{F^2}{w_+}\left(f_1^{-1}\left(\frac{y}{(n+1)^2}\right)\right) \\
\qquad{} + \frac{F^2}{w_-}\left(f_1^{-1}\left(\frac{y}{(n+1)^2}\right)\right)= O\left(\frac{1}{n\log^3n}\right).
\end{gather*}
Next, using the analyticity of $E$, $R_1$,
\begin{gather*}
 E\left(f_1^{-1}\left(\frac{y}{n^2}\right)\right) - E\left(f_1^{-1}\left(\frac{y}{(n+1)^2}\right)\right) = O\left(\frac{1}{n^2}\right)
\end{gather*}
and
\begin{gather*}
\left(1 + \frac{R_1\big(f_1^{-1}\big(\frac{y_2}{n^2}\big)\big)}{n}\right)-\left(1 + \frac{R_1\big(f_1^{-1}\big(\frac{y_2}{(n+1)^2}\big)\big)}{n+1}\right) = O\left(\frac{1}{n^2}\right),
\end{gather*}
and clearly
\begin{gather*}
\left(\begin{matrix}
1 & 0\\
0 & \dfrac{1}{2\pi n}
\end{matrix}\right) - \left(\begin{matrix}
1 & 0\\
0 & \dfrac{1}{2\pi (n+1)}
\end{matrix}\right) = O\left(\frac{1}{n^2}\right).
\end{gather*}
Thus, subtracting (\ref{OhDearGod}) term by term, (\ref{OhDearGod}) is
\begin{gather}
\left(-\frac{4\pi}{n} + \frac{4\pi }{n+1}\right)\int_0^{n^2f\left(1 +\frac{1}{n}\right)}\left(1 +O\left(\frac{1}{n}\right)\right)E\left(f_1^{-1}\left(\frac{y}{n^2}\right)\right)\nonumber \\
\qquad{} \times \left(\begin{matrix}
 1 & 0\\
 0 & \dfrac{1}{2\pi n}
 \end{matrix}\right)\left( \begin{matrix}
\Psi_{12}\Psi_{22} (y) & -\Psi_{12}^2(y) \\
\Psi_{22}^2(y) & -\Psi_{12}\Psi_{22}(y)
\end{matrix}\right) \left(\begin{matrix}
\dfrac{1}{2\pi n} & 0\\
0 & 1
\end{matrix}\right) \nonumber\\
\qquad{} \times E^{-1}\left(f_1^{-1}\left(\frac{y}{n^2}\right)\right)\left(1 + O\left(\frac{1}{n}\right)\right)\nonumber \\
\qquad {} \times \left[ \left(\frac{F^2}{w_+}\left(f_1^{-1}\left(\frac{y}{n^2}\right)\right) + \frac{F^2}{w_-}\left(f_1^{-1}\left(\frac{y}{n^2}\right)\right) -2\right)\right]\dbar y \nonumber\\
 \qquad{} + \frac{4\pi}{n} O\left(\frac{1}{n^2}\right)\left\Vert \left( \begin{matrix}
\Psi_{12}\Psi_{22} (y) & -\Psi_{12}^2(y) \\
\Psi_{22}^2(y) & -\Psi_{12}\Psi_{22}(y)
\end{matrix}\right) \right.\nonumber\\
\left. \qquad{}\times \left(\frac{F^2}{w_+}\left(f_1^{-1}\left(\frac{y}{n^2}\right)\right) + \frac{F^2}{w_-}\left(f_1^{-1}\left(\frac{y}{n^2}\right)\right) - 2\right) \right\Vert_{L^1(0, n^2f(1 + \frac{1}{n}))} \nonumber \\
\qquad{} + \frac{4\pi}{n} O\left(\frac{1}{n\log^3n}\right) \left\Vert \left( \begin{matrix}
\Psi_{11}\Psi_{22} (y ) & -\Psi_{12}^2(y) \\
\Psi_{22}^2(y) & -\Psi_{12}\Psi_{22}(y)\end{matrix}\right) \right\Vert_{L^1(0, n^2f(1 + 1/n))}. \label{Something100}
\end{gather}
The function $\frac{F^2}{w_+} + \frac{F^2}{w_-} - 2$ is bounded in compact sets and $\Psi_{12}$, $\Psi_{22}$ are both $L^2(0,\infty)$, therefore the $L^1$ norms are both bounded uniformly of~$n$. Therefore~(\ref{Something100}) is
\begin{gather}
-\frac{4\pi}{n(n+1)} \int_0^{n^2f\left(1 +\frac{1}{n}\right)}\left(1 +O\left(\frac{1}{n}\right)\right)E\left(f_1^{-1}\left(\frac{y}{n^2}\right)\right)\nonumber \\
\qquad {}\times \left(\begin{matrix}
 1 & 0\\
 0 & \dfrac{1}{2\pi n}
 \end{matrix}\right)\left( \begin{matrix}
\Psi_{11}\Psi_{22} (y) & -\Psi_{12}^2(y) \\
\Psi_{22}^2(y) & -\Psi_{12}\Psi_{22}(y)
\end{matrix}\right) \left(\begin{matrix}
\frac{1}{2\pi n} & 0\\
0 & 1
\end{matrix}\right) \nonumber\\
\qquad {}\times E^{-1}\left(f_1^{-1}\left(\frac{y}{n^2}\right)\right)\left(1 + O\left(\frac{1}{n}\right)\right)\nonumber \\
\qquad \times \left[ \left(\frac{F^2}{w_+}\left(f_1^{-1}\left(\frac{y}{n^2}\right)\right) + \frac{F^2}{w_-}\left(f_1^{-1}\left(\frac{y}{n^2}\right)\right) -2\right)\right]\dbar y + O\left(\frac{1}{n^2 \log^3 n}\right).\label{Something101}
\end{gather}
Inverting the change of variables $y = n^2 f(z)$, (\ref{Something101}) becomes
\begin{gather}
 -\frac{2\pi n}{n+1}\int_1^{1 + \frac{1}{n}} \left(1 + O\left(\frac{1}{n}\right)\right)E(z) \left(\begin{matrix}
 1 & 0\\
 0 & \dfrac{1}{2\pi n}
 \end{matrix}\right)\nonumber \\
\qquad{} \times \left( \begin{matrix}
\Psi_{12}\Psi_{22} \big(n^2f(z)\big) & -\Psi_{12}^2\big(n^2f(z)\big) \\
\Psi_{22}^2\big(n^2f(z)\big) & -\Psi_{12}\Psi_{22}\big(n^2f(z)\big)
\end{matrix}\right) \left(\begin{matrix}
\dfrac{1}{2\pi n} & 0\\
0 & 1
\end{matrix}\right)E^{-1}(z)\left(1 + O\left(\frac{1}{n}\right)\right) \nonumber \\
\qquad{} \times \left[ \left(\frac{F^2}{w_+}(z) + \frac{F^2}{w_-}(z) - 2\right) \right]\dbar z + O\left(\frac{1}{n^2\log^3n}\right). \label{Something102}
\end{gather}
However, the integral here is exactly $\frac{1}{n+1}$ times the integral in (\ref{PlugInE0}), and therefore (\ref{Something102}) evaluates to
\begin{gather}
 \frac{1}{n+1}\left(\frac{3}{16 n\log^2n} \left(\begin{matrix}
1 & -i\\
-i& -1
\end{matrix}\right) + O\left(\frac{1}{n\log^3n}\right)\right) \nonumber\\
\qquad{} = \frac{3}{16 n^2\log^2n} \left(\begin{matrix}
1 & -i\\
-i& -1
\end{matrix}\right) + O\left(\frac{1}{n^2\log^3n}\right),\label{I1H}
\end{gather}
which is the right-hand side of (\ref{HIntegralApp}).

To conclude the proof we need to show the contribution from the left endpoint, $I_2$, is of lower order. Following the same steps as those leading up to (\ref{HIntegralFirstMess}), we have
\begin{gather}
I_2 = -2\pi n\int_{(\Sigma\backslash [-1,1])\cap U_{1/n}(-1)} \sigma_3\left(1 + \frac{R_1(-z)}{n} + O\left(\frac{1}{n^2}\right)\right)E(-z) \left(\begin{matrix}
1 & 0\\
0 & \dfrac{1}{2\pi n}
\end{matrix}\right)\nonumber \\
\hphantom{I_2 =}{} \times \left( \begin{matrix}
-\Psi_{12}\Psi_{22} \big(n^2f(-z)\big) & \Psi_{12}^2\big(n^2f(-z)\big) \\
-\Psi_{22}^2\big(n^2f(-z)\big) & \Psi_{12}\Psi_{22}\big(n^2f(-z)\big)
\end{matrix}\right) \nonumber\\
\hphantom{I_2 =}{} \times\left(\begin{matrix}
\dfrac{1}{2\pi n} & 0\\
0 & 1
\end{matrix}\right) E^{-1}(-z)\left(1 - \frac{R_1(-z)}{n} + O\left(\frac{1}{n^2}\right)\right)\sigma_3 \left[ \left(\frac{F^2}{w}(z)-1\right)\right]\dbar z \nonumber\\
\hphantom{I_2 =}{} +2\pi (n+1)\int_{(\Sigma\backslash [-1,1])\cap U_{1/n}(-1)}\!\sigma_3\left(1 + \frac{R_1(-z)}{n+1} + O\left(\frac{1}{n^2}\right)\right)E(-z) \left(\begin{matrix}
1 & 0\\
0 & \dfrac{1}{2\pi(n+1)}
\end{matrix}\right)\nonumber \\
\hphantom{I_2 =}{} \times \left( \begin{matrix}
-\Psi_{12}\Psi_{22} \big((n+1)^2f(-z)\big) & \Psi_{12}^2\big((n+1)^2f(-z)\big) \\
-\Psi_{22}^2\big((n+1)^2f(-z)\big) & \Psi_{12}\Psi_{22}\big((n+1)^2f(-z)\big)
\end{matrix}\right) \nonumber\\
\hphantom{I_2 =}{} \times \left(\begin{matrix}
\dfrac{1}{2\pi (n+1)}\! & 0\\
0 & 1
\end{matrix}\right) E^{-1}(-z)\left(1 - \frac{R_1(-z)}{(n+1)} + O\left(\frac{1}{n^2}\right)\right)\sigma_3 \left[ \left(\frac{F^2}{w}(z) -1 \right) \right]\dbar z.\!\!\!\!\label{Something110}
\end{gather}
Subtracting (\ref{Something110}) term by term,
\begin{gather}
\vert I_2 \vert \le c \int_{(\Sigma\backslash [-1,1])\cap U_{1/n}(-1)} \left\vert \left( \begin{matrix}
-\Psi_{12}\Psi_{22} \big(n^2f(-z)\big) & \Psi_{12}^2\big(n^2f(-z)\big) \\
-\Psi_{22}^2\big(n^2f(-z)\big) & \Psi_{12}\Psi_{22}\big(n^2f(-z)\big)
\end{matrix}\right)\right\vert \left\vert \frac{F^2}{w}(z) -1\right\vert \vert {\rm d}z\vert \nonumber\\
\hphantom{\vert I_2 \vert \le}{}
+ cn \int_{(\Sigma\backslash [-1,1])\cap U_{1/n}(-1)} \left\vert \left(\begin{matrix}
-\Psi_{12}\Psi_{22} \big((n+1)^2f(-z)\big) & \Psi_{12}^2\big((n+1)^2f(-z)\big) \\
-\Psi_{22}^2\big((n+1)^2f(-z)\big) & \Psi_{12}\Psi_{22}\big((n+1)^2f(-z)\big)
\end{matrix}\right) \right. \nonumber\\
\left. \hphantom{\vert I_2 \vert \le}{}- \left(\begin{matrix}
-\Psi_{12}\Psi_{22} \big(n^2f(-z)\big) & \Psi_{12}^2\big(n^2f(-z)\big) \\
-\Psi_{22}^2\big(n^2f(-z)\big) & \Psi_{12}\Psi_{22}\big(n^2f(-z)\big)
\end{matrix}\right)\right\vert \left\vert \frac{F^2}{w}(z) -1\right\vert \vert {\rm d}z\vert.\label{Something111}
\end{gather}
Recall from (\ref{FTonLeft}) that for $\vert z +1 \vert \le \frac{1}{n}$,
\begin{gather}
 \left\vert \frac{F^2}{w}(z) -1\right\vert = O\big(n^{-1/2}\big).\label{Something112}
\end{gather}
Making the change of variables $y = n^2 f(-z)$ and using (\ref{Something112}), (\ref{Something111}) becomes
\begin{gather*}
cn^{-5/2} \int_{\substack{0 \le \vert y\vert \le O(n) \\ \arg y = \pm2\pi/3}} \left\vert \left( \begin{matrix}
-\Psi_{12}\Psi_{22} (y) & \Psi_{12}^2(y) \\
-\Psi_{22}^2(y) & \Psi_{12}\Psi_{22}(y)
\end{matrix}\right)\right\vert \vert {\rm d}y\vert \nonumber\\
\qquad{} + cn^{-3/2} \int_{\substack{0 \le \vert y\vert \le O(n) \\ \arg y = \pm2\pi/3}} \left\vert \left( \begin{matrix}
-\Psi_{12}\Psi_{22} \left(\dfrac{(n+1)^2}{n^2}y\right) & \Psi_{12}^2\left(\dfrac{(n+1)^2}{n^2}y\right) \\
-\Psi_{22}^2\left(\dfrac{(n+1)^2}{n^2}y\right) & \Psi_{12}\Psi_{22}\left(\dfrac{(n+1)^2}{n^2}y\right)
\end{matrix}\right) \right. \nonumber\\
\left. \qquad{} - \left( \begin{matrix}
-\Psi_{12}\Psi_{22} (y) & \Psi_{12}^2(y) \\
-\Psi_{22}^2(y) & \Psi_{12}\Psi_{22}(y)
\end{matrix}\right) \right\vert \vert {\rm d}y\vert \nonumber\\
\quad{}\le c_1n^{-5/2} + c_2n^{-5/2} \int_{\substack{0 \le \vert y\vert \le O(n) \\ \arg y = \pm2\pi/3}} \left\vert y\sup_{\left(y, \frac{(n+1)^2}{n^2}y\right)}\frac{{\rm d}}{{\rm d}x} \left( \begin{matrix}
-\Psi_{12}\Psi_{22} \left(x\right) & \Psi_{12}^2(x) \\
-\Psi_{22}^2(x) & \Psi_{12}\Psi_{22}(x)
\end{matrix}\right)\right\vert \vert {\rm d}y\vert. 
\end{gather*}
Recall that $\Psi_2(y) = O(\log y)$ and $\Psi_2'(y) = O\big(\frac{1}{y}\big)$ as $y \to 0$, and $\Psi_2(y), \Psi_2'(y) = O\big(e^{-cy^{1/2}} \big)$ as $y \to \infty$, see~(\ref{PsiAsymptotics}). Therefore
\begin{gather*}
\int_{\substack{0 \le \vert y\vert <\infty \\ \arg y = \pm2\pi/3}} \left\vert y\sup_{\left(y, \frac{(n+1)^2}{n^2}y\right)}\frac{{\rm d}}{{\rm d}x} \left( \begin{matrix}
-\Psi_{12}\Psi_{22} \left(x\right) & \Psi_{12}^2(x) \\
-\Psi_{22}^2(x) & \Psi_{12}\Psi_{22}(x)
\end{matrix}\right)\right\vert \vert {\rm d}y\vert = O(1),
\end{gather*}
and so
\begin{gather*}
\vert I_2 \vert = O\big(n^{-5/2}\big),
\end{gather*}
which, together with (\ref{I1H}) implies
\begin{gather*}
\int_\Sigma \tilde{\mu}^{(n)}(z) \big(v_\Sigma^{(n)}(z) - \tilde{v}^{(n)}_\Sigma(z)\big) \big(\tilde{\mu}^{(n)}\big)^{-1}(z) \dbar z \nonumber\\
\qquad\quad{} - \int_\Sigma \tilde{\mu}^{(n+1)}(z) \big(v_\Sigma^{(n+1)}(z) - \tilde{v}^{(n+1)}_\Sigma(z)\big) \big(\tilde{\mu}^{(n+1)}\big)^{-1}(z) \dbar z \nonumber\\
\qquad{} = \frac{3}{16 n^2\log^2n} \left(\begin{matrix}
1 & -i\\
-i& -1
\end{matrix}\right) + O\left(\frac{1}{n^2\log^3n}\right)
\end{gather*}
completing the proof of Proposition \ref{HIntegralPropApp}.
\end{proof}

\subsection*{Acknowledgments}
The work of the second author was supported in part by DMS Grant \# 1300965. The authors gratefully acknowledge the comments and suggestions about the result in this paper by Arno Kuijlaars and Andrei Martinez-Finkelshtein.

\pdfbookmark[1]{References}{ref}
\LastPageEnding


\begin{thebibliography}{99}
\footnotesize\itemsep=0pt

\bibitem{Stegun}
Abramowitz M., Stegun I.A. (Editors), Handbook of mathematical functions with
 formulas, graphs, and mathematical tables, Dover Publications, Inc., New
 York, 1992.

\bibitem{Simon}
Breuer J., Simon B., Zeitouni O., Large deviations and sum rules for spectral
 theory, \href{https://arxiv.org/abs/1608.01467}{arXiv:1608.01467}.

\bibitem{Calderon}
Calder\'on A.P., Commutators of singular integral operators, \href{https://doi.org/10.1073/pnas.53.5.1092}{\textit{Proc. Nat.
 Acad. Sci. USA}} \textbf{53} (1965), 1092--1099.

\bibitem{ClancyGohberg}
Clancey K.F., Gohberg I., Factorization of matrix functions and singular
 integral operators, \href{https://doi.org/10.1007/978-3-0348-5492-4}{\textit{Operator Theory: Advances and Applications}},
 Vol.~3, Birkh\"auser Verlag, Basel~-- Boston, Mass., 1981.

\bibitem{CMM}
Coifman R.R., McIntosh A., Meyer Y., L'int\'egrale de {C}auchy d\'efinit un
 op\'erateur born\'e sur {$L^{2}$} pour les courbes lipschitziennes,
 \href{https://doi.org/10.2307/2007065}{\textit{Ann. of Math.}} \textbf{116} (1982), 361--387.

\bibitem{David}
David G., Courbes corde-arc et espaces de {H}ardy g\'en\'eralis\'es,
 \href{https://doi.org/10.5802/aif.886}{\textit{Ann. Inst. Fourier (Grenoble)}} \textbf{32} (1982), 227--239.

\bibitem{DeiftOPs}
Deift P., Orthogonal polynomials and random matrices: a {R}iemann--{H}ilbert
 approach, \textit{Courant Lecture Notes in Mathematics}, Vol.~3, New York
 University, Courant Institute of Mathematical Sciences, New York, Amer. Math.
 Soc., Providence, RI, 1999.

\bibitem{DeiftIsing}
Deift P., Its A., Krasovsky I., Toeplitz matrices and {T}oeplitz determinants
 under the impetus of the {I}sing model: some history and some recent results,
 \href{https://doi.org/10.1002/cpa.21467}{\textit{Comm. Pure Appl. Math.}} \textbf{66} (2013), 1360--1438,
 \href{https://arxiv.org/abs/1207.4990}{arXiv:1207.4990}.

\bibitem{DeiftSD}
Deift P., Kriecherbauer T., McLaughlin K.T.-R., Venakides S., Zhou X., Strong
 asymptotics of orthogonal polynomials with respect to exponential weights,
 \href{https://doi.org/10.1002/(SICI)1097-0312(199912)52:12<1491::AID-CPA2>3.3.CO;2-R}{\textit{Comm. Pure Appl. Math.}} \textbf{52} (1999), 1491--1552.

\bibitem{DeiftNLS}
Deift P., Zhou X., Long-time asymptotics for solutions of the {NLS} equation
 with initial data in a weighted {S}obolev space, \href{https://doi.org/10.1002/cpa.3034}{\textit{Comm. Pure Appl.
 Math.}} \textbf{56} (2003), 1029--1077, \href{https://arxiv.org/abs/math.AP/0206222}{math.AP/0206222}.

\bibitem{FIK}
Fokas A.S., Its A.R., Kitaev A.V., The isomonodromy approach to matrix models
 in {$2$}{D} quantum gravity, \href{https://doi.org/10.1007/BF02096594}{\textit{Comm. Math. Phys.}} \textbf{147} (1992),
 395--430.

\bibitem{Ryzhik}
Gradshteyn I.S., Ryzhik I.M., Table of integrals, series, and products, 8th~ed., Elsevier/Academic Press, Amsterdam, 2015.

\bibitem{Kuijlaars}
Kuijlaars A.B.J., McLaughlin K.T.-R., Van~Assche W., Vanlessen M., The
 {R}iemann--{H}ilbert approach to strong asymptotics for orthogonal
 polynomials on {$[-1,1]$}, \href{https://doi.org/10.1016/j.aim.2003.08.015}{\textit{Adv. Math.}} \textbf{188} (2004), 337--398,
 \href{https://arxiv.org/abs/math.CA/0111252}{math.CA/0111252}.

\bibitem{Magnus}
Magnus A., Gaussian integation formulas for logarithmic weights and appliction
 to 2-dimensional solid-state lattices, {V}ersion from August 20, 2016,
 \url{https://perso.uclouvain.be/alphonse.magnus/grapheneR2.pdf}.

\bibitem{Szego}
Szeg\H{o} G., On certain {H}ermitian forms associated with the {F}ourier series
 of a positive function, \textit{Comm. S\'em. Math. Univ. Lund} (1952), suppl., 228--238.

\bibitem{VanAssche}
Van~Assche W., Multiple orthogonal polynomials, irrationality and
 transcendence, in Continued Fractions: from Analytic Number Theory to
 Constructive Approximation ({C}olumbia, {MO}, 1998), \href{https://doi.org/10.1090/conm/236/03504}{\textit{Contemp. Math.}},
 Vol.~236, Amer. Math. Soc., Providence, RI, 1999, 325--342.

\bibitem{Venakides}
Venakides S., The solution of completely integrable systems in the continuum
 limit of the spectral data, in Oscillation Theory, Computation, and Methods
 of Compensated Compactness ({M}inneapolis, {M}inn., 1985), \href{https://doi.org/10.1007/978-1-4613-8689-6_14}{\textit{IMA Vol.
 Math. Appl.}}, Vol.~2, Springer, New York, 1986, 337--355.

\end{thebibliography}
\end{document}